\def\Zf{{\mathbb Z}}
\def\Rf{{\mathbb R}}
\def\N{{\mathbb N}}
\def\Z{{\mathbb Z}}
\def\R{{\mathbb R}}
\def\zd{{\mathbb Z}^d}
\def\Rc{{\mathcal R}}
\def\bfp{\mathbf{P}}
\numberwithin{equation}{section}
\def\munup{\Phi_p(\nu)}
\def\xpip{X^{\nu,p}}
\def\xnup{X^{\nu,p}}
\def\jointlaw{{\mathbf P}_{\nu,p}}
\def\ttinv{TT^{-1}}
\def\rer{{\rm RER}}
\def\cp{{\rm CP}}
\def\exch{{\rm exch}}
\def\pure{{\rm pure}}
\def\conn{{\rm conn}}
\def\stat{{\rm stat}}
\def\symm{{\rm symm}}
\def\simple{{\rm simple}}
\def\part{{\rm Part}}
\def\xibfp{\xi_{{\bf p},p}}
\def\xihalf{\xi_{{\bf p},1/2}}
\def\pf{{\bf Proof. }}
\def\xipp{\xi_{{\bf p},p}}
\def\paintbox{\nu_{{\bf p}}}
\def\be{\begin{equation}}
\def\ee{\end{equation}}
\def\bea{\begin{equation*}}
\def\eea{\end{equation*}}
\def\bal{\begin{aligned}}
\def\eal{\end{aligned}}
\def\eps{\varepsilon}
\def\Pr{{\mathbf P}}
\newtheorem{thm}{Theorem}
\newtheorem{lma}[thm]{Lemma}
\newtheorem{cor}[thm]{Corollary}
\newtheorem{prop}[thm]{Proposition}
\newtheorem{df}[thm]{Definition}
\newtheorem{question}[thm]{Question}
\newtheorem{example}[thm]{Example}
\newtheorem{remark}[thm]{Remark}
\numberwithin{thm}{section}
\theoremstyle{remark}
\theoremstyle{definition}
\def\ks{\mbox{-}}
\def\part{{\rm Part}}
\begin{document}

\title{Generalized Divide and Color models}
\author{Jeffrey E. Steif\footnote{Department of Mathematics, Chalmers University of Technology and Gothenburg University, Sweden. E-mail: steif@chalmers.se. Research supported by the Swedish research council and the Knut and Alice Wallenberg foundation.} \and Johan Tykesson\footnote{Department of Mathematics, Chalmers University of Technology and Gothenburg University, Sweden. E-mail: johant@chalmers.se. Research supported by the Knut and Alice Wallenberg foundation.} }
\maketitle

{\it The first author dedicates this paper to the memory of Jonathan Kaminsky \\
(1978-2016).}

\begin{abstract}
In this paper, we initiate the study of ``Generalized Divide and Color Models''.
A very special interesting case of this is the 
``Divide and Color Model'' (which motivates the name we use)
introduced and studied by Olle H\"{a}ggstr\"{o}m.

In this generalized model, one starts with a finite or countable set 
$V$, a random partition of $V$ and a parameter $p\in [0,1]$. The corresponding
Generalized Divide and Color Model is the $\{0,1\}$-valued process indexed by $V$ obtained by 
independently, for each partition element in the random partition
chosen, with probability $p$, assigning all the elements of the partition element the value 1,
and with probability $1-p$, assigning all the elements of the partition element the value 0.

Some of the questions which we study here are the following. Under what situations can different
random partitions give rise to the same color process? 
What can one say concerning exchangeable random partitions? 
What is the set of product measures that a color process stochastically dominates?
For random partitions which are translation invariant, what ergodic properties
do the resulting color processes have? 

The motivation for studying these processes is twofold; on the one hand, we believe that this is 
a very natural and interesting class of processes that deserves investigation and 
on the other hand, a number of quite varied well-studied processes 
actually fall into this class such as (1) the Ising model,
(2) the fuzzy Potts model, (3) the stationary distributions for the Voter Model,
(4) random walk in random scenery and of course (5) the original Divide and Color Model.

\end{abstract}

\newpage

\tableofcontents


\section{Introduction}

\subsection{Overview}\label{ss.overview}
In this paper, we initiate the study of a large class of processes
which we call ``Generalized Divide and Color Models''. 
The name is motivated by a model, introduced and studied by 
Olle H\"{a}ggstr\"{o}m~\cite{OH01}, called the 
``Divide and Color Model'', which is a special case of the class we look at here;
this special case will be described later in this section.

We believe that this general class of models warrants investigation,
partly because it seems to be a very natural class and partly because a number
of very  different processes studied in probability theory fall into this class, as 
described in Subsection~\ref{ss.Examples}.

We now describe this class somewhat informally; formal definitions will be given in 
Subsection~\ref{s.Defn.Not}. We start with a finite or countable set $V$. In the first step,
a random partition of $V$ (with an arbitrary distribution) is chosen and in the 
second step, independently, for each partition element in the random partition
chosen in the first step, with probability $p$, all the elements of the
partition element are assigned the value 1 and 
with probability $1-p$, all the elements of the partition element are assigned the value 0.
This yields in the end a $\{0,1\}$-valued process indexed by $V$, which we call a 
``Generalized Divide and Color Model'' and it is this process which
will be our focus.  Note that this process depends on, in addition of course to the set
$V$, the distribution of the random partition and the parameter $p$. A trivial
example is when the random partition always consists of singletons, in which case
we simply obtain an i.i.d.\ process with parameter $p$.

\subsection{Definitions and notation}\label{s.Defn.Not}

Let $V$ be a finite or countable set and let $\rm{Part}_V$ be the set of all partitions of $V$. 
Elements of $V$ will be referred to as vertices. Elements of a partition will be referred to 
either as equivalence classes or clusters. If $\pi\in \rm{Part}_V$ and $v\in V$, we let 
$\pi(v)$ denote the partition element of $\pi$ containing $v$. 

For any measurable space $(S,\sigma(S))$, let ${\mathcal P}(S)$ denote the 
set of probability measures on $(S,\sigma(S))$. If $\pi\in \rm{Part}_V$ and $K\subseteq V$, 
let $\pi_{K}$ denote the partition of $K$ induced from $\pi$ in the obvious way. On $\rm{Part}_V$ 
we consider the $\sigma$-algebra $\sigma(\rm{Part}_V)$ generated by $\{\pi_K\}_{K\subset V,\,|K|<\infty}$.

We denote the set of all probability measures on $(\rm{Part}_V,\sigma(\rm{Part}_V))$ 
by $\rm {RER}_V$ where $\rm {RER}$ stands for ``random equivalence relation''.  When $V$ 
has a natural set of translations (such as $\Zf^d$), we let $\rm{RER}^{\rm{stat}}_V$ 
("stat" for stationary) denote the elements of $\rm {RER}_V$ which are invariant under 
these translations. When $V$ is a graph (such as $\Zf^d$ with nearest neighbor edges), we let 
$\rm{RER}^{\rm{conn}}_V$ denote the subset of $\rm {RER}_V$ which are supported on partitions for 
which each cluster is connected in the induced graph. Finally, we let $\rm{RER}^{\rm{exch}}_V$ ("exch" for exchangeable)
denote the elements of $\rm {RER}_V$ which are invariant under all permutations of $V$ which fix all but
finitely many elements.

For each finite or countable set $V$ and for each $p \in  (0,1)$,  we now introduce a mapping 
$\Phi_p$ from $\rm {RER}_V$ to probability measures on $\{0,1\}^{V}$.  The image of some 
$\nu\in\rm {RER}_V$ will be called the ``color process'' or ``Generalized Divide and Color Model''
associated to $\nu$ with parameter $p$ and 
is defined as follows. Let $\pi\in \rm{Part}_V$ be picked at random according to $\nu$. 
For each partition element $\phi$ of $\pi$, we assign \emph{all} vertices in $\phi$ the value 
$1$ with probability $p$ and the value $0$ with probability $1-p$, independently for different 
partition elements. This yields for us a $\{0,1\}^V$-valued random object, $\xnup$, whose distribution 
is denoted by $\Phi_p(\nu)$. (Clearly $\Phi_p(\nu)$ is affine in $\nu$.)
We will also refer to $\xnup$ as the \emph{color process} associated to $\nu$ with 
parameter $p$. This clearly corresponds, in a more formal way, to the generalized divide and color 
model introduced in Subsection~\ref{ss.overview}. Finally, we let $\cp_{V,p}$ (CP for ``color process'') 
be the image of $\rm {RER}_V$ under
$\Phi_p$ and we also let $\cp^{*}_{V,p}$ be the image under $\Phi_p$ of the relevant subset 
$\rm{RER}^{*}_V$ of $\rm{RER}_V$ (${*}$ is {\rm{stat}}, {\rm{conn}} or {\rm{exch}}.)

We usually do not consider the cases $p=0$ or $1$ for they are of course trivial.
We let $|\cdot |_1$ denote the $L^1$ norm on $\zd$. 

We end this section with the following elementary observation.
For any $\nu\in \rm {RER}_V$, $p\in [0,1]$ and $u,v\in V$, we have, letting $E$ denote
the event that $u$ and $v$ are in the same cluster,
\begin{equation}\label{e.nonneg.cor}
{\mathbf P}(\xnup(u)=\xnup(v)=1)=p{\mathbf P}(E)+ p^2{\mathbf P}(E^c)\ge p^2
={\mathbf P}(\xnup(u)=1) {\mathbf P}(\xnup(v)=1)
\end{equation}
and hence $\xnup$ has nonnegative pairwise correlations. Note trivially that
$\xnup$ is pairwise independent if and only if it is i.i.d.

\subsection{Examples of color processes}\label{ss.Examples}

It turns out that a number of random processes which have been studied in probability theory
have representations as color processes. In this subsection, we give five such key examples. 
There is a slight difference between the first two examples and the last three examples.
In the first two examples, the known model corresponds to a color process with respect to a particular 
RER at a specific value of the parameter $p$ but not for other values of $p$, while in the last three examples,
the known model corresponds to all the color processes with respect to a particular RER as $p$ varies over all values.

\subsubsection{The Ising Model}

For simplicity, we stick to finite graphs here. While the results here are
{\sl essentially} true also for infinite graphs as well, there are some issues which arise 
in that case but they will not concern us here. Let $G=(V,E)$ be a finite graph.

\begin{df}\label{df.Ising}
The Ising model on $G=(V,E)$ with coupling constant $J\in {\mathbb R}$ and external field $h\in {\mathbb R}$ 
is the probability measure $\mu_{G,J,h}$ on $\{-1,1\}^V$ given by
$$
\mu_{G,J,h}(\{\eta(v)\}_{v\in V}):=
e^{J\sum_{\{v,w\}\in E} \eta(v)\eta(w)+ h\sum_v \eta(v)}/Z
$$
where $Z=Z(G,J,h)$ is a normalization constant.
\end{df}

It turns out that $\mu_{G,J,0}$ is a color process when $J\ge 0$; 
this corresponds to the famous FK (Fortuin-Kasteleyn) or 
so-called random cluster representation. To explain this, we first need to introduce the following model.

\begin{df}\label{df.RC}
The FK or random cluster model on $G=(V,E)$ with parameters $\alpha\in [0,1]$ and 
$q\in (0,\infty)$ is the probability measure $\nu^{\rm{RCM}}_{G,\alpha,q}$ on
$\{0,1\}^E$ given by
$$
\nu^{\rm{RCM}}_{G,\alpha,q}(\{\eta(e)\}_{e\in E}):=\alpha^{N_1}(1-\alpha)^{N_2}q^C/Z
$$
where $N_1$ is the number of edges in state 1, $N_2$ is the number of edges in state 0, $C$ is the 
resulting number of connected clusters and $Z=Z(G,\alpha,q)$ is a normalization constant.
\end{df}

Note, if $q=1$, this is simply an i.i.d.\ process with parameter $\alpha$. We think of $\nu^{\rm{RCM}}_{G,\alpha,q}$ 
as an RER on $V$ by looking at the clusters of the percolation realization; i.e., $v$ and $w$ are in the same 
partition if there is a path from $v$ to $w$ using edges in state 1. 

The following theorem from \cite{FK} tells us that the Ising Model with $J\ge 0$ and $h=0$ is indeed 
a color process. We however must identify $-1$ with $0$. See also \cite{ES}.

\begin{thm}\label{t.FKIsing} (\cite{ES},\cite{FK})
For any graph $G$ and any $J\ge 0$, 
$$
\mu_{G,J,0}=\Phi_{1/2}(\nu^{\rm{RCM}}_{G,1-e^{-2J},2}).
$$
\end{thm}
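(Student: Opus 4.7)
The plan is to reproduce the classical Edwards--Sokal coupling. I would construct a joint probability measure on $\{-1,1\}^V\times\{0,1\}^E$ whose spin marginal is $\mu_{G,J,0}$, whose edge marginal (viewed as an RER on $V$ via its open-cluster partition) is $\nu^{\rm{RCM}}_{G,1-e^{-2J},2}$, and whose conditional law of the spins given the edges is ``assign each cluster the value $+1$ or $-1$ independently and uniformly''. Since the latter conditional rule is, after the identification of $-1$ with $0$, precisely the color-process prescription $\Phi_{1/2}$, the theorem would follow by integrating out $\omega$ on the one side and conditioning on $\omega$ on the other.

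The key step is to write down the correct unnormalized weight. Set $\alpha=1-e^{-2J}\in[0,1]$ and consider, on $\{-1,1\}^V\times\{0,1\}^E$,
$$
W(\eta,\omega)=\prod_{e=\{u,v\}\in E}\Bigl[(1-\alpha)\,\ind\{\omega(e)=0\}+\alpha\,\ind\{\omega(e)=1\}\ind\{\eta(u)=\eta(v)\}\Bigr].
$$
Summing $W$ over $\omega$ factorizes edgewise: each edge contributes $1$ if $\eta(u)=\eta(v)$ and $1-\alpha=e^{-2J}$ otherwise. Using $\ind\{\eta(u)\neq\eta(v)\}=(1-\eta(u)\eta(v))/2$, one sees this is proportional to the Ising weight $\exp\bigl(J\sum_{\{u,v\}\in E}\eta(u)\eta(v)\bigr)$ with $h=0$. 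Summing $W$ over $\eta$ instead forces $\eta$ to be constant on every $\omega$-cluster, contributing a factor $2^C$, so that the edge marginal is proportional to $\alpha^{N_1}(1-\alpha)^{N_2}2^C$, which is exactly $\nu^{\rm{RCM}}_{G,\alpha,2}$. Finally, for any fixed $\omega$ the conditional distribution of $\eta$ is uniform on the $2^C$ cluster-constant spin configurations, i.e.\ independent fair $\pm 1$ coin flips per cluster.

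Given these three computations, the proof is essentially bookkeeping: normalizing constants match up, and the cluster-by-cluster fair coloring is tautologically the definition of $\Phi_{1/2}$ applied to the RER induced by $\nu^{\rm{RCM}}_{G,\alpha,2}$. There is no serious obstacle; the one point needing care is the sign hypothesis $J\ge 0$, which is exactly what guarantees $\alpha\in[0,1]$ and hence that $W$ is nonnegative (for $J<0$ the same coupling breaks down, consistent with the antiferromagnetic Ising model falling outside the color-process framework because of \eqref{e.nonneg.cor}). The specific value $p=1/2$ is forced by the symmetry of the Ising model at $h=0$; any $p\neq 1/2$ would bias the cluster colorings and produce a nonzero external field.
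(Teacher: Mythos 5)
Your argument is correct and complete: it is precisely the Edwards--Sokal coupling, which is what the paper itself invokes by citation (the theorem is stated with references to \cite{ES} and \cite{FK} and no proof is given in the text). The three computations you outline (edge-marginal $\propto \alpha^{N_1}(1-\alpha)^{N_2}2^{C}$, spin-marginal $\propto e^{J\sum\eta(u)\eta(v)}$ via $\ind\{\eta(u)\neq\eta(v)\}=(1-\eta(u)\eta(v))/2$, and uniform cluster-constant conditional law) are exactly the standard ones, and your observation that $J\ge 0$ is what keeps $\alpha\in[0,1]$ correctly locates the only place the hypothesis is used.
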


See \cite{OHRCrep} for a nice survey concerning various random cluster representations.
We remark that while for all $p$, $\Phi_{p}(\nu^{\rm{RCM}}_{G,\alpha,2})$ is of course a color process,
we do not know if this corresponds to anything natural when $p\neq \frac{1}{2}$. We mention
that, if $G$ is the complete graph, then an alternative way to see that the Ising model with $J\ge 0$
and 0 external field is a color process is to combine Theorem~\ref{t.mainp12} later in this paper
with the fact that the Ising model on the complete graph
can be extended to an infinitely exchangeable process. This latter fact was proved in \cite{Pap}
where the technique is credited to Kac \cite{Kac}; see also Theorem 1.1 in \cite{LST}.
We end by mentioning that for the Ising model on the complete graph on 3 vertices, there are other RERs, besides
the random cluster model, that generate it and that in some sense, the 
random cluster model is not the most natural generating RER; see remark (iii) after 
Question~\ref{q.ising}.

\subsubsection{The Fuzzy Potts Model}

Again for simplicity, we stick to finite graphs here and so let $G=(V,E)$ be a finite graph.

\begin{df}\label{df.Potts} For $q\in \{2,3,\ldots,\}$,
the $q$-state Potts model on $G=(V,E)$ with coupling constant $J$ (and no external field)
is the probability measure $\mu^{\rm{Potts}}_{G,J,q}$ on $\{1,\ldots,q\}^V$ given by
$$
\mu^{\rm{Potts}}_{G,J,q}(\{\eta(v)\}_{v\in V}):=
e^{J\sum_{\{v,w\}\in E}I_{\{\eta(v)=\eta(w)\}}}/Z
$$
where $Z=Z(G,J,q)$ is a normalization constant.
\end{df}

\begin{df}\label{df.PottsFuzzy}
For $G,q$ and $J$ as in Definition~\ref{df.Potts} and parameter $\ell \in \{1,\ldots,q-1\}$,
the fuzzy $q$-state Potts model on $G$ with parameters $J$ and $\ell$,
denoted by $\mu^{\rm{Potts,Fuzzy}}_{G,J,q,\ell}$, is obtained by taking a 
realization from $\mu^{\rm{Potts}}_{G,J,q}$ and changing each $i\in \{1,\ldots,\ell\}$ to a 1 and
each $i\in \{\ell+1,\ldots,q\}$ to a 0.
\end{df}

It turns out that $\mu^{\rm{Potts,Fuzzy}}_{G,J,q,\ell}$ is also a color process for $J\ge 0$.

\begin{thm}\label{t.PottsFuzzy} (\cite{ES},\cite{FK})
For any graph $G$, and any $J\ge 0,q$ and $\ell$ as above,
$$
\mu^{\rm{Potts,Fuzzy}}_{G,J,q,\ell}=\Phi_{\frac{\ell}{q}}(\nu^{\rm{RCM}}_{G,1-e^{-2J},q}).    
$$
\end{thm}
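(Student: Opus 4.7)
My plan is to reduce this directly to the Edwards--Sokal coupling between the Potts model and the random cluster model, then observe that the fuzzy mapping commutes with the independent cluster-coloring procedure in the appropriate way.

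First, I would invoke the standard Edwards--Sokal representation (from the same references \cite{ES}, \cite{FK} already cited for Theorem~\ref{t.FKIsing}): for any $J\ge 0$ and $q\in\{2,3,\ldots\}$, one can obtain a sample from $\mu^{\rm{Potts}}_{G,J,q}$ by first sampling a bond configuration $\omega\in\{0,1\}^E$ from $\nu^{\rm{RCM}}_{G,1-e^{-2J},q}$ and then, independently for each connected cluster of $\omega$, assigning all vertices in that cluster a single color chosen uniformly at random from $\{1,\ldots,q\}$, with the choices for distinct clusters being independent. This is the exact analogue of Theorem~\ref{t.FKIsing}, with the uniform distribution on $\{1,\ldots,q\}$ replacing the uniform distribution on $\{-1,+1\}$.

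Next, I would read off the fuzzy Potts model as a deterministic function of this coupling. By Definition~\ref{df.PottsFuzzy}, $\mu^{\rm{Potts,Fuzzy}}_{G,J,q,\ell}$ is the pushforward of $\mu^{\rm{Potts}}_{G,J,q}$ under the map that sends $\{1,\ldots,\ell\}$ to $1$ and $\{\ell+1,\ldots,q\}$ to $0$. Applying this map to the Edwards--Sokal construction, we see that for each cluster of $\omega$ the resulting common value in $\{0,1\}$ is a function of the uniform color assigned to that cluster. Since the colors are independent across clusters, so are the resulting $\{0,1\}$-labels, and for each cluster the label is $1$ with probability $\ell/q$ and $0$ with probability $1-\ell/q$.

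Finally, I would identify this with $\Phi_{\ell/q}$ applied to the RER induced on $V$ by the connectivity structure of $\omega$. Viewing $\nu^{\rm{RCM}}_{G,1-e^{-2J},q}$ as an element of $\rm{RER}_V$ in the manner explained just after Definition~\ref{df.RC}, the procedure just described is, verbatim, the definition of $\Phi_{\ell/q}(\nu^{\rm{RCM}}_{G,1-e^{-2J},q})$ given in Subsection~\ref{s.Defn.Not}. Hence the two distributions coincide, which is the claimed identity. There is no real obstacle here beyond citing Edwards--Sokal cleanly; the only thing one needs to check is that the grouping map $\{1,\ldots,q\}\to\{0,1\}$ and the independent-per-cluster coloring commute in the sense above, which is immediate because uniform sampling on $\{1,\ldots,q\}$ followed by the grouping map is the same as a single Bernoulli$(\ell/q)$ variable.
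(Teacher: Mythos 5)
Your proof is correct and follows essentially the same route as the paper, which justifies the theorem in one line by citing the Edwards--Sokal coupling (uniform independent coloring of random-cluster components yields Potts) and observing that the fuzzy map turns each cluster's uniform color into an independent Bernoulli$(\ell/q)$ label, i.e.\ exactly $\Phi_{\ell/q}$ of the induced RER. Nothing is missing.
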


This follows easily from an extension of Theorem~\ref{t.FKIsing} which says that one can obtain a 
realization of $\mu^{\rm{Potts}}_{G,J,q}$ by taking a realization of $\nu^{\rm{RCM}}_{G,1-e^{-J},q}$ 
and ``coloring'' each cluster independently and uniformly from $\{1,\ldots,q\}$.
We again remark that while for all $p$, $\Phi_{p}(\nu^{\rm{RCM}}_{G,\alpha,q})$ is 
also of course a color process, we do not know if this corresponds to anything natural when $p$ is not of the 
form $\frac{\ell}{q}$.

\subsubsection{The (Classical) Divide and Color Model}

Unlike the previous examples discussed in this subsection, this model is {\sl defined} as 
a color process. In this model, which was introduced and studied in \cite{OH01}, one first performs
ordinary percolation with some parameter $\alpha$ on a finite or infinite graph $G$ and then
considers the RER corresponding to the clusters which result. The divide and color model is then defined to be 
the color processes coming from this RER as $p$ varies. Of course, using the terminology of the
previous two examples, this is simply $\Phi_{p}(\nu^{\rm{RCM}}_{G,\alpha,1})$. Some papers dealing with 
this model are the following: \cite{B}, \cite{BBT1} and \cite{BCM}.

\subsubsection{Stationary distributions for the Voter Model}

The Voter Model on ${\mathbb Z}^d$ is a continuous time Markov process with
state space  $\{0,1\}^{{\mathbb Z}^d}$; an element of $\{0,1\}^{{\mathbb Z}^d}$
specifies for each location (voter) in ${\mathbb Z}^d$ whether it is in state
0 or 1 representing two possible opinions. Heuristically, the Markov process
evolves as follows: each location in ${\mathbb Z}^d$ at rate 1 chooses a neighbor at 
random and then changes its state to that of its neighbor. (If the 
chosen neighbor has the same state, then nothing happens.)  A detailed description
of this process and the results described below can be found in \cite{DurrIPS},
\cite{LiggIPS} and \cite{LiggIPSeasy}. Clearly, the two states consisting of all 0's or of all 1's
are fixed states and hence the two point masses at these configurations as well as their convex 
combinations are stationary distributions. It turns out that in dimensions 1 or 2, these are the only
stationary distributions while in $d\ge 3$, there is a continuum of extremal 
stationary distributions indexed by $[0,1]$, denoted by $\{\mu_p\}_{p\in [0,1]}$. For each $p$,
$\mu_p$ is a translation invariant ergodic measure and is obtained by starting the Markov
process i.i.d.\ with density $p$ and taking the limiting distribution as time goes to infinity. This 
dichotomy between $d\le 2$ and $d\ge 3$ is exactly due to the recurrence/transience dichotomy in these cases. 

While it is by no means obvious, it turns out, based on the analysis of the voter model
carried out in the above references, that for each $d\ge 3$, there is an RER $\nu_d$ 
on ${\mathbb Z}^d$ such that for each $p\in [0,1]$, $\mu_p=\Phi_p(\nu_d)$.
This is also true for $d\le 2$ but then $\mu_p$ is taken to be the (nonergodic)
measure corresponding to a $(p,1-p)$ convex combination of the point mass at all 1's and the
point mass at all 0's and $\nu_d$ is concentrated on the partition which has only one partition element,
all of $\zd$. For all $d\ge 1$, the RER $\nu_d$ corresponds to ``coalescing random walks''
and is described as follows. Start independent continuous time rate 1 simple random walkers at 
each location of ${\mathbb Z}^d$, any two of which coalesce upon meeting.
Run the random walkers until time $\infty$ and then declare two locations
$x,y\in {\mathbb Z}^d$ to be in the same partition if the two random walkers starting at 
$x$ and $y$ ever coalesce. Note that for $d\le 2$ we have, due to recurrence, that this yields
one partition element, ${\mathbb Z}^d$, which is consistent with our description of $\nu_d$ above.

For $d\ge 3$, all the equivalence classes will be infinite with 0 density. Transience of random walk
implies clusters must have 0 density. The formula for return probabilities easily yields the fact that the expected
size of the cluster of the origin is infinite. Finally, the fact that the cluster size is in fact 
infinite a.s.\ can be found in \cite{Griff}.

\subsubsection{Random Walk in Random Scenery}

Let $(X_i)_{i\ge 1}$ be an i.i.d.\ sequence of random variables taking values in
${\mathbb Z}^d$. Let $(S_n)_{n\ge 1}$ be the associated random walk defined by
$S_0=0$ and $S_n=\sum_{i=1}^n X_i$ for $n\ge 1$. Next, let $\{C^p_z\}_{z\in {\mathbb Z}^d}$ 
be an i.i.d.\ process taking the value $1$ with probability $p$ and taking the value $0$ 
with probability $1-p$. Finally, letting, for $k\ge 0$, $Y^p_k:=C^p_{S_k}$, we call
$(Y^p_k)_{k\ge 0}$ ``Random Walk in Random Scenery'' since the process gives the ``scenery'' 
at the location of the random walker.

It turns out that $(Y^p_k)_{k\ge 0}$ is also in fact a color process which can be seen as follows.
We define an RER $\nu$ on ${\mathbb N}$ by declaring $i,j\ge 0$ to be in the same partition if 
$S_i=S_j$. It is then straightforward to see that $(Y^p_k)_{k\ge 0}$ has distribution $\Phi_p(\nu)$.

Although it is not so natural when thinking of random walk in random scenery,
it is sometimes useful to have the index set being ${\mathbb Z}$ instead of ${\mathbb N}$ which
can be done as follows. One starts with an i.i.d.\ process $(X_i)_{i\in {\mathbb Z}}$ and then 
defines $S_n$ as above for $n\ge 0$ and for $n \le -1$ to be $-\sum_{i=n+1}^0 X_i$. Finally, one
defines $Y^p_k$ to be $C^p_{S_k}$ for any $k\in {\mathbb Z}$. The strange definition of $S_n$ for negative 
$n$ in fact insures that $(Y^p_k)_{k\in {\mathbb Z}}$ is a stationary process. Moreover, the 
process $(X_k,Y^p_k)_{k\in {\mathbb Z}}$ is also a stationary process and is called a
generalized $\ttinv$-process.  (The name $\ttinv$ comes from the case of simple random
walk in $1$ dimension where $T$ denotes the left shift by 1 of $\{C^p_z\}_{z\in {\mathbb Z}}$:
the idea then is that from the walker's perspective, the latter sequence is shifted to the
left or right depending on the step of the walker.)
One can generalize further by allowing $(X_i)_{i\in {\mathbb Z}}$ 
to be an arbitrary stationary process rather than requiring it to be i.i.d., in which case
the random walk in random scenery would still be a color process.

If $(X_i)_{i\in {\mathbb Z}}$ yields a recurrent random walk, then a.s.\ all the equivalence 
classes are infinite and have 0 density (provided $X_1$ is not identically 0),
while if $(X_i)_{i\in {\mathbb Z}}$ yields a transient random walk, then all the equivalence 
classes are finite a.s.

\subsection{Summary of paper}

In this subsection, we summarize the different sections of the paper.

Section~\ref{s.finitecase} deals exclusively with the case that $V$ is the finite set $[n]:=\{1,2,\ldots,n\}$. 
A first natural question is whether, for fixed $p$, the map $\Phi_p\,:\,\rer_{[n]}\to \cp_{[n],p}$ is 
injective or not. One can also ask this same question when $\rer_{[n]}$ is replaced with 
$\rer_{[n]}^{\exch}$. Moreover, one can also address the question of whether there can be
two distinct (exchangeable) RERs such that their corresponding color processes agree for \emph{all} 
values of $p$. For each of these questions, we identify a phase transition in $n$. These are given in
Theorem~\ref{t.bigfinitetheorem} which is the main result in the finite case. 
We also obtain more refined results in this section as well as develop some general results. 

In Section~\ref{s.exchangeable}, we stick to color processes arising from exchangeable RERs on $\N$.
We first there remind the reader of Kingman's characterization of such RERs; see Theorem~\ref{t.kingman}.
Some of the obtained results are as follows. For $p=1/2$, it is shown that the set of color processes are 
exactly the collection of exchangeable processes which exhibit 
$0\ks 1$-symmetry; see Theorem~\ref{t.mainp12}.
While Proposition~\ref{p.Russ} tells us that, for each $p\in (0,1)$, $\Phi_p$ is injective when restricted to
the extremal elements of $\rer_{\N}^{\exch}$ (the so-called paint-boxes), it is shown that, for $p=1/2$,
$\Phi_p$ is highly non-injective on $\rer_{\N}^{\exch}$ and the subset where ``$\Phi_p$ is injective'' is
characterized; see Theorem~\ref{p.unprop}. It turns out however that the behavior for $p\neq 1/2$ seems 
quite different and $\Phi_p$ is ``much more injective''.

In Section~\ref{s.conn}, we look at a very specific type of color process; namely those where $V={\mathbb Z}$
and the classes are connected and hence are simply intervals.

In Section~\ref{s.dom}, we study the question of stochastic domination of product measures
for the set of color processes. More specifically, given an RER and $p\in (0,1)$, we consider the maximum density
product measure which the corresponding color process dominates. Of particular interest is the limit, as $p\to 1$
of this maximum density which often is not 1; this is related to the large deviation picture of the number of 
clusters intersecting a large box. In addition to obtaining various general results, the case of 
$\rer_{\N}^{\exch}$ as well as our various models from Subsection~\ref{ss.Examples} are analyzed in detail.

In Section~\ref{s.transfer}, we move into our ``ergodic theory'' section. Here we consider stationary color 
processes indexed by $\zd$ and study their ergodic behavior. Some of the obtained results are as follows. 
Theorem~\ref{t.ergod1} tells us that if there is positive probability of a positive density cluster, then
ergodicity is ruled out. On the other hand, Theorem~\ref{t.finiteclust} tells us that if all clusters are 
finite a.s., then the color process inherits all of the ergodic properties of the generating RER. These two 
results tell us that the interesting cases are when the RER has infinite clusters but all with 0 
density a.s. Various results in this case are obtained as well as other questions looked at.

Finally, in Section~\ref{s.ques}, we present a number of questions and further directions which we feel might 
be interesting to pursue.

\section{The finite case}\label{s.finitecase}

In this section, we restrict ourselves to the case when $V$ is finite. In the first and main subsection,
we state and prove Theorem~\ref{t.bigfinitetheorem} concerning uniqueness of the representing
RER and present further refined results.
The second subsection deals with some other general results in the finite case.

\subsection{Uniqueness of the representing RER in the finite case}

It is natural to ask, for various color processes, whether the representing RER is unique.
We give in this subsection fairly detailed answers to this in the finite case.
Recall $p\in (0,1)$.

We begin by giving an alternative description of $\rer^{\exch}_{[n]}$ which is as follows. 
A partition of the \emph{integer} $n$ is given by an integer $s\ge 1$ and positive integers 
$k_1\le k_2\le \ldots \le k_s$ such that $\sum_i k_i=n$. We denote by 
$[k_s\ks k_{s-1}\ks \ldots\ks k_1]$ the set of all partitions of (the set) $[n]$ that can be 
written as $\{C_1,\ldots,C_s\}$ where $|C_i|=k_i$. It is easy to see that
$\rer_{[n]}^{\exch}$ are those $\nu\in\rer_{[n]}$ such that if 
$\pi$ and $\pi'$ belong to the same $[k_s\ks k_{s-1}\ks \ldots \ks k_1]$, then
$\nu(\pi)=\nu(\pi')$. In this way, $\rer_{[n]}^{\exch}$ 
can be identified with probability measures on partitions of the integer $n$. 

The following is the main result in the finite case.

\begin{thm}\label{t.bigfinitetheorem}
{\bf (A).} The map
$$\Phi_{1/2}\,:\,\rer_{[n]}\to \cp_{[n],1/2}$$
is injective if $n=2$ and non-injective if $n\ge 3$.

{\bf (B).} The map
$$\Phi_{1/2}\,:\,\rer_{[n]}^{\exch}\to\cp_{[n],1/2}^{\exch}$$
is injective if $n=2$ and non-injective if $n\ge 3$.

{\bf (C).} If $p\neq 1/2$, then the map
$$\Phi_p\,:\,\rer_{[n]}\to \cp_{[n],p}$$
is injective for $n=2,3$ and non-injective for $n\ge 4$.

{\bf (D).} If $p\neq 1/2$, then the map
$$\Phi_p\,:\,\rer_{[n]}^{\exch}\to \cp_{[n],p}^{\exch}$$
is injective if $n=2,3$ and non-injective if $n\ge 4$.

{\bf (E).} There are $\nu_1\neq \nu_2 \in \rer_{[n]}$ such that 
$\Phi_p(\nu_1)=\Phi_p(\nu_2)$ for all $p\in [0,1]$ if and only if $n\ge 4$. 

{\bf (F).} There are $\nu_1\neq \nu_2 \in \rer_{[n]}^{\exch}$ such that 
$\Phi_p(\nu_1)=\Phi_p(\nu_2)$ for all $p\in [0,1]$ if and only if $n\ge 6$. 
\end{thm}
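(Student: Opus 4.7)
The plan is to recast each statement as a question about the kernel of the linear extension $\tilde\Phi_p$ of $\Phi_p$ to signed measures on $\part_{[n]}$. Using the moment generating function identity
\[
\Ebf_\nu\!\Bigl[\prod_{v\in [n]}t_v^{X^{\nu,p}(v)}\Bigr]=\sum_\pi\nu(\pi)\,Q_\pi(p,t),\qquad Q_\pi(p,t):=\prod_{B\in\pi}\!\Bigl(p\!\prod_{v\in B}\!t_v+1-p\Bigr),
\]
injectivity of $\Phi_p$ on $\rer_{[n]}$ is equivalent to the triviality of $\ker\tilde\Phi_p$ among mass-zero signed measures, and the ``for all $p$'' parts (E) and (F) correspond to the intersection of these kernels over $p\in[0,1]$. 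In the exchangeable case the distribution of $X^{\nu,p}$ is determined by that of $|X^{\nu,p}|$, so setting $t_v\equiv t$ collapses the identity to $\sum_\lambda c_\lambda R_\lambda(p,t)=0$, where $R_\lambda(p,t):=\prod_i(pt^{\lambda_i}+1-p)$ and $c_\lambda:=N_\lambda\alpha_\lambda$ is the total signed mass on type $\lambda$.

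For the injective directions the plan is to strip off coefficients of $t^n,t^{n-1},\ldots$ in decreasing $t$-degree. The coefficient of $t^n$ in $R_\lambda$ is $p^{|\lambda|}$, so $[t^n]=0$ forces $\sum_{|\lambda|=s}c_\lambda=0$ for every $s$; the coefficient of $t^{n-1}$ in turn forces $\sum_{|\lambda|=s}m_1(\lambda)\,c_\lambda=0$ for every $s$; and so on. For $n\le 3$ in parts (A)--(D) and for $n\le 5$ in (F) this chain of constraints forces $c\equiv 0$; along the way the only cancellations of $p$-powers produce explicit factors of $(2p-1)$, which vanish exactly at $p=\tfrac12$ and account for the dichotomy between (A), (B) and (C), (D). The non-exchangeable fixed-$p$ case at $n=2,3$ follows from the same argument applied to the multivariate polynomials $Q_\pi(p,t_1,\ldots,t_n)$, and the $n\le 3$ parts of (E) follow by injectivity of $\Phi_p$ at any single $p\ne\tfrac12$.

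For the non-injective directions I would exhibit explicit mass-zero signed measures in the relevant kernel. At $n=3,p=\tfrac12$ the exchangeable measures $\nu_1=\tfrac23\delta_{\{\{1\},\{2\},\{3\}\}}+\tfrac13\delta_{\{\{1,2,3\}\}}$ and $\nu_2=\tfrac13\sum_{i<j}\delta_{\{\{i,j\},\{k\}\}}$ satisfy $\Phi_{1/2}(\nu_1)=\Phi_{1/2}(\nu_2)$ by direct expansion, giving (A) and (B). For (E) at $n=4$ a two-dimensional all-$p$ kernel appears indexed by the three perfect matchings $\{i,j\}\sqcup\{k,l\}=[4]$: each matching contributes a parameter $\alpha_{\{ij\}}$ placing masses $+\alpha_{\{ij\}}$ on each of $\{\{i,j\},\{k\},\{l\}\}$ and $\{\{k,l\},\{i\},\{j\}\}$, mass $-\alpha_{\{ij\}}$ on $\{\{i,j\},\{k,l\}\}$, and zero elsewhere, subject to $\sum_{ij}\alpha_{ij}=0$; since it lies in the single-$p$ kernel for every $p$ it also handles (C). For (D) at $n=4,p\ne\tfrac12$ the $5\times 5$ linear system in the exchangeable type-weights has rank $4$, yielding a one-parameter $p$-dependent exchangeable solution. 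For (F) at $n=6$ the decisive example is $c_{2^3}=c_{4+1+1}=c_{3+3}=1$, $c_{3+2+1}=-2$, $c_{4+2}=-1$, and all other $c_\lambda=0$; one checks coefficient-by-coefficient that every $[t^j]\sum_\lambda c_\lambda R_\lambda(p,t)$ vanishes identically in $p$ for $j=0,\ldots,6$, repeatedly using $p+(1-p)=1$ to collapse surviving terms. Each larger $n$ in every part follows by adjoining singletons to every partition in the support (and symmetrizing over $[n]$ in the exchangeable cases), which multiplies each $Q_\pi$ by $\prod_{v>n_0}(pt_v+1-p)$ and therefore preserves the identity.

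The main obstacle is the construction and verification of the $n=6$ exchangeable all-$p$ kernel in (F). The threshold $n=6$ is sharp because the $|\lambda|$-strata of integer partitions have size at most $2$ for $n\le 5$, making the $[t^n]$ and $[t^{n-1}]$ constraints sufficient to force $c\equiv 0$; at $n=6$ two strata (those with $|\lambda|=2$ and $|\lambda|=3$) first reach size $3$, and the full chain of $[t^j]$-constraints happens to fail by exactly one dimension. Writing down the specific linear combination that achieves the cancellations requires some experimentation, and the verification that every coefficient vanishes identically in $p$ is a long but mechanical check.
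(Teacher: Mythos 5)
Your generating-function reformulation is correct and, in several places, takes a genuinely different and cleaner route than the paper. The paper proves the injectivity of $\Phi_p$ on $\rer_{[3]}$ for $p\neq 1/2$ by a direct matrix computation isolating $p_{011}-p_{100}=(2p-1)p(1-p)q_1$, and it proves the $n=5$ case of {\bf (F)} by first ``singularizing'' the two measures and then applying an auxiliary lemma (Lemma~\ref{l.js}) repeatedly to kill each partition type; your coefficient-stripping of $\sum_\lambda c_\lambda\prod_i(pt^{\lambda_i}+1-p)$ subsumes that lemma (its part (i) is your $[t^n]$ constraint, its parts (ii)--(iii) are low-order-in-$p$ coefficients) and disposes of $n=5$ in two lines once one notes that each stratum of integer partitions with a fixed number of parts has size at most $2$ and that $m_1$ separates the two partitions in each such stratum. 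Your $n=6$ kernel vector is, up to scaling, exactly the paper's pair $\nu_1([4\ks 2])=1/3,\ \nu_1([3\ks 2\ks 1])=2/3$ versus $\nu_2([4\ks 1\ks 1])=\nu_2([3\ks 3])=\nu_2([2\ks 2\ks 2])=1/3$, and your matching-indexed family for {\bf (E)} at $n=4$ is a two-parameter kernel containing the paper's single example (I checked that $\sum_\pi Q_\pi$ over a matching's three partitions is matching-independent, so the differences do lie in the all-$p$ kernel); the paper's remark that the {\bf (E)} example also settles {\bf (C)} matches your use of it. Two points to tighten. First, the stratified conclusions ``$[t^n]=0$ forces $\sum_{\ell(\lambda)=s}c_\lambda=0$ for every $s$'' are only valid when the identity holds for all $p$, i.e.\ in parts {\bf (E)} and {\bf (F)}; for a fixed $p$ each $[t^j]$ gives a single scalar equation, and the injectivity in {\bf (A)}--{\bf (D)} comes from comparing different coefficients (e.g.\ $[t^2]-[t^1]$ at $n=3$ yields the factor $3p(1-p)(2p-1)c_{1\ks1\ks1}$), which your parenthetical about $(2p-1)$ acknowledges but which should be stated as a separate argument from the stratified one. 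Second, for {\bf (D)} at $n=4$ you assert that the $5\times 5$ exchangeable system has rank $4$ without exhibiting a kernel vector; this is the one place where the paper supplies an explicit witness ($c_{1111}=1$, $c_{211}=-2$, $c_{22}=1-3p(1-p)$, $c_{31}=4p(1-p)$, $c_{4}=-p(1-p)$, which your equations $[t^3]\pm[t^1]$, $[t^2]$, $[t^4]$ do produce mechanically) and you should carry out that short computation rather than assert the rank.
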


\pf Before starting with any of the parts, we first show that in each of these
parts, we have monotonicity in $n$; for {\bf (A)}-{\bf (D)}, this means that the relevant
map being non-injective for $n$ implies it is non-injective for $n+1$ and for
{\bf (E)} and {\bf (F)}, this means that if we have such a pair of measures as described 
for $n$, then we have such a pair for $n+1$. To do this, we first note that there
are simple injections from $\rer_{[n]}$ into $\rer_{[n+1]}$ and from 
$\rer_{[n]}^{\exch}$ into $\rer_{[n+1]}^{\exch}$. For the first one, given 
$\nu\in\rer_{[n]}$, we can let $T(\nu)\in\rer_{[n+1]}$ be such that $n+1$ is its own
cluster and the partition on $[n]$ is distributed according to $\nu$. 
For the second one, given 
$\nu\in\rer_{[n]}^{\exch}$, we construct $S(\nu)\in\rer_{[n+1]}^{\exch}$ as follows.
For every partition $s,k_1, \ldots, k_s$ of $n$, let 
$S({\nu})([k_s\ks k_{s-1}\ks\ldots\ks k_1 \ks 1]) :=\nu([k_s\ks k_{s-1}\ks\ldots\ks k_1])$. 
(Note that, unlike for $T$, the projection of $S(\nu)$ to $[n]$ is not $\nu$.)
Finally, it is easy to check that if $\mu$ and $\nu$ give the same color process
in {\bf (A)}-{\bf (D)} or satisfy the properties in {\bf (E)} or {\bf (F)}, 
then this will also hold
for the extended measures $T(\mu)$ and $T(\nu)$ or $S(\mu)$ and $S(\nu)$, as the case may be.

{\bf (A).} In view of the above monotonicity, we only need to look at $n=2$ and 3. 
First consider the case $n=2$. We represent $\nu\in \cp_{[2]}$ as the probability vector $(q_1,q_2)$ 
where $q_1:=\nu(\{\{1\},\{2\}\})$ and $q_2:=1-q_1=\nu(\{\{1,2\}\})$. Observe that $\Phi_p(\nu)((0,1))=q_1p(1-p)$. 
The injectivity now follows immediately, not just for $p=1/2$ but for all $p\in (0,1)$, since 
$\nu$ is determined by $q_1$. 

Next, consider the case $n=3$. We write $\nu\in \rer_{[3]}$ as $(q_1,\ldots,q_5)^t$ where 
$q_1:=\nu(\{\{1\},\{2\},\{3\}\})$, $q_2:=\nu(\{\{1,2\},\{3\}\})$, $q_3:=\nu(\{\{1\},\{2,3\}\})$, 
$q_4:=\nu\{\{\{1,3\},\{2\}\}\}$ and $q_5:=\nu(\{1,2,3\})$. In addition, we write $\Phi_{1/2}(\nu)$ 
as $(p_{111},p_{110},p_{101},p_{011},p_{100},p_{010},p_{001},p_{000})^t,$ where 
$p_{ijk}=\Phi_{1/2}(\nu)((i,j,k))$. Let $\nu_1=(2/3,0,0,0,1/3)$ and $\nu_2=(0,1/3,1/3,1/3,0)$.
Note in fact, $\nu_1,\nu_2\in \rer_{[3]}^{{\rm exch}}$.  Straightforward calculations which are 
left to the reader give that 
$$\Phi_{1/2}(\nu_1)=\Phi_{1/2}(\nu_2)= (1/4,1/12,1/12,1/12,1/12,1/12,1/12,1/4),$$
and the non-injectivity follows. 

{\bf (B).} Again, we only need to look at $n=2$ and 3. These are however
contained in {\bf (A)} since (i) it is easier to be injective on a subset 
(in fact, in this case, $\rer_{[2]}=\rer_{[2]}^{\exch}$) and (ii)
the examples there showing non-injectivity for $n=3$ are in fact exchangeable. 

{\bf (C).} This time, by monotonicity, we only need to look at $n=3$ and 4.
For $n=3$, $$\Phi_p(\nu)=L_p \nu,$$

where $L_p$ is the matrix given by 

\begin{equation}\label{e.3matr}
L_p=\left( \begin{array}{ccccc} p^3 & p^2 & p^2 & p^2 & p \\ 
p^2(1-p) & p(1-p) & 0 & 0 & 0 \\ p^2(1-p) & 0 & 0 & p(1-p) & 0 \\ p^2(1-p) & 0 & p(1-p) & 0 & 0 \\ p(1-p)^2 & 0 & p(1-p) & 0 & 0 \\ p(1-p)^2 & 0 & 0 & p(1-p) & 0 \\ p(1-p)^2 & p(1-p) & 0 & 0 & 0 \\ (1-p)^3 & (1-p)^2 & (1-p)^2 & (1-p)^2 & (1-p) \end{array} \right),
\end{equation}
where we use the same notation and ordering as in ({\bf A}).
Suppose that $p\neq 1/2$. Let $\nu=(q_1,\ldots,q_5)^t$ and $\nu'=(q_1',\ldots,q_5')^t$. We must show that 
if $\Phi_p(\nu)=\Phi_p(\nu')$, then $\nu=\nu'$. So suppose that $\Phi_p(\nu)=\Phi_p(\nu')$. Denote the entries of 
$\Phi_p(\nu')$ by $p_{111}',p_{110}',\ldots$. Calculating the entries in $\Phi_p(\nu)$ and $\Phi_p(\nu')$ 
(using~\eqref{e.3matr}) gives $p_{011}=p^2(1-p)q_1+p(1-p)q_3$ and $p_{100}=p(1-p)^2 q_1+p(1-p) q_3$, and the 
same formulas for $p_{011}'$ and $p_{100}'$ with $q_1$ and $q_3$ replaced with $q_1'$ and $q_3'$. Observe that 
$$
p_{011}-p_{100}= (2p-1)p(1-p)q_1,
$$
and 
$$
p_{011}'-p_{100}'=(2p-1)p(1-p)q_1'.
$$

Since $\Phi_p(\nu)=\Phi_p(\nu')$ and $p\neq 1/2$, we get that $q_1=q_1'$. 
From the facts that $p_{100}=p_{100}'$ and $q_1=q_1'$ it follows that $q_3=q_3'$. By symmetry, it then follows 
that $q_2=q_2'$ and $q_4=q_4'$. Hence, $\nu=\nu'$.

For the $n=4$ case, we first let $g(p):=p(1-p)$ and then define $\nu_1$ and 
$\nu_2=\nu_2(p)\in \rer_{[4]}^{\exch}$ as follows. Let 
$\nu_1([4])=\nu_1([3\ks 1])=\nu_1([2\ks 2])=\nu_1([2\ks 1\ks 1])=\nu_1([1\ks 1\ks 1\ks 1])=1/5,$ and 
let $\nu_2([4])=1/5+g(p)/10$, $\nu_2([3\ks 1])=1/5-2 g(p)/5$, $\nu_2([2\ks 2])=1/10+3 g(p)/10$, 
$\nu_2([2\ks 1\ks 1])=2/5$ and $\nu_2([1\ks 1\ks 1\ks 1])=1/10.$ Straightforward calculations which are left 
to the reader show that for all $p$, $\Phi_p(\nu_1)=\Phi_p(\nu_2(p))$, from which the non-injectivity follows.

We mention that the (nonexchangeable) construction in part ({\bf E}) below also could
have been used here in this case; however, we would still need the above for (D).

{\bf (D).} Again, by monotonicity, we only need to look at $n=3$ and 4. 
These are however contained in {\bf (C)} since (i) it is easier to be injective on a subset 
and (ii) the examples there showing non-injectivity for $n=4$ are in fact exchangeable. 

{\bf (E).} 
Again, by monotonicity, we only need to look at $n=3$ and 4. 
The case $n=3$ follows from Part ({\bf C}).   
Now consider the case $n=4$ and define $\nu_1$ by letting
$$
\nu_1( \{ \{ 1,3\}, \{2 \}, \{4\} \} )=\nu_1( \{ \{1 \} ,\{3\} , \{ 2,4\}\} )=1/3
$$ 
and 
$$
\nu_1(\{\{ 1,2\}, \{3,4 \}\})=\nu_1(\{\{ 1,4\}, \{2,3 \}\})=1/6.
$$ 
Then define $\nu_2$ by letting
\begin{eqnarray*}\lefteqn{\nu_2( \{\{1,2 \} , \{ 3\} ,\{4\}  \})=\nu_2(\{  \{ 1\} ,\{2,3\}, \{4\} \} )}\\ & & =\nu_2(\{  \{ 1\}, \{2 \},\{3,4\}   \})=\nu_2(\{  \{ 1,4\}, \{2 \},\{3\}   \})=1/6,\end{eqnarray*}  and 
$$
\nu_2(\{  \{ 1,3\}, \{2,4\}   \})=1/3.
$$
Observe that $\nu_1$ and $\nu_2$ are each invariant under rotations and reflections.
Straightforward calculations show that for $i=1,2$,  $\Phi_p(\nu_i)((1,1,1,1))=2p^3/3+p^2/3$, 
$\Phi_p(\nu_i)((0,1,1,1))=(1-p)p^2/3$, $\Phi_p(\nu_i)((1,1,0,0))=p(1-p)/6$ and 
$\Phi_p(\nu_i)((1,0,1,0))=p(1-p)/3$. Since $\nu_1$ and $\nu_2$ are each invariant under 
rotations and since the roles of $1$ and $0$ get switched when $p$ is replaced by $1-p$, 
we conclude that $\Phi_p(\nu_1)=\Phi_p(\nu_2)$ for all $p$.

{\bf (F).}  By monotonicity, we only need to look at $n=5$ and 6. 
For the case of $n= 5$, we will make important use of Lemma~\ref{l.js} below, which we believe
can be of independent interest. We 
state and prove it after the completion of the present proof. Assume now, by way
of contradiction, that there exist $\nu_1\neq \nu_2$ in $\rer_{[5]}^{\exch}$ such that
$\Phi_p(\nu_1)=\Phi_p(\nu_2)$ for all $p$. We now want to ``singularize'' $\nu_1$ and $\nu_2$.
Let $m$ be the largest subprobability measure dominated by both $\nu_1$ and $\nu_2$.
Since $\Phi_p$ is affine, it is easy to see that we also have that
$\Phi_p(\frac{\nu_1-m}{|\nu_1-m |_1})=\Phi_p(\frac{\nu_2-m}{|\nu_2-m |_1})$ 
for all $p$. The latter two measures are singular with respect to each other.
The conclusion is that we may now assume that we have
$\nu_1\neq \nu_2$ in $\rer_{[5]}^{\exch}$ which are singular and such that
$\Phi_p(\nu_1)=\Phi_p(\nu_2)$ for all $p$. 

We now make use of Lemma~\ref{l.js} several times. The application of part (i) is always made with $S=[5]$. 
By Lemma~\ref{l.js} (i) and the assumed singularity between $\nu_1$ and $\nu_2$, we can conclude that $\nu_1$ 
and $\nu_2$ both vanish on $[5]$, $[2\ks 1\ks 1\ks 1]$ and $[1\ks 1\ks 1\ks 1\ks 1]$. Also, Lemma~\ref{l.js} (ii) 
tells us that $\nu_1$ and $\nu_2$ give the same measure to $[4\ks 1]$ and hence they both vanish there by singularity. 
At this point, we know that both $\nu_1$ and $\nu_2$ are concentrated on $[3\ks 2]$, $[3\ks 1\ks 1]$ and 
$[2\ks 2\ks 1]$. Again using Lemma~\ref{l.js} (i) and singularity shows that $\nu_1$ and $\nu_2$ vanish on $[3\ks 2]$. 
Next, Lemma~\ref{l.js} (ii) and singularity then shows that $\nu_1$ and $\nu_2$ vanish on $[3\ks 1\ks 1]$. 
Hence, both $\nu_1$ and $\nu_2$ are both concentrated on $[2\ks 2 \ks 1]$ which is a contradiction since 
they are singular probability measures.

For the case $n= 6$ we define two probability measures $\nu_1$ and $\nu_2$ on partitions of the integer 
$6$ as follows. First let  
$$
\nu_1([4\ks 2])=1/3\mbox{ and }\nu_1([3\ks 2\ks 1])=2/3.
$$ 
Then let 
$$
\nu_2([4\ks 1\ks 1])=\nu_2([3\ks 3])=\nu_2([2\ks 2\ks 2])=1/3.
$$ 

Let $A_k$ be the event that there are exactly $i$ ones in the color process. Exchangeability implies that 
if $\Phi_p(\nu_1)(A_k)=\Phi_p(\nu_2)(A_k)$ for  $k=0,1,\ldots 6$ and all $p$, then $\Phi_p(\nu_1)=\Phi_p(\nu_2)$ 
for all $p$. Simple calculations left to the reader show that for $i=1,2$,
$$
\Phi_p(\nu_i)(A_6)=\frac{p^2}{3}+\frac{2p^3}{3},\mbox{ }\Phi_p(\nu_i)(A_5)=\frac{2p^2(1-p)}{3}\mbox{ and }\Phi_p(\nu_i)(A_4)=\frac{p}{3}+\frac{p^2}{3}-\frac{2p^3}{3}.
$$

Since we have, for $i=1,2$, $\Phi_{p}(\nu_i)(A_0)=\Phi_{1-p}(\nu_i)(A_6)$, $\Phi_{p}(\nu_i)(A_1)=\Phi_{1-p}(\nu_i)(A_5)$, 
$\Phi_{p}(\nu_i)(A_2)=\Phi_{1-p}(\nu_i)(A_4)$ and $\Phi_{p}(\nu_i)(A_3)=1-\sum_{k\neq 3} \Phi_{p}(\nu_i)(A_k)$, 
we can finally conclude that $\Phi_p(\nu_1)= \Phi_p(\nu_2)$ for all $p$. 
\qed

\bigskip

Next, we give the lemma which was used repeatedly in the proof of {\bf (F)} 
in Theorem~\ref{t.bigfinitetheorem} above.

\begin{lma}\label{l.js}
Let $\nu_1,\nu_2\in \rer_{[n]}$. Then each one
of the following conditions implies that
$\Phi_p(\nu_1)\neq \Phi_p(\nu_2)$ for some $p$. \\
(i). For some $S\subseteq [n]$, the distribution
of the number of equivalence classes of $\pi_S$ is different under $\nu_1$ and $\nu_2$.\\
(ii). For some $T\ge 1$, 
the mean of the number of equivalence classes whose size is equal to $T$ is 
different under $\nu_1$ and $\nu_2$.\\
(iii). For some $C\subseteq [n]$, the probability that $C$ is an equivalence class
is different under $\nu_1$ and $\nu_2$.\\
\end{lma}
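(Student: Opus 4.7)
\noindent\textbf{Proof proposal for Lemma~\ref{l.js}.}
The plan is to prove (i) directly by extracting the distribution of the number of equivalence classes of $\pi_S$ from a polynomial in $p$ read off the color process, then to derive (iii) from (i) via an inclusion-exclusion on supersets of $C$, and finally to obtain (ii) from (iii) by summing over sets of size $T$.

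For (i), the natural observable to look at is $A_S := \{\xnup(v)=1 \text{ for all } v\in S\}$. Given the random partition $\pi$, each equivalence class of $\pi$ that meets $S$ must be colored $1$; these classes are in bijection with the classes of $\pi_S$. Hence, conditional on $\pi$, the probability of $A_S$ is exactly $p^{N(\pi_S)}$, where $N(\pi_S)$ denotes the number of equivalence classes of $\pi_S$. Therefore
\[
\Phi_p(\nu)(A_S) \;=\; \sum_{k\ge 1} \Pbf_\nu\bigl(N(\pi_S)=k\bigr)\, p^k,
\]
a polynomial in $p$ whose coefficients are the distribution of $N(\pi_S)$ under $\nu$. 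If the distributions of $N(\pi_S)$ differ between $\nu_1$ and $\nu_2$, the two polynomials differ and hence must take different values at some $p\in(0,1)$, as desired.

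For (iii), let $B_D$ denote the event that $D \subseteq [n]$ is contained in a single equivalence class of $\pi$; by the argument above, $\Pbf_\nu(B_D) = \Pbf_\nu(N(\pi_D)=1)$ is determined by the color process via (i). The set $C$ is itself an equivalence class of $\pi$ iff $B_C$ holds and $B_{C\cup\{v\}}$ fails for every $v\notin C$. Crucially, $B_{C\cup D_1}\cap B_{C\cup D_2} = B_{C\cup D_1\cup D_2}$, so inclusion-exclusion collapses into
\[
\Pbf_\nu(C \text{ is an equivalence class}) \;=\; \sum_{D \subseteq [n]\setminus C} (-1)^{|D|}\, \Pbf_\nu(B_{C\cup D}).
\]
Since every term on the right is determined by $\Phi_p(\nu)$, so is the left side, and (iii) follows.

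For (ii), I would simply observe that the number of equivalence classes of size $T$ equals $\sum_{C\subseteq[n],\,|C|=T}\ind_{\{C\text{ is a class}\}}$, so its mean under $\nu$ is a sum of terms of the form appearing in (iii). Hence this mean is determined by $\Phi_p(\nu)$; if the means under $\nu_1$ and $\nu_2$ disagree, then necessarily $\Phi_p(\nu_1)\neq\Phi_p(\nu_2)$ for some $p$. The only place that requires care is the inclusion-exclusion step in (iii), which is the one non-mechanical step; everything else reduces to the polynomial identification in (i).
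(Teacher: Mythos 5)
Your proof is correct, but for parts (ii) and (iii) it takes a genuinely different route from the paper. Part (i) is essentially identical: both arguments read off the distribution of $N(\pi_S)$ from the polynomial $p\mapsto \Phi_p(\nu)(A_S)=E_\nu(p^{N(\pi_S)})$. For (iii) the paper instead looks at the event $H$ that the color process is identically $1$ \emph{exactly} on $C$ and expands as $p\to 0$: the leading term is $\nu(C\text{ is a class})\,p+O(p^2)$, so the linear coefficients must differ. For (ii) it similarly expands the probability of seeing exactly $T$ ones, whose linear coefficient is the mean number of classes of size $T$. Your route avoids these asymptotics entirely: you observe that $\Pbf_\nu(B_D)=\Pbf_\nu(N(\pi_D)=1)$ is already among the coefficients extracted in (i), that the identity $B_{C\cup D_1}\cap B_{C\cup D_2}=B_{C\cup D_1\cup D_2}$ collapses the inclusion--exclusion into $\Pbf_\nu(C\text{ is a class})=\sum_{D\subseteq[n]\setminus C}(-1)^{|D|}\Pbf_\nu(B_{C\cup D})$, and that (ii) follows from (iii) by linearity of expectation. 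The paper's argument is shorter and more self-contained per part; yours has the structural payoff of showing that conditions (ii) and (iii) are in fact logically subsumed by condition (i) ranging over all $S$ --- if the distributions of $N(\pi_S)$ agree for every $S$, then so do all the quantities in (ii) and (iii) --- which the paper's small-$p$ expansions do not reveal. Both arguments are complete and valid.
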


\pf (i).  For the given $S$, let $F$ be the event that the color process is identically $1$ on $S$, 
and let $N$ be the number of equivalence classes of $\pi_S$. Then for all $p$ and $i=1,2$,
$$
\Phi_p(\nu_i)(F)=E_{\nu_i}(p^{N}).
$$  
By assumption, some coefficient in these two polynomials in $p$ are different
and hence $\Phi_p(\nu_1)$ and $\Phi_p(\nu_2)$ give $F$ different probability for some $p$. 

(ii). For the given $T$ let $X$ be the number of equivalence classes of size equal to $T$, and
suppose that $E_{\nu_1}(X)\neq E_{\nu_2}(X)$.  Let $K$ be the event that the color process contains 
exactly $T$ $1$'s. Then $\Phi_p(\nu_1)(K)=p E_{\nu_1}(X) + O(p^2)$ as $p\to 0$
and similarly for $\nu_2$. We conclude that $\Phi_p(\nu_1)$ and $\Phi_p(\nu_2)$ give the event 
$K$ different probability for small $p$.

(iii). For the given $C$, let $D$ be the event that $C$ is a cluster and let $H$ 
be the event that the color process is identically 1 exactly on $C$. Then 
$\Phi_p(\nu_1)(H)=\nu_1(D) p +O(p^2)$ as $p\to 0$ and similarly for $\nu_2$. 
We conclude that $\Phi_p(\nu_1)$ and $\Phi_p(\nu_2)$ give $H$ different probability for small $p$. \qed

\medskip\noindent
\begin{remark} (i) Concerning Theorem~\ref{t.bigfinitetheorem}(E,F),
it might at first be surprising that one can find distinct and exchangeable $\mu$ 
and $\nu$ such that $\Phi_p(\mu)= \Phi_p(\nu)$ for all $p$ since there are infinitely many $p$. However,
since all the functions of $p$ that arise are polynomials in $p$ of degree at most $n$,
we are essentially in a finite dimensional situation. Another way to see this is that
if $\Phi_p(\mu)= \Phi_p(\nu)$ for $n+1$ many values of $p$, then this holds for all $p$.  \\
(ii). We describe how we came up with the example for the $n=6$ case.  The negations of 
conditions (i) and (ii) of Lemma~\ref{l.js} for $S=[6]$ give a set of linear equations that must hold in order 
for two RERs to have the same color process. With the help of Mathematica, the nullspace of the coefficient 
matrix of the linear system was calculated. By looking at the positive and negative part of one of the vectors 
of the nullspace, the two measures $\nu_1$ and $\nu_2$ were then constructed.
\end{remark}

The next result, Proposition~\ref{p.kerprop}, 
describes our injectivity results in more linear algebraic terms
and goes into more detail concerning what happens in the non-injective case.
In particular, in the case of non-injectivity, it is natural to try to identify 
``where $\Phi_p$ is non-injective''. The next definition captures this notion.

\begin{df}
Let $V$ be a finite or countable set. Let ${\mathcal R}\subseteq \rer_V$ and $p\in (0,1)$. 
We say that $\nu \in {\mathcal R}$ is $({\mathcal R},p)$-unique if 
$\Phi_p(\nu')\neq \Phi_p(\nu)$ for all $\nu'\in {\mathcal R}\setminus \{\nu\}$.
\end{df}

\begin{prop}\label{p.kerprop}
Let $n\ge 2$, $p\in (0,1)$ and consider the map 
$$
\Phi_p\,:\,\rer_{[n]}\to \cp_{[n]}.
$$
Noting that $\Phi_p$, being affine, extends to the vector space of signed measures on
${\rm Part}_{[n]}$ and denoting this extension by $\Phi_p^*$,
the following four statements hold:
\begin{enumerate}
\item[(i).] $\Phi_p$ is non-injective if and only if ${\rm Ker}(\Phi_p^*)\neq \{{\bf 0}\}$.
\item[(ii).] Suppose that $n\ge 2$ and $p\in (0,1)$. Then $\nu\in \rer_{[n]}$ is not 
$(\rer_{[n]},p)$-unique if and only if there is 
$v\in {\rm Ker}(\Phi_p^*)\setminus \{{\bf 0}\}$ 
such that $v_i\ge 0$ for all $i\in ({\rm supp} \,\nu)^c$. 
\item[(iii).] If ${\rm Dim}({\rm Ker}(\Phi_p^*))=1$, then there is a unique pair $\nu_1,\nu_2\in \rer_{[n]}$, 
singular with respect to each other, such that $\Phi_p(\nu_1)=\Phi_p(\nu_2)$.
\item[(iv).] If ${\rm Dim}({\rm Ker}(\Phi_p^*))\ge 2$, then there infinitely many distinct pairs 
$\nu_1,\nu_2\in \rer_{[n]}$, singular with respect to each other, such that $\Phi_p(\nu_1)=\Phi_p(\nu_2)$.

\end{enumerate}

Moreover, if ${\mathcal R}$ is a closed and convex subset of $\rer_{[n]}$
and $\Phi_{p,\langle {\mathcal R}\rangle}^*$ is the restriction of $\Phi_{p}^{*}$ to $\langle{\mathcal R}\rangle$, 
the subspace spanned by $\Rc$, then (i) and (ii) still hold with $\rer_{[n]}$, $\Phi_p$ and $\Phi_p^*$ replaced by $\Rc$,
$\Phi_p|_{\Rc}$ and $\Phi_{p,\langle \Rc \rangle}^{*}$. Also, if in addition $\Rc$ is such that that 
$\nu_1,\nu_2 \in \Rc$ and $\nu_1\neq \nu_2$ imply that
\begin{equation}\label{e.condition}
\frac{\nu_1- (\nu_1 \wedge \nu_2)}{|\nu_1- (\nu_1 \wedge \nu_2)|_1}\in \Rc, 
\end{equation}
then (iii) and (iv) hold with $\rer_{[n]}$ and $\Phi_p^*$ replaced by $\Rc$ and 
$\Phi_{p,\langle \Rc \rangle}^*$.
\end{prop}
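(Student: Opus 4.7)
The plan is to reduce each part to linear algebra on the vector space of signed measures on $\part_{[n]}$, exploiting that $\Phi_p^*$ is linear and preserves total mass (since $\Phi_p$ sends probability measures to probability measures). Every element of $\text{Ker}(\Phi_p^*)$ is then automatically a mass-zero signed measure admitting a Jordan decomposition $v = v_+ - v_-$ into disjointly supported nonnegative parts of equal total mass $\alpha > 0$. Part (i) follows immediately: forward, $\nu_1 - \nu_2 \in \text{Ker}(\Phi_p^*)\setminus\{{\bf 0}\}$; reverse, $v_+/\alpha$ and $v_-/\alpha$ are two distinct elements of $\rer_{[n]}$ with the same image. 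For part (ii), the forward direction takes $v = \nu' - \nu$ and observes $v_i = \nu'_i \geq 0$ on $(\text{supp}\,\nu)^c$; the reverse uses the support condition to deduce $\text{supp}(v_-) \subseteq \text{supp}(\nu)$, so $v_- \leq c\nu$ componentwise for some $c > 0$, whence $\nu + tv$ is a probability measure for all sufficiently small $t > 0$, differs from $\nu$, and has the same image as $\nu$.

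Parts (iii) and (iv) follow by counting dimensions of the kernel. In (iii), if $\text{Ker}(\Phi_p^*)$ is one-dimensional, spanned by some $v$, then any singular pair $(\nu_1, \nu_2)$ with equal image satisfies $\nu_1 - \nu_2 = cv$ for some scalar $c$; mutual singularity together with $|\nu_1|_1 = |\nu_2|_1 = 1$ forces $|c| = 1/|v_+|_1$, so $\{\nu_1,\nu_2\} = \{v_+/|v_+|_1,\, v_-/|v_-|_1\}$ is the unique singular pair. For (iv), two linearly independent kernel elements span a plane containing infinitely many one-dimensional subspaces, each yielding a distinct singular pair via Jordan decomposition---distinct because the difference $\nu_1 - \nu_2$ recovers the line through the corresponding kernel element up to sign.

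For the moreover clause, the constructions must remain inside $\Rc$. The substitute for Jordan decomposition is a convexity-based one: any $v \in \text{Ker}(\Phi_{p,\langle \Rc\rangle}^*)\setminus\{{\bf 0}\}$ may be written as $v = \sum_i a_i \rho_i$ with $\rho_i \in \Rc$, and since $\sum_i a_i = 0$ by mass conservation, grouping the positive and negative terms and normalizing gives $v = \alpha(\rho^+ - \rho^-)$ with $\rho^+, \rho^- \in \Rc$ distinct and $\alpha > 0$ (both being convex combinations of the $\rho_i$, hence in $\Rc$ by convexity). This immediately establishes (i) for $\Rc$, since the pair $\rho^+, \rho^-$ witnesses non-injectivity. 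The main obstacle is (ii) for $\Rc$: given $v$ in $\text{Ker}(\Phi_{p,\langle \Rc\rangle}^*)$ with the support condition at $\nu$, I must produce $\nu' \in \Rc\setminus\{\nu\}$ with $\Phi_p(\nu') = \Phi_p(\nu)$---that is, I must keep the perturbation $\nu + tv$ inside $\Rc$ rather than merely inside the simplex. My plan is to express $\nu + tv = (\epsilon - t\alpha)\rho^- + (1-\epsilon)\mu + t\alpha\,\rho^+$ as a convex combination of elements of $\Rc$, where $\nu = \epsilon \rho^- + (1-\epsilon)\mu$ is a convex decomposition of $\nu$ with $\epsilon > 0$ and $\mu \in \Rc$. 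Producing such a decomposition is the crux; I expect to use the freedom in choosing the representation $v = \alpha(\rho^+ - \rho^-)$ together with the support condition to arrange, for example, $\text{supp}(\rho^-) \subseteq \text{supp}(\nu)$, which would make $\rho^-$ \emph{absorbable} into $\nu$ as a convex summand.

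For (iii) and (iv) under the added hypothesis \eqref{e.condition}, the singularization step is clean: given $\nu_1 \neq \nu_2 \in \Rc$ with $\Phi_p(\nu_1) = \Phi_p(\nu_2)$, the measures $\tilde\nu_i := (\nu_i - \nu_1 \wedge \nu_2)/|\nu_i - \nu_1 \wedge \nu_2|_1$ lie in $\Rc$ by hypothesis, are mutually singular by construction, and the two normalization factors coincide (both equal $1 - |\nu_1 \wedge \nu_2|_1$, since $|\nu_1|_1 = |\nu_2|_1 = 1$), so affineness of $\Phi_p$ yields $\Phi_p(\tilde\nu_1) = \Phi_p(\tilde\nu_2)$. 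With singular pairs now produced inside $\Rc$, the kernel-dimension arguments of (iii) and (iv) carry over verbatim to $\text{Ker}(\Phi_{p,\langle \Rc\rangle}^*)$---crucially using that singularization preserves the line spanned by $\nu_1 - \nu_2$ in the kernel, so distinct lines in the kernel produce distinct singular pairs in $\Rc$.
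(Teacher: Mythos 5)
Parts (i)--(iv) for the full space $\rer_{[n]}$ are correct and follow essentially the same linear-algebraic route as the paper, with two harmless variations: for the reverse implication in (i) you normalize the Jordan parts of a kernel element (the paper instead perturbs a fully supported $\nu$ by a small multiple of a kernel element), and for (iv) you index singular pairs by the lines of a two-dimensional subspace of the kernel (the paper runs the one-parameter family $2(av+w)/|av+w|_1$ and uses continuity together with distinct endpoints). Your observation that the difference of a normalized singular pair recovers the spanning line is exactly what makes both the uniqueness in (iii) and the injectivity of the map from lines to pairs in (iv) work, and your singularization of an arbitrary pair via $\nu_i-\nu_1\wedge\nu_2$ under \eqref{e.condition} is the right mechanism for transporting (iii) and (iv) to $\Rc$; the paper's own treatment of the ``moreover'' clause is a single sentence, so on those points you are if anything more explicit.

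The gap is the one you flagged yourself: the reverse direction of (ii) for a general closed convex $\Rc$. You need $\nu+tv\in\Rc$ for some $t>0$, and neither $v\in\langle\Rc\rangle$ nor the sign condition on $({\rm supp}\,\nu)^c$ delivers this; moreover, the ``absorbability'' you hope to arrange by choosing the representation $v=\alpha(\rho^+-\rho^-)$ cannot be achieved in general, because the restricted statement (ii) is actually false for some closed convex $\Rc$. Concretely, with $n=3$ and $p=1/2$, let $\Rc$ be the convex hull of $\nu_1=(2/3,0,0,0,1/3)$, $\nu_2=(0,1/3,1/3,1/3,0)$ and the uniform measure $\mu=(1/5,\dots,1/5)$. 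Since $\Phi_{1/2}(\nu_1)=\Phi_{1/2}(\nu_2)\neq\Phi_{1/2}(\mu)$, the restricted kernel is exactly the line through $\nu_1-\nu_2$, and the sign condition at $\nu=\mu$ is vacuous because $\mu$ has full support; yet $\mu$ is $(\Rc,1/2)$-unique, since $\Phi_{1/2}(a\nu_1+b\nu_2+c\mu)=(a+b)\Phi_{1/2}(\nu_1)+c\Phi_{1/2}(\mu)$ can equal $\Phi_{1/2}(\mu)$ only when $a+b=0$. So no refinement of your decomposition will close this step as stated.

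What does work, and what the paper's ``easy modifications'' implicitly relies on, is the extra hypothesis $\Rc=\rer_{[n]}\cap\langle\Rc\rangle$: then $\nu+tv$ is, for small $t>0$, a probability measure (by the sign condition, exactly as in the unrestricted case) lying in $\langle\Rc\rangle$, hence in $\Rc$. This hypothesis holds for the two families the paper actually uses, namely measures supported on a fixed subset of ${\rm Part}_{[n]}$ and measures invariant under a group action such as $\rer_{[n]}^{\exch}$. You should either add this hypothesis to the ``moreover'' clause for (ii) or restrict to those cases; with that amendment the rest of your argument goes through.
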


\pf (i). First, ${\rm Ker}(\Phi_p^*)=\{{\bf 0}\}$ trivially implies injectivity. 
Now suppose that ${\rm Ker}(\Phi_p^*)\neq \{{\bf 0}\}$. 
Let $\nu\in \rer_{[n]}$ be such that if we let $\pi_1,\ldots$ be an enumeration of ${\rm Part}_{[n]}$ we have 
$\nu(\pi_i)\in (0,1)$ for all $i$. Pick $u\in {\rm Ker}(\Phi_p^*)\setminus \{{\bf 0}\}$. Since $\nu(\pi_i)\in (0,1)$ 
for all $i$ and ${\rm Part}_{[n]}$ is finite, we can pick $\epsilon>0$ such that $\nu(\pi_i)+\epsilon u_i>0$ for 
all $i$. Let $\nu'=\nu+\epsilon u$. It is easy to show that $\sum_i u_i=0$ for any $u\in {\rm Ker}(\Phi_p^*)$ and 
so we have $\nu'\in \rer_{[n]}$. Moreover, $\Phi_p(\nu')=\Phi_p(\nu)$, finishing the proof.

(ii).
Suppose that $\nu\in \rer_{[n]}$ is such that there is 
$v\in {\rm Ker}(\Phi_p^*)\setminus \{{\bf 0}\}$ with $v_i\ge 0$ for all $i\in ({\rm supp}\, \nu)^c$. 
In similar fashion as in the proof of part (i), we get that if $\epsilon>0$ is sufficiently small, 
then $\nu':=\nu+\epsilon v$ belongs to $\rer_{[n]}$ and moreover, $\Phi_p(\nu)=\Phi_p(\nu')$. 
Hence $\nu$ is not  $(\rer_{[n]},p)$-unique.

For the other direction, suppose that $\nu$ is not $(\rer_{[n]},p)$-unique. Then we can 
pick $\nu'\in\rer_{[n]}$ such that $\nu'\neq \nu$ and $\Phi_p(\nu)=\Phi_p(\nu')$ in 
which case ${\bf 0}\neq v:=\nu'-\nu\in {\rm Ker}(\Phi_p^*)$. Moreover, since $\nu'=v+\nu$ 
it follows that $v_i\ge 0$ for all $i\in ({\rm supp}\,\nu)^c$ since otherwise $\nu'$ 
would have a negative entry.
 
(iii). Suppose that ${\rm Dim}({\rm Ker}(\Phi_p^*))=1$. 
Pick $w\in {\rm Ker}(\Phi_p^*)\setminus \{{\bf 0}\}$. Write $w=w_{+}-w_{-}$ where $(w_{+})_i=w_i$ 
if $w_i\ge 0$ and $(w_{+})_i=0$ if $w_i<0$. Then, letting $\nu_1:=2 w_{+}/|w|_1$ and 
$\nu_2:=2 w_{-}/|w|_1$, we have $\nu_1,\nu_2\in \rer_{[n]}$, $\nu_1\neq\nu_2$ and since $w\in {\rm Ker}(\Phi_p^*)$ 
we have $\Phi_p(\nu_1)=\Phi_p(\nu_2)$. It is also clear that $\nu_1$ and $\nu_2$ are singular 
with respect to each other. It remains to prove uniqueness. For this, assume that 
$\nu_1',\nu_2'\in \rer_{[n]}$ satisfy $\Phi_p(\nu_1')=\Phi_p(\nu_2')$ and that $\nu_1'$ and 
$\nu_2'$ are singular with respect to each other. Since $\Phi_p$ is affine, 
$\nu_1'-\nu_2'\in {\rm Ker}(\Phi_p^*)$, and since ${\rm Dim}({\rm Ker}(\Phi_p^*))=1$ it follows 
that $\nu_1'-\nu_2'= c (\nu_1-\nu_2)$ for some $c\neq 0$. If $c>0$, then by singularity, 
$\nu_1'=c\nu_1$ and $c=1$. Hence, $\nu_1=\nu_1'$ and $\nu_2=\nu_2'$. Similarly, $c<0$ implies 
$\nu_1=\nu_2'$ and $\nu_2=\nu_1'$. Hence, the uniqueness is established.

(iv). Now instead assume that ${\rm Dim}({\rm Ker}(\Phi_p^*))\ge 2$. Let $v$ and $w$ be two 
linearly independent elements in ${\rm Ker}(\Phi_p^*)$. It follows that either $2 v_+/|v|_1$ differs
from $2 w_+/|w|_1$ or $2 v_-/|v|_1$ differs from $2 w_-/|w|_1$ (or both). Without loss of generality,
we assume the first.
For $a\ge 0$, let $u(a):=2 (a v +w)/|av+w|_1$ and let $\nu_1(a):=u(a)_{+}$ and let 
$\nu_2(a):=u(a)_{-}$, defined as in part (iii). Then for every $a$, $\nu_1(a),\nu_2(a)\in \rer_{[n]}$, 
$\Phi_p(\nu_1(a))=\Phi_p(\nu_2(a))$ and $\nu_1(a)$ and $\nu_2(a)$ are singular with respect to 
each other. Observe that $\nu_1(a)$ is continuous in $a$, $\nu_1(0)= 2 w_+/|w|_1$
and $\nu_1(a)\to 2 v_+/|v|_1$ as $a\to \infty$. The latter are distinct and hence $(\nu_1(a))_{a\ge 0}$ 
contains an uncountable collection of distinct elements from $\rer_{[n]}$. 

Finally we observe that the extensions mentioned to certain $\Rc\subseteq \rer_{[n]}$ require 
easy modifications of the given proofs.\qed

\begin{remark}
(i). Taking $\Rc\subset \rer_{[3]}$ to be 
$$
\Rc=\{\nu_1,\nu_2,\nu_3\}:=\{(1,0,0,0,0),(0,0,0,0,1),(0,1/3,1/3,1/3,0)\},
$$ 
we have that  ${\rm Ker}(\Phi_{1/2,\langle \Rc\rangle}^*)$ is nonempty (indeed, by Example~\ref{ex.n3} below 
we have that $2\nu_1+\nu_2-3\nu_3\in {\rm Ker}(\Phi_{1/2,\langle \Rc \rangle }^*))$ but
$\Phi_p$ is injective  on $\Rc$; hence we need some convexity assumption on $\Rc$.\\
(ii). If $\Rc$ is either the set of probability measures supported on some
fixed subset of ${\rm Part}_{[n]}$ or $\Rc$ is the set of probability measures invariant under
some group action (such as $\rer_{[n]}^{\exch}$), then all of the last 
conditions in Proposition~\ref{p.kerprop} hold and hence so do (i)-(iv).\\
(iii). An example of a closed and convex set $\Rc\subset \rer_{[3]}$ where (iii) fails when $p=1/2$ is 
$$
\{(q_1,\ldots,q_5)\in \rer_{[3]}\,:\,q_5\le \min(q_1,q_2,q_3,q_4)\}.
$$
($q_1,\ldots,q_5$ are defined as they were in the proof of Theorem~\ref{t.bigfinitetheorem}(A).)
To see this, first observe that $\nu_1:=(\frac{3}{7},\frac{1}{7},\frac{1}{7},\frac{1}{7},\frac{1}{7})$ 
and $\nu_2:=(\frac{1}{7},\frac{2}{7},\frac{2}{7},\frac{2}{7},0)$ are in $\Rc$ and
$\Phi_{1/2}(\nu_1)=\Phi_{1/2}(\nu_2)$. Hence ${\rm Ker}(\Phi_{1/2}^*|_{\Rc})$ has dimension at least 1
while this dimension is at most 1 since Example~\ref{ex.n3} (given below) shows that ${\rm Ker}(\Phi_{1/2}^*)$ 
has dimension 1. Now part (iii) of Proposition~\ref{p.kerprop} applied to $\rer_{[3]}$ gives that there is 
only one pair of singular measures in $\rer_{[3]}$ with the same $\Phi_{1/2}$ value, namely
$(\frac{2}{3},0,0,0,\frac{1}{3})$ and $(0,\frac{1}{3},\frac{1}{3},\frac{1}{3},0)$. 
Since the first is not in $\Rc$, we do not have such a singular pair there, showing (iii) fails.
As must be the case, (\ref{e.condition}) fails and one can immediately check that 
it fails for $\nu_1=(\frac{3}{7},\frac{1}{7},\frac{1}{7},\frac{1}{7},\frac{1}{7})$ 
and $\nu_2=(\frac{1}{7},\frac{2}{7},\frac{2}{7},\frac{2}{7},0)$, whose difference 
is in ${\rm Ker}(\Phi_{1/2}^*)$. However, it is easy to see that (iii) can never fail the ``other
way'', namely that if the dimension of the relevant kernel is 1, then there are at most one desired pair
of singular measures; to see this, one notes that the proof given goes through verbatim for any $\Rc\subset \rer_{[3]}$.
\end{remark}

\begin{example}\label{ex.n3}
As we saw in Theorem~\ref{t.bigfinitetheorem}, $\Phi_{1/2}\,:\,\rer_{[3]}\to \cp_{[3],1/2}$ 
is not injective. Using Proposition~\ref{p.kerprop}(ii),
we can determine exactly which $\nu\in \rer_{[3]}$ are $(\rer_{[3]},1/2)$-unique. 
Recall that we write $\Phi_{1/2}(\nu)=L_{1/2}\nu$. The first four rows of $L_{1/2}$
will be the same as the last four (unlike in the $p\neq 1/2$ case). 
The first four rows of $L_{1/2}$ are given by

\begin{equation*}
(L_{1/2})_{1\le i \le 4, 1\le j\le 5}=\left( \begin{array}{ccccc} 1/8 & 1/4 & 1/4 & 1/4 & 1/2 \\ 1/8 & 1/4 & 0& 0& 0 \\ 1/8 & 0 & 0 & 1/4 & 0 \\ 1/8 & 0 & 1/4 & 0 & 0  \end{array} \right).
\end{equation*}

Elementary algebraic calculations show that the kernel of $L_{1/2}$ is spanned by

\begin{equation}\label{e.kernel3}
\left( \begin{array}{c} 2 \\ -1 \\-1 \\ -1 \\ 1  \end{array} \right).
\end{equation}

Using Proposition~\ref{p.kerprop}(ii)
and~\eqref{e.kernel3} we can conclude that for $\nu\in \rer_{[3]}$:

\begin{enumerate}
\item If $|{\rm supp}\, \nu| =1$ then $\nu$ is $(\rer_{[3]},1/2)$-unique.
\item If $|{\rm supp}\, \nu| =2$ then  $\nu$  is not $(\rer_{[3]},1/2)$-unique if and only if ${\rm supp}\,\nu=\{1,5\}.$
\item If $|{\rm supp}\, \nu| =3$ then  $\nu$  is not $(\rer_{[3]},1/2)$-unique if and only if \\
${\rm supp}\,\nu=\{2,3,4\}, \{1,2,5\}, \{1,3,5\}$ or $\{1,4,5\}.$
\item If $|{\rm supp}\, \nu| =4$ then  $\nu$  is not  $(\rer_{[3]},1/2)$-unique.
\item If $|{\rm supp}\, \nu| =5$ then  $\nu$  is not $(\rer_{[3]},1/2)$-unique.
\end{enumerate}

\end{example}

Using $(iii)-(iv)$ of Proposition~\ref{p.kerprop} applied to $\rer_{[n]}$ and $\rer_{[n]}^{\exch}$, 
we can obtain the following corollary. 
This corollary only deals with cases where we already have established non-injectivity.

\begin{cor}

(i). If $p=1/2$ then there is a unique singular pair $\nu_1,\nu_2\in \rer_{[n]}$ such that 
$\Phi_p(\nu_1)=\Phi_p(\nu_2)$ if $n=3$ and infinitely many such pairs if $n\ge 4$.\\
(ii). If $p=1/2$ then there is a unique singular pair $\nu_1,\nu_2\in \rer_{[n]}^{\exch}$ such that 
$\Phi_p(\nu_1)=\Phi_p(\nu_2)$ if $n=3$ and infinitely many such pairs if $n\ge 4$.\\
(iii). If $p\neq 1/2$ then there are infinitely many distinct singular pairs 
$\nu_1,\nu_2\in \rer_{[n]}$ such that $\Phi_p(\nu_1)=\Phi_p(\nu_2)$ if $n\ge 4$.\\
(iv).  If $p\neq 1/2$ then there is a unique singular pair $\nu_1,\nu_2\in \rer_{[n]}^{\exch}$ such 
that $\Phi_p(\nu_1)=\Phi_p(\nu_2)$ if $n=4$ and infinitely many such pairs if $n\ge 5$.
\end{cor}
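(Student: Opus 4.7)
The plan is to derive all four parts from Proposition~\ref{p.kerprop}(iii)--(iv), applied with $\Rc=\rer_{[n]}$ for (i), (iii) and with $\Rc=\rer_{[n]}^{\exch}$ for (ii), (iv); condition~\eqref{e.condition} holds in both cases by remark~(ii) following that proposition. For each subcase, the task reduces to showing either $\dim\ker\Phi_{p,\langle\Rc\rangle}^{*}=1$ (yielding uniqueness via (iii) of the proposition) or $\dim\ker\Phi_{p,\langle\Rc\rangle}^{*}\ge 2$ (yielding infinitely many singular pairs via (iv)). The cases $n=3$ of (i) and (ii) follow directly from Example~\ref{ex.n3}, which exhibits a one-dimensional kernel with an exchangeable generator.

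For the remaining ``infinitely many'' assertions my strategy is a uniform dimension count. The image of $\Phi_{p,\langle\Rc\rangle}^{*}$ is contained in a subspace of signed measures on $\{0,1\}^n$ cut out by the structural constraints of color processes: (a) all marginals equal $p\sum v$ and hence agree with one another, which is $n-1$ independent linear equations in the non-exchangeable setting and automatic in the exchangeable setting; (b) when $p=1/2$, the $0\!\leftrightarrow\! 1$ symmetry $P(\eta)=P(1-\eta)$ halves the ambient target (and makes (a) automatic). Comparing source dimensions -- the Bell number $B_n$ for $\Rc=\rer_{[n]}$ and the integer-partition count $p(n)$ for $\Rc=\rer_{[n]}^{\exch}$ -- to the constrained target dimensions gives the desired $\dim\ker\ge 2$: for (i) at $n\ge 4$ one has $B_n$ against $2^{n-1}$; for (iii) at $n\ge 4$ one has $B_n$ against $2^n-(n-1)$; for (ii) at $n\ge 4$ one has $p(n)$ against $\lfloor n/2\rfloor+1$; and for (iv) at $n\ge 5$ one has $p(n)$ against $n$ (the remaining target constraint here being $\sum_k(k-np)\alpha_k=0$). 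In each case one verifies the inequality source $\ge$ target $+2$ at the base case and notes that the source grows super-linearly while the target grows at most linearly.

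The only subcase where this count fails is (iv) at $n=4$ with $p\neq 1/2$, where source $p(4)=5$ and target $n=4$ give only $\dim\ker\ge 1$; this is the main obstacle, requiring the matching upper bound $\dim\ker\le 1$. My plan is to compute a specific $4\times 4$ minor of the $5\times 5$ transition matrix from the exchangeable classes $\{[4],[3,1],[2,2],[2,1,1],[1,1,1,1]\}$ to $(\alpha_0,\dots,\alpha_4)$. Deleting the $\alpha_4$ row and the $[1,1,1,1]$ column and then expanding iteratively along the resulting sparse columns collapses the minor to a nonzero scalar multiple of $(1-p)^4p^3(2p-1)$, which is nonzero precisely for $p\in(0,1)\setminus\{1/2\}$. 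This pins the rank at $4$ and the kernel dimension at $1$, after which Proposition~\ref{p.kerprop}(iii) produces the unique singular pair.
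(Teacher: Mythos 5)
Your overall strategy is sound and, at the base cases, closely parallels the paper's own proof, which likewise bounds $\dim{\rm Ker}(\Phi^*_{p,\langle\Rc\rangle})$ by comparing source and target dimensions and then invokes Proposition~\ref{p.kerprop}(iii)--(iv). Two of your counts are in fact cleaner than the paper's: for (iii) at $n=4$ you get $\dim{\rm Ker}\ge B_4-(2^4-3)=2$ purely from the equal-marginals constraint, and for (iv) at $n=5$ you get $\dim{\rm Ker}\ge p(5)-5=2$ from the mean constraint $\sum_k(k-np)\alpha_k=0$, whereas the paper resorts to Mathematica-assisted kernel bases in both places. Your determinant for (iv) at $n=4$ also checks out: the $4\times 4$ minor obtained by deleting the $\alpha_4$ row and the $[1\ks 1\ks 1\ks 1]$ column evaluates to $4p^3(1-p)^4(2p-1)$, which pins the rank at $4$ and the kernel at dimension $1$, exactly as the paper asserts (there without an explicit computation). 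The $n=3$ cases and the appeal to the remark guaranteeing~\eqref{e.condition} for $\rer_{[n]}$ and $\rer_{[n]}^{\exch}$ are fine.

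The one genuine flaw is in how you propagate the inequality ``source $\ge$ target $+2$'' from the base case to all larger $n$. You justify this by saying the source grows super-linearly while the target grows at most linearly. That is correct for (ii) and (iv), where the targets are $\lfloor n/2\rfloor+1$ and $n$, but it is false for (i) and (iii), where your own targets are $2^{n-1}$ and $2^n-(n-1)$ and hence grow exponentially. The needed inequalities $B_n\ge 2^{n-1}+2$ and $B_n\ge 2^n-(n-1)+2$ do hold for all $n\ge 4$ (the second with equality at $n=4$), but they require a separate verification comparing Bell numbers with powers of $2$, which your stated reason does not supply. The cheapest repair is the paper's monotonicity device: the injections $T$ and $S$ from the proof of Theorem~\ref{t.bigfinitetheorem} carry singular pairs with equal color processes on $[n]$ to singular pairs with equal color processes on $[n+1]$, so it suffices to exhibit infinitely many pairs at the base case $n=4$ (resp.\ $n=5$) only. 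As written, your induction step would fail for (i) and (iii); with either repair the proof goes through.
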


\pf First we show the following monotonicity property: If  $n$ is such that $\rer_{[n]}$ contains 
infinitely many pairs of singular measures $\nu_1,\nu_2\in \rer_{[n]}$ with 
$\Phi_p(\nu_1)=\Phi_p(\nu_2)$, then the same holds for $n+1$. To see this, assume that 
$\nu_1,\nu_2\in \rer_{[n]}$ are singular with $\Phi_p(\nu_1)=\Phi_p(\nu_2)$. Let 
$T\,:\,\rer_{[n]}\to \rer_{[n+1]}$ be the injection from the proof of 
Theorem~\ref{t.bigfinitetheorem}. Then it is straightforward to verify that $T(\nu_1)$ and 
$T(\nu_2)$ are singular and give the same color process. The same proof using the injection $S$ 
(instead of $T$) from the proof of Theorem~\ref{t.bigfinitetheorem} shows that the same 
monotonicity property holds for $\rer_{[n]}^{\exch}$.

In the general case, ($\rer_{[n]}$), the dimension of the domain of our
operator will be the number of partitions of the set $[n]$ and 
the dimension of the image space will be $2^n$.
In the exchangeable case, ($\rer_{[n]}^{\exch}$),
the dimension of the domain of our operator will be the number of partitions of the integer $n$ and 
the dimension of the image space will be $n+1$.

(i). By Example~\ref{ex.n3}, we have that ${\rm Dim}({\rm Ker}(\Phi_{1/2}^*))=1$ if $n=3$.
For $n=4$, we have a mapping from a 15-dimensional space to a 16-dimensional space.
However, since
$p=1/2$, the probability on the latter has a $0\ks 1$-symmetry and so the range is at most
8-dimensional. From this, we conclude that ${\rm Dim}({\rm Ker}(\Phi_{1/2}^*))\ge 7$ 
and hence $(i)$ follows from Proposition~\ref{p.kerprop}(iii,iv) and the above monotonicity.
We mention that Mathematica shows that indeed ${\rm Dim}({\rm Ker}(\Phi_{1/2}^*))=7$.

(ii). One also can check directly that
${\rm Dim}({\rm Ker}(\Phi_{1/2,\langle \rer_{[3]}^\exch \rangle}^*))=1$
(which essentially follows from (i) also). For $n=4$, 
$\Phi_{1/2,\langle \rer_{[4]}^\exch \rangle}^*$ maps from a
5-dimensional space to a 5-dimensional space and one easily checks that the 
range is 3-dimensional and therefore 
${\rm Dim}({\rm Ker}(\Phi_{1/2,\langle \rer_{[4]}^\exch \rangle}^*))=2$.
Hence $(ii)$ follows from Proposition~\ref{p.kerprop}(iii,iv) and the above monotonicity.

(iii). For $n=4$, $\Phi_{p}^*$ maps from a 15-dimensional space to a 16-dimensional space. 
Mathematica claims to give a basis (depending on $p$) for the kernel which is 3-dimensional.
One can then check by hand that this proposed basis is linearly independent and
belongs to the kernel.
Hence, $(iii)$ follows from Proposition~\ref{p.kerprop}(iii,iv) and the above monotonicity.
(Note that Mathematica is not needed for the formal proof.)

(iv). Finally, with $p\neq 1/2$, if $n=4$,
one can check by hand that $\Phi_{p,\langle \rer_{[4]}^\exch \rangle}^*$,
which maps from a 5-dimensional space to a 5-dimensional space, has a range which is 
4-dimensional and hence
$$
{\rm Dim}({\rm Ker}(\Phi_{p,\langle \rer_{[4]}^\exch \rangle}^*))=1.
$$
If $n=5$, $\Phi_{p,\langle \rer_{[5]}^\exch \rangle}^*$ 
maps a 7-dimensional space into a 6-dimensional space.
Mathematica claims to give a basis (depending on $p$) for the kernel which is 2-dimensional.
One can then check by hand that this proposed basis is linearly independent and
belongs to the kernel. Hence
$(iv)$ follows from Proposition~\ref{p.kerprop}(iii,iv) and the above monotonicity.
(Note that Mathematica is not needed for the formal proof.) \qed

\subsection{Other geneneral results in the finite case}

\begin{prop}\label{t.mainfinitetheorem2}
If $\mu\in {\mathcal P}(\{0,1\}^{[2]})$, then $\mu\in \cp_{[2]}$ if and only if $\mu$ satisfies 
non-negative pairwise correlations and $\mu((1,0))=\mu((0,1))$.
\end{prop}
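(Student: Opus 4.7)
\pf (Proof plan.) The plan is to parametrize everything explicitly on $[2]$ and show that the two listed conditions are exactly what is needed to solve for $\nu$ and $p$.

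For the forward direction, suppose $\mu = \Phi_p(\nu)$ for some $p \in [0,1]$ and $\nu \in \rer_{[2]}$. Non-negative pairwise correlations is immediate from inequality (\ref{e.nonneg.cor}). For the symmetry $\mu((1,0)) = \mu((0,1))$, I would use the description from part (\textbf{A}) of the proof of Theorem~\ref{t.bigfinitetheorem}: writing $q_1 = \nu(\{\{1\},\{2\}\})$ and $q_2 = 1 - q_1 = \nu(\{\{1,2\}\})$, a direct computation gives $\mu((1,0)) = \mu((0,1)) = q_1 p(1-p)$.

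For the converse, given $\mu$ with the stated properties, set $a := \mu((1,1))$, $b := \mu((1,0)) = \mu((0,1))$, $c := \mu((0,0))$, so $a + 2b + c = 1$. The formulas
\begin{equation*}
a = q_1 p^2 + q_2 p, \qquad b = q_1 p(1-p), \qquad c = q_1(1-p)^2 + q_2(1-p)
\end{equation*}
suggest the choice $p := a + b$ (the common marginal) and $q_1 := b/[p(1-p)]$, with $q_2 := 1 - q_1$. The main step is to check that this $q_1$ lies in $[0,1]$. Non-negativity is immediate since $b \ge 0$. For $q_1 \le 1$, the inequality $b \le p(1-p) = (a+b)(b+c)$, after expansion and using $a + c = 1 - 2b$, simplifies to
\begin{equation*}
ac - b^2 \ge 0.
\end{equation*}
But this is exactly the non-negative pairwise correlation condition, since $\text{Cov}(X_1,X_2) = a - (a+b)^2 = ac - b^2$ after substituting $c = 1 - a - 2b$. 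So the non-negative correlation hypothesis is precisely what is needed; once $q_1 \in [0,1]$ is established, one verifies by substituting back that $\Phi_p(\nu) = \mu$.

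The edge cases $p \in \{0,1\}$ (i.e.\ $a + b = 0$ or $a + b = 1$) correspond to $\mu$ being a point mass at $(0,0)$ or $(1,1)$, which are trivially color processes (take $\nu$ concentrated on $\{\{1,2\}\}$). There is no serious obstacle in this proof — the content is the clean observation that the non-negative correlation inequality on $[2]$ is literally the same inequality as $q_1 \le 1$ for the unique candidate $\nu$. \qed
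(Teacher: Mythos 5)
Your proof is correct and follows essentially the same route as the paper: the paper also sets $p=\mu((1,1))+\mu((0,1))$ and $\nu(\{\{1\},\{2\}\})=\mu((0,1))/\bigl[(\mu((1,1))+\mu((0,1)))(\mu((0,1))+\mu((0,0)))\bigr]$, which is exactly your $q_1=b/[p(1-p)]$, and invokes non-negative correlations to get $q_1\le 1$. You have merely written out the algebra (the identity $\Cov(X_1,X_2)=ac-b^2$) and the trivial edge cases $p\in\{0,1\}$ that the paper leaves implicit.
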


\begin{proof} 
The "only if" direction is immediate. For the other direction, let 
$$
\nu(\{\{1\},\{2\}\})=\frac{\mu((0,1))}{(\mu((1,1))+\mu((0,1)))(\mu((0,1))+\mu((0,0)))}
$$
and $p=\mu((1,1))+\mu((0,1))$. Then the assumption of non-negative pairwise correlations implies that 
$\nu(\{\{1\},\{2\}\}) \le 1$ and a straightforward calculation shows that $\Phi_p(\nu)=\mu$, as desired.
\end{proof}

\begin{df}
A measure $\mu$ on $\{0,1\}^{n}$ is said to be 
exchangeable if it is invariant under all permutations of $[n]$.
\end{df}

If we move to $n=3$, then it turns out that non-negative pairwise correlations 
and exchangeability (the latter no longer being necessary for being a 
color process with $n=3$) do not suffice for being a color process as is 
shown by the following example.
We consider the distribution $\frac{1}{9}m_1+\frac{8}{9}m_2$ where $m_1,m_2$
are product measures with respective densities $.9$ and $.45$. 
This is exchangeable and has non-negative pairwise correlations. 
Since the marginals
are $1/2$ but the process does not exhibit $0\ks 1$-symmetry (see next
definition), it cannot be a color process. 

\begin{df}\label{df.01symm}
A measure $\mu$ on $\{0,1\}^n$ is said to be $0\ks 1$-symmetric if for any $\xi\in \{0,1\}^n$,
we have $\mu(\xi)=\mu(\hat{\xi})$ where we define $\hat{\xi}$ by letting 
$\hat{\xi}(i)=1-\xi(i)$ for all $i\in [n]$.
\end{df}

The following result characterizes color processes for $n=3$ in the special case $p=1/2$.

\begin{prop}\label{l.symm}
Let $\mu$ be a probability measure on $\{0,1\}^3$. Then
$\mu\in\cp_{[3],1/2}$ if and only if $\mu$ has non-negative pairwise correlations
and is $0\ks 1$-symmetric.
\end{prop}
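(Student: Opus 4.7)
For the ``only if'' direction, non-negative pairwise correlations are immediate from~\eqref{e.nonneg.cor}. For $0\ks 1$-symmetry, I would observe that at $p=1/2$ the coloring step is invariant under the involution that sends the colour of every cluster to its complement: this yields a measure-preserving bijection on colour configurations that swaps $\xi$ and $\hat\xi$, so $\Phi_{1/2}(\nu)(\xi)=\Phi_{1/2}(\nu)(\hat\xi)$ for every $\xi\in\{0,1\}^3$ and every $\nu\in\rer_{[3]}$.

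For the ``if'' direction, I would parametrize $\nu\in\rer_{[3]}$ by $(q_1,\ldots,q_5)$ exactly as in the proof of Theorem~\ref{t.bigfinitetheorem}(A) and use the matrix $L_{1/2}$ displayed in Example~\ref{ex.n3}. The $0\ks 1$-symmetry of $\mu$ means that the last four rows of the system $L_{1/2}\nu=\mu$ are redundant, so the problem reduces to finding $q_i\ge 0$ with $\sum_i q_i=1$ solving the four equations coming from the rows for $\mu(111), \mu(110), \mu(101), \mu(011)$. A direct row reduction gives a one-parameter family of solutions indexed by $q_1$:
\begin{equation*}
q_2=4\mu(110)-q_1/2,\quad q_3=4\mu(011)-q_1/2,\quad q_4=4\mu(101)-q_1/2,\quad q_5=4\mu(111)-1+q_1/2,
\end{equation*}
where in deriving the formula for $q_5$ one uses that $0\ks 1$-symmetry forces $\mu(111)+\mu(110)+\mu(101)+\mu(011)=1/2$.

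All that remains is to show that one can pick $q_1\in[0,1]$ making all four of $q_2,q_3,q_4,q_5$ non-negative. The constraints read $q_1\ge\max(0,\,2-8\mu(111))$ and $q_1\le 8\min(\mu(110),\mu(101),\mu(011))$. This interval is non-empty iff each of $\mu(111)+\mu(110)$, $\mu(111)+\mu(101)$, $\mu(111)+\mu(011)$ is at least $1/4$. But since $0\ks 1$-symmetry forces every one-point marginal to equal $1/2$, these three inequalities say exactly that $\mathbf{P}(X_i=X_j=1)\ge 1/4=\mathbf{P}(X_i=1)\mathbf{P}(X_j=1)$ for the three pairs $\{i,j\}\subset[3]$, i.e.\ precisely the non-negative pairwise correlations hypothesis. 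Any $q_1$ in this interval produces the desired $\nu\in\rer_{[3]}$. There is no real obstacle; the only care point is handling the redundancy created by $0\ks 1$-symmetry and tracking the feasibility inequalities to see that they match the assumed pairwise correlation bounds on the nose.
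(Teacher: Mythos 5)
Your proof is correct and takes essentially the same route as the paper: both solve the linear system $L_{1/2}\nu=\mu$ directly and reduce the non-negativity of the $q_i$ to the three inequalities $\mu(111)+\mu(1\,1\,0$-type$)\ge 1/4$, which are exactly the pairwise-correlation hypotheses once $0\ks 1$-symmetry forces the marginals to be $1/2$. The only difference is cosmetic: you exhibit the full one-parameter solution family in $q_1$, while the paper makes a WLOG reduction and picks the specific solution corresponding to the right endpoint $q_1=8\min(\mu(110),\mu(101),\mu(011))$ of your feasible interval (so that one of the two-point-cluster weights vanishes).
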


\pf The "only if" direction is immediate. For the other direction,
let $p_1=\mu(1,1,1)=\mu(0,0,0), p_2=\mu(1,1,0)=\mu(0,0,1), p_3=\mu(1,0,1)=\mu(0,1,0)$ and
$p_4=\mu(0,1,1)=\mu(1,0,0)$ where clearly $\sum_i p_i =1/2$. 
Let $q_1=\nu(\{1,2,3\})$, $q_2=\nu(\{\{1,2\},\{3\}\})$, 
$q_3=\nu\{\{\{1,3\},\{2\}\}\}$, $q_4=\nu(\{\{1\},\{2,3\}\})$ and 
$q_5=\nu(\{\{1\},\{2\},\{3\}\})$. Without loss of generality, we may assume 
that $p_2\le \min\{p_3,p_4\}$. We then take
$q_1:= 2(p_1+p_2-p_3-p_4), q_2:=0, q_3:= 4p_3-4p_2, q_4:= 4p_4-4p_2$ and $q_5:= 8p_2$.
One can immediately check that $\sum_i q_i =1$ with no assumptions. The key point is to
show that $q_i\in [0,1]$ for each $i$. After this, it is easy to check that this $\nu$ works
and this is left to the reader.

To establish $q_i\in [0,1]$ for each $i$, we will of course use the 
non-negative pairwise correlations assumption. The latter assumption easily yields
$p_1+p_2\ge 1/4$, $p_1+p_3\ge 1/4$ and $p_1+p_4\ge 1/4$. Recall also 
$\sum_i p_i =1/2$ and $p_2\le \min\{p_3,p_4\}$. These are all that will be used. 

If $p_2=1/8 + \epsilon$ for some $\epsilon>0$, then
$\sum_i p_i =1/2$ and $p_2\le \min\{p_3,p_4\}$ imply that 
$p_1\le 1/8 -3 \epsilon$, contradicting $p_1+p_2\ge 1/4$. Hence
$p_2\le 1/8$ and so $q_5\in [0,1]$. Next, $q_1\ge 0$ since
$p_1+p_2\ge 1/4$ and $\sum_i p_i =1/2$. The latter also gives that $q_1\le 1$.
Next $p_2\le \min\{p_3,p_4\}$ yields $q_3\ge 0$.
If $p_3=1/4 + \epsilon$ for some $\epsilon>0$, then 
$\sum_i p_i =1/2$ yields that $p_1+p_2< 1/4$, contradicting one of our inequalities.
Therefore $p_3\le 1/4$ implying $q_3\le 1$. Lastly, $q_4$ is handled exactly as $q_3$.\qed

Unfortunately, we don't have any nice characterization of
$\cp_{[3],p}$ for $p\neq 1/2$ since we don't have a good replacement for the $0\ks 1$-symmetry
in this case. The next result shows that Proposition \ref{l.symm} has no extension to larger $n$,
even if exchangeability is assumed.

\begin{prop}\label{l.nonnegcorr}
For each $n\ge 4$, there is a measure $\mu$ on $\{0,1\}^{[n]}$ which is 
exchangeable, $0\ks 1$-symmetric and has non-negative pairwise correlations 
but for which $\mu\notin\cp_{[n],1/2}$.
\end{prop}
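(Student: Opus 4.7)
My plan is to exhibit, for each $n \geq 4$, an explicit exchangeable, $0\ks 1$-symmetric probability measure $\mu_n$ on $\{0,1\}^n$ with non-negative pairwise correlations that fails to be a color process at $p = 1/2$. The strategy is to concentrate the distribution of $\sum_i X_i$ on a very small set of values, so that only a handful of integer partitions of $n$ could conceivably appear in the type decomposition of a representing RER; then a direct comparison of one simple event's probability rules these out.

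Let $k = \lfloor n/2 \rfloor$ and $k' = \lceil n/2 \rceil$. I would define $\mu_n$, depending on a parameter $\alpha \in (0, 1/2)$ to be fixed later, as the unique exchangeable probability measure on $\{0,1\}^n$ whose distribution of $\sum_i X_i$ puts mass $\alpha/2$ on each of $0$ and $n$ and spreads the remaining $1 - \alpha$ equally on $\{k, k'\}$ (collapsing to a single value when $n$ is even). Exchangeability and $0\ks 1$-symmetry hold by construction. Using the standard exchangeable formula $\Pbf(X_1 = X_2 = 1) = \sum_j \frac{j(j-1)}{n(n-1)} \Pbf(\sum_i X_i = j)$, a direct computation shows that the non-negative pairwise correlation condition becomes $\alpha \geq 1/n$ when $n$ is even and $\alpha \geq 1/(n+1)$ when $n$ is odd; in either case there is ample room to choose $\alpha < 1/2$ for $n \geq 4$.

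To show $\mu_n \notin \cp_{[n], 1/2}$, I would assume instead that $\mu_n = \Phi_{1/2}(\nu)$ for some $\nu \in \rer_{[n]}^{\exch}$, viewed as a probability measure on integer partitions of $n$. The distribution of $\sum_i X_i$ under $\Phi_{1/2}(\nu)$ is the mixture $\sum_\pi \nu(\pi) D_\pi$, where for $\pi = (k_1, \ldots, k_s)$, $D_\pi$ is the law of $\sum_{i=1}^s k_i B_i$ with iid Bernoulli$(1/2)$ variables $B_i$. Since $\mu_n$ is supported on configurations with number of ones in $\{0, k, k', n\}$, every $\pi$ charged by $\nu$ must have $\mathrm{supp}(D_\pi) \subseteq \{0, k, k', n\}$. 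The key combinatorial claim, which I expect to be the only delicate step, is that the only such partitions are $(n)$ and $(k', k)$: $s = 1$ is trivial; for $s = 2$ both cluster sizes lie in $\{k, k'\}$ (as they are positive and less than $n$) and sum to $n$, forcing $\{k_1, k_2\} = \{k, k'\}$; for $s \geq 3$ every cluster size is at least $k = \lfloor n/2 \rfloor$, so $\sum_i k_i \geq 3\lfloor n/2 \rfloor > n$ whenever $n \geq 4$, which is impossible.

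Once $\nu = \beta \nu_{(n)} + (1-\beta) \nu_{(k', k)}$ for some $\beta \in [0,1]$, I would close the argument by comparing the probabilities of the event $\{\mathbf{0}, \mathbf{1}\}$. Under $\Phi_{1/2}(\nu_{(n)})$ this event has probability $1$, and under $\Phi_{1/2}(\nu_{(k', k)})$ it has probability exactly $1/2$ (the two clusters must receive the same colour, which happens with probability $2 \cdot (1/2)^2$). Hence $\Phi_{1/2}(\nu)(\{\mathbf{0}, \mathbf{1}\}) = 1/2 + \beta/2 \geq 1/2$, which contradicts $\mu_n(\{\mathbf{0}, \mathbf{1}\}) = \alpha < 1/2$.
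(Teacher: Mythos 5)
Your construction is correct, but it takes a genuinely different (and considerably more elaborate) route than the paper. The paper's proof simply takes $\mu$ uniform on the levels $1$ and $n-1$, computes $\Cov_{\mu}(X(1),X(2))=\frac{1}{4}-\frac{1}{n}\ge 0$ for $n\ge 4$, and then observes that $\mu$ assigns probability $0$ to the all-ones configuration, whereas every color process with $p\in(0,1)$ assigns it probability $E[p^{N}]>0$ (with $N$ the number of clusters); no analysis of the representing RER is needed. Your measure, by contrast, charges both constant configurations, so that cheap obstruction is unavailable, and you are forced into the support analysis of the level distribution together with the quantitative bound $\Phi_{1/2}(\nu)(\{\mathbf{0},\mathbf{1}\})\ge 1/2$. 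That analysis is sound: the correlation threshold $\alpha\ge 1/n$ (resp.\ $1/(n+1)$) checks out, the combinatorial claim that only the integer partitions $(n)$ and $(\lceil n/2\rceil\ks\lfloor n/2\rfloor)$ have all subset sums in $\{0,\lfloor n/2\rfloor,\lceil n/2\rceil,n\}$ is exactly where $n\ge 4$ enters (via $3\lfloor n/2\rfloor>n$), and the final comparison on $\{\mathbf{0},\mathbf{1}\}$ closes the argument since $\alpha<1/2$. One small point: you assume without comment that the representing $\nu$ may be taken in $\rer_{[n]}^{\exch}$; this needs a word of justification (symmetrize $\nu$ over ${\rm Perm}_{[n]}$, using that $\Phi_{1/2}$ is affine and commutes with permutations), or better, note that your support argument applies verbatim to an arbitrary $\nu\in\rer_{[n]}$, since the law of $\sum_i X(i)$ under $\Phi_{1/2}(\delta_\pi)$ depends only on the multiset of block sizes of $\pi$. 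What your approach buys is a stronger kind of counterexample -- one in which every configuration that a color process is forced to charge is indeed charged, so the failure is not the trivial ``null configuration'' obstruction; what it costs is length.
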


\pf Consider the measure $\mu$ on $\{0,1\}^{[n]}$ which is uniform on all points belonging to
levels 1 or $n-1$ where level $i$ refers to those elements which have $i$ 1's. 
Exchangeability and $0\ks 1$-symmetry are obvious. Next, 
we have $$E_{\mu}[X(1)X(2)]=\frac{1}{2}\times \frac{(n-2)}{n} =\frac{1}{2}-\frac{1}{n}$$
so that 
$$\mbox{Cov}_{\mu}(X(1),X(2))=\frac{1}{2}-\frac{1}{n} -\frac{1}{4}=\frac{1}{4}-\frac{1}{n},$$
which is non-negative if and only if $n\ge 4$. Finally,
since $\mu$ assigns measure $0$ to the configuration $\{1,\ldots,1\}$, $\mu\notin \cp_{[n],1/2}$.\qed

We recall the following two definitions.
\begin{df}
A probability measure on $\{0,1\}^{[n]}$ is called {\it positively associated} 
if any two increasing events are positively correlated. 
\end{df}

\begin{df}\label{d.FKGL}
A probability measure on $\{0,1\}^{[n]}$ is said to satisfy the {\it FKG lattice condition}, if, whenever
all but two of the variables are conditioned on, then the remaining two variables are
(conditionally) positively correlated. 
\end{df}

The famous FKG Theorem (see \cite{FKG}) says that if a measure on $\{0,1\}^{[n]}$ has full support and 
satisfies the FKG lattice condition, then, whenever some of the variables are conditioned on, then the 
(conditional) distribution of the remaining variables is positively associated (and so, in 
particular, the measure itself is positively associated).

One can show that the example right before Definition~\ref{df.01symm} satisfies the FKG lattice condition.
This shows that exchangeability and the FKG lattice condition do not necessarily lead to being a 
color process. Interestingly, although color processes of course always have non-negative pairwise correlations, 
they are not necessarily positively associated as shown by the following simple example.

\begin{example}\label{e.posass4}
Define $\nu\in \rer_{[4]}$ to be $\{\{1,2\},\{3\},\{4\}\}$ with probability $1/2$ and 
$\{\{1\},\{2\},\{3,4\}\}$ with probability $1/2$. 
Let $A$ be the event that $X^{\nu,1/2}(1)=X^{\nu,1/2}(2)=1$ and $B$ the event that $X^{\nu,1/2}(3)=X^{\nu,1/2}(4)=1$. 
Then ${\bf P}(A)={\bf P}(B)=3/8$ but ${\bf P}(A\cap B)=1/8<9/64={\bf P}(A){\bf P}(B)$.
\end{example}

While we have not bothered to check, we suspect that all color processes for $n=3$ are in
fact positively associated; this is certainly true for $n=2$. There are results concerning
positive association for color processes associated to the RER corresponding (using the percolation clusters)
to the FK model given in Definition~\ref{df.RC}. Positive association was proved, in chronological order,
(1) for $q\ge 1$ and $p\in [1/q,1-1/q]$ in \cite{OHfuzzy},  (2) for $q=1$ and $p\in [0,1]$ in \cite{OH01} and
(3) for $q\ge 1$ and $p\in [0,1]$ in \cite{KW}. Interestingly, in this last mentioned paper, the authors 
conjecture that this is true for all $q>0$ and bring up the question of positively association in the 
general setup of divide and color models that we study in this paper.

\section{Color processes associated to infinite exchangeable random partitions}\label{s.exchangeable}

In this section, we restrict ourselves to color processes arising from so-called infinite
exchangeable random partitions. In Subsection~\ref{s.definetti}, we recall the notions of simplices,
infinite exchangeable processes and infinite exchangeable random partitions as well as the 
central de Finetti's and Kingman's Theorems concerning such objects. In Subsection~\ref{s.ecp}, we 
develop some general results which apply for all values of $p$. It turns out that the map 
$\Phi_p$ seems to have very different properties depending on whether $p=1/2$ or $p\neq 1/2$, being 
``much more injective'' in the latter case. (Recall, analogously, that Theorem~\ref{t.bigfinitetheorem}(A) 
and (C) (or (B) and (D)) in Section~\ref{s.finitecase} tells us that for $n=3$, we have 
injectivity in the $p\neq 1/2$ case and non-injectivity in the $p= 1/2$ case.)
In Subsection~\ref{s.symmcase}, we restrict to the $p= 1/2$ case, 
characterizing the set of color processes as those which exhibit $0\ks 1$-symmetry (Theorem~\ref{t.mainp12})
and characterizing ``where $\Phi_{1/2}$ is injective'', i.e., which $\nu\in \rer_{\N}^{\exch}$ are
$\rer_{\N}^{\exch}$-unique (Theorem~\ref{p.unprop}).
In Subsection~\ref{s.NONsymmcase}, we restrict to the $p\neq 1/2$ case, obtaining some results
which might suggest that $\Phi_{p}$ is injective in this case. In Subsection~\ref{s.gaussian},
we look at threshold Gaussian and stable processes.

\subsection{Background: Simplices and de Finetti's and Kingman's Theorems}\label{s.definetti}

We first recall Choquet's Theorem (see \cite{glasner}, p. 367).

\begin{thm}\label{t.choquet} 
If $Q$ is a metrizable compact convex subset of a locally convex topological vector space, then
for each $x\in Q$, there is a probability measure $\mu$ on the extremal elements $\emph{ext}(Q)$ of $Q$
for which $x$ is the barycenter (average) of $\mu$ in the sense that for all continuous affine functions
$f$ on $Q$,
$$
f(x)=\int_{\emph{ext}(Q)} f d\mu.
$$
\end{thm}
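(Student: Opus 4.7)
The plan is to follow the classical Choquet ordering argument. First, let $\mathcal{P}(Q)$ denote the space of regular Borel probability measures on $Q$ equipped with the weak-$*$ topology; since $Q$ is compact and metrizable, so is $\mathcal{P}(Q)$. For fixed $x\in Q$, define
$$M_x:=\left\{\mu\in\mathcal{P}(Q):\int_Q f\,d\mu=f(x)\ \text{for every continuous affine}\ f:Q\to\R\right\}.$$
This set is nonempty (it contains $\delta_x$), weak-$*$ closed, and convex, hence compact.

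Next, I would introduce the Choquet partial order $\preceq$ on $\mathcal{P}(Q)$ by declaring $\mu\preceq\nu$ iff $\int f\,d\mu\le\int f\,d\nu$ for every continuous convex $f:Q\to\R$. Since every continuous affine function is simultaneously convex and concave, this order preserves barycenters, so $\preceq$ restricts to a partial order on $M_x$. A standard compactness argument shows every $\preceq$-chain in $M_x$ has an upper bound (take a subnet converging weak-$*$), so Zorn's lemma produces a $\preceq$-maximal element $\mu^{*}\in M_x$.

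The heart of the argument is to show that $\mu^{*}$ is concentrated on $\mathrm{ext}(Q)$. Metrizability is used here in two crucial ways. First, if $d$ is a compatible metric on $Q$, writing
$$Q\setminus\mathrm{ext}(Q)=\bigcup_{n\ge 1}\left\{w\in Q:w=\tfrac{1}{2}(y+z)\ \text{for some}\ y,z\in Q\ \text{with}\ d(y,z)\ge 1/n\right\}$$
displays $\mathrm{ext}(Q)$ as a $G_\delta$, hence Borel. Second, metrizability underlies a measurable selection that assigns to each non-extreme $w\in Q$ a splitting $w=\tfrac{1}{2}(y+z)$ with $y\neq z$. I would then argue by contradiction: if $\mu^{*}(\mathrm{ext}(Q))<1$, I would use such a selection to ``dilate'' $\mu^{*}$ on the non-extreme set by replacing the point mass at a non-extreme $w$ with $\tfrac{1}{2}(\delta_y+\delta_z)$, producing $\nu\in M_x$ with $\nu\succ\mu^{*}$ strictly — this is detected by some continuous convex $f$ that is not affine on the segment through the chosen splitting, e.g.\ one separating $f(w)$ from its concave envelope at $w$.

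The main obstacle is this final dilation step: it requires Borel measurability of $\mathrm{ext}(Q)$ (handled above), a measurable selection theorem for the splitting multifunction $w\mapsto\{(y,z)\in Q\times Q:w=\tfrac{1}{2}(y+z)\}$ (e.g.\ Kuratowski--Ryll-Nardzewski applied to this compact-valued map), and a careful verification that the dilated $\nu$ remains a probability measure, stays in $M_x$, and strictly dominates $\mu^{*}$ in the Choquet order. Once these technicalities are settled, maximality forces $\mu^{*}(\mathrm{ext}(Q))=1$, and $\mu^{*}$, viewed as a probability measure on $\mathrm{ext}(Q)$, is precisely the representing measure claimed by the theorem, since by definition of $M_x$ it reproduces $f(x)$ for every continuous affine $f$.
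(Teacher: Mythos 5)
You should first be aware that the paper does not prove this statement at all: Theorem~\ref{t.choquet} is quoted from the literature (see \cite{glasner}, p.~367), so there is no in-paper argument to compare against. What you have written is the classical Choquet-order/maximal-measure proof, which is the standard route in the metrizable case, and its outline is essentially sound: $M_x$ is nonempty, convex and weak-$*$ compact; the Choquet order is a genuine partial order on it (antisymmetry holds because differences of continuous convex functions are uniformly dense in $C(Q)$); chains have upper bounds by weak-$*$ compactness; and a $\preceq$-maximal $\mu^{*}$ exists by Zorn. The $G_\delta$ description of $\mathrm{ext}(Q)$ and the measurable selection of splittings on each closed set $\{w: w=\tfrac12(y+z)\ \text{for some }y,z\text{ with }d(y,z)\ge 1/n\}$ are both fine.

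The one step that is genuinely under-specified is the strictness of the dilation. After replacing $\mu^{*}$ on the non-extreme set by the barycentric image of $w\mapsto \tfrac12(\delta_{y(w)}+\delta_{z(w)})$, you need a \emph{single} continuous convex $f$ with $\int f\,d\nu>\int f\,d\mu^{*}$; ``some $f$ not affine on the segment through the chosen splitting'' will not do, because the splitting varies with $w$ and you must win on a positive-$\mu^{*}$-measure set of $w$'s simultaneously. The standard fix --- and the third place metrizability enters --- is to produce one continuous function that is strictly convex in the midpoint sense: take a sequence $(h_n)$ dense in the unit ball of the continuous affine functions on $Q$ (separability of $C(Q)$ uses compact metrizability) and set $f=\sum_n 2^{-n}h_n^2$; since continuous affine functions separate points of $Q$ (Hahn--Banach in a locally convex space), $y\neq z$ forces $\tfrac12(f(y)+f(z))>f\bigl(\tfrac{y+z}{2}\bigr)$. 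With this $f$ the integrand $\tfrac12\bigl(f(y(w))+f(z(w))\bigr)-f(w)$ is strictly positive on all of $Q\setminus\mathrm{ext}(Q)$, so $\nu\succ\mu^{*}$ whenever $\mu^{*}(\mathrm{ext}(Q))<1$, and maximality finishes the proof. (The more common textbook variant avoids measurable selections entirely by characterizing maximal measures via upper envelopes, $\mu(f)=\mu(\bar f)$ for continuous convex $f$, and then applying this to the same strictly convex $f$; either way that function is the indispensable ingredient your sketch omits.)
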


\begin{df}
If $Q$ is a metrizable compact convex subset of a locally convex topological vector space, then
$Q$ is a {\it simplex} if for all $x\in Q$, the representing $\mu$ in Choquet's Theorem is unique.
\end{df}

The following example is illustrative and will appear soon. Let $C_3$ be the set of probability measures
on $[0,1]$ in the weak$^*$ topology, $C_2$ be the subset consisting of probability measures with mean $1/2$
and $C_1$ the further subset consisting of probability measures which are symmetric about $1/2$.
Clearly $C_1\subseteq C_2\subseteq C_3$ and each $C_i$ is a metrizable compact convex set in this topology
for which Choquet's Theorem is applicable. Interesting, while $C_1$ and $C_3$ are simplices,
$C_2$ is not, as can be checked. The extremal elements of $C_3$ are the point masses while 
the extremal elements of $C_1$ are measures of the form $\frac{\delta_{1/2+a}+\delta_{1/2-a}}{2}$.

Next, let ${\rm Perm}_{\N}$ denote the space of permutations on $\N$ which fix all but finitely many elements. 

\begin{df}
A stochastic process $(X(i))_{i\in \N}$ is said to be exchangeable if for any 
$\sigma \in {\rm Perm}_{\N}$,  $(X(\sigma(i)))_{i\in \N}$ and $(X(i))_{i\in \N}$ are equal in distribution. 
\end{df}

The following is de Finetti's Theorem (see \cite{DURRETT}, p.228).

\begin{thm}\label{t.definetti}
Given a real-valued exchangeable process $X$, there is a unique random distribution $\Xi$ on ${\mathbb R}$ 
such that $X$ is obtained by first choosing $\Xi$ and then letting $X$ be i.i.d.\ 
with distribution $\Xi$.  It follows that this set of exchangeable processes is a simplex whose extremal
elements are product measures.
\end{thm}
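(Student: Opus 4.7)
The plan is to construct $\Xi$ as the almost sure weak limit of the empirical measures and then derive the conditional i.i.d.\ structure, uniqueness, and the simplex property in that order.

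First I would fix a countable $\|\cdot\|_\infty$-dense family $\{f_k\}_{k\ge 1}$ in $C_b(\R)$ and, for each $f_k$, consider the averages $M_n^{(k)} := \tfrac{1}{n}\sum_{i=1}^{n} f_k(X_i)$. With the decreasing $\sigma$-algebras $\mathcal{E}_n := \sigma(\sum_{i=1}^n \delta_{X_i},\, X_{n+1}, X_{n+2},\dots)$, exchangeability makes $(M_n^{(k)})$ a reverse martingale, so the reverse martingale convergence theorem gives $M_n^{(k)} \to E[f_k(X_1)\mid\mathcal{E}]$ a.s.\ and in $L^1$, where $\mathcal{E} = \bigcap_n \mathcal{E}_n$. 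Taking a countable intersection of full-measure events over $k$ and using a standard portmanteau argument, the empirical measures $\Xi_n := \tfrac{1}{n}\sum_{i=1}^n \delta_{X_i}$ converge weakly a.s.\ to a random probability measure $\Xi$ on $\R$, with $\int f_k\, d\Xi = E[f_k(X_1)\mid\mathcal{E}]$.

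Next I would establish that conditional on $\mathcal{E}$ the $X_i$ are i.i.d.\ with law $\Xi$. For bounded continuous $f_1,\dots,f_r$ and any $n\ge r$, exchangeability yields
\[
E\Bigl[\prod_{j=1}^r f_j(X_j)\Bigr] \;=\; E\!\left[\frac{1}{(n)_r}\sum_{(i_1,\dots,i_r)\,\text{distinct}} \prod_{j=1}^r f_j(X_{i_j})\right],
\]
where $(n)_r = n(n-1)\cdots(n-r+1)$. The inner sum differs from $\prod_{j=1}^r \sum_{i=1}^n f_j(X_i)$ by terms coming from coinciding indices, which contribute $O(1/n)$ since the $f_j$ are bounded. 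Letting $n\to\infty$ and using the weak convergence of $\Xi_n$ to $\Xi$ together with dominated convergence gives $E\bigl[\prod_j f_j(X_j)\bigr] = E\bigl[\prod_j \int f_j\, d\Xi\bigr]$, and the same argument conditioned on $\mathcal{E}$ shows $E\bigl[\prod_j f_j(X_j)\mid \mathcal{E}\bigr] = \prod_j \int f_j\, d\Xi$ a.s. This product formula, extended to bounded Borel $f_j$ by a monotone class argument, is exactly conditional independence with common conditional law $\Xi$.

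Uniqueness of the mixing law is then immediate: if $X$ has distribution $\int \mu^{\otimes\N}\,\Lambda(d\mu)$ for some law $\Lambda$ on $\mathcal{P}(\R)$, then conditional on the mixing variable the ordinary Glivenko--Cantelli theorem forces the empirical measures to converge weakly a.s.\ to that variable; so $\Xi$ has law $\Lambda$, determining $\Lambda$ uniquely. For the simplex conclusion: product measures $\mu^{\otimes\N}$ are extremal (if $\mu^{\otimes\N} = \tfrac{1}{2}(\rho_1 + \rho_2)$ with exchangeable $\rho_i$, applying the construction above to each $\rho_i$ produces a mixing law, and uniqueness forces both to equal $\delta_\mu$); conversely every exchangeable law is the mixture $\int \mu^{\otimes\N}\,\Lambda(d\mu)$ exhibited above, and uniqueness of $\Lambda$ is exactly the defining property of a simplex.

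The main obstacle I anticipate is the step from pointwise a.s.\ convergence of $\int f_k\, d\Xi_n$ (one $k$ at a time) to almost sure weak convergence of $\Xi_n$ as measures; this forces a careful choice of the dense family $\{f_k\}$ (e.g.\ a convergence-determining subset of $C_b(\R)$, such as indicators of a countable base of continuity intervals), plus verification that the limiting set functions almost surely extend to a bona fide probability measure on $\R$. Everything else---the conditional product formula, uniqueness, and extremality---reduces to routine computation once the limiting object $\Xi$ has been constructed as a genuine random probability measure.
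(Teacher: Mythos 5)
Your argument is a correct rendition of the standard reverse-martingale proof of de Finetti's theorem; the paper itself offers no proof of this classical statement, only a citation to Durrett, whose proof is exactly this route (conditioning on the exchangeable $\sigma$-field, comparing sums over distinct indices with products of empirical averages, and deducing uniqueness and the simplex structure from the a.s.\ convergence of the empirical measures). The one delicate point --- upgrading coordinatewise convergence of $\int f_k\,d\Xi_n$ to a.s.\ weak convergence toward a genuine random probability measure --- you correctly identify, and it is handled as you suggest by taking $\Xi$ to be the regular conditional distribution of $X_1$ given $\mathcal{E}$ and a countable convergence-determining class.
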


In this paper, we mainly consider processes which are $\{0,1\}$-valued.

\begin{df}
Let ${\rm EP}_{\N}$ denote the space of exchangeable processes on $\N$ taking values in $\{0,1\}^\N$.  
For $p\in [0,1]$, let ${\rm EP}_{\N,p}$ denote the space of elements in ${\rm EP}_{\N}$ whose
marginal distribution has mean $p$.
\end{df}

Mostly, we will refer to the elements of  ${\rm EP}_{\N,p}$ as probability measures, but 
sometimes as processes. If $\nu\in {\rm EP}_{\N}$, then de Finetti's Theorem says that 
there exists a unique probability measure $\rho_\nu$ on $[0,1]$ such that

\begin{equation}\label{e.nuxi}
\nu=\int_{s=0}^1 \Pi_s \, d\rho_{\nu}(s),
\end{equation}
where $\Pi_s$ denotes product measure on $\{0,1\}^{\N}$ with density $s$. 
In this case, $\Xi$ is concentrated on $\{0,1\}$ and hence is parameterized by $[0,1]$.
We therefore have a bijection between ${\rm EP}_{\N}$ and probability measures on $[0,1]$. 
In what follows, we will denote by $\xi_{\nu}$ a random variable with law $\rho_{\nu}$. 
Similarly, given a random variable $\xi$ on $[0,1]$ we will by $\nu_{\xi}$ denote the 
exchangeable process obtained by~\eqref{e.nuxi} where $\rho_{\nu}$ is taken to be the law of 
$\xi$; i.e., $\xi$ has distribution $\rho_{{\nu}_{\xi}}$.

Given a real-valued exchangeable process $X$ and $h\in {\mathbb R}$, we let $Y^h=(Y^h(i))_{i=0}^{\infty}$ 
be the ``$h$-threshold process obtained from $X$'' defined by $Y^h(i)=1\{X(i)\ge h\}$. 
Clearly $Y^h\in {\rm EP}_{\N}$ and it is of interest to determine if $Y^h$ is a color process. 
In Section~\ref{s.gaussian}, we will see that this is the case for the $0$-threshold Gaussian and stable
processes.

Next, we find the probability measure $\rho_{Y^h}$ corresponding to $Y^h$. 
Recall the definition of $\Xi$ used in the representation of $X$ above. Observe 
that for any $k\ge 1$, any sequence of integers $0\le n_1<\ldots <n_k$ and any choices of 
$i_{n_1},\ldots,i_{n_k}\in \{0,1\}$ we have 
\begin{equation}\label{e.projectexch}
P(Y^h(n_1)=i_{n_1},\ldots,Y^h(n_k)=i_{n_k})=E\left[\Xi([h,\infty))^{\sum_{j=1}^k i_{n_j}}(1-\Xi([h,\infty)))^{k-\sum_{j=1}^k i_{n_j}}\right].
\end{equation}
From~\eqref{e.projectexch} it follows that $\rho_{Y^h}$ is the law of $\Xi([h,\infty))$, or 
equivalently, $\xi_{Y^h}=\Xi([h,\infty))$.

\medskip

For $\sigma\in {\rm Perm}_{\N}$ and $\pi \in {\rm Part}_{\N}$ define $\sigma \pi \in {\rm Part}_{\N}$ 
by letting $\sigma\pi(x)=\sigma \pi(y)$ if and only if $\pi(\sigma^{-1}(x))=\pi(\sigma^{-1}(y))$.  
The ``$-1$'' is present to ensure that we have a "group action". For $\nu\in \rer_{\N}$ and 
$\sigma\in {\rm Perm}_{\N}$, let $\sigma\circ \nu \in \rer_{\N}$ be defined as 
$\sigma\circ \nu (\cdot)=\nu(\sigma^{-1}(\cdot))$. 

\begin{df}
We say that $\nu\in \rer_{\N}$ is \emph{exchangeable} if for any $\sigma \in {\rm Perm}_{\N}$ we have 
$\sigma \circ \nu=\nu$. The space of exchangeable RERs on $\N$ will be denoted by $\rer_{\N}^{\exch}$.
\end{df}

Of course, $\N$ can be replaced by any countable set here since there is no "geometric structure" since
we are considering all permutations but we use $\N$ for simplicity. 

The following is the first step in introducing our collection of exchangeable RERs.

\begin{df}
We say that ${\bf p}=(p_1,p_2,\ldots)$ is a \emph{paint-box} if $p_i\ge 0$ for all $i$, 
$p_i\ge p_{i+1}$ for all $i$, and $\sum_i p_i\le 1$.
\end{df}

Given a paint-box ${\bf p}=(p_1,p_2,\ldots)$, we obtain an element of
$\rer_{\N}^{\exch}$ as follows. Define the random equivalence classes $(S_i)_{i\ge 1}$ by 
putting each element of ${\mathbb N}$ independently in $S_i$ with probability $p_i$ and with 
probability $1-\sum_i p_i$ put it in its own equivalence class. We denote this {\rm RER} by 
$\nu_{{\bf p}}$. It follows easily that $\nu_{{\bf p}}\in \rer_{\N}^{{\rm \exch}}$. 

\begin{remark}
We use slightly different terminology for paint-boxes than what is used in~\cite{JB06}, where
it is the RER $\nu_{\bf p}$, rather than the vector ${\bf p}$, which is called a paint-box.
\end{remark}

\begin{df}
The subset of $\rer^{\exch}_{\N}$ which consists of RERs obtained from paint-boxes will be denoted 
by $\rer^{\exch,\pure}_{\N}$.
\end{df}

We can obtain more elements in $\rer_{\N}^{\exch}$ by taking convex combinations
and in fact generalized convex combinations of the elements in $\rer_{\N}^{\exch,\pure}$.
It is immediate that all of these are in $\rer_{\N}^{\exch}$. Kingman's famous theorem 
(Theorem~\ref{t.kingman} below, see also~\cite{JB06})
says that these account for all of the elements of $\rer_{\N}^{\exch}$.
Moreover, the uniqueness in this theorem tells us that $\rer_{\N}^{\exch}$ is a
simplex whose extremal elements are $\rer_{\N}^{\exch,\pure}$.

\begin{thm}\label{t.kingman}
{\bf (Kingman) }Suppose that $\nu\in {\rm RER}_{\N}^{\exch}$. Then there is a unique probability measure 
$\rho=\rho_{\nu}$ on ${\rm RER}^{\exch,\pure}_{\N}$ such that 
$$
\nu=\int_{\nu_{{\bf p}}\in {\rm RER}^{\exch,\pure}_{\N}}\nu_{\bf p}\,d\rho(\nu_{\bf p}).
$$
\end{thm}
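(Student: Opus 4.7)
The plan is to prove Kingman's theorem by the classical two-step strategy: first establish the existence of a paint-box representation, and then show uniqueness of the mixing measure $\rho$. The key tool throughout is de Finetti's theorem (Theorem~\ref{t.definetti}) applied to suitable $\{0,1\}$-valued sequences extracted from the exchangeable partition.

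For existence, I would first show that asymptotic block frequencies exist. Fix $\nu \in \rer_{\N}^{\exch}$ and let $\pi$ be sampled from $\nu$. For each $i \in \N$, the $\{0,1\}$-valued sequence $(\mathbf{1}\{\pi(j)=\pi(i)\})_{j \in \N,\, j \ne i}$ is exchangeable, since permutations of $\N$ fixing $i$ leave the law of $\pi$ invariant. By de Finetti's theorem this sequence is a mixture of i.i.d.\ Bernoulli sequences, so by the strong law the limit
\[
F_i := \lim_{N \to \infty} \frac{|\pi(i) \cap [N]|}{N}
\]
exists almost surely. Let $P_1 \ge P_2 \ge \cdots \ge 0$ be the decreasing rearrangement of the positive values of $\{F_i\}$, counting each distinct block once and padding with zeros. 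Since the blocks are disjoint subsets of $\N$, one has $\sum_i P_i \le 1$ almost surely, so $\mathbf{P} := (P_1, P_2, \ldots)$ is almost surely a paint-box.

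Next, I would show that conditional on $\mathbf{P}$, the partition $\pi$ has law $\nu_{\mathbf{P}}$. Enumerate the blocks in size-biased fashion: set $B_1 = \pi(1)$, then $B_2 = \pi(m_2)$ where $m_2$ is the smallest element not in $B_1$, and so on. Applying de Finetti as above to the sequence $(\mathbf{1}\{\pi(j)=\pi(m_k)\})_j$ for each $k$, together with exchangeability, one obtains that, conditional on $\mathbf{P}$ and on an identification of each $B_k$ with one of the positive-frequency slots, the elements of $\N$ are placed independently into slot $i$ with probability $P_i$ and into ``dust'' (its own singleton class) with probability $1-\sum_i P_i$. Averaging over the random identification yields precisely the paint-box distribution $\nu_{\mathbf{P}}$, so unconditionally $\nu = \int \nu_{\bf p}\, d\rho(\nu_{\bf p})$ with $\rho$ the law of $\mathbf{P}$.

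For uniqueness, note that $\mathbf{P}$ is a measurable function of $\pi$, and for a pure paint-box $\nu_{\bf p}$ the strong law forces the ranked asymptotic frequencies to equal $\bf p$ almost surely. Hence in any representation $\nu = \int \nu_{\bf p}\, d\rho(\nu_{\bf p})$, the measure $\rho$ must coincide with the law of $\mathbf{P}$ under $\nu$, which depends only on $\nu$. The main obstacle is the intermediate conditional paint-box step: one must carefully handle the ``dust'' that appears when $\sum_i P_i < 1$, and justify rigorously that the size-biased enumeration combined with de Finetti produces i.i.d.\ slot-placement with the prescribed weights. Once that combinatorial/probabilistic step is in place, the existence and uniqueness assertions both drop out.
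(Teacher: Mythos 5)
First, a point of comparison: the paper does not prove this statement at all --- Theorem~\ref{t.kingman} is Kingman's classical representation theorem, quoted with a pointer to \cite{JB06}. So there is no in-paper proof to measure your argument against, and your proposal has to stand on its own as a proof of the classical result. Its first and last steps are sound: for each $i$ the sequence $(\mathbf{1}\{\pi(j)=\pi(i)\})_{j\neq i}$ is indeed exchangeable, de Finetti gives a.s.\ existence of the frequency $F_i$ (and, as a free bonus, that any block with $F_i=0$ is a.s.\ a singleton), the ranked positive frequencies form a paint-box by disjointness, and uniqueness does follow once existence is known, because the ranked frequencies of a pure paint-box $\nu_{\bf p}$ equal ${\bf p}$ a.s.

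The gap is exactly where you flag it, and it is a genuine missing idea rather than a routine verification. Applying de Finetti separately to each sequence $(\mathbf{1}\{\pi(j)=\pi(m_k)\})_{j}$ only yields, for each \emph{fixed} block, a conditional i.i.d.\ structure for membership in that one block; it gives no control on the joint law of the assignments of different elements across different blocks, which is precisely what the paint-box law asserts (each element independently placed in slot $i$ with probability $P_i$). The phrase ``together with exchangeability, one obtains\dots'' is asserting the conclusion, not deriving it, and no number of one-dimensional de Finetti applications will produce the required joint independence. The standard way to close this hole is Aldous's labeling trick: take $(U_i)_{i\in\N}$ i.i.d.\ uniform on $[0,1]$ independent of $\pi$, set $X_j:=U_{\min \pi(j)}$, check that $(X_j)_{j\in\N}$ is a real-valued exchangeable sequence, and apply de Finetti (the paper's Theorem~\ref{t.definetti}) \emph{once} to obtain a random directing measure $\Xi$. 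Since $i$ and $j$ lie in the same block if and only if $X_i=X_j$ a.s., the atoms of $\Xi$ are the paint-box weights, the conditional i.i.d.\ structure of $(X_j)$ given $\Xi$ delivers the joint independence and the dust simultaneously, and the uniqueness of $\Xi$ in de Finetti's theorem gives the uniqueness of $\rho$ directly. Without some such joint argument, your middle step does not go through.
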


\subsection{Infinite exchangeable color processes}\label{s.ecp}

Our first result says that ${\rm CP}_{\N,p}^{\exch}$ (which recall was defined to be the image of
$\rer_{\N}^{\exch}$ under $\Phi_p$) is simply ${\rm EP}_{\N,p}\cap {\rm CP}_{\N,p}$.

\begin{prop}\label{p.imageprop}
For any $p\in[0,1]$,
$$
{\rm CP}_{\N,p}^{\exch}={\rm EP}_{\N,p}\cap {\rm CP}_{\N,p}.
$$
\end{prop}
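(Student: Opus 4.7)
The plan is to prove both inclusions, with the containment ${\rm CP}_{\N,p}^{\exch}\subseteq {\rm EP}_{\N,p}\cap {\rm CP}_{\N,p}$ being immediate and the reverse containment requiring a compactness/averaging argument. For the easy direction, if $\mu=\Phi_p(\nu)$ with $\nu\in\rer_{\N}^{\exch}$, then $\mu\in {\rm CP}_{\N,p}$ by definition, its marginal mean is $p$ because each vertex, given its partition element, is assigned value $1$ with probability $p$, and $\mu$ is exchangeable because $\Phi_p$ is equivariant with respect to the action of ${\rm Perm}_{\N}$ (the coin flips attached to classes are exchangeable, and ${\rm Perm}_{\N}$-invariance of $\nu$ therefore transfers to ${\rm Perm}_{\N}$-invariance of the coloring).

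For the nontrivial direction, fix $\mu\in {\rm EP}_{\N,p}\cap {\rm CP}_{\N,p}$ and pick any $\nu\in\rer_{\N}$ with $\Phi_p(\nu)=\mu$. Let ${\rm Perm}_{[n]}\subset {\rm Perm}_{\N}$ denote the permutations fixing every $i>n$, and set
$$\nu_n:=\frac{1}{n!}\sum_{\sigma\in {\rm Perm}_{[n]}}\sigma\circ \nu.$$
Because $\Phi_p$ is affine and equivariant and $\mu$ is exchangeable, $\Phi_p(\sigma\circ \nu)=\sigma\cdot\mu=\mu$ for every $\sigma$, so $\Phi_p(\nu_n)=\mu$ for each $n$. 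Moreover, for each fixed $m$, once $n\ge m$ one has ${\rm Perm}_{[m]}\subset {\rm Perm}_{[n]}$, and averaging over ${\rm Perm}_{[n]}$ kills the action of ${\rm Perm}_{[m]}$, so $\nu_n$ is ${\rm Perm}_{[m]}$-invariant for all $n\ge m$.

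To pass to a limit, I would topologize $\rer_{\N}$ as follows. Identify ${\rm Part}_{\N}$ with the projective limit of the finite sets ${\rm Part}_{[k]}$ under the restriction maps $\pi\mapsto \pi_{[k]}$; this realizes ${\rm Part}_{\N}$ as a closed subset of the compact metrizable product $\prod_k {\rm Part}_{[k]}$, so ${\rm Part}_{\N}$ is compact metrizable and $\rer_{\N}={\mathcal P}({\rm Part}_{\N})$ is compact metrizable in the weak$^*$ topology. With this topology $\Phi_p$ is continuous, since for any cylinder event $A\subset\{0,1\}^{\N}$ depending on a finite set $K\subset \N$ of coordinates, $\Phi_p(\nu)(A)$ is a polynomial in the finitely many probabilities $\nu(\{\pi:\pi_K=\pi_0\})$, each of which is a continuous functional of $\nu$. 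Extract a weak$^*$-convergent subsequence $\nu_{n_j}\to \nu^*$. Continuity of $\Phi_p$ gives $\Phi_p(\nu^*)=\mu$, and ${\rm Perm}_{[m]}$-invariance is preserved under weak$^*$ limits (as one checks on cylinder events), so $\nu^*$ is ${\rm Perm}_{[m]}$-invariant for every $m$, hence $\nu^*\in\rer_{\N}^{\exch}$, completing the proof.

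The only real obstacle is cosmetic: one must set up the right topological framework so that ``averaging over the infinite symmetric group and passing to a limit'' is rigorous. An alternative would be to invoke Kingman's Theorem by first exchanging the pair $(\pi,X^{\nu,p})$ jointly, but the averaging approach above is the most direct.
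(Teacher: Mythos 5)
Your proof is correct and follows essentially the same route as the paper's: average $\nu$ over ${\rm Perm}_{[n]}$, note that $\Phi_p(\nu_n)=\mu$ by equivariance and exchangeability of $\mu$, and pass to a weak$^*$ subsequential limit using compactness and continuity of $\Phi_p$. The extra detail you supply on topologizing $\rer_{\N}$ merely fills in steps the paper leaves to the reader.
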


\pf  The containment $\subseteq$ is clear. Assume that $\mu\in {\rm EP}_{\N,p}\cap {\rm CP}_{\N,p}$. 
Then there is some $\nu\in \rer_{\N}$ 
such that $\Phi_p(\nu)=\mu$. We will be done if we find some $\nu'\in \rer_{\N}^{\exch}$ such that 
$\Phi(\nu')=\mu$. We will construct such a $\nu'$ from $\nu$. Let ${\rm Perm}_{[n]}$ denote the set 
of permutations on $[n]$ and let 
$$
\nu_n=\frac{1}{|{\rm Perm}_{[n]}|}\sum_{\sigma\in {\rm Perm}_{[n]}}\sigma\circ \nu,
$$
where it is understood that a $\sigma\in {\rm Perm}_{[n]}$ is viewed as an element of
${\rm Perm}_{\N}$ which fixes all $k$ larger than $n$.
Since $\mu\in {\rm EP}_{\N,p}$ and $\Phi_p$ commutes with permutations it follows that 
$\Phi_{p}(\sigma\circ \nu)=\sigma\circ\Phi_p(\nu)=\sigma\circ \mu =\mu$ for any 
$\sigma\in {\rm Perm}_{[n]}$. In particular, $\Phi_p(\nu_n)=\mu$ for all $n$.
Clearly  $\nu_n$ is invariant under permutations of $[n]$ (meaning that $\sigma \circ \nu_n=\nu_{n}$ 
for any $\sigma\in {\rm Perm}_{[n]}$), so that in particular the restriction of $\nu_n$ to $[n]$ 
belongs to $\rer_{[n]}^{\exch}$.  By compactness, we can choose some subsequence $n_k$ so that 
$\nu_{n_k}$ converges to some $\nu_{\infty}$ as $k\to \infty$. It is clear that 
$\nu_{\infty}\in \rer_{\N}^{\exch}$ and $\Phi_p(\nu_{\infty})=\mu$ follows from the easily shown 
fact that $\Phi_p(\cdot)$ is continuous.\qed

We now show that the mixing random variable $\xi$ for the color process corresponding to a paintbox is a 
so-called Bernoulli convolution.

\begin{lma}\label{l.xipaint}
Fix $p\in [0,1]$ and a paintbox ${\bf p}=(p_1,p_2,\ldots)$.
For the associated color process, let $\xi_{{\bf p},p}$ be the representing random variable in $[0,1]$
in de Finetti's Theorem. Then, in distribution,
\begin{equation}\label{e.xiequiv}
\xibfp=(1-\sum_{i\ge 1} p_i) p+\frac{1}{2} \sum_{i\ge 1} p_i +\frac{1}{2} \sum_{i\ge 1} p_i Z_i,
\end{equation}
where the $Z_i$ are i.i.d.\ random variables with $P(Z_i=1)=p$ and $P(Z_i=-1)=1-p$. 
If $p=1/2$,~\eqref{e.xiequiv} simplifies to

\begin{equation}\label{e.xiequi2}
\xihalf=\frac{1}{2}+\frac{1}{2}\sum_{i\ge 1} p_i Z_i.
\end{equation}

\end{lma}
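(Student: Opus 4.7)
The plan is to use de Finetti's theorem to identify $\xi_{\mathbf{p},p}$ as the almost-sure limit of the empirical density of $1$'s in the color process, and then compute that limit by applying the strong law of large numbers to both the random partition and the independent coloring.

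First, since $\Phi_p(\nu_{\mathbf{p}})\in {\rm EP}_{\N,p}$, the representation in~\eqref{e.nuxi} combined with the SLLN for i.i.d.\ Bernoulli variables under each $\Pi_s$ gives that, for a typical realization of $X:=X^{\nu_{\mathbf{p}},p}$, the empirical average $\frac{1}{n}\sum_{k=1}^n X(k)$ converges a.s.\ to a random variable whose distribution is $\rho_{\Phi_p(\nu_{\mathbf{p}})}$, i.e.\ to a copy of $\xi_{\mathbf{p},p}$. So it suffices to compute this a.s.\ limit on a single realization.

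Next, I would give an explicit joint construction. Independently for each $k\in\N$, assign a label $L(k)$ which equals $i$ with probability $p_i$ (for $i\ge 1$) and is a ``fresh'' singleton label with probability $q:=1-\sum_i p_i$; then draw i.i.d.\ Bernoulli$(p)$ colors $B_1,B_2,\ldots$ for the potential classes $S_1,S_2,\ldots$, and independent i.i.d.\ Bernoulli$(p)$ colors $C_k$ for the singletons. Setting $X(k)=B_{L(k)}$ when $L(k)\in\N$ and $X(k)=C_k$ otherwise produces a realization of $\Phi_p(\nu_{\mathbf{p}})$. By the SLLN applied to $\mathbf{1}\{L(k)=i\}$, one has $\frac{|S_i\cap[n]|}{n}\to p_i$ a.s.\ for each fixed $i$; and by the SLLN applied to $\mathbf{1}\{L(k)\text{ is fresh},C_k=1\}$ (which is Bernoulli$(qp)$), the contribution to the empirical density of $1$'s from singletons converges a.s.\ to $qp$. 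For each fixed $N$, a finite-sum SLLN gives $\sum_{i=1}^N \frac{|S_i\cap[n]|}{n}B_i\to \sum_{i=1}^N p_i B_i$ a.s., while the tail is bounded by $\sum_{i>N}\frac{|S_i\cap[n]|}{n}\to \sum_{i>N}p_i$ a.s. (again by SLLN applied to $\mathbf{1}\{L(k)>N\}$), which can be made arbitrarily small by choosing $N$ large. Hence
\[
\frac{1}{n}\sum_{k=1}^n X(k)\;\xrightarrow{\text{a.s.}}\; qp+\sum_{i\ge 1} p_i B_i,
\]
so $\xi_{\mathbf{p},p}$ has the same distribution as $(1-\sum_i p_i)p+\sum_i p_i B_i$ with $B_i$ i.i.d.\ Bernoulli$(p)$.

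Finally, writing $B_i=\tfrac{1+Z_i}{2}$ with $Z_i\in\{-1,+1\}$ and $P(Z_i=1)=p$ converts this into the form of~\eqref{e.xiequiv}. When $p=1/2$, the deterministic part $(1-\sum_i p_i)p+\tfrac12\sum_i p_i$ collapses to $1/2$, yielding~\eqref{e.xiequi2}. The only non-routine step is the truncation argument showing that the infinite sum $\sum_i p_i B_i$ really is the a.s.\ limit of the corresponding partial sums, but this follows from the SLLN bound on the total mass in the discarded classes.
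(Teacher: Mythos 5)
Your proof is correct. The underlying identification is the same as the paper's: both proofs run the explicit paint-box construction (independent class labels for the vertices, independent Bernoulli$(p)$ colors $B_i$ for the classes and separate Bernoulli$(p)$ colors for the singletons) and both arrive at the same random variable $\sum_i p_i\,\mathbf{1}\{\text{class }i\text{ colored }1\}+(1-\sum_i p_i)p$. Where you diverge is in how you certify that this variable is the de Finetti mixing variable. The paper simply observes (leaving it as ``straightforward'') that, conditionally on the set $S$ of classes colored $1$, the coordinates of the color process are i.i.d.\ with density $\sum_{i\in S}p_i+(1-\sum_i p_i)p$, so that $\Phi_p(\nu_{\bf p})$ is exhibited directly as the corresponding mixture of product measures and the claim follows from uniqueness in de Finetti's Theorem. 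You instead characterize $\xi_{{\bf p},p}$ as the a.s.\ limit of the empirical density of $1$'s and compute that limit by the strong law of large numbers, with a truncation argument to control the tail $\sum_{i>N}p_i$. Your route is longer but fills in explicitly what the paper waves at; the paper's conditional-independence observation is the more economical argument, since once one conditions on the class colors the i.i.d.\ structure of the vertex labels makes the coordinates of $X$ manifestly independent with the stated common density, and no limit theorem is needed. Both are complete proofs.
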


\pf Let $p\in [0,1]$ and consider the paintbox ${\bf p}=(p_1,p_2,\ldots)$. 
Define a random subset $S$ of ${\N}$ by independently putting each $n\in \N$ in $S$ with probability 
$p$ and in $S^c$ with probability $1-p$.  Letting 
\begin{equation}\label{e.xidef}
\xi_{{\bf p},p}:=\sum_{i\ge 1} I\{i\in S\} p_i + (1-\sum_{i\ge 1} p_i) p,
\end{equation}

and $F_{\xi_{{\bf p},p}}$ be the law of $\xi_{{\bf p},p}$, it is straightforward to see that

$$\Phi_p(\nu_{\bf p})=\int_{s=0}^1 \Pi_s d F_{\xi_{{\bf p},p}}(s)\mbox{ }(=\nu_{\xi_{{\bf p},p}}).$$
Finally, one verifies that~\eqref{e.xidef} can be rewritten as~\eqref{e.xiequiv}.
\qed

\medskip

As an application of Lemma~\ref{l.xipaint} we get the identities

\begin{equation}\label{e.simplebox}
\Phi_p(\nu_{(p_1,0,\ldots)})=p \Pi_{p_1+(1-p_1)p} +(1-p) \Pi_{(1-p_1) p},
\end{equation}

\noindent and

\begin{eqnarray}\label{e.pure2box}
\lefteqn{\Phi_p(\nu_{(p_1,p_2,0,\ldots)})=p^2 \Pi_{p_1+p_2+(1-p_1-p_2)p} +(1-p) p\Pi_{p_1+(1-p_1-p_2) p}}\\  & & +(1-p) p\Pi_{p_2+(1-p_1-p_2) p}+(1-p)^2\Pi_{(1-p_1-p_2) p},\nonumber
\end{eqnarray}

\noindent which in the case $p=1/2$ simplify to 

\begin{equation}\label{e.simplebox05}
\Phi_{1/2}(\nu_{(p_1,0,\ldots)})=\frac{1}{2}( \Pi_{1/2+p_1/2} +\Pi_{1/2-p_1/2}),
\end{equation}

\noindent and

\begin{eqnarray}\label{e.pure2box05}
\lefteqn{\Phi_{1/2}(\nu_{(p_1,p_2,0,\ldots)})}\nonumber\\& & =\frac{1}{4} (\Pi_{1/2+(p_1+p_2)/2}+\Pi_{1/2+(p_1-p_2)/2}+\Pi_{1/2-(p_1-p_2)/2}+\Pi_{1/2-(p_1+p_2)/2}).
\end{eqnarray}

\noindent From~\eqref{e.simplebox05} and~\eqref{e.pure2box05} we obtain

\begin{equation}\label{e.purenonuniq05}
\Phi_{1/2}(\nu_{(p_1,p_2,0,\ldots)})=\frac{1}{2}\Phi_{1/2}(\nu_{(q_1,0,\ldots)})+\frac{1}{2} \Phi_{1/2}(\nu_{(q_2,0,\ldots)}),
\end{equation}

\noindent where $q_1=p_1+p_2$ and $q_2=p_1-p_2$. Note that this implies that
$\Phi_{1/2}\,:\,\rer_{\N}^{\exch}\to \cp_{\N,{1/2}}^{\exch}$ is not injective.

\bigskip\noindent
On the other hand, we have the following proposition, where
the key part of the proof was provided to us by Russell Lyons.

\begin{prop}\label{p.Russ}
The map 
$$
\Phi_p\,:\,\rer_{\N}^{\exch,\pure}\to \cp_{\N,p}^{\exch}
$$
is injective for every $p\in (0,1)$.
\end{prop}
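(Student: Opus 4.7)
The plan is to reduce via de Finetti's theorem to showing that the law of $\xibfp$ determines the paintbox ${\bf p}$, and then to recover ${\bf p}$ by analyzing the zeros of the characteristic function of $\xibfp$ viewed as an entire function on ${\mathbb C}$.

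First I would observe that if $\Phi_p(\nu_{\bf p})=\Phi_p(\nu_{\bf q})$, then by the uniqueness clause of de Finetti's theorem (Theorem~\ref{t.definetti}) applied to these two exchangeable $\{0,1\}$-valued processes, the laws of $\xibfp$ and $\xi_{{\bf q},p}$ coincide. Using Lemma~\ref{l.xipaint} (in the form~\eqref{e.xidef}), the characteristic function of $\xibfp$ is the explicit product
\be
\phi_{\bf p}(t)=e^{itp(1-T_{\bf p})}\prod_{i:\,p_i>0}\bigl(1-p+p\,e^{itp_i}\bigr),\qquad T_{\bf p}:=\sum_i p_i.
\ee
Because $\xibfp\in[0,1]$ and $p_i\to 0$, the product converges uniformly on compact subsets of ${\mathbb C}$ and so $\phi_{\bf p}$ extends to an entire function; by analytic continuation $\phi_{\bf p}\equiv\phi_{\bf q}$ on all of ${\mathbb C}$, and in particular the two entire functions have the same zeros with the same multiplicities.

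The main step is then to reconstruct ${\bf p}$ from this zero-with-multiplicity set. Solving $1-p+pe^{itp_i}=0$ shows that the zeros contributed by the factor indexed by $p_i$ form the arithmetic progression
\bea
t \;=\; \frac{(2k+1)\pi - i\log\bigl(\tfrac{1-p}{p}\bigr)}{p_i},\qquad k\in{\mathbb Z},
\eea
all lying on the horizontal line $\operatorname{Im}(t)=-\log((1-p)/p)/p_i$ with real-part spacing $2\pi/p_i$, and the order of $\phi_{\bf p}$ at such a $t$ equals the total number of paintbox indices $j$ with $p_j$ giving rise to that zero.

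For $p\neq 1/2$, $\log((1-p)/p)\neq 0$, so the progressions coming from distinct positive values of $p_i$ lie on distinct horizontal lines; reading the imaginary parts and orders of the zeros directly returns the multiset of positive $p_i$'s, and hence ${\bf p}$, since a paintbox is by definition the non-increasing arrangement of these values padded by zeros. For $p=1/2$ all zeros are real; here I would argue inductively, identifying the largest $p_i$ as $\pi$ divided by the smallest positive zero (with its multiplicity read off from the order of that zero), then subtracting the arithmetic progression $\{(2k+1)\pi/p_i:k\in{\mathbb Z}\}$ with this multiplicity from the zero-set, and iterating. The main obstacle is the combinatorial bookkeeping in the $p=1/2$ case, where different $p_i$'s can produce overlapping progressions of real zeros (for instance $p_j=p_i/3$ forces $3\pi/p_i=\pi/p_j$), so at each stage one must subtract the higher-harmonic contributions of already identified $p_i$'s before reading off the next value. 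Once this is handled, the reconstruction yields ${\bf p}={\bf q}$.
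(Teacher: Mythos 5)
Your proof is correct and follows essentially the same route as the paper: reduce via de Finetti's theorem and Lemma~\ref{l.xipaint} to showing that the law of $\xi_{{\bf p},p}$ determines ${\bf p}$, write its Fourier transform as an infinite product over the paintbox entries, and recover the $p_i$'s iteratively from the zeros of the resulting entire function. The only difference is in how the zeros are read off: the paper uses the minimal $|\operatorname{Im}(z)|$ among zeros of the successive tail products, which equals $\pi/(2p_n)$ for every $p\in(0,1)$ and so avoids your case split at $p=1/2$, but your greedy peeling of arithmetic progressions (smallest positive zero gives the largest remaining $p_i$, its order giving that value's multiplicity) also works, since a factor with parameter $p_j$ can contribute a zero at the location $\pi/v$ only when $p_j=(2k+1)v\ge v$.
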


\pf Fix $p\in (0,1)$ and consider two different paintboxes ${\bf p}$ and ${\bf p}'$. In view of 
Lemma~\ref{l.xipaint} and the uniqueness in de Finetti's Theorem, we need to show that 
\begin{equation}
(1-\sum_{i\ge 1} p_i) p+\frac{1}{2} \sum_{i\ge 1} p_i +\frac{1}{2} \sum_{i\ge 1} p_i Z_i,
\end{equation}
and
\begin{equation}
(1-\sum_{i\ge 1} p_i') p+\frac{1}{2} \sum_{i\ge 1} p_i' +\frac{1}{2} \sum_{i\ge 1} p_i' Z_i,
\end{equation}
have different distributions where, as before,
the $Z_i$ are i.i.d.\ random variables with $P(Z_i=1)=p$ and $P(Z_i=-1)=1-p$. 
The length of the smallest intervals containing the supports of these distributions are
$\sum_{i\ge 1} p_i$ and $\sum_{i\ge 1} p_i'$ and hence if these differ, then the 
distributions are different. Assume now that $\sum_{i\ge 1} p_i=\sum_{i\ge 1} p_i'$. In this case,
if the distributions were the same, we would also have that the distributions of
$\sum_{i\ge 1} p_i Z_i$ and $\sum_{i\ge 1} p_i' Z_i$ were the same. 

We will now be done if we prove that the Fourier transform
$$
f(z):={\mathbf E}[{\rm e}^{z \sum_{i=1}^{\infty} p_i Z_i}],\mbox{ }z\in {\mathbb C}
$$
determines the paintbox ${\bf p}$. We do this in the case $p_i>0$ for all $i\ge 1$. 
The argument is easily modified to the case $p_i=0$ for all $i$ sufficiently large. 
By independence,
$$ 
f(z)=\prod_{j=1}^{\infty}{\mathbf E}[{\rm e}^{z \,p_j Z_j}]=\prod_{j=1}^{\infty}(p\,{\rm e}^{z\, p_j}+(1-p)\,{\rm e}^{-z\,p_j}).
$$

For $j\ge 1$, let $\Delta_j=\{z\in {\mathbb C}\,:\,{\mathbf E}[{\rm e}^{z \,p_j Z_j}]=0\}$. Then
\begin{equation}\label{e.deltadef}
\Delta_j=\left\{\frac{1}{p_j} \left( \frac{\log{\left(\frac{1-p}{p}\right)}}{2} +i (\pi k+\pi/2)\right)\,:\,k\in {\mathbb Z}\right\}.
\end{equation}
Since $\sum_{j\ge 1} p_j\le 1$, we have $g_n(z)=0$ only if $z\in \Delta_j$ for some $j$.
Let $g_1(z)=f(z)$ and for $n\ge 2$ let 
$$
g_n(z)=\prod_{j=n}^{\infty} {\mathbf E}[{\rm e}^{z \,p_j Z_j}].
$$

For $n\ge 1$, let
$$
t_n=\inf\{|{\rm Im}(z)|\,:\,g_n(z)=0\}.
$$
Hence, according to~\eqref{e.deltadef}, $t_n=\pi/( 2 p_n)$. Hence, we can recover the sequence 
$(p_n)_{n\ge 1}$ from the sequence $(t_n)_{n\ge 1}$ and the result follows. \qed

\subsection{The case $p=1/2$.}\label{s.symmcase}

In this subsection, we obtain some results concerning $\Phi_{1/2}$ on $\rer_{\N}^{\exch}$. 
First observe that if $\mu \in \cp_{\N,1/2}^{\exch}$, then $\mu$ is $0\ks 1$-symmetric
and hence so is the representing random variable $\xi_\mu$; i.e.\ $\xi_\mu=1-\xi_\mu$ in law.
Interestingly, as we will see below in Theorem~\ref{t.mainp12}, this necessary condition of symmetry 
is actually a sufficient condition for being a color process when $p=1/2$.
In Theorem~\ref{p.unprop} we determine exactly which are the exchangeable 
RERs that are $(\rer_{\N}^{\exch},1/2)$-unique.

In the proofs below, we will make use of the following lemma which follows easily from de 
Finetti's theorem.

\begin{lma}\label{l.exchextremal}
Let ${\rm EP}_{\N,1/2}^{\symm}$ be the set of exchangeable processes which are $0\ks 1$-symmetric 
(or equivalently their representing distribution in $[0,1]$ is symmetric about $1/2$) and 
for $\alpha\in [0,1/2]$, let $\mu_{\alpha}:=(\Pi_{1/2+\alpha}+\Pi_{1/2-\alpha})/2$. Then 
${\rm EP}_{\N,1/2}^{\symm}$ is a simplex and
\begin{equation}\label{e.extremalset}
\emph{ext}({\rm EP}_{\N,1/2}^{\symm})=(\mu_\alpha)_{\alpha\in [0,1/2]}.
\end{equation}
\end{lma}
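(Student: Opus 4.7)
\noindent
My plan is to transport the problem to the de Finetti simplex on $[0,1]$ and then characterize the extremal symmetric measures there.

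First, I would identify ${\rm EP}_{\N}$ with the compact convex set $\mathcal{P}([0,1])$ of probability measures on $[0,1]$ (in the weak$^*$ topology) via the affine homeomorphism $\nu \leftrightarrow \rho_\nu$ from Theorem~\ref{t.definetti}. Under this identification, $\Pi_s$ corresponds to the point mass $\delta_s$, the mean-$1/2$ condition corresponds to $\int s\, d\rho_\nu(s) = 1/2$, and the $0\ks 1$-symmetry of $\nu$ corresponds to the symmetry $\rho_\nu = R_*\rho_\nu$ of its de Finetti mixing distribution, where $R:s\mapsto 1-s$ is the reflection about $1/2$; this equivalence is read off from the formula $\nu(\{\xi(i_j)=a_j\}) = \int s^{\#\{j:a_j=1\}}(1-s)^{\#\{j:a_j=0\}}\, d\rho_\nu(s)$. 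Note that symmetry about $1/2$ already forces mean $1/2$, so ${\rm EP}_{\N,1/2}^{\symm}$ corresponds exactly to the set $\mathcal{P}_{\symm}([0,1])$ of probability measures on $[0,1]$ invariant under $R$. This set is closed (symmetry is a closed condition) and convex inside the compact metrizable $\mathcal{P}([0,1])$, so it is itself compact, convex, and metrizable, hence a candidate for applying Choquet's theorem.

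Next I would establish the bijection between $\mathcal{P}_{\symm}([0,1])$ and $\mathcal{P}([0,1/2])$ given by the folding map $F:\rho \mapsto |\cdot - 1/2|_* \rho$ (pushforward under $s\mapsto |s-1/2|$), with inverse $\tau \mapsto \tfrac{1}{2}(T_+{}_*\tau + T_-{}_*\tau)$ where $T_\pm(\alpha) = 1/2 \pm \alpha$. This map is an affine homeomorphism between $\mathcal{P}_{\symm}([0,1])$ and $\mathcal{P}([0,1/2])$. Under it, the point mass $\delta_\alpha \in \mathcal{P}([0,1/2])$ pulls back to the symmetric two-point measure $\tfrac{1}{2}(\delta_{1/2+\alpha}+\delta_{1/2-\alpha})$.

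The key step is now to transport the simplex structure. It is standard that $\mathcal{P}([0,1/2])$ is a Choquet simplex whose extremal elements are the point masses $\{\delta_\alpha\}_{\alpha \in [0,1/2]}$ (this is the trivial half of de Finetti, or a direct application of Choquet using that $\mathcal{P}([0,1/2])$ is Bauer). Since $F$ is an affine homeomorphism, $\mathcal{P}_{\symm}([0,1])$ inherits the simplex property, with extremal elements $\{\tfrac{1}{2}(\delta_{1/2+\alpha}+\delta_{1/2-\alpha})\}_{\alpha\in[0,1/2]}$. Transporting back through de Finetti's affine homeomorphism, the extremal elements of ${\rm EP}_{\N,1/2}^{\symm}$ are precisely
\[
\Phi\left(\tfrac{1}{2}(\delta_{1/2+\alpha}+\delta_{1/2-\alpha})\right) \;=\; \tfrac{1}{2}(\Pi_{1/2+\alpha} + \Pi_{1/2-\alpha}) \;=\; \mu_\alpha,
\]
for $\alpha \in [0,1/2]$, which is \eqref{e.extremalset}, and uniqueness of representation (the simplex property of ${\rm EP}_{\N,1/2}^{\symm}$) is preserved under the two affine homeomorphisms.

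The only mild obstacle is justifying that extremality and the simplex property really do pass through the affine homeomorphisms; this is routine (affine homeomorphisms preserve the convex and extremal structure and the uniqueness of Choquet representations), but one should be careful to verify that the pushforward/pullback between $\mathcal{P}_{\symm}([0,1])$ and $\mathcal{P}([0,1/2])$ is continuous in the weak$^*$ topology in both directions, which follows immediately from continuity of $|s-1/2|$ and of $T_\pm$.
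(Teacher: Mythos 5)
Your proof is correct and takes exactly the route the paper intends: the paper states this lemma without proof (asserting it ``follows easily from de Finetti's theorem''), and its earlier illustrative discussion of the simplex $C_1$ of probability measures on $[0,1]$ symmetric about $1/2$, whose extremal elements are the measures $\frac{\delta_{1/2+a}+\delta_{1/2-a}}{2}$, is precisely the de Finetti transfer plus folding argument you have written out in detail.
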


The following subset of ${\rm RER}_{\N}^{\exch,\simple}$ will play an important role in our 
discussions below. 

\begin{df}
The subset of $\rer^{\exch,\pure}_{\N}$ which consists of  RERs obtained from paint-boxes with 
$p_2=0$ will be denoted by $\rer^{\exch,\simple}_{\N}$.
\end{df}

Note that, using~\eqref{e.xiequi2}, we have a natural identification between $\rer^{\exch,\pure}_{\N}$,
$\{\mu_{\alpha}\}_{\alpha \in [0,1/2]}$ from Lemma~\ref{l.exchextremal} and $[0,1/2]$ via
$$
(p,0,\ldots)\leftrightarrow \mu_{p/2} \leftrightarrow p/2
$$
with the first bijection also being given by $\Phi_{1/2}$.

\begin{thm}\label{t.mainp12}
The map $\Phi_{1/2}\,:\,\rer_{\N}^{\exch}\to {\rm EP}_{\N,1/2}^{\symm}$ is onto. Moreover,
for every $\mu\in {\rm EP}_{\N,1/2}^{\symm}$ there is a unique probablity measure $\rho_{\mu}$ 
on $\rer_{\N}^{\exch,\simple}$ such that 
\begin{equation}\label{e.mainexch2}
\mu=\Phi_{1/2}\left(\int_{\nu\in \rer_{\N}^{\exch,\simple}}\nu\,d\rho_{\mu}(\nu)\right).
\end{equation}
\noindent
(Hence ${\rm EP}_{\N,1/2}^{\symm}=\cp_{\N,1/2}^{\exch}$ is 
a simplex whose extremal elements is the set $\{\mu_{\alpha}\}_{\alpha \in [0,1]}$.)
On the other hand, the map $\Phi_{1/2}\,:\,{\rm RER}_{\N}^{\exch,\pure}\to {\rm EP}_{\N,1/2}^{\symm}$ 
is not onto. 
\end{thm}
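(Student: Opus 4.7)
The plan is to reduce the ``onto plus unique $\rho_\mu$'' assertions to the simplex description of ${\rm EP}_{\N,1/2}^{\symm}$ from Lemma~\ref{l.exchextremal}, and to handle the failure of onto-ness on $\rer_{\N}^{\exch,\pure}$ by producing a concrete counterexample. The central observation (already recorded in the identification displayed just before the theorem statement) is that the map
$$\iota:[0,1/2]\longrightarrow \rer_{\N}^{\exch,\simple},\qquad \alpha\mapsto \nu_{(2\alpha,0,\ldots)}$$
is a measurable bijection, and by~\eqref{e.simplebox05} the composition $\Phi_{1/2}\circ\iota$ coincides with the parametrization $\alpha\mapsto\mu_\alpha$ of $\operatorname{ext}({\rm EP}_{\N,1/2}^{\symm})$ from Lemma~\ref{l.exchextremal}.

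Given $\mu\in{\rm EP}_{\N,1/2}^{\symm}$, Lemma~\ref{l.exchextremal} (together with Choquet's theorem) supplies a unique probability measure $\tilde\rho_\mu$ on $[0,1/2]$ with $\mu=\int_{[0,1/2]}\mu_\alpha\,d\tilde\rho_\mu(\alpha)$. Take $\rho_\mu$ to be the pushforward of $\tilde\rho_\mu$ under $\iota$; then $\rho_\mu$ is supported on $\rer_{\N}^{\exch,\simple}$ and the affinity of $\Phi_{1/2}$ yields
$$\Phi_{1/2}\!\left(\int_{\rer_{\N}^{\exch,\simple}}\nu\,d\rho_\mu(\nu)\right)=\int \Phi_{1/2}(\nu)\,d\rho_\mu(\nu)=\int_{[0,1/2]}\mu_\alpha\,d\tilde\rho_\mu(\alpha)=\mu,$$
which simultaneously shows that $\mu$ is a color process and produces the required $\rho_\mu$. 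Uniqueness of $\rho_\mu$ follows by pushing any candidate forward through $\iota^{-1}$: two distinct choices would give two representations of $\mu$ in the simplex ${\rm EP}_{\N,1/2}^{\symm}$, contradicting Lemma~\ref{l.exchextremal}. The parenthetical claims are then immediate: $\cp_{\N,1/2}^{\exch}\subseteq{\rm EP}_{\N,1/2}^{\symm}$ is just the $0\ks 1$-symmetry at $p=1/2$, the reverse inclusion is what was just established, and the simplex structure and identification of extremals transfer verbatim from Lemma~\ref{l.exchextremal}.

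For the last claim, I would pick three rationally independent $\alpha_1,\alpha_2,\alpha_3\in(0,1/2)$ and set $\mu:=\tfrac13(\mu_{\alpha_1}+\mu_{\alpha_2}+\mu_{\alpha_3})$, so that the associated de Finetti mixing variable $\xi_\mu$ has exactly six distinct atoms, each of mass $1/6$. If we had $\mu=\Phi_{1/2}(\nu_{\bf p})$ for some paintbox ${\bf p}$, then by Lemma~\ref{l.xipaint} and the uniqueness clause in de Finetti's theorem, $\xi_\mu$ would be equal in law to $\tfrac12+\tfrac12\sum_i p_i Z_i$ for i.i.d.\ Rademacher $Z_i$. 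When only finitely many (say $k$) of the $p_i$'s are positive, every atom of this distribution has mass a positive integer multiple of $2^{-k}$, which cannot equal $1/6$ since $2^k$ is never divisible by $3$. When infinitely many $p_i$'s are positive, the tail sums satisfy $\Var(\sum_{i>n}p_iZ_i)=\sum_{i>n}p_i^2\to 0$, and a standard concentration-function argument for infinite convolutions of non-degenerate symmetric summands forces the distribution of $\sum_i p_iZ_i$ to have no atoms at all, again contradicting the existence of atoms in $\xi_\mu$. Hence no such paintbox exists, proving the desired negative statement. The only genuinely delicate step in the whole argument is this atom-free conclusion in the infinite-paintbox case.
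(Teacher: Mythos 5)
Your proof of the positive assertions (surjectivity onto ${\rm EP}_{\N,1/2}^{\symm}$, existence and uniqueness of $\rho_\mu$, and the simplex consequence) is correct and is essentially the argument in the paper: both rest on the identity $\Phi_{1/2}(\nu_{(2\alpha,0,\ldots)})=\mu_\alpha$ from~\eqref{e.simplebox05}, the simplex description of ${\rm EP}_{\N,1/2}^{\symm}$ in Lemma~\ref{l.exchextremal}, and the affinity of $\Phi_{1/2}$, with uniqueness transported through the bijection $\alpha\mapsto\nu_{(2\alpha,0,\ldots)}$.

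For the failure of surjectivity on $\rer_{\N}^{\exch,\pure}$ you take a genuinely different route, and it works, but it is heavier than necessary. The paper's counterexample is the mixing law $W$ with $P(W=0)=P(W=1)=3/8$, $P(W=1/2)=1/4$: since this has a three-point support, the paintbox is immediately forced to have exactly two nonzero entries (a paintbox in $S_k$ gives $\xi$ with support of size at least $k+1$, and an infinite paintbox gives infinite support), and then an elementary quarter-mass count yields a contradiction. Your six-atom example with masses $1/6$ requires two separate exclusions: the divisibility argument $6m\neq 2^k$ for finitely supported paintboxes (correct), and an atomlessness claim for infinitely supported ones, which is true but rests on an Erd\H{o}s--Littlewood--Offord concentration bound combined with $\sup_x P(X+Y=x)\le\sup_x P(X=x)$ for independent summands --- exactly the ``delicate step'' you flag. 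You could bypass it entirely: when infinitely many $p_i$ are positive, the topological support of $\frac12+\frac12\sum_i p_iZ_i$ is already infinite (the points $\frac12+\frac12\bigl(\sum_i p_i-2\sum_{i\le n}p_i\bigr)$ all lie in the support and are eventually distinct), so it cannot match a six-point distribution. With that simplification your argument becomes as elementary as the paper's; as written it is correct but imports a nontrivial external estimate where none is needed.
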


\pf We start with~\eqref{e.mainexch2}. As already observed right before Theorem~\ref{t.mainp12},
if ${\bf p}_{\alpha}= (2\alpha,0,\ldots)$ with $\alpha\in [0,1/2]$, then
\begin{equation}\label{e.degen}
\Phi_{1/2}(\nu_{{\bf p}_\alpha})=\mu_\alpha.
\end{equation}
Hence $\mu_\alpha \in \cp_{\N,1/2}^{\exch}$. Now pick an arbitrary $\mu\in {\rm EP}_{\N,1/2}^{\symm}$. 
By Lemma~\ref{l.exchextremal} there is a unique law $F_\mu$ on $[0,1/2]$ such that 
\begin{equation}\label{e.muident}
\mu=\int_{0}^{1/2} \mu_{\alpha} d F_{\mu}(\alpha).
\end{equation}
It follows from the affine property of $\Phi_{1/2}$ that
\begin{equation}\label{e.convpunch}
\Phi_{1/2}\left(\int_{0}^{1/2}  \nu_{{\bf p}_\alpha} d F_{\mu}(\alpha)\right)=\int_{0}^{1/2} \Phi_{1/2}( \nu_{{\bf p}_\alpha}) d F_{\mu}(\alpha)\stackrel{~\eqref{e.degen}}{=}\int_{0}^{1/2} \mu_\alpha d F_{\mu}(\alpha)\stackrel{~\eqref{e.muident}}{=}\mu,
\end{equation}
and~\eqref{e.mainexch2} follows. The uniqueness of $\rho_\mu$ follows from the comment before Theorem~\ref{t.mainp12}.

Next, we need to prove that there exist elements of ${\rm EP}_{\N,1/2}^{\symm}$ which can not be 
obtained as the image of some element of $\rer_{\N}^{\exch,\pure}$ under $\Phi_{1/2}$. Consider a 
paintbox ${\bf p}=(p_1,p_2,\ldots)$. Recall $\xi_{{\bf p},1/2}$ from~\eqref{e.xiequi2}. Then

\begin{equation}\label{e.phintrepr}
\Phi_{1/2}(\nu_{\bf p})=\int_{s=0}^1 \Pi_s d F_{\xi_{{\bf p},1/2}}(s),
\end{equation}
where $F_{\xi_{{\bf p},1/2}}$ is the law of $\xi_{{\bf p},1/2}$.
From~\eqref{e.xiequi2} and~\eqref{e.phintrepr}, we see that it suffices to find a random variable 
$W$ in $[0,1]$ which is symmetric around $1/2$ which can not be written as
\begin{equation}\label{e.zrepr}
W=\frac{1}{2}+\frac{1}{2}\sum_i p'_i Z_i,
\end{equation}
for any paintbox ${\bf p}'=(p_1',\ldots)$ where the $\{Z_i\}$'s are as in the proof of 
Proposition~\ref{p.Russ}. Take $W$ to be a random variable with $P(W=1)=P(W=0)=3/8$ 
and $P(W=1/2)=1/4.$  Now, if $W$ has the above representation, then we must have $p_i'\neq 0$ 
for $i=1,2$ and $p_i'=0$ for all $i\ge 3$, since $W$ has three possible values. However, we then obtain
\begin{equation}\label{e.contreq}
P(W=\frac{1}{2}+\frac{p_1'+p_2'}{2}) = P(W=\frac{1}{2}+\frac{p_1'-p_2'}{2}) =
\end{equation}
\begin{equation}
P(W=\frac{1}{2}+\frac{p_2'-p_1'}{2}) =P(W=\frac{1}{2}-\frac{p_1'+p_2'}{2})=1/4.
\end{equation}
Since we assumed that $P(W=1/2)>0$, we must have $p_2'-p_1'=0.$ However then according to~\eqref{e.contreq} 
we get $P(W=1/2)=1/2$, which is a contradiction. Hence $W$ does not have the 
representation~\eqref{e.zrepr} and the result follows. \qed

\begin{cor}\label{l.nuq1}
For any $\nu\in \rer_{\N}^{\exch}$ there is a unique probability measure $\rho=\rho_{\nu}$ on 
$[0,1]$ such that 
\begin{equation}\label{e.combsimple1}
\Phi_{1/2}(\nu)=\Phi_{1/2}\left(\int_{0}^1 \nu_{(p,0,\ldots)}\,d\rho(p)\right).
\end{equation}
\end{cor}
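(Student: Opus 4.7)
The plan is to deduce this directly from Theorem~\ref{t.mainp12} by identifying $\rer_{\N}^{\exch,\simple}$ with $[0,1]$ via the single nonzero paintbox coordinate, and then transferring the representing measure through this identification. No new ideas beyond those already in Theorem~\ref{t.mainp12} should be needed.

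First I would set $\mu := \Phi_{1/2}(\nu)$ and check that $\mu \in {\rm EP}_{\N,1/2}^{\symm}$. The marginal at every site equals $p=1/2$; exchangeability of $\mu$ follows from exchangeability of $\nu$ together with the commutation identity $\Phi_{1/2}(\sigma \circ \nu) = \sigma \circ \Phi_{1/2}(\nu)$ used in the proof of Proposition~\ref{p.imageprop}; and $0\ks 1$-symmetry is immediate since the cluster colorings are ${\rm Bernoulli}(1/2)$, whose law is invariant under flipping $0 \leftrightarrow 1$.

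Next, I would apply Theorem~\ref{t.mainp12} to $\mu$ to produce a unique probability measure $\rho_\mu$ on $\rer_{\N}^{\exch,\simple}$ such that
$$
\mu = \Phi_{1/2}\!\left(\int_{\nu' \in \rer_{\N}^{\exch,\simple}} \nu' \, d\rho_\mu(\nu')\right).
$$
By definition, every element of $\rer_{\N}^{\exch,\simple}$ is of the form $\nu_{(p,0,\ldots)}$ for a unique $p \in [0,1]$, so the map $\Psi\,:\,[0,1] \to \rer_{\N}^{\exch,\simple}$ given by $\Psi(p) = \nu_{(p,0,\ldots)}$ is a bijection, and it is measurable (in fact continuous in the natural topology on $\rer_{\N}$ generated by the finite-dimensional projections). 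Taking $\rho := \Psi^{-1}_* \rho_\mu$ (the pushforward under $\Psi^{-1}$) then yields the required representation on $[0,1]$; uniqueness of $\rho$ follows because any competitor $\rho'$ would give $\Psi_* \rho'$ as another measure on $\rer_{\N}^{\exch,\simple}$ satisfying the conclusion of Theorem~\ref{t.mainp12}, forcing $\Psi_* \rho' = \rho_\mu$ and hence $\rho' = \rho$.

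There is essentially no obstacle here; the only point that might require a brief remark is the measurability of $\Psi$, which is routine. All of the nontrivial content is already in Theorem~\ref{t.mainp12}, and the corollary is essentially a restatement of it in the parametrization of $\rer_{\N}^{\exch,\simple}$ by the unit interval.
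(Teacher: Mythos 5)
Your proposal is correct and follows essentially the same route as the paper: the authors likewise observe that $\Phi_{1/2}(\nu)\in {\rm EP}_{\N,1/2}^{\symm}$ and then invoke~\eqref{e.mainexch2} together with the identification of $\rer_{\N}^{\exch,\simple}$ with an interval (stated in the comment preceding Theorem~\ref{t.mainp12}) to transfer the unique representing measure to $[0,1]$. You simply spell out the verification of membership in ${\rm EP}_{\N,1/2}^{\symm}$ and the pushforward/uniqueness step in more detail than the paper does.
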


\pf We have that $\Phi_{1/2}(\nu)\in {\rm EP}_{\N,1/2}^{\symm}$. Now~\eqref{e.combsimple1} follows 
immediately from~\eqref{e.mainexch2} and the comment preceding Theorem~\ref{t.mainp12}. \qed

We have seen in the previous subsection that $\Phi_{1/2}$ is not injective. The following 
characterizes exactly the subset of $\rer_{\N}^{\exch}$ on which $\Phi_{1/2}$ is injective.

\begin{thm}\label{p.unprop}
If $\nu\in \rer_{\N}^{\exch}$, then $\nu$ is  $(\rer_{\N}^{\exch},1/2)$-unique if and only if 
$\nu\in \rer_{\N}^{\exch,\simple}$.
\end{thm}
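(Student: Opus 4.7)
The plan is to prove both directions using the simplex structure from Theorem~\ref{t.mainp12}, the injectivity of $\Phi_{1/2}$ on pure paintboxes (Proposition~\ref{p.Russ}), Kingman's Theorem~\ref{t.kingman}, and the identity~\eqref{e.purenonuniq05}.

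For the sufficient direction, suppose $\nu = \nu_{(p_0, 0, \ldots)} \in \rer_{\N}^{\exch,\simple}$, so that $\Phi_{1/2}(\nu) = \mu_{p_0/2}$ by~\eqref{e.degen}, an extremal point of the simplex ${\rm EP}_{\N,1/2}^{\symm}$. Any $\nu' \in \rer_{\N}^{\exch}$ with $\Phi_{1/2}(\nu') = \mu_{p_0/2}$ admits by Kingman a decomposition $\nu' = \int \nu_{\bf p}\,d\sigma({\bf p})$, and affineness of $\Phi_{1/2}$ gives $\mu_{p_0/2} = \int \Phi_{1/2}(\nu_{\bf p})\,d\sigma$. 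Since $\mu_{p_0/2}$ is extremal inside the simplex ${\rm EP}_{\N,1/2}^{\symm}$, this forces $\sigma$-a.e.\ $\Phi_{1/2}(\nu_{\bf p}) = \mu_{p_0/2}$. Proposition~\ref{p.Russ} then yields $\sigma$-a.e.\ ${\bf p} = (p_0, 0, \ldots)$, so $\nu' = \nu$.

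For the necessary direction, suppose $\nu \notin \rer_{\N}^{\exch,\simple}$. Apply Corollary~\ref{l.nuq1} to produce $\tilde{\nu} := \int \nu_{(p, 0, \ldots)}\,d\rho_\nu(p)$ with $\Phi_{1/2}(\tilde{\nu}) = \Phi_{1/2}(\nu)$. If $\tilde{\nu} \neq \nu$, take $\nu' := \tilde{\nu}$ and we are done. Otherwise $\nu = \tilde{\nu}$ is itself a mixture of simple paintboxes, and since $\nu \notin \rer_{\N}^{\exch,\simple}$, Kingman uniqueness forces $\rho_\nu$ to be non-Dirac. With $\pi := \rho_\nu \otimes \rho_\nu$, define
\[
\nu' := \int_{[0,1]^2} \nu_{((q_1 + q_2)/2,\, |q_1 - q_2|/2,\, 0, \ldots)}\,d\pi(q_1, q_2);
\]
the integrand is a valid paintbox since $(q_1+q_2)/2 \geq |q_1-q_2|/2 \geq 0$ and $(q_1+q_2)/2 + |q_1-q_2|/2 = \max(q_1, q_2) \leq 1$. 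Applying~\eqref{e.purenonuniq05} to each integrand shows that $\Phi_{1/2}$ of the integrand equals $\tfrac12[\Phi_{1/2}(\nu_{(q_1, 0, \ldots)}) + \Phi_{1/2}(\nu_{(q_2, 0, \ldots)})]$; integrating and using that both marginals of $\pi$ equal $\rho_\nu$ then yields $\Phi_{1/2}(\nu') = \Phi_{1/2}(\nu)$. To see $\nu' \neq \nu$, note that $\pi$ assigns positive mass to the off-diagonal (as $\rho_\nu$ is non-Dirac), so the Kingman representation of $\nu'$ places positive mass on non-simple paintboxes, whereas that of $\nu$ is supported on simple ones; Kingman uniqueness distinguishes them.

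The main obstacle is the sufficient direction, specifically the passage from $\mu_{p_0/2} = \int \Phi_{1/2}(\nu_{\bf p})\,d\sigma$ to the conclusion $\sigma$-a.e.\ $\Phi_{1/2}(\nu_{\bf p}) = \mu_{p_0/2}$. This invokes the simplex property of ${\rm EP}_{\N,1/2}^{\symm}$ established in Theorem~\ref{t.mainp12}, namely that an extremal point has only the trivial barycentric representation. One could alternatively avoid this appeal by analyzing $\xi_{{\bf p}, 1/2}$ from Lemma~\ref{l.xipaint} directly, observing that $\Phi_{1/2}(\nu_{\bf p}) = \mu_{p_0/2}$ forces $\sum_i p_i Z_i \in \{\pm p_0\}$ almost surely, which by the independence of the symmetric signs $Z_i$ can occur only when a single $p_i$ is nonzero and equal to $p_0$.
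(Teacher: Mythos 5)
Your proposal is correct, and it reaches the paper's conclusion by a somewhat different route in both directions. For sufficiency, the paper argues directly on the support of the de Finetti mixing variable: $\xi_{\nu_{(p_0,0,\ldots)}}$ is supported on the two-point set $\{\tfrac12+\tfrac{p_0}{2},\tfrac12-\tfrac{p_0}{2}\}$, while the $\xi$ of every other element of $\rer_{\N}^{\exch}$ has part of its support outside this set; the ``alternative'' you sketch at the end is exactly this argument. Your primary argument instead pushes the Kingman representing measure $\sigma$ of a competitor forward under $\Phi_{1/2}$ and uses extremality of $\mu_{p_0/2}$ in ${\rm EP}_{\N,1/2}^{\symm}$ (Lemma~\ref{l.exchextremal}) to force the pushforward to be $\delta_{\mu_{p_0/2}}$, finishing with Proposition~\ref{p.Russ}. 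This is valid, but the step you flag as the main obstacle does not in fact rely on the simplex property: that an extreme point of a metrizable compact convex set admits only the trivial representing measure $\delta_x$ is Bauer's theorem and holds for arbitrary compact convex sets; it is standard but not proved in the paper, so you should cite it or supply the short argument. For necessity, both proofs reduce via Corollary~\ref{l.nuq1} to the case $\nu=\int\nu_{(p,0,\ldots)}\,d\rho_\nu(p)$ with $\rho_\nu$ non-Dirac and then merge pairs of simple paintboxes into two-block paintboxes using~\eqref{e.purenonuniq05}. The paper carries this out only on two disjoint intervals $I,J$ of positive $\rho_\nu$-mass, which requires the normalization $\psi(J)\ge\psi(I)$ and a three-term decomposition of $\psi$; your globally symmetrized mixture over $\rho_\nu\otimes\rho_\nu$ with the paintbox $((q_1+q_2)/2,\,|q_1-q_2|/2,\,0,\ldots)$ achieves the same effect more cleanly and with no case analysis, and your verification that $\nu'\neq\nu$ (Kingman uniqueness plus positive off-diagonal mass producing non-simple paintboxes) matches the paper's.
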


\pf 
If $\nu=\nu_{(p,0,\ldots)}$, then the support of $\xi_\nu$ is 
$\{\frac{1}{2}+\frac{p}{2},\frac{1}{2}-\frac{p}{2}\}$. The 
$\xi$ corresponding to every other $\nu'\in \rer_{\N}^{\exch,\pure}$ has part of its support outside of 
the above set. Hence any $\nu'\in \rer_{\N}^{\exch}$ other than $\nu$ has its
corresponding $\xi$ having part of its support outside of this set. It follows that 
$\nu$ is $(\rer_{\N}^{\exch},1/2)$-unique.

For the other direction, fix $\nu\in \rer_{\N}^{\exch}\setminus \rer_{\N}^{\exch,\simple}$.  By 
Corollary~\ref{l.nuq1} and the fact that $\nu$ is not simple, it suffices to consider the case when we can write
\begin{equation}\label{e.notpointmass}
\nu=\int_{p=0}^1 \nu_{(p,0,\ldots)}d\psi(p),
\end{equation}
for some probability measure $\psi$ on $[0,1]$ where $\psi\neq \delta_t$ for any $t\in [0,1]$. Then we can 
find constants $a_1,a_2,b_1,b_2$ such that $0\le a_1<a_2<b_1<b_2\le 1$, $\psi([a_1,a_2])>0$ and 
$\psi([b_1,b_2])>0$. Let $I=[a_1,a_2]$ and $J=[b_1,b_2]$ and $K=[0,1]\setminus (I\cup J)$. For any 
$T\subset [0,1]$ such that $\psi(T)>0$ let $\tilde{\psi}_T:=\psi_T/\psi(T)$ where $\psi_T$ stands for 
the restriction of $\psi$ to $T$.

Without loss of generality, assume that $\psi(J)\ge \psi(I)$. Observe that 

\begin{eqnarray*}\label{e.psieq}
\lefteqn{\psi=\psi(I)\tilde{\psi}_I+\psi(J)\tilde{\psi}_J+\psi(K)\tilde{\psi}_K}\\ & & =\psi(K)\tilde{\psi}_K +(\psi(J)-\psi(I))\tilde{\psi}_J+2\psi(I)(\tilde{\psi}_I/2+\tilde{\psi}_J/2).
\end{eqnarray*}

Hence,

\begin{eqnarray*}
\lefteqn{\nu=\psi(K)\int_{p\in K}\nu_{(p,0,\ldots)}d\tilde{\psi}_K(p)+(\psi(J)-\psi(I))\int_{p\in J}\nu_{(p,0,\ldots)}d\tilde{\psi}_J(p)}\\ & & +2\psi(I)\left(\frac{1}{2}\int_{p\in I}\nu_{(p,0,\ldots)}d\tilde{\psi}_I(p)+\frac{1}{2}\int_{p\in J}\nu_{(p,0,\ldots)}d\tilde{\psi}_J(p)\right).
\end{eqnarray*}

We now focus on the last term in the sum above. Let 

$$
\rho=\frac{1}{2}\int_{p\in I}\nu_{(p,0,\ldots)}d\tilde{\psi}_I(p)+\frac{1}{2}\int_{p\in J}\nu_{(p,0,\ldots)}d\tilde{\psi}_J(p),
$$ 
and observe that $\rho\in \rer_{\N}^{\exch}$ since $\tilde{\psi}_I$ is a probability measure on $I$ and 
$\tilde{\psi}_J$ is a probability measure on $J$. Since $\Phi_{1/2}$ is affine and $\psi(I)>0$, we will be done 
if we can find $\rho'\in \rer_{\N}^{\exch}$ such that $\rho'\neq \rho$ but $\Phi_{1/2}(\rho)=\Phi_{1/2}(\rho')$. 
We let
$$
\rho'=\int_{p_1\in J}\int_{p_2\in I} \nu_{\left((p_1+p_2)/2,(p_1-p_2)/2,0,\ldots\right)} d\tilde{\psi}_I(p_2)d\tilde{\psi}_J(p_1),
$$
where we recall that $p_1>p_2$ for $p_1\in J$ and $p_2\in I$. Clearly, $\rho'\in \rer_{\N}^{\exch}$. 
Moreover, $\rho'\neq \rho$ since $\rho'$ assigns measure $1$ to those 
$\nu_{(q_1,q_2,\ldots)}\in \rer_{\N}^{\exch,\pure}$ which have $q_2\neq 0$. Since $\Phi_{1/2}$ is affine, we get
\begin{eqnarray}\label{e.affinfubini}
\lefteqn{\Phi_{1/2}(\rho')=\int_{p_1\in J}\int_{p_2\in I}\Phi_{1/2}( \nu_{\left((p_1+p_2)/2,(p_1-p_2)/2,0,\ldots\right)} )d\tilde{\psi}_I(p_2)d\tilde{\psi}_J(p_1)}\nonumber \\ & & \stackrel{~\eqref{e.purenonuniq05}}{=} \frac{1}{2}\int_{p_1\in J}\int_{p_2\in I}\Phi_{1/2}( \nu_{\left(p_1,0,\ldots\right)} )+\Phi_{1/2}( \nu_{\left(p_2,0,\ldots\right)} ) d\tilde{\psi}_I(p_2)d\tilde{\psi}_J(p_1)\nonumber\\ & & =\frac{1}{2}\int_{p_1\in J}\Phi_{1/2}( \nu_{\left(p_1,0,\ldots\right)} )d\tilde{\psi}_J(p_1)+\frac{1}{2}\int_{p_2\in I}\Phi_{1/2}( \nu_{\left(p_2,0,\ldots\right)} )d\tilde{\psi}_I(p_2) \\ & & =\Phi_{1/2}(\rho). \nonumber
\end{eqnarray}
\qed

\subsection{The case $p\neq 1/2$}\label{s.NONsymmcase}

If $p=1/2$, we have seen in the previous subsection that the map 
$\Phi_p \,:\,\rer_{\N}^{\exch}\to {\rm CP}_{\N,p}^{\exch}$
is ``highly non-injective''. In this subsection, we present evidence that, for $p\neq 0,1/2,1$, $\Phi_p$ might be
injective, although we do not manage to prove such a result.

We first introduce some notation. Let $S_0=\{\nu_{(0,\ldots)}\}$ and for $k\ge 1$, define 
$$
S_k:=\{\nu_{\bf p}\in \rer_{N}^{\exch,\pure}\,:\,{\bf p}=(p_1,\ldots,p_k,0,\ldots)\,\mbox{ with }\,p_k>0\},
$$ 
and 
$$
S_{\infty}:=\{\nu_{\bf p}\in \rer_{N}^{\exch,\pure}\,:\,{\bf p}=(p_1,\ldots)\,\mbox{ with }\,p_i>0\,\,\,\forall i\}.
$$ 
Then the $S_k$'s are disjoint and $\rer_{\N}^{\exch,\pure}=\cup_{0\le k\le\infty} S_k$.

The following result from \cite{MR2538010} (see Theorem 1.3 there) tells us what needs to be verified in 
order to conclude that $\Phi_p$ is injective.

\begin{thm}\label{t.czech}
If $\phi$ is a continuous affine map from a compact convex set $X$ to a simplex $Y$
such that $\phi(\emph{ext}(X))\subseteq \emph{ext}(Y)$ and $\phi$ is injective on
$\emph{ext}(X)$, then $\phi$ is injective.
\end{thm}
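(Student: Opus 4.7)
The plan is to use Choquet's Theorem (Theorem~\ref{t.choquet}) to translate the question from points of $X$ to their representing measures on $\emph{ext}(X)$, push these measures forward to $Y$, and then invoke the simplex property of $Y$. First, take $x_1,x_2\in X$ with $\phi(x_1)=\phi(x_2)$ and choose probability measures $\mu_1,\mu_2$ on $\emph{ext}(X)$ representing $x_1$ and $x_2$. Consider the pushforwards $\phi_*\mu_1,\phi_*\mu_2$, which are Borel probability measures on $Y$ and, by the hypothesis $\phi(\emph{ext}(X))\subseteq\emph{ext}(Y)$, are concentrated on $\emph{ext}(Y)$.

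For any continuous affine $g\colon Y\to \R$, the composition $g\circ\phi$ is continuous and affine on $X$, so
\begin{equation*}
\int g\,d(\phi_*\mu_i)=\int g\circ\phi\,d\mu_i=g(\phi(x_i)),
\end{equation*}
which identifies $\phi_*\mu_i$ as a representing measure for $\phi(x_i)$ supported on $\emph{ext}(Y)$. Since $Y$ is a simplex and $\phi(x_1)=\phi(x_2)$, uniqueness of the representing measure on $\emph{ext}(Y)$ forces $\phi_*\mu_1=\phi_*\mu_2$. It remains to deduce $\mu_1=\mu_2$, for then uniqueness of the barycenter in Choquet's Theorem applied to $X$ gives $x_1=x_2$.

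The restriction $\phi|_{\emph{ext}(X)}\colon\emph{ext}(X)\to\emph{ext}(Y)$ is continuous and, by hypothesis, injective; and since $X$ is a metrizable compact convex set, $\emph{ext}(X)$ is a $G_\delta$ subset and in particular a Polish space (and similarly for $\emph{ext}(Y)$). By the Lusin--Souslin theorem, a continuous injection between Polish spaces is a Borel isomorphism onto its (Borel) image, so pulling back under this isomorphism turns $\phi_*\mu_1=\phi_*\mu_2$ into $\mu_1=\mu_2$.

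The hard part of the argument is this last measure-theoretic descent: it tacitly uses metrizability (implicit in the version of Choquet's Theorem recalled in the paper) to equip $\emph{ext}(X)$ with a standard Borel structure and to ensure that a measure concentrated there is determined by its pushforward under a continuous injection. Without metrizability, the natural substitute would be to work with maximal measures in the Bishop--de Leeuw formulation of Choquet theory and to verify that maximality is preserved by affine maps sending extreme points to extreme points.
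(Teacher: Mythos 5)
The paper does not actually prove Theorem~\ref{t.czech}; it is quoted verbatim from the reference \cite{MR2538010} (Theorem 1.3 there), so there is no internal proof to compare against. Your argument is a correct, self-contained proof in the metrizable setting, and it follows the natural Choquet-theoretic route one would expect: represent $x_1,x_2$ by measures on $\emph{ext}(X)$, observe that the pushforwards are representing measures for $\phi(x_1)=\phi(x_2)$ concentrated on $\emph{ext}(Y)$, use the simplex property of $Y$ to equate the pushforwards, and then descend to $\mu_1=\mu_2$ via the Borel isomorphism furnished by Lusin--Souslin, after which separation of points by continuous affine functionals gives $x_1=x_2$. Each step checks out: $\emph{ext}(X)$ is a $G_\delta$ in a compact metric space, hence Polish, so the continuous injection $\phi|_{\emph{ext}(X)}$ has Borel image and Borel inverse, and for Borel $B\subseteq\emph{ext}(X)$ one has $\mu_i(B)=\phi_*\mu_i(\phi(B))$ because $\phi^{-1}(\phi(B))\cap\emph{ext}(X)=B$. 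You are also right to flag metrizability as the one hypothesis you are silently importing: the theorem as stated in the paper mentions only ``compact convex set'' and ``simplex,'' but since the paper's Theorem~\ref{t.choquet} and its definition of simplex are themselves restricted to metrizable sets (and the intended application, $\rer_{\N}^{\exch}\to\cp_{\N,p}^{\exch}$, is metrizable), this is the right level of generality; in the non-metrizable case one would indeed have to pass to maximal measures \`a la Bishop--de Leeuw, as you indicate. No gaps.
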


It is not so difficult to show (and left to the reader) that if $x\in \rm{ext}(X)$ is $\phi$-unique 
(meaning $\phi(x)\neq \phi(y)$ for all $y\neq x$), then $\phi(x)\in \rm{ext}(Y)$.
Hence, in our context, to show injectivity using Theorem~\ref{t.czech}, one needs, in addition to
Proposition~\ref{p.Russ}, to show that, for $p\neq 1/2$, (1) 
$\cp_{\N,p}^{\exch}$ is a simplex and (2) for all $0\le k\le \infty$, all elements of $S_k$ are 
$(\rer_{\N}^{\exch},p)$-unique. We are not able to show (1) (but note we have seen this is true for 
$p= 1/2$) and in the rest of the subsection, we show (2) for $S_0$, $S_1$, $S_2$ and a subset of $S_3$.

Observe first that $\Phi_p(\nu_{(0,\ldots)})=\Pi_p$, so it is easy to see that $\nu_{(0,\ldots)}$ is 
$(\rer_{\N}^{\exch},p)$-unique for every $p\in (0,1)$. The following three propositions cover the cases
$k=1,2$ and part of $k=3$.

\begin{prop}\label{p.s1unique}
Suppose that $\nu\in S_1$. Then $\nu$ is $(\rer_{\N}^{\exch},p)$-unique for every $p\in (0,1)\setminus \{1/2\}$. 
(This is also true for $p=1/2$ by Theorem~\ref{p.unprop}.)
\end{prop}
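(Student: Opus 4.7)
The plan is to combine Kingman's Theorem with the explicit description of the de Finetti representing distribution from Lemma~\ref{l.xipaint}. Let $\nu = \nu_{(p_1,0,\ldots)}$ with $p_1 > 0$. By equation~\eqref{e.simplebox}, the color process $\Phi_p(\nu)$ has de Finetti representing measure $F_{\xi_{\nu,p}}$ concentrated on the two points $a := (1-p_1)p$ and $b := p_1 + (1-p_1)p$, with respective weights $1-p$ and $p$.

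Suppose now $\nu' \in \rer_{\N}^{\exch}$ satisfies $\Phi_p(\nu') = \Phi_p(\nu)$. By Kingman's Theorem, there is a unique $\rho$ with $\nu' = \int \nu_{\bf q}\,d\rho(\nu_{\bf q})$. Affineness of $\Phi_p$ and uniqueness in de Finetti's Theorem together identify the representing distribution of $\Phi_p(\nu')$ as the $\rho$-mixture $\int F_{\xi_{{\bf q},p}}\,d\rho(\nu_{\bf q})$. This mixture must coincide with $F_{\xi_{\nu,p}}$, which is supported on $\{a,b\}$; standard measure theory then yields that for $\rho$-a.e.\ $\nu_{\bf q}$ the distribution of $\xi_{{\bf q},p}$ is itself supported in $\{a,b\}$.

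The remaining task is a case analysis identifying which paint-boxes ${\bf q}$ yield a $\xi_{{\bf q},p}$ with support of size at most $2$. Using Lemma~\ref{l.xipaint}, write $\xi_{{\bf q},p} = c_{\bf q} + \tfrac12\sum_i q_i Z_i$. If ${\bf q}$ has two indices $i \ne j$ with $q_i, q_j > 0$, then conditioning on all $Z_k$ with $k \ne i$ shows that $\xi_{{\bf q},p}$ takes the two values $M \pm q_i/2$ where $M$ is a function of the remaining $Z_k$'s; requiring the unconditional support to have only two points forces $M$ to be a.s.\ constant, contradicting $q_j > 0$. Hence $\rho$-a.e.\ ${\bf q}$ lies in $S_0 \cup S_1$. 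For ${\bf q} \in S_0$ one has $\xi_{{\bf q},p} = p$ deterministically, and neither $p = a$ nor $p = b$ can hold (each would force $p_1 = 0$ or $p = 1$). For ${\bf q} = (q_1,0,\ldots) \in S_1$ the support $\{(1-q_1)p,\, q_1 + (1-q_1)p\}$ must equal $\{a,b\}$, and the only consistent matching is $q_1 = p_1$: the swapped matching leads to $q_1 p = -p_1(1-p) < 0$.

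Combining these steps, $\rho$ is the point mass at $(p_1,0,\ldots)$, so $\nu' = \nu$. The argument in fact works for every $p \in (0,1)$; the excluded value $p = 1/2$ is also covered by Theorem~\ref{p.unprop}. I do not foresee a serious obstacle; the only point requiring care is the conditioning argument ruling out paint-boxes with two or more positive coordinates, which must be stated uniformly across finite and infinite support (hence via the conditional-support argument rather than by counting atoms of a finite truncation).
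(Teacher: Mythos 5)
Your proposal is correct and follows essentially the same route as the paper: decompose $\nu'$ via Kingman's Theorem, pass to the de Finetti representing measures, and show that the only paint-boxes whose $\xi_{{\bf q},p}$ can be supported in the two-point set $\{(1-p_1)p,\,p_1+(1-p_1)p\}$ are $\nu_{(p_1,0,\ldots)}$ itself (the paper rules out $S_k$, $k\ge 2$, by the observation that $|{\rm supp}(\xi_{{\bf q},p})|\ge k+1$, which your conditioning argument proves in the form needed). As you note, the argument never uses $p\neq 1/2$, consistent with the parenthetical remark in the statement.
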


{\bf Proof.} By symmetry we assume that $p\in (1/2,1)$. Fix $\nu=\nu_{(s,0,\ldots)}\in S_1$.  Suppose that 
$\tilde{\nu}\in \rer_{\N}^{\exch}$ is such that $\Phi_p(\nu)=\Phi_p(\tilde{\nu})$. Recall that by 
Kingman's theorem, there is a unique probability measure $\rho=\rho_{\tilde{\nu}}$ on $\rer_{\N}^{\exch,\pure}$ 
such that 
\begin{equation}\label{e.unprop10}
\tilde{\nu}=\int_{\nu_{\bf p}\in \rer_{\N}^{\exch,\pure}} \nu_{\bf p} d\rho(\nu_{\bf p}).
\end{equation}
Hence, we will be done if we show that $\Phi_p(\nu)=\Phi_p(\tilde{\nu})$ implies that $\rho=\delta_{\nu}$. 
For our fixed $\nu\in S_1$, we have, using~\eqref{e.xiequiv}, that

\begin{equation}\label{e.xi3support2}
\xi_{(s,0,\ldots),p}=\left\{\begin{array}{lll} 
y_1:=p+ s (1-p)&{\rm w.p.}&p\\ y_2:=p-sp &{\rm w.p.}&1-p
\end{array}\right.
\end{equation}

Observe that if $\paintbox\in S_k$, then $|{\rm supp}(\xipp)|\ge k+1$. 
Hence $\rho(\cup_{k\ge 2}S_k)=0$. Using~\eqref{e.xi3support2}, we see that if $\paintbox \in S_1$ then in 
order to have ${\rm supp}(\xi_{{\bf p},p})\subseteq {\rm supp}(\xi_{(s,0,\ldots)})$ we must have 
$\paintbox=\nu$. Hence $\rho(S_1\setminus \{\nu\})=0$. Finally, since $y_2<p<y_1$ for every $p\in (0,1)$, 
it follows that $\rho(S_0)=0$. Hence $\rho=\delta_{\nu}$ as claimed.\qed

\begin{prop}\label{p.uniqueprop1}
Suppose that $\nu\in S_2$. Then $\nu$ is $(\rer_{\N}^{\exch},p)$-unique for every $p\in (0,1)\setminus \{1/2\}$. 
(This is false for $p=1/2$ by Theorem~\ref{p.unprop}.)
\end{prop}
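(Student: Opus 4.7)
The plan is to mimic the proof of Proposition~\ref{p.s1unique}. Fix $\nu = \nu_{(p_1,p_2,0,\ldots)} \in S_2$, set $s := 1-p_1-p_2$, and suppose $\tilde{\nu} \in \rer_{\N}^{\exch}$ satisfies $\Phi_p(\tilde{\nu}) = \Phi_p(\nu)$. By Kingman's theorem I may write $\tilde{\nu} = \int \nu_{\bf q}\, d\rho(\nu_{\bf q})$ for a unique $\rho$ on $\rer_{\N}^{\exch,\pure}$, and the goal is to show $\rho = \delta_\nu$. Since $\Phi_p$ is affine and de Finetti's representation is unique, the hypothesis forces $\xi_{\tilde\nu,p}$ and $\xi_{\nu,p}$ to have the same law. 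By Lemma~\ref{l.xipaint}, $\xi_{\nu,p}$ is supported on
\[
A := \Big\{sp + \sum_{i \in T} p_i : T \subseteq \{1,2\}\Big\},
\]
of size $4$ when $p_1 > p_2$ and $3$ when $p_1 = p_2$. Hence $\rho$-almost every $\nu_{\bf q}$ has $\mathrm{supp}(\xi_{{\bf q},p}) \subseteq A$.

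The next step is a classification of pure paintboxes $\nu_{\bf q}$ with this support property. For $\nu_{\bf q} \in S_k$ the chain $0 < q_k < q_k+q_{k-1} < \cdots < q_1+\cdots+q_k$ forces $|\mathrm{supp}(\xi_{{\bf q},p})| \ge k+1$, so $\rho$ concentrates on $S_0 \cup S_1 \cup S_2 \cup S_3$. For $S_2$, matching $\min A = sp$ with $\max A = sp + p_1 + p_2$ forces $\{q_1,q_2\}=\{p_1,p_2\}$, so $\nu$ itself is the only candidate. For $S_1$, enumerating the pairs $(i,j)$ with $sp = v_i$ and $sp + q_1 = v_j$ and discarding the ``interior-to-interior'' case (which would force $p = 1/2$) leaves only $\nu_{(p_1+p_2,0,\ldots)}$. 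For $S_3$, requiring the eight subset-sums $\sum_{i \in T} q_i$ to fit into a set of size at most $4$, under the constraint $q_1 \ge q_2 \ge q_3 > 0$, forces $q_1=q_2=q_3$ (the only alternative candidate $q_1 = q_2+q_3$ still produces $7$ distinct sums, and no further coincidence consistent with the ordering can repair this); matching to $A$ then requires $p_1 = 2p_2$ and $q_i = p_2$. Finally, the only $S_0$ paintbox $\nu_{(0,\ldots)}$, with $\xi \equiv p$, enters only when $p \in A$, i.e.\ when $p = p_j/(p_1+p_2)$ for some $j$.

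With $\rho$ thus supported on at most four explicit atoms, I write
\[
\rho = a_0 \delta_\nu + a_1\delta_{\nu_{(p_1+p_2,0,\ldots)}} + a_3\delta_{\nu_{(p_2,p_2,p_2,0,\ldots)}} + a_*\delta_{\nu_{(0,\ldots)}},
\]
the last two terms present only under the corresponding parameter coincidences, and solve the linear system obtained by matching $\mathrm{Law}(\xi_{\tilde\nu,p})$ to $\mathrm{Law}(\xi_{\nu,p})$ atom by atom on $A$. At the two interior atoms of $A$, $\nu$ contributes $p(1-p)$ each, the $S_3$ paintbox contributes $3p(1-p)^2$ and $3p^2(1-p)$, $\nu_{(p_1+p_2,0,\ldots)}$ contributes $0$, and $\nu_{(0,\ldots)}$ contributes at only one of the two. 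Combined with the non-negativity $a_3, a_* \ge 0$ inherited from $\rho$ being a probability measure, the interior equations produce a relation proportional to $1-2p$ between $a_3$ and $a_*$, forcing both to vanish since $p \ne 1/2$. The remaining matching equations then give $a_0 = 1$ and $a_1 = 0$. The case $p_1 = p_2$ runs in parallel, with $A$ of size $3$ and no $S_3$ paintbox to worry about.

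The main obstacle will be the $S_3$ classification: rigorously verifying, under $q_1 \ge q_2 \ge q_3 > 0$, that only $q_1 = q_2 = q_3$ allows the $2^3$ subset-sums to occupy at most four distinct values. This is elementary but requires enumerating the admissible non-trivial coincidences $q_1 = q_2$, $q_2 = q_3$, $q_1 = q_2 + q_3$ (the others are ruled out by the ordering), and checking that no combination apart from the all-equal case reduces the distinct-value count to four. The hypothesis $p \ne 1/2$ is used essentially twice: to discard the spurious middle-pair $S_1$ candidate in the classification step, and to kill the $S_3$ (and hence $S_0$) weights in the final linear system through the factor $1-2p$.
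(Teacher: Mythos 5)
Your proposal follows essentially the same route as the paper's proof: Kingman's decomposition plus uniqueness in de Finetti's theorem reduces everything to classifying which paintboxes have ${\rm supp}(\xi_{{\bf q},p})\subseteq{\rm supp}(\xi_{(p_1,p_2,0,\ldots),p})$, which yields the same four candidates (the $S_0$ element, $\nu_{(p_1+p_2,0,\ldots)}$, $\nu$ itself, and the all-equal $S_3$ paintbox), after which a linear system at the support atoms is killed by the factor $2p-1$. The one place you are lighter than the paper is the $S_2$ classification: the subcase $q_1=q_2$ with $p_1\neq p_2$ (a three-point support sitting inside a four-point one) is not settled by matching the extremes alone and requires the same gap-spacing comparison you invoke for $S_3$; this is routine and is carried out in the paper as its Case 3(iv).
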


{\bf Proof.} The strategy of this proof will be the same as that of the proof of Proposition~\ref{p.s1unique}, 
but more involved since there will be more cases to deal with. By symmetry we assume that $p\in (1/2,1)$. 
We fix $\nu=\nu_{(p_1,p_2,0,\ldots)}\in S_2$ and suppose that $\tilde{\nu}\in \rer_{\N}^{\exch}$ is such 
that $\Phi_p(\nu)=\Phi_p(\tilde{\nu})$. Let $\rho=\rho_{\tilde{\nu}}$ be the unique probability measure 
on $\rer_{\N}^{\exch,\pure}$ such that 
\begin{equation}\label{e.unprop1}
\tilde{\nu}=\int_{\nu_{\bf p}\in \rer_{\N}^{\exch,\pure}} \nu_{\bf p} d\rho(\nu_{\bf p}).
\end{equation}
We will show that $\Phi_p(\nu)=\Phi_p(\tilde{\nu})$ implies that $\rho=\delta_{\nu}$. 
Again, we recall the random variable $\xipp$ from Lemma~\ref{l.xipaint}, and we will proceed by looking 
at the support of this random variable. For our fixed $\nu\in S_2$, we have, using~\eqref{e.xiequiv}, that
\begin{equation}\label{e.xisupport}
\xi_{(p_1,p_2,0,\ldots),p}=\left\{\begin{array}{lllll}
z_1:=p+(p_1+p_2)(1-p) &\mbox{ w.p. } &p^2\\ z_2:= p+p_1(1-p)-p_2 p &\mbox{ w.p. } &p(1-p)\\ z_3:= p+p_2(1-p)-p_1 p &\mbox{ w.p. } &p(1-p)\\z_4:= p-(p_1+p_2)p &\mbox{ w.p. }&(1-p)^2
\end{array} \right. 
\end{equation}
In~\eqref{e.xisupport}, we have ordered the elements of 
${\rm supp}(\xi_{(p_1,p_2,0,\ldots),p})$ in decreasing order, with the largest element on the first line.

Now, we will look at the elements in $S_0,S_1,\ldots$ in order to find those $\paintbox$ for which $\xipp$ 
has its support contained in  ${\rm supp}(\xi_{(p_1,p_2,0,\ldots),p})$. The measure $\rho$ must be supported 
on such $\xipp$'s. 

{\bf Case 1:} First we look at the single element of $S_0$, namely $\nu_{(0,\ldots)}$. We have 
\begin{equation}\label{e.xi1support}
\xi_{(0,\ldots),p}=p\,\, \mbox{ w.p. }1.
\end{equation} 
Since $p>1/2$ we have $z_1>p$ and $z_3,z_4<p$. Hence we see that if 
${\rm supp}(\xi_{(0,\ldots),p})\subseteq {\rm supp}(\xi_{(p_1,p_2,0,\ldots),p})$, then $p=z_2$ so 
that $p_1/p=p_2/(1-p)$. From this we conclude that
\begin{equation}\label{e.keep3}
\mbox{If }\rho(\nu_{(0,\ldots)})>0,\mbox{ then }p_1/p=p_2/(1-p) \mbox{ and } p=z_2.
\end{equation}

{\bf Case 2: } Assume that $\nu_{(s,0,\ldots)}\in S_1$ and recall that
\begin{equation}\label{e.xi3support}
\xi_{(s,0,\ldots),p}=\left\{\begin{array}{lll} 
y_1:=p+ s (1-p)&\,\, {\rm w.p.}&p\\ y_2:=p-sp &\,\, {\rm w.p.}&1-p
\end{array}\right.
\end{equation}
Assume now that ${\rm supp}(\xi_{(s,0,\ldots),p})\subseteq {\rm supp}(\xi_{(
p_1,p_2,0,\ldots),p})$. Since $z_3,z_4<p$ and $y_1>p$, we have $y_1=z_1$ or $y_1=z_2$.  The former 
case implies that $s=p_1+p_2$. If we instead assume that $y_1=z_2$ then we must also have $y_2=z_3$ or 
$y_2=z_4$. If $y_2=z_4$, then $s=p_1+p_2$. On the other hand, $y_1=z_2$ and $y_2=z_3$ imply 
after a short calculation that $p=1/2$, which is a contradiction. Hence we can conclude

\begin{equation}\label{e.xi6support}
\rho(S_1\setminus \{\nu_{(p_1+p_2,0,\ldots)}\})=0.
\end{equation}
Also observe that from the above it follows that

\begin{equation}\label{e.keep1}
\mbox{$s=p_1+p_2$ implies that $y_1=z_1$ and $y_2=z_4$.}
\end{equation}

{\bf Case 3:} Assume that $\nu_{(s_1,s_2,0,\ldots)}\in S_2$. We consider four subcases. 

{\it Case 3(i):} Suppose that $s_1\neq s_2$ and $p_1\neq p_2$. Then 
${\rm supp}(\xi_{(s_1,s_2,0,\ldots),p})\subseteq {\rm supp}(\xi_{(p_1,p_2,0,\ldots),p})$ implies that, since 
both supports have four elements, 
$$
{\rm supp}(\xi_{(s_1,s_2,0,\ldots),p})={\rm supp}(\xi_{(p_1,p_2,0,\ldots),p}).
$$ 
From this it is easy to conclude (using~\eqref{e.xisupport}) that $s_1=p_1$ and $s_2=p_2$ so that 
$\nu_{(s_1,s_2,0,\ldots)}= \nu_{(p_1,p_2,0,\ldots)}$. 

{\it Case 3(ii)}: Suppose that $s_1=s_2$ and $p_1=p_2$. Then, arguing similarly as in case 3(i), 
${\rm supp}(\xi_{(s_1,s_2,0,\ldots),p})\subseteq {\rm supp}(\xi_{(p_1,p_2,0,\ldots),p})$  implies that 
$\nu_{(s_1,s_2,0,\ldots)}= \nu_{(p_1,p_2,0,\ldots)}$.

{\it Case 3(iii)}: Suppose that $s_1\neq s_2$ and $p_1=p_2$. Then 
$|{\rm supp}(\xi_{(p_1,p_2,0,\ldots),p})|=3$ while $|{\rm supp}(\xi_{(s_1,s_2,0,\ldots),p})|=4$, and so 
${\rm supp}(\xi_{(s_1,s_2,0,\ldots),p})$ cannot be a subset of ${\rm supp}(\xi_{(p_1,p_2,0,\ldots),p})$.

{\it Case 3(iv)}: Suppose that $s_1=s_2$ and $p_1\neq p_2$. Then using~\eqref{e.xisupport} we see that
\begin{equation}\label{e.xi2support}
\xi_{(s_1,s_2,0,\ldots),p}=\left\{\begin{array}{lll} 
q_1:=p+2 s_1(1-p)&{\rm w.p.}&p^2\\ q_2:=p+s_1(1-2p)&{\rm w.p.}&2p(1-p)\\q_3:= p-2 s_1 p &{\rm w.p.}&(1-p)^2
\end{array}\right.\end{equation}
Assume that ${\rm supp}(\xi_{(s_1,s_2,0,\ldots),p})\subseteq {\rm supp}(\xi_{(p_1,p_2,0,\ldots),p})$.
Since $q_1=z_1$ or $q_3=z_4$, we have $2s_1=p_1+p_2$.
Using~\eqref{e.xisupport} and~\eqref{e.xi2support} we see that $(q_1-q_2,q_2-q_3)=(s_1,s_1)$ and 
$(z_1-z_2,z_2-z_3,z_3-z_4)=(p_2,p_1-p_2,p_2)$. Therefore, since $\{q_1,q_2\}=\{z_1,z_2\}$
or $\{q_2,q_3\}=\{z_3,z_4\}$, we have $s_1=p_2$, contradicting $2s_1=p_1+p_2$ since $p_1\neq p_2$.
Hence, this case can not occur either.

Putting cases $3(i)-3(iv)$ together we can now conclude that 

\begin{equation}\label{e.xi7support}
\rho(S_2\setminus \{\nu_{(p_1,p_2,0,\ldots)}\})=0.
\end{equation}

{\bf Case 4: }Assume now that $\nu_{(t_1,t_2,t_3,0\ldots)}\in S_3$. Unless $t_1=t_2=t_3$ it is 
straightforward to see that ${\rm supp}(\xi_{(t_1,t_2,t_3,0,\ldots),p})$ has at least $5$ elements. 
Hence we can conclude
\begin{equation}
\rho(S_3\setminus\{\nu_{(t,t,t,0,\ldots)}\,:\,t\in (0,1/3]\})=0.
\end{equation}
So assume now that $t_1=t_2=t_3=t$ for some $t\in (0,1/3]$. We get that, again using~\eqref{e.xiequiv} that

\begin{equation}\label{e.xi4support}
\xi_{(t,t,t,0,\ldots),p}=\left\{\begin{array}{lll} 
x_1:=p+t(3-3p)&{\rm w.p.}&p^3\\ x_2:=p+t(2-3p) &{\rm w.p.}&3p^2(1-p)\\ x_3:=p+t(1-3p)&{\rm w.p.}&3p(1-p)^2\\ x_4:=p-3tp&{\rm w.p.}&(1-p)^3
\end{array}\right.\end{equation}

Clearly, if $p_1=p_2$, then ${\rm supp}(\xi_{(t,t,t,0,\ldots),p})$ is not a subset of 
${\rm supp}(\xi_{(p_1,p_2,0,\ldots),p})$, so assume that $p_1\neq p_2$. Then in order to have 
${\rm supp}(\xi_{(t,t,t,0,\ldots),p})\subseteq {\rm supp}(\xi_{(p_1,p_2,0,\ldots),p})$ we must have 
${\rm supp}(\xi_{(t,t,t,0,\ldots),p})= {\rm supp}(\xi_{(p_1,p_2,0,\ldots),p})$. This implies that 
$x_1=z_1$ so that $t=(p_1+p_2)/3$. So we can conclude that

\begin{equation}\label{e.xi5support}
\rho(S_3\setminus \{\nu_{((p_1+p_2)/3,(p_1+p_2)/3,(p_1+p_2)/3,0,\ldots)}\})=0.
\end{equation}

So we must have
\begin{equation}\label{e.keep2}
\mbox{$t=(p_1+p_2)/3$ and $(x_1,x_2,x_3,x_4)=(z_1,z_2,z_3,z_4)$.}
\end{equation}

{\bf Case 5:} Finally we show that we do not need to consider $S_k$ for $k\ge 4$. Observe that if 
$\paintbox\in S_k$, then it is straightforward to check that $|{\rm supp}(\xipp)|\ge k+1$. Since 
$|{\rm supp}(\xi_{(p_1,p_2,0,\ldots),p})|\le 4$, we can conclude that 
\begin{equation}
\rho(S_k)=0\mbox{ for every }4\le k\le \infty.
\end{equation}

From~\eqref{e.keep3}, ~\eqref{e.xi6support}, ~\eqref{e.xi7support} and ~\eqref{e.xi5support} above, we 
see that to show that $\rho=\delta_{\nu}$ and thereby finish the proof it suffices to show that we 
cannot find $\alpha,\beta\in [0,1]$ with 
$\alpha+\beta\le 1$ such that
\begin{equation}\label{e.alfabetaeq}
\Phi_p(\nu_{(p_1,p_2,0,\ldots)})=\alpha \Phi_p(\nu_{(0,\ldots)})+\beta\Phi_p(\nu_{(p_1+p_2,0,\ldots)})+(1-\alpha-\beta)\Phi_p(\nu_{(\frac{p_1+p_2}{3},\frac{p_1+p_2}{3},\frac{p_1+p_2}{3},0,\ldots)}).
\end{equation}

Comparing~\eqref{e.xisupport} with~\eqref{e.xi1support},~\eqref{e.xi3support} and~\eqref{e.xi4support} we 
see that in order for~\eqref{e.alfabetaeq} to hold, it is necessary that 
(keeping~\eqref{e.keep3},~\eqref{e.keep1} and~\eqref{e.keep2} in mind)

\begin{equation}\label{e.finishingpunch}
\begin{array}{rl}p^2=&\beta p+(1-\alpha-\beta)p^3 \\ p(1-p) =&\alpha {\bf 1}\{\frac{p_1}{p}=\frac{p_2}{(1-p)}\} +(1-\alpha-\beta)3 p^2(1-p)\\ p(1-p)=& (1-\alpha-\beta)3 p(1-p)^2 \\ (1-p)^2=&\beta(1-p)+(1-\alpha-\beta)(1-p)^3
\end{array} 
\end{equation}
Since $p\in (0,1)$, the third equation gives that $1-\alpha-\beta\neq 0$. Therefore,
since $p\in(1/2,1)$, the right hand side of the second equation of~\eqref{e.finishingpunch} is strictly 
larger than the right hand side of the third equation. Hence, the linear system in~\eqref{e.finishingpunch} 
does not have any solution for $\alpha,\beta\in [0,1]$ with $\alpha+\beta\le 1$ when $p\in (1/2,1)$. 
\qed

\begin{prop}
Let $t\in (0,1/3]$. Then $\nu_{(t,t,t,0,\ldots)}\in S_3$ is $(\rer_{\N}^{\exch},p)$-unique for 
every $p\in (0,1)\setminus\{1/2\}$. (This is false for $p=1/2$ by Theorem~\ref{p.unprop}.)
\end{prop}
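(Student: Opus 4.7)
The plan is to follow the template of the proofs of Propositions~\ref{p.s1unique} and~\ref{p.uniqueprop1}. By the $0\ks 1$-symmetry ($p\leftrightarrow 1-p$) I may assume $p\in(1/2,1)$. Fix $\nu=\nu_{(t,t,t,0,\ldots)}$ and suppose $\tilde\nu\in\rer_{\N}^{\exch}$ satisfies $\Phi_p(\tilde\nu)=\Phi_p(\nu)$. By Theorem~\ref{t.kingman} I write $\tilde\nu=\int \nu_{\bf p}\,d\rho(\nu_{\bf p})$ for a unique probability measure $\rho$ on $\rer_{\N}^{\exch,\pure}$; by the affineness of $\Phi_p$ and the uniqueness in de Finetti's Theorem, the $\rho$-mixture of the laws of $\xi_{{\bf p},p}$ equals the law of $\xi_{(t,t,t,0,\ldots),p}$, whose support (by~\eqref{e.xi4support}) consists of the four equally spaced values $x_1>x_2>x_3>x_4$ of common gap $t$, with respective masses $p^3,\,3p^2(1-p),\,3p(1-p)^2,\,(1-p)^3$. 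Consequently $\rho$ must be concentrated on those paintboxes for which ${\rm supp}(\xi_{{\bf p},p})\subseteq\{x_1,x_2,x_3,x_4\}$.

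The key general input is the bound $|{\rm supp}(\xi_{{\bf p},p})|\ge k+1$ for ${\bf p}\in S_k$, obtained from~\eqref{e.xiequiv} by observing that, in $\tfrac12\sum p_iZ_i$, fixing the first $k-j$ signs to $-$ and the last $j$ to $+$ gives values strictly increasing in $j$ (using $p_{k-j}>0$ and the non-increasing order of the $p_i$'s). Thus $\rho$ vanishes on $S_k$ for $k\ge 4$ and on $S_\infty$, and it remains to enumerate the allowed elements of $S_0\cup S_1\cup S_2\cup S_3$. Since $p>1/2$ gives $x_3,x_4<p<x_1$, the only candidate in $S_0$ is $\nu_{(0,\ldots)}$, allowed only when $p=x_2$, i.e.\ when $p=2/3$. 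In $S_1$, the two values $y_1>p$ and $y_2<p$ from~\eqref{e.xi3support} must lie in $\{x_1,x_2\}\times\{x_3,x_4\}$; checking each of the four combinations (exactly as in the proof of Proposition~\ref{p.uniqueprop1}) shows only $(y_1,y_2)=(x_1,x_4)$ is consistent, forcing $s=3t$. In $S_2$, the spacings $s_2,\,s_1-s_2,\,s_2$ of the (at most four) values of $\xi_{(s_1,s_2,0,\ldots),p}$ must match the constant gap $t$ of the $x_i$'s; this rules out $s_1=s_2$ and forces $(s_1,s_2)=(2t,t)$, and a direct check confirms that $\nu_{(2t,t,0,\ldots)}$ produces exactly $\{x_1,x_2,x_3,x_4\}$. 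Finally, a short case analysis of the eight sums $\pm s_1\pm s_2\pm s_3$ (after ordering $s_1\ge s_2\ge s_3>0$) shows that $|{\rm supp}(\xi_{{\bf p},p})|\le 4$ in $S_3$ forces $s_1=s_2=s_3=:s$, and then the uniform spacing $s$ must equal $t$, giving $\nu_{\bf p}=\nu$.

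Hence $\rho=\alpha\,\delta_{\nu_{(0,\ldots)}}+\beta\,\delta_{\nu_{(3t,0,\ldots)}}+\gamma\,\delta_{\nu_{(2t,t,0,\ldots)}}+\zeta\,\delta_\nu$, where $\alpha,\beta,\gamma,\zeta\ge 0$ sum to $1$ and $\alpha=0$ unless $p=2/3$. Equating the mass the mixture assigns to each of $x_1,x_2,x_3,x_4$ with the known masses of $\xi_{(t,t,t,0,\ldots),p}$ yields a small linear system; subtracting the $x_2$- and $x_3$-equations produces $3\zeta(1-2p)=3(1-2p)$, so $p\ne 1/2$ forces $\zeta=1$, whence $\alpha=\beta=\gamma=0$ is immediate from the remaining equations. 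Therefore $\rho=\delta_\nu$, $\tilde\nu=\nu$, and $\nu$ is $(\rer_{\N}^{\exch},p)$-unique. The main obstacle will be the $S_3$ step above: to establish it I would exploit the $\epsilon\leftrightarrow-\epsilon$ symmetry of the eight sums $\epsilon_1 s_1+\epsilon_2 s_2+\epsilon_3 s_3$, and note that, given $s_i>0$, each coincidence between two of them forces an equality among the $s_i$, so reducing the count all the way to four collapses the three $s_i$'s to a common value.
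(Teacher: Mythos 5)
Your proposal follows the paper's own proof almost step for step: the same reduction via Kingman/de Finetti to comparing supports of $\xi_{{\bf p},p}$, the same cardinality bound $|{\rm supp}(\xi_{{\bf p},p})|\ge k+1$ eliminating $S_k$ for $k\ge 4$, the same case analysis over $S_0,\ldots,S_3$ with the same conclusions ($p=2/3$ for $S_0$, $s=3t$ in $S_1$, $(2t,t)$ in $S_2$, $(t,t,t)$ in $S_3$), and the same concluding linear system. Two spots, however, are justified by reasons that do not quite work as written. First, in the final step the $x_2$-equation contains the extra term $\alpha\,{\bf 1}\{p=2/3\}$ (the point mass of $\xi_{(0,\ldots),p}$ sits at $p=x_2$ exactly when $p=2/3$), so subtracting the $x_2$- and $x_3$-equations gives $3p(1-p)(2p-1)(1-\zeta)=\alpha\,{\bf 1}\{p=2/3\}$ rather than $3\zeta(1-2p)=3(1-2p)$. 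For $p\neq 2/3$ this does force $\zeta=1$, but at $p=2/3$ you need one more line, e.g.\ as in the paper: the $x_3$-equation gives $\gamma=3(1-p)(1-\zeta)=1-\zeta$, and then the $x_1$-equation yields $\beta=(1-\zeta)p(p-1)$, which is negative unless $\zeta=1$.

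Second, in the $S_2$ case the claim that matching spacings ``rules out $s_1=s_2$'' is too quick: when $s_1=s_2$ the support has three equally spaced points with gap $s_1$, and such a set \emph{can} sit inside the arithmetic progression $\{x_1,x_2,x_3,x_4\}$ of gap $t$ if $s_1=t$ (as $\{x_1,x_2,x_3\}$ or $\{x_2,x_3,x_4\}$). To exclude it you must also use the endpoints: any $3$-element subset of $\{x_1,\ldots,x_4\}$ contains $x_1$ or $x_4$, and either $q_1=x_1$ or $q_3=x_4$ forces $2s_1=3t$, contradicting $s_1=t$; this is exactly the paper's Case 3(ii). With those two patches your argument is complete and coincides with the paper's.
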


{\bf Proof.} The strategy of this proof is the same as in the proof of Proposition~\ref{p.uniqueprop1}, so we 
will be somewhat briefer. By symmetry we can assume that $p\in (1/2,1)$. Fix $t\in (0,1/3]$ and let 
$\nu:=\nu_{(t,t,t,0,\ldots)}$. Assume that $\tilde{\nu}\in \rer_{\N}^{\exch}$ is such that 
$\Phi_p(\nu)=\Phi_p(\tilde{\nu})$. Let $\rho=\rho_{\tilde{\nu}}$ be the unique probability measure on 
$\rer_{\N}^{\exch,\pure}$ such that

\begin{equation}
\tilde{\nu}=\int_{\nu_{\bf p}\in \rer_{\N}^{\exch,\pure}}\nu_{\bf p}d\rho(\nu_{\bf p}).
\end{equation}

As above, we will show that $\rho=\delta_{\nu}$.

{\bf Case 1: }First we consider $S_0=\{\nu_{(0,\ldots)}\}$. Recall that ${\rm supp}(\xi_{(0,\ldots),p})=\{p\}$. 
Using~\eqref{e.xi4support}, we see that only if $p=2/3$ can we have that 
$p\in {\rm supp}(\xi_{(t,t,t,0,\ldots),p})$. Hence, 
\begin{equation}\label{e.anothereq1}
\mbox{ If }\rho(\nu_{(0,\ldots)})>0,\mbox{ then }p=2/3.
\end{equation}

{\bf Case 2: }Now suppose that $\nu_{(s,0,\ldots)}\in S_1$. Recall that 
${\rm supp}(\xi_{(s,0,\ldots),p})=\{y_1,y_2\}$ from~\eqref{e.xi3support} and 
${\rm supp}(\xi_{(t,t,t,0,\ldots),p})=\{x_1,x_2,x_3,x_4\}$ from~\eqref{e.xi4support}. We have that 
$y_1>p$, $y_2<p$, $x_1>p$ and (since $p>1/2$), $x_3<p$. Hence if 
${\rm supp}(\xi_{(s,0,\ldots),p})\subseteq {\rm supp}(\xi_{(t,t,t,0,\ldots),p})$ it must be the case that 
$y_1=x_1$ or $y_1=x_2$. First, if $y_1=x_1$, we get that $s=3t$. If $y_1=x_2$ and $y_2=x_3$ then 
$s=t(2-3p)/(1-p)$ and $s=-t(1-3p)/p$, and these two equations give that $p=1/2$, which is a contradiction. 
Finally, if $y_1=x_2$ and $y_2=x_4$ then $s=t(2-3p)/(1-p)$ and $s=3t$, and it is easy to see that these two 
equations can not hold at the same time for any $p$. Hence, we can conclude that 

\begin{equation}\label{e.anothereq2}
\rho(S_1\setminus \{\nu_{(3t,0,\ldots)}\})=0.
\end{equation}

Also observe that
\begin{equation}\label{e.anothereq3}
\mbox{If $s=3t$ then $y_1=x_1$ and $y_2=x_4$.}
\end{equation}

{\bf Case 3: }Now assume that $\nu_{(p_1,p_2,0,\ldots)}\in S_2$. Recall from~\eqref{e.xisupport} that 
${\rm supp}(\xi_{(p_1,p_2,0,\ldots),p})=\{z_1,z_2,z_3,z_4\}$, where the four elements are distinct when 
$p_1\neq p_2$, and $z_1>z_2=z_3>z_4$ if $p_1=p_2$. Also observe that we have 
$(x_1-x_2,x_2-x_3,x_3-x_4)=(t,t,t)$ and, as before, $(z_1-z_2,z_2-z_3,z_3-z_4)=(p_2,p_1-p_2,p_2)$. 

{\it Case 3(i)}: Assume that $p_1\neq p_2$. From the above, we see that in order to have 
${\rm supp}(\xi_{(p_1,p_2,0,\ldots),p})\subseteq{\rm supp}(\xi_{(t,t,t,0,\ldots),p})$ we must have $t=p_2=p_1-p_2$, 
which implies $p_1=2p_2=2 t$.

{\it Case 3(ii)}: Assume that $p_1=p_2$. From the above, it follows that in order to have 
${\rm supp}(\xi_{(p_1,p_2,0,\ldots),p})\subseteq {\rm supp}(\xi_{(t,t,t,0,\ldots),p})$ we must have 
$t=p_2$. However, in this case $|{\rm supp}(\xi_{(p_1,p_2,0,\ldots),p})|=3$ so we must also have 
$z_1=x_1$ or $z_4=x_4$. Each of these two cases imply that $t=(p_1+p_2)/3$, which contradicts $t=p_2$ 
since $p_1=p_2$.

From {\it Case 3(i)} and {\it Case 3(ii)} we conclude that

\begin{equation}\label{e.anothereq4}
\rho(S_2\setminus \{\nu_{(2t,t,0,\ldots)}\})=0.
\end{equation}

Also observe that

\begin{equation}\label{e.anothereq5}
\mbox{If $p_1=2t$ and $p_2=t$ then $(x_1,x_2,x_3,x_4)=(z_1,z_2,z_3,z_4)$.}
\end{equation}

{\bf Case (4):} Now assume that $\nu_{(t_1,t_2,t_3,0,\ldots)}\in S_3$. If $(t_1,t_2,t_3)\neq (t',t',t')$ 
for some $t'$, then $|{\rm supp}(\xi_{(t_1,t_2,t_3,0,\ldots),p})|>|{\rm supp}(\xi_{(t,t,t,0,\ldots),p})|$. 
Next, if $t'\neq t$ and $t'\in (0,1/3]$, then by looking at~\eqref{e.xi4support} we see that 
${\rm supp}(\xi_{(t',t',t',0,\ldots),p})$ cannot be a subset of ${\rm supp}(\xi_{(t,t,t,0,\ldots),p})$. 
It follows that

\begin{equation}\label{e.anothereq6}
\rho(S_3\setminus\{\nu_{(t,t,t,0,\ldots)}\})=0.
\end{equation} 

We now finish in the same way as in the proof of Proposition~\ref{p.uniqueprop1}. From~\eqref{e.anothereq1}, 
~\eqref{e.anothereq2}, ~\eqref{e.anothereq4} and~\eqref{e.anothereq6} above, we see that to show that 
$\rho=\delta_{\nu}$ and thereby finish the proof it suffices to show that we cannot find $\alpha,\beta\in [0,1]$ 
with $\alpha+\beta\le 1$ such that
\begin{equation}\label{e.alfabetaeq2}
\Phi_p(\nu_{(t,t,t,0,\ldots)})=\alpha \Phi_p(\nu_{(0,\ldots)})+\beta\Phi_p(\nu_{(3t,0,\ldots)})+(1-\alpha-\beta)\Phi_p(\nu_{(2t,t,0,\ldots)})
\end{equation}

Comparing~\eqref{e.xi4support} with~\eqref{e.xisupport},~\eqref{e.xi1support} and~\eqref{e.xi3support} we see 
that in order for~\eqref{e.alfabetaeq2} to hold, it is necessary that (keeping~\eqref{e.anothereq1},
~\eqref{e.anothereq3} and~\eqref{e.anothereq5} in mind)

\begin{equation}\label{e.finishingpunch2}
\begin{array}{rl}p^3=&\beta p+(1-\alpha-\beta)p^2 \\ 3p^2(1-p) =&\alpha {\bf 1}\{p=2/3\} +(1-\alpha-\beta) p(1-p)\\ 3p(1-p)^2=& (1-\alpha-\beta)p(1-p) \\ (1-p)^3=&\beta(1-p)+(1-\alpha-\beta)(1-p)^2\end{array} 
\end{equation}

If $p\neq 2/3$, then the second and third equations in~\eqref{e.finishingpunch2} imply that $p=1/2$, 
finishing the proof in this case. If $p=2/3$, then the third equation implies that
$\alpha=\beta=0$, in which case the first equation does not hold, completing this case.
\qed

\subsection{Gaussian and symmetric stable exchangeable processes}\label{s.gaussian}
In this section, we first consider the exchangeable Gaussian threshold process, and then the more general case 
of exchangeable symmetric stable threshold processes. Suppose that $X$ is an exchangeable Gaussian process 
with $N(0,1)$-marginals and pairwise correlations $r\in [0,1]$. Let $\Xi$ be the random distribution used in 
the representation of $X$ from Subsection~\ref{s.definetti}. Observe that in the case $r=0$ we have $\Xi$ is $N(0,1)$ 
a.s.\ and in the case $r=1$ we have $\Xi=\delta_x$ where $x$ has distribution $N(0,1)$. For general 
$r\in [0,1]$, $\Xi$ is $N(r^{1/2} W, (1-r))$ where $W$ is $N(0,1)$. We can equivalently obtain $X$ 
as follows: Let $W,U_1,U_2,\ldots$ be i.i.d.\ $N(0,1)$ and let $X_i:=r^{1/2} W+(1-r)^{1/2} U_i$. 

Now let $Y^h$ be the $h$-threshold process obtained from $X$ as described in Subsection~\ref{s.definetti},
where $r$ is suppressed in the notation.
A straightforward calculation left to the reader shows that (recall~\eqref{e.projectexch})
\begin{equation}
\label{e.xirepr}
\xi_{Y^h}=\Xi([h,\infty])=\int_{\frac{h-r^{1/2}W}{(1-r)^{1/2}}}^{\infty} \frac{e^{-t^2/2}}{\sqrt{2\pi}}\,dt.
\end{equation}
In particular, if $h=0$ and $r=1/2$, then we see that
$$
\xi_{Y^0}=\Xi([0,\infty])=1-\Phi(-W),
$$
where $\Phi$ is the probability distribution function of the $N(0,1)$-distribution. Now $\Phi(-W)$ is 
uniformly distributed on $[0,1]$, and hence so is $\Xi([0,\infty])$. 

By symmetry and Theorem~\ref{t.mainp12}, we can conclude that for $h=0$ and any $r$, $Y^0$ is a color process.
Observe that if ${\bf p}=(p_1,\ldots)$ 
where $p_i=1/2^i$ for $i\ge 1$, then the random variable $\xi_{{\bf p},1/2}$ in~\eqref{e.xiequi2} is 
uniformly distributed on $[0,1]$. It follows that when $r=1/2$, $Y^0$ is the color process associated to the paintbox
$(1/2,1/4,1/8,\ldots)$. 

Now we move on to the symmetric stable case. Recall that a stable distribution is characterized by four 
parameters: the location parameter $\mu\in \R$, the skewness parameter $\beta\in [-1,1]$, the scale 
parameter $c\in (0,\infty)$ and the stability parameter $\alpha\in (0,2]$. Here we consider only the 
special case when $\mu=0$, $c=1$ and $\beta=0$. In this case, the characteristic function of the stable 
distribution with stability parameter $\alpha$ is given by $e^{-|t|^{\alpha}}$, $t\in \R$. We denote this 
distribution by ${\mathcal S}(\alpha)$. If $\alpha=2$, then we (essentially) get the $N(0,1)$ distribution, the case of 
which we already covered above.

We obtain an exchangeable process where the marginals are ${\mathcal S}(\alpha)$ as follows. First 
recall that if $|a|^{\alpha}+|b|^{\alpha}=1$ and $V_1,V_2\in {\mathcal S}(\alpha)$, then 
$aV_1+ b V_2\in {\mathcal S}(\alpha)$. Let $W,U_1,U_2,\ldots\in {\mathcal S}(\alpha)$ be i.i.d.\ and fix 
$a\in (0,1)$. Let $b=(1-a^{\alpha})^{1/\alpha}$ and let $X=(X(i))_{i\in \N}$ where $X_i=a W+b U_i$. 
Then $X$ is clearly exchangeable with marginals given by ${\mathcal S}(\alpha)$. Let $Y^h$ be the 
$h$-threshold process obtained from $X$. This depends on $\alpha$ and $a$ but this is suppressed in the notation.
In the same way as in the Gaussian case, one gets that 
$$
\xi_{Y^h}=1-F\left(\frac{h-aW}{b}\right)
$$
where $F$ be the distribution function of $W$. We see that in the special case of $h=0$ and 
$a=b=(1/2)^{1/\alpha}$ we have that $\xi_{Y^0}$ is uniform on $[0,1]$. 

By symmetry and Theorem~\ref{t.mainp12}, we can conclude that for $h=0$ and any $\alpha$ and $a$, 
$Y^0$ is a color process. As in the Gaussian case, we have that when $a=(1/2)^{1/\alpha}$,
$Y^0$ is the color process associated to the paintbox $(1/2,1/4,1/8,\ldots)$. In particular, the $0$-threshold
Gaussian for $r=1/2$ is the same process as the $0$-threshold stable process when $a=(1/2)^{1/\alpha}$.

\section{Connected random equivalence relations on ${\mathbb Z}$}\label{s.conn}

In this section, we focus on the class of connected RERs on ${\mathbb Z}$ thought of as a graph
with nearest neighbor edges. Therefore, in this case, all of the clusters are of the form
$\phi=\{m,m+1,\ldots,n\}$ with $-\infty\le m\le n\le \infty$. 
For $m\in \Z$, the edge between $m$ and $m+1$ will be denoted by $e_{m,m+1}$.
The next definition gives a way of creating an element from $\rer_{\Z}^{\conn}$ by using a process 
on the edges of $\Z$.

\begin{df}\label{d.rergen}
Let $\{Y(e_{n,n+1})\}_{n\in {\mathbb Z}}$ be any process on the edges of $\Z$ with state space 
$\{-1,1\}$  Define $\pi_Y$ to be the random equivalence relation on ${\mathbb Z}$ obtained as 
follows:  $m<n\in \Z$ are said to be in the same equivalence class of $\pi_Y$ if and only 
if $Y(e_{m,m+1})=\ldots=Y(e_{n-1,n})=1$.
\end{df}

Observe that $Y$ and $\pi_Y$ can be recovered from each other. It follows that $\pi_Y$ will 
inherit any property which $Y$ has. We will often say that $\pi_Y$ is induced by $Y$.

\begin{df}
Let  $\{Y(e_{n,n+1})\}_{n\in {\mathbb Z}}$ be any process with state space $\{-1,1\}$. We denote 
by $X^{Y,p}$ the color process obtained from the RER induced by $Y$ with parameter $p$.
\end{df}

In the next proposition we describe exactly which Markov chains with state space $\{0,1\}$ 
are color processes. In some sense, most of this proposition is well known.
\begin{prop}\label{t.markovcolor}
Let $Z=(Z(n))_{n\in {\mathbb Z}}$ be a Markov chain with state-space $\{0,1\}$ and transition 
probabilities $p_{0,0},p_{0,1},p_{1,0}$ and $p_{1,1}$.  The following statements are equivalent:
\begin{enumerate}
\item\label{i.it1} For all $m,n\in {\mathbb Z}$, $Cov(Z(m),Z(n))\ge 0$
\item\label{i.it2} $p_{0,1}\le p_{1,1}$
\item\label{i.it3} $(Z(n))_{n\in {\mathbb Z}}$ is a color process
\item\label{i.it4} $(Z(n))_{n\in {\mathbb Z}}$ satisfies the FKG lattice condition
\item\label{i.it5} $(Z(n))_{n\in {\mathbb Z}}$ satisfies positive associations
\end{enumerate}
\end{prop}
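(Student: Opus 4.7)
The plan is to prove all five conditions equivalent by the cycle
$(3)\Rightarrow(1)\Rightarrow(2)\Rightarrow(3)$ together with $(2)\Rightarrow(4)\Rightarrow(5)\Rightarrow(1)$. Of these, $(3)\Rightarrow(1)$ is immediate from~\eqref{e.nonneg.cor}, and $(5)\Rightarrow(1)$ is trivial since $\omega\mapsto\omega(m)$ and $\omega\mapsto\omega(n)$ are increasing. For $(1)\Rightarrow(2)$, I would just apply nonnegative correlation to the adjacent pair: writing $\pi_1=p_{0,1}/(p_{0,1}+p_{1,0})$ for the stationary probability, $\mathrm{Cov}(Z(0),Z(1))=\pi_1 p_{1,1}-\pi_1^2\ge 0$ gives $p_{1,1}\ge \pi_1$, which rearranges (clearing denominators by $p_{1,0}$) to $p_{1,1}p_{1,0}\ge p_{0,1}p_{1,0}$, i.e.\ $p_{1,1}\ge p_{0,1}$; the corner cases where $p_{1,0}=0$ or $p_{0,1}=0$ make $Z$ constant and are handled separately.

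The substantive step is $(2)\Rightarrow(3)$. Given a chain with $p_{0,1}\le p_{1,1}$, set $\alpha:=p_{1,1}-p_{0,1}\in[0,1]$ and take $(Y(e_{n,n+1}))_{n\in\Z}$ to be i.i.d.\ $\{-1,1\}$-valued with $\Pr(Y=1)=\alpha$. Let $\pi_Y$ be the RER induced by $Y$ as in Definition~\ref{d.rergen}, and set $p:=\pi_1=p_{0,1}/(p_{0,1}+p_{1,0})$. The color process $X^{Y,p}$ is Markov (conditional on $Z(n)$, the next value is determined by the independent edge $Y(e_{n,n+1})$ and, if that edge is off, by an independent coloring of the next cluster), so it suffices to match the marginal and one-step transitions. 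The marginal is $p$ by construction. For the transition, a direct computation gives
$$
\Pr(Z(n{+}1)=1\mid Z(n)=1)=\alpha+(1-\alpha)p,
$$
and the algebraic identity $(1-\alpha)p=(p_{1,0}+p_{0,1})\cdot p_{0,1}/(p_{0,1}+p_{1,0})=p_{0,1}$ then yields $\alpha+(1-\alpha)p=p_{1,1}$, as required. Thus $Z$ coincides in distribution with the color process associated to the connected RER $\pi_Y$, proving (3). This construction---producing the intended RER from the raw transition data---is the place I expect the work to be; the rest is routine verification.

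For the remaining implications, $(2)\Rightarrow(4)$ follows since the FKG lattice condition for any non-adjacent pair is automatic by conditional independence under the Markov property, while for an adjacent pair $\{m,m+1\}$, conditioning on neighbors $Z(m-1)=a$, $Z(m+2)=b$, the conditional weights factor as $p_{a,x}p_{x,y}p_{y,b}$, and the lattice inequality collapses to $p_{1,1}p_{0,0}\ge p_{1,0}p_{0,1}$, which rearranges to $p_{1,1}\ge p_{0,1}$. For $(4)\Rightarrow(5)$, in the nondegenerate case $0<p_{0,1},p_{1,0}<1$ the finite-dimensional distributions have full support on each window $[-N,N]$, so the FKG Theorem gives positive associations on cylinder events and this extends to all increasing measurable events by a standard monotone class argument; in the degenerate cases the process is a.s.\ constant and positive associations is trivial. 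This closes the cycle, with the main obstacle being the explicit RER construction in $(2)\Rightarrow(3)$.
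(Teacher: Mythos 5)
Your proof is correct and follows essentially the same route as the paper: the identical cycle of implications, with the same construction for $(2)\Rightarrow(3)$ (i.i.d.\ edge process with parameter $p_{1,1}-p_{0,1}$ and color parameter $p_{0,1}/(p_{0,1}+p_{1,0})$) and the same trivial observations for $(3)\Rightarrow(1)$ and $(5)\Rightarrow(1)$. The only real difference is that you actually carry out the $(2)\Rightarrow(4)$ reduction to $p_{1,1}p_{0,0}\ge p_{1,0}p_{0,1}$, which the paper omits as a ``standard but tedious calculation,'' and your forward-in-time justification of the Markov property is a slightly lighter (but valid) version of the paper's conditional-independence computation.
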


{\bf Proof. } $~\ref{i.it1}\Longrightarrow ~\ref{i.it2}:$ This is completely straightforward.\\
$~\ref{i.it2}\Longrightarrow ~\ref{i.it3}:$ Assume that $p_{0,1}\le p_{1,1}$. 
Let $\{Y(e_{n,n+1})\}_{n\in {\mathbb Z}}$ be an i.i.d.\ process with 
$$P(Y(e_{n,n+1})=1)=p_{1,1}-p_{0,1}=1-P(Y(e_{n,n+1})=0).$$ We now claim that the color process 
$X^{Y,p}$ where $p=p_{0,1}/(p_{0,1}+p_{1,0})$ has the same law as $Z$. First we show that 
$X^{Y,p}$ has the Markov property. Let $s:=P(Y(e_{n,n+1})=1)$. Fix $n\ge 1$ and 
$i_0,\ldots,i_n\in\{0,1\}$. We have

$$
P(X^{Y,p}(0)=i_0| X^{Y,p}(1)=i_1,\ldots,X^{Y,p}(n)=i_n)=\frac{P(X^{Y,p}(0)=i_0,\ldots,X^{Y,p}(n)=i_n)}{P(X^{Y,p}(1)=i_1,\ldots,X^{Y,p}(n)=i_n)}.
$$

We now observe that conditioned on $\{Y(e_{0,1})=0\}$ the events $\{X^{Y,p}(0)=i_0\}$ and 
$\{X^{Y,p}(1)=i_1,\ldots,X^{Y,p}(n)=i_n\}$ are conditionally independent. This follows from the fact 
that $\{Y(e_{0,1})=0\}$ implies $0$ and $1$ are in different clusters of $\pi_Y$. Hence
\begin{eqnarray*}
\lefteqn{P(X^{Y,p}(0)=i_0,\ldots,X^{Y,p}(n)=i_n|Y(e_{0,1})=0)}\\ & & =P(X^{Y,p}(0)=i_0 | Y(e_{0,1})=0) P(X^{Y,p}(1)=i_1,\ldots,X^{Y,p}(n)=i_n|Y(e_{0,1})=0)\\ & & = P(X^{Y,p}(0)=i_0) P(X^{Y,p}(1)=i_1,\ldots,X^{Y,p}(n)=i_n),
\end{eqnarray*}
where the last equality uses the fact that $Y$ is an i.i.d.\ process.
Observe that $\{Y(e_{0,1})=1\}$ implies $X^{Y,p}(0)=X^{Y,p}(1)$. We get that, again using that $Y$ is i.i.d.\,,
\begin{eqnarray*}
\lefteqn{P(X^{Y,p}(0)=i_0,\ldots,X^{Y,p}(n)=i_n|Y(e_{0,1})=1)}\\ & & ={\bf 1}\{i_0=i_1\}P(X^{Y,p}(1)=i_1,\ldots,X^{Y,p}(n)=i_n).
\end{eqnarray*}

\noindent
Hence,

\begin{eqnarray*}
\lefteqn{P(X^{Y,p}(0)=i_0,\ldots,X^{Y,p}(n)=i_n)}\\ & & =(s {\bf 1}\{i_0=i_1\}+(1-s) P(X^{Y,p}(0)=i_0))P(X^{Y,p}(1)=i_1,\ldots,X^{Y,p}(n)=i_n),
\end{eqnarray*}

\noindent
which implies

\begin{eqnarray*}
\lefteqn{P(X^{Y,p}(0)=i_0| X^{Y,p}(1)=i_1,\ldots,X^{Y,p}(n)=i_n)}\\ & & =s {\bf 1}\{i_0=i_1\}+(1-s) P(X^{Y,p}(0)=i_0),
\end{eqnarray*}
which does not depend on $i_2,\ldots,i_n$. Hence the Markov property of $X^{Y,p}$ follows.

\noindent
It remains to show that the transition probabilities coincide with those of $Z$. We have that

\begin{eqnarray*}
\lefteqn{P( X^{Y,p}(n)=1| X^{Y,p}(n-1)=1)}\\ & & = P(Y(e_{n-1,n})=1)+\frac{p_{0,1}}{p_{0,1}+p_{1,0}}P(Y(e_{n-1,n})=0)\\ & & =p_{1,1}-p_{0,1}+\frac{p_{0,1}}{p_{0,1}+p_{1,0}} (1-p_{1,1}+p_{0,1})\\ & & = p_{1,1}-p_{0,1}+\frac{p_{0,1}}{p_{0,1}+p_{1,0}} (p_{1,0}+p_{0,1})\\ & & = p_{1,1},
\end{eqnarray*}
and
\begin{eqnarray*}
\lefteqn{P( X^{Y,p}(n)=0| X^{Y,p}(n-1)=0)}\\ & & = P(Y(e_{n-1,n})=1)+\frac{p_{1,0}}{p_{0,1}+p_{1,0}}P(Y(e_{n-1,n})=0)\\ & & =p_{1,1}-p_{0,1}+\frac{p_{1,0}}{p_{0,1}+p_{1,0}} (1-p_{1,1}+p_{0,1})\\ & & = p_{1,1}-p_{0,1}+\frac{p_{1,0}}{p_{0,1}+p_{1,0}} (p_{1,0}+p_{0,1})\\ & & = p_{1,1}-p_{0,1}+p_{1,0}\\ & & =1-p_{0,1}\\ & & =p_{0,0}.
\end{eqnarray*}

From the above, it follows that $Z\stackrel{{\mathcal D}}{=}X^{Y,p},$ and so $Z$ is a color process.\\
$~\ref{i.it3}\Longrightarrow ~\ref{i.it1}:$ This follows from the fact that any color process has 
non-negative pairwise correlations. \\
$~\ref{i.it4}\Longrightarrow ~\ref{i.it5}:$ This implication was already mentioned in the paragraph 
following Definition~\ref{d.FKGL}.\\
$~\ref{i.it2}\Longrightarrow ~\ref{i.it4}:$ This is a standard but tedious calculation which we omit.\\
$~\ref{i.it5}\Longrightarrow ~\ref{i.it1}:$ This implication is trivial.
\qed

\medskip
The Ising model on $\Z$ will play an important role in this section from now on. 
However, we will define the Ising model on the edges of $\Z$ since we will use it to generate 
an RER as in Definition~\ref{d.rergen}. Since we now also want to allow a varying external field, 
we regive the definition.

\begin{df}
For $m<n$ let $E_{m,n}=\{e_{m,m+1},\ldots,e_{n-1,n}\}$. Let $J\ge 0$ and $h=(h_e)_{e\in E_{m,n}}$ be 
a sequence of real numbers. Let $\mu_{J,h}^{m,n}$ denote the Ising model with nearest neighbor 
interaction $J$ and edge varying external field $h$ on $E_{m,n}$, i.e. for any 
$x\in \{-1,1\}^{E_{m,n}}$,
$$
\mu_{J,h}^{m,n}(x)=\frac{\exp(J\sum_{i=m}^{n-2} x(e_{i,i+1})x(e_{i+1,i+2})+\sum_{i=m}^{n-1} h(e_{i,i+1}) x(e_{i,i+1}))}{Z_{m,n}}.
$$
Here $Z_{m,n}(J,h)$ is a normalizing constant making $\mu_{J,h}^{m,n}$ into a probability measure. 
The Ising model on the edges of all of $\Z$ is defined as the distributional limit
$$
\mu_{J,h}^{\Z}:=\lim_{n\to \infty\, m\to -\infty} \mu_{J,h}^{m,n},
$$
which is well known to exist.
\end{df}

\noindent
We will denote by $Y_{J,h}^{m,n}$ ($Y_{J,h}^{\Z}$) a random object with law $\mu_{J,h}^{m,n}$ ($\mu_{J,h}^{\Z}$). 

In the proof of Proposition~\ref{t.markovcolor} we saw that discrete time two-state Markov 
chains with non-negative pairwise correlations can be viewed as color processes, where the 
underlying RER is generated by an i.i.d.\ process. Theorem~\ref{t.notnmarkov} below shows that 
if, instead of an i.i.d.\ process, we use a (nontrivial) Ising model to generate an RER,
then the resulting process is not $n$-step Markov for any $n\ge 2$. 
First, we give some more preliminary results. The first 
proposition might be of independent interest.

\begin{prop}\label{p.isingmod}
Let $J\ge 0$ and let the (possibly edge dependent) external field $h$ be arbitrary.  Then for any 
$0\le k \le l\le n$ and any $p$,
\begin{equation}
{\mathcal D}(Y_{J,h}^{0,n}\, | \,X^{Y,p}(k)=1,\ldots, X^{Y,p}(l)=1)=\mu_{J,\tilde{h}_{k,l}}^{0,n}
\end{equation}
where $\tilde{h}_{k,l}(e_{i,i+1})=h(e_{i,i+1})-(\log{p})/2$ for $i=k,\ldots,l-1$ and 
$\tilde{h}_{k,l}(e_{i,i+1})=h(e_{i,i+1})$ otherwise and where we write $Y$ for $Y_{J,h}^{0,n}$. 
\end{prop}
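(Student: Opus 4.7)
The plan is to compute the likelihood $P(A \mid Y = y)$ for the conditioning event $A = \{X^{Y,p}(k) = 1, \ldots, X^{Y,p}(l) = 1\}$ explicitly, recognize that it is a log-linear function of $y$ restricted to the edges $e_{k,k+1}, \ldots, e_{l-1,l}$, and then apply Bayes' rule to see that this factor simply perturbs the external field of the Ising model on those edges. The promised shift $-(\log p)/2$ already suggests that this is the right route.

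First I would observe that, given $Y = y$, the color process assigns color $1$ to each cluster of $\pi_Y$ independently with probability $p$. Therefore $P(A \mid Y = y) = p^{N(y)}$, where $N(y)$ denotes the number of clusters of $\pi_Y$ that intersect $\{k, k+1, \ldots, l\}$. The key combinatorial observation is that two sites $i, i+1 \in \{k, \ldots, l\}$ lie in the same cluster of $\pi_Y$ iff $y(e_{i,i+1}) = 1$, while distinct clusters meeting $\{k,\ldots,l\}$ are separated precisely by the edges $e_{i,i+1}$ with $i \in \{k,\ldots,l-1\}$ that take the value $-1$ (any additional connectivity through edges outside $[k,l]$ cannot merge clusters already separated inside $[k,l]$). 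Hence
\begin{equation*}
N(y) = 1 + \sum_{i=k}^{l-1} \mathbf{1}\{y(e_{i,i+1}) = -1\} = 1 + \frac{l-k}{2} - \frac{1}{2}\sum_{i=k}^{l-1} y(e_{i,i+1}).
\end{equation*}

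Consequently
\begin{equation*}
P(A \mid Y = y) = p^{1 + (l-k)/2}\,\exp\!\left(-\frac{\log p}{2}\sum_{i=k}^{l-1} y(e_{i,i+1})\right),
\end{equation*}
which, crucially, depends on $y$ only through a linear functional of the edge spins on $e_{k,k+1},\ldots,e_{l-1,l}$. Now I would apply Bayes' rule: the conditional density of $Y$ given $A$ is proportional to $\mu_{J,h}^{0,n}(y)\,P(A \mid Y=y)$. Multiplying the Ising weight by the above exponential simply adds $-(\log p)/2$ to the external field coefficient on exactly the edges $e_{k,k+1},\ldots,e_{l-1,l}$, while leaving the interaction term $J\sum y(e_{i,i+1})y(e_{i+1,i+2})$ and the other external fields untouched. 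After absorbing the $y$-independent prefactor into the normalization, this is the definition of $\mu_{J,\tilde h_{k,l}}^{0,n}$, which is what we wanted.

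I do not expect any serious obstacle: the only step that needs care is the combinatorial identity for $N(y)$, in particular justifying that the clusters meeting $[k,l]$ are counted by internal cuts alone (so that edges outside $[k,l]$ play no role in the likelihood of $A$). Everything else is bookkeeping on the Ising Gibbs weight.
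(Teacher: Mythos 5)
Your proposal is correct and follows essentially the same route as the paper's proof: compute the likelihood $P(A\mid Y=y)=p^{N(y)}$, use $N(y)=1+\#\{i\in\{k,\ldots,l-1\}:y(e_{i,i+1})=-1\}$ together with $\mathbf{1}\{y=-1\}=\tfrac{1-y}{2}$ to rewrite it as a constant times $\exp\bigl(-\tfrac{\log p}{2}\sum_{i=k}^{l-1}y(e_{i,i+1})\bigr)$, and absorb this into the external field via Bayes' rule. The one step you flag as needing care (that clusters meeting $[k,l]$ are counted by internal cuts alone) is indeed the only substantive point, and it holds because the clusters of $\pi_Y$ are intervals.
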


\pf  Fix $0\le k\le l\le n$ and 
$y(e_{0,1}),\ldots,y(e_{n-1,n})\in \{-1,1\}^{\{e_{0,1},\ldots,e_{n-1,n}\}}$. Then
\begin{eqnarray}\label{e.bayes}
\lefteqn{{\mathbf P}(Y(e_{0,1})=y(e_{0,1}),\ldots,Y(e_{n-1,n})=y(e_{n-1,n})\, | \,X^{Y,p}(k)=1,\ldots, X^{Y,p}(l)=1)}\nonumber\\ & & = \frac{  {\mathbf P}(X^{Y,p}(k)=1,\ldots, X^{Y,p}(l)=1\,|\,Y(e_{0,1})=y(e_{0,1}),\ldots,Y(e_{n-1,n})=y(e_{n-1,n}))}{ {\mathbf P}( X^{Y,p}(k)=1,\ldots, X^{Y,p}(l)=1 ) }\nonumber \\ & &\mbox{  } \times {\mathbf P}(Y(e_{0,1})=y(e_{0,1}),\ldots,Y(e_{n-1,n})=y(e_{n-1,n})) .
\end{eqnarray}

Let $M(k,l)=M(k,l,Y)$ be the number of equivalence classes in $\pi_{Y}$ intersecting $\{k,..,l\}$. 
For $s=-1,1$ let $$
N_{s}(k,l)=N_{s}(k,l,Y)=|\{i\in \{k,\ldots,l-1\}\,:\,Y(e_{i,i+1})=s\}|.
$$

We observe the identities

\begin{equation}\label{e.classes1}
M(k,l)=1+N_{-1}(k,l),
\end{equation}

and

\begin{eqnarray}\label{e.classes2}
(l-k)-2 N_{-1}(k,l) =N_{1}(k,l)- N_{-1}(k,l)=\sum_{i=k}^{l-1} Y(e_{i,i+1}).
\end{eqnarray}

In what follows, the constant implicit in the proportionality sign $\propto$ is allowed to depend 
only on $J,h,k,l,n$ and $p$. We now get that

\begin{eqnarray}\label{e.bayes2}
 \lefteqn{{\mathbf P}(X^{Y,p}(k)=1,\ldots, X^{Y,p}(l)=1\,|\,Y(e_{0,1})=y(e_{0,1}),\ldots,Y(e_{n-1,n})=y(e_{n-1,n}))}\nonumber\\ & & = p^{M(k,l)}\stackrel{~\eqref{e.classes1}}\propto p^{N_{-1}(k,l)}=\left(\frac{1}{p^{1/2}}\right)^{-2 N_{-1}(k,l)}\propto \left(\frac{1}{p^{1/2}}\right)^{(l-k)-2 N_{-1}(k,l)}\nonumber \\ & &  \stackrel{~\eqref{e.classes2}}{=} \exp\left\{-\frac{\log p}{2} \sum_{i=k}^{l-1} y(e_{i,i+1})\right\}.
\end{eqnarray}
In addition we have

\begin{eqnarray}\label{e.isingnormal}
\lefteqn{{\mathbf P}(Y(e_{0,1})=y(e_{0,1}),\ldots,Y(e_{n-1,n})=y(e_{n-1,n}))}\nonumber \\ & & \propto \exp\left\{ J\sum_{i=0}^{n-2}y(e_{i,i+1}) y(e_{i+1,i+2})+\sum_{i=0}^{n-1} h(e_{i,i+1}) y(e_{i,i+1})\right\}
\end{eqnarray}

Combining~\eqref{e.bayes},~\eqref{e.bayes2} and~\eqref{e.isingnormal}, we get

\begin{eqnarray*}\label{e.final1}
\lefteqn{{\mathbf P}(Y(e_{0,1})=y(e_{0,1}),\ldots,Y(e_{n-1,n})=y(e_{n-1,n})\, | \,X^{Y,p}(k)=1,\ldots, X^{Y,p}(l)=1)}\\ & & \propto \exp\left\{ J \sum_{i=0}^{n-2}y(e_{i,i+1}) y(e_{i+1,i+2})+\sum_{i=0}^{k-1} h(e_{i,i+1}) y(e_{i,i+1}) \right. \\ & & \left. +\sum_{i=k}^{l-1} \left(h(e_{i,i+1})-\frac{\log p}{2} \right) y(e_{i,i+1}) + \sum_{i=l}^{n-1} h(e_{i,i+1}) y(e_{i,i+1}) \right\},
\end{eqnarray*} 
finishing the proof of the proposition.\qed

\begin{lma}\label{l.percus}
Let $J>0$ and let the (possibly edge dependent) external field $h$ be arbitrary.  Then for any 
$n\in {\mathbb Z}$ and $k \le l \in \Z$,

\begin{eqnarray}\label{e.strictineq}
\lefteqn{{\mathbf E}(Y_{J,h}^{\Z}(e_{n,n+1})\,| \, X^{Y_{J,h}^{\Z},p}(k)=1,\ldots, X^{Y_{J,h}^{\Z},p}(l)=1)}\nonumber \\ & & >{\mathbf E}(Y_{J,h}^{\Z}(e_{n,n+1}) \,| \, X^{Y_{J,h}^{\Z},p}(k)=1,\ldots, X^{Y_{J,h}^{\Z},p}(l-1)=1).
\end{eqnarray}
If $k=l$, then there is no conditioning on the right hand side of the above.
\end{lma}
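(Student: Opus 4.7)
The plan is to reduce the strict inequality to a monotonicity statement for an Ising expectation as a function of a single external-field coordinate, and then establish the requisite strict positive correlation.

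First I would extend Proposition~\ref{p.isingmod} to infinite volume by a standard weak-limit argument (letting the finite window exhaust $\Z$), concluding that the conditional law of $Y_{J,h}^{\Z}$ given $X^{Y,p}(k)=\cdots=X^{Y,p}(l)=1$ equals $\mu_{J,h^{+}}^{\Z}$, where $h^+$ agrees with $h$ off the edges $\{e_{i,i+1}:k\le i\le l-1\}$ and satisfies $h^+(e_{i,i+1})=h(e_{i,i+1})-(\log p)/2$ on those edges. Analogously, conditioning on $X^{Y,p}(k)=\cdots=X^{Y,p}(l-1)=1$ yields $\mu_{J,h^{-}}^{\Z}$, where the same shift is applied only for $k\le i\le l-2$. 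Hence $h^+$ and $h^-$ coincide everywhere except at the single edge $e_{l-1,l}$, where $h^+(e_{l-1,l})-h^-(e_{l-1,l})=-(\log p)/2>0$. (In the case $k=l$ both shifts are vacuous, consistent with the parenthetical remark in the statement.)

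Next I would interpolate. For $s\in[0,1]$, let $h^{(s)}$ equal $h^-$ except that $h^{(s)}(e_{l-1,l}):=h^-(e_{l-1,l})+s\alpha$, where $\alpha:=-(\log p)/2>0$, and write $E_s$ for expectation under $\mu_{J,h^{(s)}}^{\Z}$. The standard Gibbs derivative identity (differentiating $\log$ of the partition function) yields
$$
\frac{d}{ds}E_s[Y(e_{n,n+1})] \;=\; \alpha\cdot\Cov_s\!\bigl(Y(e_{n,n+1}),\,Y(e_{l-1,l})\bigr).
$$
Integrating in $s$ from $0$ to $1$ reduces the lemma to showing that this covariance is strictly positive for each $s\in[0,1]$. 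When $n=l-1$ this equals $\alpha\,\Var_s(Y(e_{l-1,l}))>0$, since $\mu_{J,h^{(s)}}^{\Z}$ charges both spin values.

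For $n\ne l-1$, the measure $\mu_{J,h^{(s)}}^{\Z}$ is a two-state Markov chain on $\Z$ whose one-step transition matrix has all four entries strictly in $(0,1)$ because $J>0$. Integrating out the edges strictly between $e_{n,n+1}$ and $e_{l-1,l}$ via the transfer-matrix identity presents the joint law of $\bigl(Y(e_{n,n+1}),Y(e_{l-1,l})\bigr)$ as a product of strictly positive $2\times2$ matrices; such a product is strictly monotone in a Perron--Frobenius sense, giving $E_s[Y(e_{l-1,l})\mid Y(e_{n,n+1})=1] > E_s[Y(e_{l-1,l})\mid Y(e_{n,n+1})=-1]$ and hence the desired strict positive covariance. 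The main obstacle is this last step --- making the strict positivity airtight in the infinite-volume, inhomogeneous-field setting; a viable alternative route is to invoke strict FKG lattice positivity in finite volume and then pass to the limit using the exponential decay of correlations of the 1D Ising chain to control the tails.
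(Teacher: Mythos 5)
Your proposal follows essentially the same route as the paper's proof: identify the conditioned law as a field-shifted Ising measure via Proposition~\ref{p.isingmod}, express the difference of the two conditional expectations as an integral of $\Cov\bigl(Y(e_{n,n+1}),Y(e_{l-1,l})\bigr)$ as the single field coordinate $h(e_{l-1,l})$ is shifted by $-(\log p)/2$, and conclude from strict positivity of that covariance when $J>0$. The only organizational difference is that the paper performs the interpolation in finite volume $[-N,N]$ and passes to the limit of the covariance integral by bounded convergence, which sidesteps the infinite-volume worry you raise at the end; your direct infinite-volume conditioning also works since the conditioning density $p^{M(k,l)}$ is a bounded, strictly positive local function of $Y$.

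One caution on your final step: it is \emph{not} true that a product of strictly positive $2\times 2$ matrices is "strictly monotone" --- e.g.\ a single symmetric positive matrix with negative determinant gives negative correlation. What makes the argument work is that each ferromagnetic transfer matrix has strictly positive determinant because $J>0$; positive determinant is preserved under products and under multiplication by positive diagonal (field/boundary) factors, and a positive determinant of the $2\times 2$ joint-law matrix of two $\pm1$-valued variables is exactly equivalent to strictly positive covariance. With that substitution your justification of the strict inequality (which the paper leaves terse, citing only Percus' equality for the weak inequality) is sound.
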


\pf In the proof, we will work on the interval $[-N,N]$ and keep $J$ fixed, so we write 
$Y^{N}_{h}=Y^{-N,N}_{J,h}$, and in addition we write $Y_{h}=Y^{\Z}_{J,h}$. Without loss of 
generality, we can choose $n=0$ and so we will be done if we show that for any fixed $k\le l$,

\begin{eqnarray}\label{e.enoughts}
\lefteqn{\lim_{N\to \infty } {\mathbf E}(Y^{N}_{h}(e_{0,1})\,|\,X^{Y^{N}_{h},p}(k)=1,\ldots, X^{Y^{N}_{h},p}(l)=1)}\nonumber \\ & & > \lim_{N\to \infty } {\mathbf E}(Y^{N}_{h}(e_{0,1})\,|\,X^{Y^{N}_{h},p}(k)=1,\ldots, X^{Y^{N}_{h},p}(l-1)=1),
\end{eqnarray}
since the LHS and RHS in~\eqref{e.enoughts} coincide with the LHS and RHS of~\eqref{e.strictineq} 
respectively.

If $N>\max(|k|,|l|)$, then we know from Proposition~\ref{p.isingmod} that

$$
{\mathcal D}(Y^{N}_{h}\, |\, X^{Y^{N}_{h},p}(k)=1,\ldots, X^{Y^{N}_{h},p}(l-1)=1) =  {\mathcal D}(Y^{N}_{\tilde{h}_{k,l-1}}),
$$

and

$$
{\mathcal D}(Y^{N}_{h}\, |\, X^{Y^{N}_{h},p}(k)=1,\ldots, X^{Y^{N}_{h},p}(l)=1) =  {\mathcal D}(Y^{N}_{\tilde{h}_{k,l}}),
$$
where $\tilde{h}_{k,l}$ is given in the statement of Proposition~\ref{p.isingmod}.
It is well known and easy to prove (see \cite{ELLIS} p.148) that  for all $i$ and $j$,
$$
\frac{\partial {\mathbf E}[Y^{N}_{h}(e_{j,j+1})]}{\partial h(e_{i,i+1})}={\bf Cov}(Y^{N}_{h}(e_{i,i+1}),Y^{N}_{h}(e_{j,j+1})).
$$
This implies that
\begin{eqnarray}\label{e.corrint}
\lefteqn{{\mathbf E}[Y^{N}_{\tilde{h}_{k,l}}(e_{0,1})]-{\mathbf E}[Y^{N}_{\tilde{h}_{k,l-1}}(e_{0,1})] }\nonumber \\& & = \int_{h(e_{l-1,l})}^{h(e_{l-1,l})-(\log p )/2}{\bf Cov}(Y^{N}_{s}(e_{0,1}),Y^{N}_{s}(e_{l-1,l}))d s(e_{l-1,l}),
\end{eqnarray}
where $s(e_{i,i+1})=\tilde{h}_{k,l-1}(e_{i,i+1})$ for $i\neq l-1$. As $N\to \infty$, the right hand 
side of~\eqref{e.corrint} converges (by the bounded convergence theorem) to 
$$
\int_{h(e_{l-1,l})}^{h(e_{l-1,l})-(\log p )/2}{\bf Cov}(Y_{s}(e_{0,1}),Y_{s}(e_{l-1,l}))d s(e_{l-1,l}).
$$
Since $J>0$, this last expression is strictly positive. 
(Percus' equality (\cite{P75}, see also \cite{ELLIS} p.142) gives the weaker fact that the expression
is nonnegative.) Now~\eqref{e.enoughts} follows. \qed

\medskip

In what follows, we write, as in the proof of Lemma~\ref{l.percus}, $Y_{h}=Y^{\Z}_{J,h}$.

\begin{thm}\label{t.notnmarkov}

Let $J> 0$ and let the external field $h$ be constant but arbitrary. Then the color process 
$X^{Y_h,p}$ is not $n$-step Markov for any $n\ge 1$ unless $p\in \{0,1\}$.
\end{thm}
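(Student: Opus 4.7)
The plan is to exhibit, for every $n\ge 1$, a conditioning event under which adding one more past value strictly changes the one-step conditional probability of $X^{Y_h,p}(0) = 1$, which by definition refutes $n$-step Markovness. Concretely, I aim to show that
\[
q_m := P\bigl(X^{Y_h,p}(0) = 1 \;\big|\; X^{Y_h,p}(-m) = \cdots = X^{Y_h,p}(-1) = 1\bigr), \qquad m \ge 1,
\]
is strictly increasing in $m$, so that $q_n \neq q_{n+1}$ for every $n \ge 1$.

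First I would derive a clean formula for $q_m$ by conditioning additionally on the edge variable $Y_h(e_{-1,0})$. On the event $X^{Y_h,p}(-m) = \cdots = X^{Y_h,p}(-1) = 1$: if $Y_h(e_{-1,0}) = 1$, then $0$ lies in the same cluster as $-1$, which is already known to be colored $1$, so $X^{Y_h,p}(0) = 1$ surely; if $Y_h(e_{-1,0}) = -1$, then $0$ belongs to a cluster extending only to the right of $0$, whose independent coloring gives $X^{Y_h,p}(0) = 1$ with probability $p$. Setting
\[
\alpha_m := E\bigl[Y_h(e_{-1,0}) \;\big|\; X^{Y_h,p}(-m) = \cdots = X^{Y_h,p}(-1) = 1\bigr]
\]
and using $Y_h(e_{-1,0}) \in \{-1,1\}$, I obtain $q_m = p + (1-p)(1 + \alpha_m)/2$, which is strictly monotone in $\alpha_m$ because $p \in (0,1)$.

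It then suffices to show that $(\alpha_m)_{m \ge 1}$ is strictly increasing. Because $h$ is constant, the law $\mu_{J,h}^{\Z}$ is invariant under the reflection $k \mapsto -k$ (which maps $e_{-1,0}$ to $e_{0,1}$), and $\Phi_p$ commutes with this reflection; hence
\[
\alpha_m = E\bigl[Y_h(e_{0,1}) \;\big|\; X^{Y_h,p}(1) = \cdots = X^{Y_h,p}(m) = 1\bigr].
\]
Lemma~\ref{l.percus}, applied with edge $e_{0,1}$ and $k = 1$, $l = m$, yields $\alpha_m > \alpha_{m-1}$ for every $m \ge 2$, completing the proof.

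The only substantive step is the reduction from $q_m$ to $\alpha_m$ via the edge variable $Y_h(e_{-1,0})$; once that is in place, Lemma~\ref{l.percus} supplies the crucial strict inequality directly. The main subtlety to watch for is the reflection step, which uses the actual reflection invariance of $\mu_{J,h}^{\Z}$ (guaranteed by the constancy of $h$) and not merely translation invariance.
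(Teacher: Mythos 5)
Your proof is correct and is essentially the paper's argument run in mirror image: the paper conditions on the \emph{future}, computing ${\mathbf P}(X^{Y_h,p}(0)=1\mid X^{Y_h,p}(1)=\cdots=X^{Y_h,p}(n)=1)=p+(1-p)\,{\mathbf P}(Y_h(e_{0,1})=1\mid \cdot)$ and applying Lemma~\ref{l.percus} to the edge $e_{0,1}$ directly, so no reflection is needed. Your reflection step is valid (constant $h$ does make $\mu_{J,h}^{\Z}$, and hence the joint law with the coloring, reflection invariant), but it is an extra ingredient the paper avoids by exploiting the time-symmetry of the $n$-step Markov property instead.
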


\pf Observe that

\begin{align*}
{\mathbf P}(&X^{Y_h,p}(0)=1| X^{Y_h,p}(1)=1,\ldots, X^{Y_h,p}(n)=1)\\ &= {\mathbf P}(Y_h(e_{0,1})=1\,|\, X^{Y_h,p}(1)=1,\ldots, X^{Y_h,p}(n)=1)\\ & +p\, {\mathbf P}(Y^{h}(e_{0,1})=0\,|\, X^{Y_h,p}(1)=1,\ldots, X^{Y_h,p}(n)=1)\\& =p+(1-p){\mathbf P}(Y_h(e_{0,1})=1\,|\, X^{Y_h,p}(1)=1,\ldots, X^{Y_h,p}(n)=1).
\end{align*}

Lemma~\ref{l.percus} says that the last expression is strictly increasing in $n$ and so the theorem 
is proved. \qed

\section{Stochastic domination of product measures}\label{s.dom}

Given $\nu$ and $p$, it is natural to ask which product measures the color process $\Phi_p(\nu)$ 
stochastically dominates. In this section, we present results in this direction. We write 
$\mu_1\preceq \mu_2$ if $\mu_2$ stochastically dominates $\mu_1$ which we recall means that 
the two measures can be coupled so that the joint distribution is concentrated on pairs of 
configurations where the realization for $\mu_1$ is below the realization for $\mu_2$.

To begin with, the following definition is natural.

\begin{df}
Let $V$ be a finite or countable set and let $\nu\in\rer_V$.
For $p\in (0,1)$, let $d(\nu,p):=\max\{\alpha:\Pi_\alpha\preceq \Phi_p(\nu)\}$.
We also let $d(\nu):=\lim_{p\to 1}d(\nu,p)$. 
($\Pi_s$ denotes as before product measure on $\{0,1\}^{V}$ with density $s$.)
\end{df}

\subsection{Some general results for stochastic domination}

At first, one might think that $d(\nu)$ should often be 1. However, this is usually not 
the case; see e.g.\ Proposition~\ref{p.expdomlemma1}(ii) below. Our first proposition tells 
us that $d(\nu)=1$ does hold if the cluster sizes are bounded.

\begin{prop}\label{p.BoundedCluster}
Suppose that $\nu\in \rer_{V}$ where $V$ is an arbitrary set and that
\begin{equation}\label{e.ClusterBounded}
\nu(\{\pi\,:\,|\phi|\le M\mbox{ for all }\phi\in \pi\})=1.
\end{equation}
Then for all $p\in (0,1)$,
$$
d(\nu,p)\ge 1-(1-p)^{\frac{1}{M}}
$$
and hence $d(\nu)=1$.
\end{prop}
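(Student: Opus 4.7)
The plan is to pass stochastic domination through mixtures on the dominating side and through independent products, reducing the statement to a single cluster of bounded size. Concretely, for $1\le k\le M$ let $\mu_{k,p}$ denote the measure on $\{0,1\}^k$ that puts mass $p$ on $(1,\ldots,1)$ and mass $1-p$ on $(0,\ldots,0)$; I will show $\mu_{k,p}\succeq \Pi_{\alpha_k}$ with $\alpha_k:=1-(1-p)^{1/k}$, and then assemble the full result.

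For this single-cluster domination I will use Strassen's theorem: it suffices to check $\mu_{k,p}(A)\ge \Pi_{\alpha_k}(A)$ for every up-set $A\subseteq\{0,1\}^k$. Any up-set containing $(0,\ldots,0)$ equals $\{0,1\}^k$, and any nonempty up-set must contain $(1,\ldots,1)$; the only nontrivial case is $(0,\ldots,0)\notin A\ni (1,\ldots,1)$, where $\mu_{k,p}(A)=p$ and
\begin{equation*}
\Pi_{\alpha_k}(A)\le 1-(1-\alpha_k)^k=p
\end{equation*}
by the choice of $\alpha_k$. Since $k\mapsto \alpha_k$ is decreasing, one in fact has $\mu_{k,p}\succeq \Pi_\alpha$ for all $k\le M$, with $\alpha:=1-(1-p)^{1/M}$.

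For a partition $\pi$ with $|\phi|\le M$ for every cluster $\phi\in \pi$, the measure $\Phi_p(\delta_\pi)$ is the product $\bigotimes_{\phi\in\pi}\mu_{|\phi|,p}$ on $\prod_{\phi}\{0,1\}^{\phi}=\{0,1\}^V$, which dominates $\Pi_\alpha$ since stochastic domination is preserved under independent products. Finally, $\Phi_p(\nu)=\int \Phi_p(\delta_\pi)\,d\nu(\pi)$, and since by \eqref{e.ClusterBounded} $\nu$-a.e.\ $\pi$ meets the hypothesis above, for every bounded increasing measurable $f\colon \{0,1\}^V\to\R$ one has $\int f\,d\Phi_p(\nu)=\int\left(\int f\,d\Phi_p(\delta_\pi)\right)d\nu(\pi)\ge \int f\,d\Pi_\alpha$, hence $\Phi_p(\nu)\succeq \Pi_\alpha$. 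This yields $d(\nu,p)\ge 1-(1-p)^{1/M}$, and letting $p\to 1$ gives $d(\nu)=1$. The only step carrying real content is the up-set verification in the single-cluster case, and no serious obstacle arises once one sees that the extremal up-set there is $\{0,1\}^k\setminus\{(0,\ldots,0)\}$.
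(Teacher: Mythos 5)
Your proof is correct and follows essentially the same route as the paper: decompose $\Phi_p(\nu)$ as a mixture of the $\Phi_p(\delta_\pi)$ and reduce to a single partition with bounded clusters. The only difference is that you spell out the single-cluster domination $\mu_{k,p}\succeq\Pi_{\alpha_k}$ via the up-set check (which the paper dismisses as ``straightforward to show''), and your verification of that step is accurate.
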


\pf
Suppose first that $\pi\in {\rm Part}_{V}$ is such that $\pi$ contains only 
equivalence classes of size at most $M$. Letting $\alpha:=1-(1-p)^{\frac{1}{M}}$,
it is straightforward to show that $\Pi_{\alpha}\preceq \Phi_p(\delta_{\pi})$ where $\delta_\pi$ 
stands for the point measure at $\pi$.  Now write 
$$
\Phi_p(\nu)=\int_{\pi\in {\rm Part}_{V}} \Phi_p(\delta_{\pi})d\nu(\pi).
$$
The claim now follows, since $\Pi_{\alpha}\preceq \Phi_p(\delta_{\pi})$ for $\nu$-almost every $\pi$. \qed

\medskip

The next proposition, due to Olle H\"{a}ggstr\"{o}m, shows that having uniformly bounded cluster sizes is 
not a necessary condition for $d(\nu)=1$.

\begin{prop}\label{p.Olle}
There exists an RER $\nu$ with $d(\nu)=1$ for which the supremum of the cluster sizes is infinite a.s.
\end{prop}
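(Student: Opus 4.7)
My plan is to exhibit an explicit RER $\nu$ on $V = \N$ whose partition has an infinite cluster almost surely (so the supremum of cluster sizes is infinite a.s.) but which is sparse enough that $\Phi_p(\nu)$ stochastically dominates a product measure with density tending to $1$ as $p\to 1$.

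\emph{Construction}: let $(\sigma_v)_{v\in\N}$ be independent Bernoulli random variables with $P(\sigma_v=1)=\epsilon_v$, where the $\epsilon_v\in(0,1)$ are chosen so that $\sum_v\epsilon_v=\infty$ but $\epsilon_v\to 0$; for concreteness take $\epsilon_v=1/v$. Put $S=\{v:\sigma_v=1\}$ and let $\nu$ be the law of the partition whose equivalence classes are $S$ together with the singleton $\{v\}$ for each $v\notin S$. Since $\sum_v 1/v=\infty$, Borel--Cantelli gives $|S|=\infty$ a.s., so the supremum of cluster sizes is infinite almost surely, whereas with the Bounded-Cluster proposition in mind the sparseness of $S$ should still allow good product-measure domination.

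To establish $d(\nu)=1$, I would fix $p\in(0,1)$ and a finite $F\subset\N$ and, conditioning on $(\sigma_v)_{v\in F}$ and on the common color $\xi_S$ of $S$, obtain the closed-form identity
\[
P\bigl(\Phi_p(\nu)|_F\equiv 0\bigr)
=p(1-p)^{|F|}\prod_{v\in F}(1-\epsilon_v)
+(1-p)^{|F|+1}\prod_{v\in F}\Bigl[(1-\epsilon_v)+\tfrac{\epsilon_v}{1-p}\Bigr].
\]
Specializing to $\epsilon_v=1/v$ and $F=\{2,\ldots,k+1\}$, the second-term product becomes $\prod_{v=2}^{k+1}\bigl(1+\tfrac{p}{(1-p)v}\bigr)$, and an optimization balancing $k$ against $\log(1/(1-p))$ yields a uniform bound of the form $P(\Phi_p(\nu)|_F\equiv 0)\leq(1-\alpha(p))^{|F|}$ for all finite $F$, with $\alpha(p)\to 1$ as $p\to 1$.

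The main obstacle is turning this control of ``all-zero'' probabilities into full stochastic domination $\Phi_p(\nu)\succeq\Pi_{\alpha(p)}$, since a priori bounds on cylinder events of this special form are not sufficient to dominate a product measure. My plan is to close this gap by a direct coupling using the decomposition $\Phi_p(\nu)=p\mu_1+(1-p)\mu_0$, where $\mu_1$ trivially dominates $\Pi_p$ (the cluster $S$ is colored $1$, and $V\setminus S$ is i.i.d.\ Bernoulli$(p)$), while $\mu_0$ enters with weight $1-p$ that shrinks to zero. The delicate point is that, on $S$, the values of $\Phi_p(\nu)$ are perfectly correlated (all equal to $\xi_S$), so the coupling must inject fresh randomness on a carefully chosen subset of $V\setminus S$ (whose density is close to $1$ because $\epsilon_v\to 0$) in order to produce genuinely i.i.d.\ Bernoulli$(\alpha(p))$ variables that are pointwise dominated by $\Phi_p(\nu)$; I expect this independence-preserving coupling to be the hardest step.
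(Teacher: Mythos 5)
Your construction is plausible and the first half (an infinite cluster a.s., plus the exact formula for the all-zero cylinder probabilities) is fine, but the proof has a genuine gap exactly where you flag it: nothing in the proposal actually upgrades the bound $P(\Phi_p(\nu)|_F\equiv 0)\le(1-\alpha(p))^{|F|}$ to the stochastic domination $\Pi_{\alpha(p)}\preceq\Phi_p(\nu)$, and the ``independence-preserving coupling'' you describe is not constructed. The standard routes do not close it for your example: (a) the Holley/Liggett--Schonmann--Stacey criterion via conditional single-site marginals only gives $P(X_v=1\mid\text{rest})\ge(1-\epsilon_v)p$, whose infimum over $v$ is $(1-\sup_v\epsilon_v)p$; with $\epsilon_2=1/2$ this yields at best $\Phi_p(\nu)\succeq\Pi_{p/2}$ and hence only $d(\nu)\ge 1/2$; (b) the decomposition $\Phi_p(\nu)=p\mu_1+(1-p)\mu_0$ is not useful as stated, since $\mu_0$ is identically $0$ on the infinite set $S$ and therefore dominates no nontrivial product measure, and a convex combination $p\Pi_{\beta_1}+(1-p)\Pi_{\beta_0}$ on an infinite index set dominates only $\Pi_{\min(\beta_0,\beta_1)}$ --- the small weight $1-p$ does not rescue this. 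So even the truth of $d(\nu)=1$ for your $\nu$ is left unestablished (the necessary conditions you check are consistent with it, but only barely: the all-zeros events on $F=\{2,\dots,k+1\}$ already force $1-\alpha\ge\bigl((1-p)/(k+1)!\bigr)^{1/k}$, hence $1-\alpha\gtrsim 1/\log\frac{1}{1-p}$ after optimizing in $k$).

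The paper sidesteps precisely this difficulty by a different design of $\nu$: it takes $V=\bigcup_{k\ge 2}V_k$ with $|V_k|=k$ and, \emph{independently} over $k$, makes $V_k$ a single cluster with probability $\epsilon_k=2^{-k^2}$ and all singletons otherwise. Because the blocks are independent, domination tensorizes and it suffices to dominate blockwise; because each block is \emph{finite, exchangeable and satisfies the FKG lattice condition}, Theorem 1.3 of \cite{LS06} applies and reduces blockwise domination to exactly the all-zeros inequality $(1-\alpha)^k\ge\epsilon_k(1-p)+(1-\epsilon_k)(1-p)^k$, which the rapidly decaying $\epsilon_k$ makes easy to verify uniformly in $k$. (A countable product of independent copies then makes the supremum of cluster sizes infinite a.s.) Your single infinite, non-exchangeable cluster admits neither the tensorization nor the \cite{LS06} reduction, which is why the coupling step you postpone is not a technicality but the entire content of the claim. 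If you want to keep an argument in this spirit, replace the one infinite cluster by independent finite blocks with rapidly decaying cluster probabilities and invoke \cite{LS06}; otherwise you must supply the coupling.
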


\pf
The main step is to first construct an RER $\nu$ with $d(\nu)=1$ for which~\eqref{e.ClusterBounded} fails 
for each $M$.
To do this, let $V_2,V_3,\ldots$ be disjoint finite sets with $|V_k|=k$ for each $k$ and let $V=\cup_{k\ge 2} V_k$.
Given a sequence $(\epsilon_k)$, we consider the RER $\nu$ on $V$ obtained as follows.
Independently for different $k$, we let $V_k$ be a cluster with probability $\epsilon_k$
and we let all the elements of $V_k$ to be singletons with probability $1-\epsilon_k$. Clearly if 
$\epsilon_k>0$ for each $k$, then~\eqref{e.ClusterBounded} fails for each $M$.
We now claim that if $\epsilon_k=\frac{1}{2^{k^2}}$, then $d(\nu)=1$. We need to show that for each 
$\alpha <1$, there is $p<1$ so that $\Pi_\alpha\preceq \Phi_p(\nu)$. 
Since the behavior on different $V_k$'s is independent under $\nu$,
we only need to check the stochastic domination for each $V_k$. We first check that we can obtain the 
desired inequality for the (decreasing) event of having all 0's. This inequality is then
$$
(1-\alpha)^k \ge \eps_k (1-p)+(1-\eps_k) (1-p)^k
$$
and it is easy to check that with $\epsilon_k=\frac{1}{2^{k^2}}$ as above, 
given any $\alpha <1$, there is $p<1$ so that 
this inequality holds for all $k$. Theorem 1.3 in \cite{LS06} states that a finite exchangeable process which
satisfies the FKG lattice condition dominates a given product measure once one has the appropriate 
inequality for the
event of having all 0's. It is not hard to see that the color process above on $V_k$ is exchangeable and
satisfies the FKG lattice condition therefore yielding the desired stochastic domination.

Finally, once we have an RER $\nu$ with $d(\nu)=1$ for which~\eqref{e.ClusterBounded} fails for each $M$,
we can obtain what is claimed in the proposition simply by considering an infinite number of independent such systems.
\qed

\medskip

The next proposition relates stochastic domination with the behavior of the 
number of clusters intersecting a large box.

\begin{prop}\label{p.expdomlemma1}
Let $d\ge 1$, $\nu\in \rer_{\zd}^{\stat}$ and $C_n=C^\nu_n$ be the number of clusters 
intersecting $[-n,n]^d$.

(i). If $p,\alpha \in (0,1)$ is such that we have
$\Pi_\alpha\preceq \Phi_p(\nu)$, then for all $n\ge 0$ and all $k\ge 1$,
\begin{equation}\label{e.dominationLDnewversion}
\nu(C_n\le k)\le \frac{1}{(1-p)^k} (1-\alpha)^{(2n+1)^d}.
\end{equation}

(ii).  If
\begin{equation}\label{e.suff.cond.dzeroagain}
\liminf_{n\to\infty}\frac{-\log \nu(C_n\le \delta  (2n+1)^d)}{(2n+1)^d}\le \epsilon,
\end{equation}
then $d(\nu,p)\le 1-\frac{(1-p)^{\delta}}{e^{\epsilon}}$.
In particular if this $\liminf$ is 0, then $d(\nu,p)\le 1-(1-p)^{\delta}$.

(iii). If there exists $k_n=o(n^d)$ such that
\begin{equation}\label{e.suff.cond.dzero}
\liminf_{n\to\infty}\frac{-\log \nu(C_n\le k_n)}{(2n+1)^d}\le \epsilon,
\end{equation}
then $d(\nu)\le 1-e^{-\epsilon}$. 
In particular if this $\liminf$ is 0, then $d(\nu)=0$. 

(iv).  If $\nu(C_n=1)\ge \gamma^{(2n+1)^d}$ for infinitely many values of $n$,
then $d(\nu)\le 1-\gamma$.
\end{prop}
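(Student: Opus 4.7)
All four parts will follow from a single identity plus the duality between stochastic domination and decreasing events. Let $A_n$ denote the event that the configuration is identically $0$ on $[-n,n]^d$. Conditioning on the partition, each cluster intersecting $[-n,n]^d$ independently fails to be colored $1$ with probability $1-p$, so
\begin{equation*}
\Phi_p(\nu)(A_n) \;=\; E_\nu\bigl[(1-p)^{C_n}\bigr] \;\ge\; (1-p)^{k}\,\nu(C_n\le k) \qquad \text{for every } k\ge 1.
\end{equation*}
Since $A_n$ is a decreasing event, stochastic domination $\Pi_\alpha\preceq \Phi_p(\nu)$ forces $\Phi_p(\nu)(A_n)\le \Pi_\alpha(A_n)=(1-\alpha)^{(2n+1)^d}$. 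Combining these two bounds yields
\begin{equation*}
\nu(C_n\le k) \;\le\; \frac{(1-\alpha)^{(2n+1)^d}}{(1-p)^k},
\end{equation*}
which is precisely (i).

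For (ii), plug $k=\lfloor\delta(2n+1)^d\rfloor$ into (i) (adjusting the rounding costs only a factor $(1-p)^{-1}$, which vanishes after dividing by $(2n+1)^d$ in the limit), take $-\log$, divide by $(2n+1)^d$ and let $n\to\infty$ along the subsequence achieving the $\liminf$:
\begin{equation*}
\epsilon \;\ge\; -\log(1-\alpha)+\delta\log(1-p),
\end{equation*}
which rearranges to $\alpha\le 1-(1-p)^\delta e^{-\epsilon}$ and so gives the stated bound on $d(\nu,p)$.

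For (iii), repeat the same manipulation with $k=k_n$; the extra term $\tfrac{k_n\log(1-p)}{(2n+1)^d}$ now tends to $0$ because $k_n=o(n^d)$, yielding $\alpha\le 1-e^{-\epsilon}$ uniformly in $p\in(0,1)$, and hence the same bound on $d(\nu)=\lim_{p\to 1}d(\nu,p)$. Part (iv) is then the immediate specialization to $k_n=1$ and $\epsilon=-\log\gamma$, since $\nu(C_n\le 1)\ge\nu(C_n=1)\ge \gamma^{(2n+1)^d}$ along an infinite subsequence gives $\liminf\tfrac{-\log\nu(C_n\le 1)}{(2n+1)^d}\le -\log\gamma$.

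There is no real obstacle: once one recognizes that the ``all zeros on a box'' event simultaneously (a) serves as a test decreasing event for stochastic domination and (b) has a clean closed-form probability $E_\nu[(1-p)^{C_n}]$ under the color process, parts (ii)--(iv) are just successive specializations of the inequality in (i). The only minor care needed is the rounding when passing from the real number $\delta(2n+1)^d$ to an integer $k$, and confirming that $o(n^d)=o((2n+1)^d)$ so that the term involving $k_n$ drops out in the limit in (iii).
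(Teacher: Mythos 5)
Your proposal is correct and follows essentially the same route as the paper: part (i) is exactly the paper's computation (testing the decreasing event ``all zeros on $[-n,n]^d$'' against the identity $\Phi_p(\nu)(A_n)=E_\nu[(1-p)^{C_n}]$), and the paper then derives (ii)--(iv) as the same successive specializations that you spell out. The only remark worth making is that the rounding issue in (ii) is even more harmless than you suggest, since $C_n$ is integer-valued and so $\nu(C_n\le \delta(2n+1)^d)=\nu(C_n\le\lfloor\delta(2n+1)^d\rfloor)$ exactly.
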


\pf (i). Fix $p,\alpha \in (0,1)$ with $\Pi_\alpha\preceq \Phi_p(\nu)$ and let $n\ge 0$ and $k\ge 1.$
Then
\begin{eqnarray}\label{e.domincube1}
\nonumber (1-\alpha)^{(2n+1)^d} =\Pi_{\alpha}(X|_{[-n,n]^d}\equiv 0)
\ge \Phi_p(\nu)(X|_{[-n,n]^d}\equiv 0)
\\   =E[(1-p)^{C_n}]\ge \nu(C_n\le k) (1-p)^k.
\end{eqnarray}

(ii). This follows from (i) in a straightforward manner.

(iii). This follows from (ii) in a straightforward manner.

(iv). This follows from (iii) in a straightforward manner.
\qed

\medskip

We next have the following proposition for RERs concentrated on connected classes.

\begin{prop}\label{p.statconncorr}
(i). Let $\nu\in \rer_{\Z}^{\stat,\conn}$. If $p,\alpha \in (0,1)$ is such that we have
$\Pi_\alpha\preceq \Phi_p(\nu)$, then for all $n\ge 1$
\begin{equation}\label{e.dominationLDagain}
\nu(|\pi(0)|\ge n)\le (n+2) \frac{1}{1-p} (1-\alpha)^{2\lfloor n/2 \rfloor +1}.
\end{equation}
It follows that if $\nu(|\pi(0)|\ge n)\ge C\gamma^n$ for infinitely many $n$ for some $C>0$, then
$d(\nu)\le 1-\gamma$.

(ii). There exists $\nu\in \rer_{\Z}^{\stat}$ and $p,\alpha \in (0,1)$ such that 
$\Pi_\alpha\preceq \Phi_p(\nu)$ but where the LHS of~\eqref{e.dominationLDagain} does not go to 0 with $n$.

(iii). There exists $d\ge 2$, $\nu\in\rer_{\zd}^{\rm stat,conn}$ and $p,\alpha \in (0,1)$ such that 
$\Pi_\alpha\preceq \Phi_p(\nu)$ but where the LHS of~\eqref{e.dominationLDagain} does not go to 0 with $n$.

(iv).
Let $\nu\in \rer_{\zd}^{\stat,\conn}$. If $p,\alpha \in (0,1)$ is such that we have
$\Pi_\alpha\preceq \Phi_p(\nu)$, then for all $n\ge 1$
\begin{equation}\label{e.dominationLDagainagain}
\nu(|\pi(0)| \ge n ) \le \frac{(7^d(1-\alpha))^n}{1-p}.
\end{equation}
(This only has content if $\alpha\in (1-7^{-d},1)$.)
It follows that if $\nu(|\pi(0)|\ge n)\ge C\gamma^n$ for infinitely many $n$ for some $C>0$, then
$d(\nu)\le 1-\frac{\gamma}{7^d}$.
\end{prop}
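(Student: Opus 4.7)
For part (i), I will bound $\nu(|\pi(0)|\ge n)$ by a union bound exploiting that every cluster is an interval of $\Z$. If $|\pi(0)|\ge n$, then $\pi(0)=[a,b]$ with $a\le 0\le b$ and $b-a+1\ge n$. Letting $L = 2\lfloor n/2\rfloor + 1$, I would decompose $\{|\pi(0)|\ge n\}$ into events $\{[j,j+L-1]\text{ in one cluster}\}$ indexed by a bounded range of $j$ containing $0$. For each such event, stationarity lets me translate to a symmetric box of size $L$, and Proposition~\ref{p.expdomlemma1}(i) with $k=1$ bounds its probability by $(1-\alpha)^L/(1-p)$; summing over at most $n+2$ values of $j$ gives the claim. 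The delicate case is $n$ even (where $L=n+1$ exceeds $n$): I would split off $\{|\pi(0)|=n\}$ and exploit that under $\{\pi(0)=[c,c+n-1]\}$ the color of the boundary site $c-1$ is an independent Bernoulli$(p)$, producing the extra $(1-\alpha)$ factor needed to reach $(1-\alpha)^{n+1}$.

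For part (ii), I will construct a non-connected $\nu\in\rer_{\Z}^{\stat}$ by taking $(W_i)_{i\in\Z}$ i.i.d.\ Bernoulli$(\eps)$ with $\eps>0$ small, declaring $\{i:W_i=1\}$ a single (non-connected) cluster, and all other sites singletons. Then $\nu(|\pi(0)|=\infty)=\eps>0$, so the left-hand side cannot vanish. The color process has each singleton independently Bernoulli$(p)$ and all marked sites receiving a common Bernoulli$(p)$ color $Z$. A direct computation of $\Phi_p(\nu)(X|_S\equiv 0)$ on finite boxes yields exponential decay in $|S|$ with rate $-\log(1-(1-\eps)p)$, and a Strassen/FKG-style argument using the positive-correlation structure upgrades this to $\Phi_p(\nu)\succeq\Pi_\alpha$ for some $\alpha>0$.

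For part (iii), I will adapt this construction to $\zd$, $d\ge 2$, while forcing connected clusters. Pick $U$ uniform on $\{0,\ldots,K-1\}$ and declare each hyperplane $\Z^{d-1}\times\{y\}$ with $y\equiv U\pmod K$ a single (connected) cluster, all remaining sites singletons. Stationarity under shifts in the last coordinate follows from uniformity of $U$ modulo $K$; the other shifts preserve the configuration. Then $\nu\in\rer_{\zd}^{\stat,\conn}$ and $\nu(|\pi(\mathbf{0})|=\infty)=1/K>0$. The ``all zeros'' probability on a box of side $N$ has exponent $N^d(K-1)/K + O(N^{d-1})$, and for $K$ large enough this beats $(1-\alpha)^{N^d}$ for some $\alpha>0$, yielding $\Phi_p(\nu)\succeq \Pi_\alpha$ as in (ii).

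For part (iv), I will run a lattice animal argument. If $|\pi(\mathbf{0})|\ge n$, then $\pi(\mathbf{0})$ being connected and containing $\mathbf{0}$ lets me extract a connected $n$-vertex subset $A\subseteq\pi(\mathbf{0})$ with $\mathbf{0}\in A$. A standard encoding (each animal determined by a depth-first traversal with $2d$ choices per step) shows the number of such rooted animals in $\zd$ is at most $7^{dn}$. For each such $A$, the event $\{A\text{ in one cluster}\}$ gives $\Phi_p(\nu)(X|_A\equiv 0)\ge \nu(A\text{ in one cluster})\cdot(1-p)$, while stochastic domination gives $(1-\alpha)^n=\Pi_\alpha(X|_A\equiv 0)\ge \Phi_p(\nu)(X|_A\equiv 0)$; hence $\nu(A\text{ in one cluster})\le (1-\alpha)^n/(1-p)$. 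A union bound over the $\le 7^{dn}$ animals then produces $(7^d(1-\alpha))^n/(1-p)$. The main obstacle is the combinatorial bound: confirming the constant $7^d$ (rather than a weaker $C(d)^n$) requires a careful encoding and is the place where a small gain in $d$-dependence can be traded for simplicity.
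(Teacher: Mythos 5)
Parts (i), (ii) and (iv) of your proposal follow essentially the same route as the paper: a union bound over translates of an interval combined with the single-cluster estimate $(1-\alpha)^{|S|}\ge (1-p)\,\nu(S\subseteq\mbox{one cluster})$ for (i); a one-box paint-box (i.i.d.\ marks forming a single, non-connected cluster) for (ii); and a lattice-animal union bound with the $7^{dn}$ count for (iv). Two small remarks. In (ii), the domination is immediate from the mixture representation $\Phi_p(\nu_{(\eps,0,\ldots)})=p\,\Pi_{\eps+(1-\eps)p}+(1-p)\,\Pi_{(1-\eps)p}$ (equation~\eqref{e.simplebox}, or Proposition~\ref{p.dominationexch}), so the ``all-zeros probability plus FKG upgrade'' step---which as stated is not a proof, since exponential decay of the all-zeros probability is necessary but not sufficient for domination---can be dispensed with. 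In (i), your treatment of even $n$ costs a factor $(1-p)^{-2}$ on the $\{|\pi(0)|=n\}$ term, so you obtain a slightly weaker constant than in~\eqref{e.dominationLDagain}, though still one of the form ${\rm poly}(n)\,C(p)\,(1-\alpha)^{n}$, which is all the corollary $d(\nu)\le 1-\gamma$ requires.

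Part (iii), however, contains a genuine error: the hyperplane example does not dominate any $\Pi_\alpha$ with $\alpha>0$. Take $S$ to be any finite subset of the hyperplane $\Z^{d-1}\times\{0\}$. With probability $1/K$ this hyperplane is a single cluster, in which case $X|_S\equiv 0$ with probability $1-p$; hence $\Phi_p(\nu)(X|_S\equiv 0)\ge (1-p)/K$ for every such $S$, no matter how large. Since $\{X|_S\equiv 0\}$ is a decreasing event, $\Pi_\alpha\preceq\Phi_p(\nu)$ would force $\Phi_p(\nu)(X|_S\equiv 0)\le(1-\alpha)^{|S|}\to 0$, a contradiction. (This is exactly the mechanism behind Proposition~\ref{p.expdomlemma1}(i) and your own part (iv): domination of $\Pi_\alpha$ forces $\nu(S\subseteq\mbox{one cluster})\le(1-\alpha)^{|S|}/(1-p)$, so no sets of unbounded size may lie in a single cluster with probability bounded away from $0$.) Checking the all-zeros probability on large boxes is therefore not enough. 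The paper's example for (iii) is the supercritical random cluster model with $q=2$: its color process at $p=1/2$ is $(\mu_{J}^{\zd,+}+\mu_{J}^{\zd,-})/2$, and one invokes the known fact that the minus state of the low-temperature Ising model dominates a nontrivial product measure; the point is that for this RER the quantity $\nu(S\subseteq\mbox{one cluster})$ does decay exponentially in $|S|$ even though infinite clusters exist. Any working example must have this feature.
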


\pf (i). Observe that since $\nu$ produces only connected equivalence classes a.s.\  the 
following inclusion holds a.s.
$$
\{|\pi(0)|\ge n\}\subseteq \bigcup_{i=-\lceil n/2 \rceil}^{\lceil n/2 \rceil} \{\pi(i)\supseteq [i-\lfloor n/2 \rfloor,i+\lfloor n/2 \rfloor]\}.
$$

Hence
\begin{eqnarray}
\lefteqn{\nu(|\pi(0)|\ge n)\le \sum_{i=-\lceil n/2 \rceil}^{\lceil n/2 \rceil} \nu(\pi(i)\supseteq [i-\lfloor n/2 \rfloor,i+\lfloor n/2 \rfloor])}\nonumber \\ & & \le 
(n+2) \frac{1}{1-p} (1-\alpha)^{2\lfloor n/2 \rfloor +1},
\end{eqnarray}
using Proposition~\ref{p.expdomlemma1}(i) with $k=1$ in the last inequality, finishing the proof.
The last statement follows easily.

(ii). 
We use Proposition~\ref{p.dominationexch} which comes later in this section. Assume we 
have a paintbox with $p_1 >0$ and $\sum_i p_i <1$. 
Since $\sum_i p_i <1$, Proposition~\ref{p.dominationexch} says that 
$\Pi_{\alpha} \preceq\Phi_p(\nu)$ for some $\alpha, p\in (0,1)$.
However, since $p_1 >0$, $\nu(|\pi(0)|=\infty)>0$ and so
the LHS  of~\eqref{e.dominationLDagain} does not go to 0 with $n$. 

(iii). Let $\nu\in\rer_{\zd}^{\stat,\conn}$ be the random cluster model with $J>J_c$. 
Then, using the fact that the random cluster model has a unique infinite cluster,
the color process $\Phi_{1/2}(\nu)$ is necessarily given by 
$(\mu_{J}^{\zd,+}+\mu_{J}^{\zd,-})/2$ where these two measures are respectively the plus and 
minus states for the Ising model with coupling constant $J$.
It is well known that there is some $\epsilon=\epsilon(J,d)>0$ such that  
$\Pi_{\epsilon}\preceq \mu_{J}^{\zd,-} (\preceq \mu_{J}^{\zd,+})$ and
hence $\Pi_{\epsilon}\preceq \Phi_{1/2}(\nu)$. However $\nu(|\pi(0)|=\infty)>0$ and
hence the LHS of~\eqref{e.dominationLDagain} does not go to 0 with $n$.

(iv). Let $S_n$ be the set of connected subsets of $\zd$ of size $n$ containing the origin.
It is known that $|S_n|\le 7^{dn}$, see p.$81$ of \cite{grimmett}. We then have
\begin{equation}\label{e.domination11}
\nu( |\pi(0)| \ge n  ) \le \sum_{\phi\in S_n} \nu(\phi \subseteq \pi(0)).
\end{equation}

Since by assumption $\Pi_{\alpha} \preceq\Phi_p(\nu)$, we get, using domination in the 
second inequality, that for any $\phi\in S_n$
$$
(1-p)\nu(\phi \subseteq \pi(0))\le\Phi_p(\nu)(X|_{\phi}\equiv 0)\le (1-\alpha)^n,
$$
so that
\begin{equation}\label{e.domination12}
\nu(\phi \subseteq \pi(0))\le \frac{(1-\alpha)^n}{1-p}.
\end{equation}

From~\eqref{e.domination11} and~\eqref{e.domination12} it follows that
$$ 
\nu( |\pi(0)| \ge n  )\le |S_n| \frac{(1-\alpha)^n}{1-p}\le  \frac{(7^d(1-\alpha))^n}{1-p},
$$
as claimed. The last statement follows easily.
\qed

\begin{remark}
The essential reason that (i) does not hold when $d\ge 2$ is that the number of connected sets of size $n$ 
containing the origin is exponential in $n$ rather than linear in $n$ as in $d=1$. 
\end{remark}

The next proposition says that no matter how fast $\nu(|\pi(0)|\ge n)$ decays to 0 for $d=1$, 
there is no guarantee that $\Phi_p(\nu)$ will dominate any product measure, even for
$\nu\in \rer_{{\mathbb Z}}^{\stat,\conn}$. This shows in particular that the converse 
of Proposition~\ref{p.statconncorr}(i) is false.

\begin{prop}\label{p.nodomination}
Let $(b_n)_{n\ge 1}$ be a decreasing sequence of real numbers such that $b_n\to 0$ as 
$n\to \infty$ and $b_n>0$ for all $n$. Then there exists 
$\nu\in \rer_{{\mathbb Z}}^{\stat,\conn}$ such that $\nu(|\pi(0)|\ge n)\le b_n$ for 
all $n\ge 2$ but $d(\nu)=0$.
\end{prop}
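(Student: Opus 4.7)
The plan is to construct $\nu$ as a convex mixture of periodic ``tiling'' RERs indexed by block length. For each integer $n \ge 1$, let $\nu_n \in \rer_{\Z}^{\stat, \conn}$ be the partition of $\Z$ into consecutive blocks of length $n$ with offset chosen uniformly from $\{0,1,\ldots,n-1\}$; in particular $\nu_1$ partitions $\Z$ into singletons, and for $n \ge 2$ we have $|\pi(0)| = n$ almost surely under $\nu_n$.

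Without loss of generality assume $b_n \le 1$ for every $n \ge 2$ (otherwise replace $b_n$ by $\min(b_n,1)$, which preserves the hypotheses). Define weights $w_1 := 1 - b_2$ and $w_n := b_n - b_{n+1}$ for $n \ge 2$; monotonicity of $(b_n)$ gives $w_n \ge 0$, and telescoping together with $b_n \to 0$ gives $\sum_{n \ge 1} w_n = 1$. Set
$$
\nu := \sum_{n \ge 1} w_n \nu_n \in \rer_{\Z}^{\stat, \conn}.
$$
Then for every $m \ge 2$,
$$
\nu(|\pi(0)| \ge m) = \sum_{n \ge 1} w_n \, \nu_n(|\pi(0)| \ge m) = \sum_{n \ge m} w_n = b_m,
$$
so the required tail bound holds. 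Moreover, since $b_n > 0$ for all $n$ and $b_n \to 0$, the set $\{n : w_n > 0\}$ is infinite.

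It remains to show $d(\nu) = 0$; I will show the stronger assertion that $d(\nu, p) = 0$ for every $p \in (0, 1)$. Fix such a $p$ and suppose $\Pi_\alpha \preceq \Phi_p(\nu)$ for some $\alpha \in [0, 1)$. Considering the decreasing event $A_N = \{X|_{[-N, N]} \equiv 0\}$ and using affineness of $\Phi_p$,
$$
(1-\alpha)^{2N+1} = \Pi_\alpha(A_N) \ge \Phi_p(\nu)(A_N) \ge w_n \, \Phi_p(\nu_n)(A_N) \ge w_n (1-p)^{\lceil (2N+1)/n \rceil + 1},
$$
where the last inequality uses that at most $\lceil (2N+1)/n \rceil + 1$ blocks of $\nu_n$ meet $[-N, N]$. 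Fixing any $n$ with $w_n > 0$, taking logarithms, dividing by $2N+1$, and sending $N \to \infty$ yields $\log(1-\alpha) \ge \log(1-p)/n$. Letting $n \to \infty$ along $\{n : w_n > 0\}$ gives $\log(1-\alpha) \ge 0$, so $\alpha = 0$.

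There is no real obstacle: the construction is transparent once one notices that, no matter how fast $(b_n)$ decays, blocks of arbitrarily large size still occur with positive probability at every location (by stationarity), and the ``box all zeros'' event then forces $\Phi_p(\nu)(A_N)$ to have far too much mass to be dominated by any nontrivial Bernoulli product measure.
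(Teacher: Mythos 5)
Your construction of $\nu$ is exactly the one used in the paper: the same mixture $\sum_n w_n\nu_n$ of uniformly shifted length-$n$ block partitions with the same telescoping weights $w_n=b_n-b_{n+1}$, giving $\nu(|\pi(0)|\ge m)=b_m$. Where you genuinely diverge is in proving $d(\nu)=0$. The paper argues by contradiction via an ergodic decomposition: it takes a stationary coupling witnessing $\Pi_\epsilon\preceq\Phi_p(\nu)$ and uses ergodicity of $\Pi_\epsilon$ to conclude that $\Pi_\epsilon\preceq\Phi_p(\nu_n)$ for \emph{each} $n$ separately, then derives $\frac{1-p}{n}\le(1-\epsilon)^n$. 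This step quietly relies on the (true but not entirely trivial) fact that stochastic domination between translation-invariant measures can be realized by a translation-invariant coupling. Your argument sidesteps all of that: you simply lower-bound the mixture's probability of the decreasing all-zeros event by $w_n$ times the corresponding probability under $\nu_n$, obtaining $(1-\alpha)^{2N+1}\ge w_n(1-p)^{\lceil(2N+1)/n\rceil+1}$, and let $N\to\infty$ and then $n\to\infty$ along the infinite set $\{n:w_n>0\}$. This is more elementary, purely quantitative, and is really just Proposition~\ref{p.expdomlemma1}(i) applied directly to the mixture; it also cleanly yields the slightly stronger conclusion $d(\nu,p)=0$ for every fixed $p$. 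Your preliminary normalization $b_n\mapsto\min(b_n,1)$ to ensure $w_1\ge 0$ is a small point the paper glosses over. The proof is correct.
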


\pf For $n\ge 1$ let $(K_n)_{n\ge 1}$ be uniform on $\{0,\ldots,n-1\}$. For $k\in {\mathbb Z}$ and 
$n\ge 1$, let $I_{k,n}=\{kn,\ldots,kn+n-1\}$. For $n\ge 1$ let $\pi_n$ be the {\rm RER} with 
equivalence classes given by $(I_{k,n}+K_n)_{k\in {\mathbb Z}}$ and let $\nu_n$ be the law of 
$\pi_n$. Let $(p_n)_{n\ge 1}$ satisfy $p_n\in(0,1)$ for all $n$ and $\sum_{n\ge 1}p_n=1$ and
then put $\nu=\sum_{n\ge 1} p_n \nu_n$. We now show that the sequence
$(p_n)$ can be chosen so that $\nu$ satisfies the properties required. 

First, we see that the decay of the probabilities $\nu(|\pi(0)|\ge n)$ can be given the 
desired behavior by an appropriate choice of the sequence $(p_n)_{n\ge 1}$. For example one 
can let $p_1:=1-b_2$ and then $p_n:=b_n-b_{n+1}$ for $n\ge 2$. This gives 
$\nu(|\pi(0)|\ge n)= b_n$ for all $n\ge 2$.

To show that $d(\nu)=0$, we proceed as follows. If $d(\nu)> 0$, then there would exist
$\epsilon,p\in (0,1)$ such that $\Pi_{\epsilon} \preceq\Phi_p(\nu)$. 
Next consider the ergodic decomposition of any stationary coupling of
$\Pi_{\epsilon}$ and $\Phi_p(\nu)$ which couples the former below the latter.
Since $\Pi_{\epsilon}$ is ergodic, it follows that
$\Pi_{\epsilon} \preceq\Phi_p(\sum_{n\ge 1} p_n \nu_n)$
can only occur if $\Pi_{\epsilon} \preceq\Phi_p(\nu_n)$ for each $n$.
However, $\Pi_{\epsilon} \preceq\Phi_p(\nu_n)$ implies that 
$$
\frac{1-p}{n}\le \Phi_p(\nu_n)(X|_{1,\ldots,n}\equiv 0)\le (1-\epsilon)^n
$$
which is clearly false for large $n$. \qed

\subsection{Stochastic domination for the infinitely exchangeable case}

We now turn to the infinitely exchangeable case and give a formula (see 
Proposition~\ref{p.dominationexch} below) $d(\nu,p)$.
Suppose first that $\mu\in {\rm EP}_{\N}$. Recall (see~\eqref{e.nuxi}) that
$$
\mu=\int_{s=0}^1 \Pi_s \, d\rho_{\mu}(s),
$$
for some unique measure $\rho_{\mu}$  on $[0,1]$. The proof of the next lemma is 
straightforward and certainly known, so we omit it.

\begin{lma}\label{l.dominationlemma}
Suppose that $\mu \in {\rm EP}_{\N}$. Then
\begin{equation}
\sup\{s\,:\,\Pi_s\preceq \mu\}=\inf {\rm supp} \,\rho_{\mu}.
\end{equation}
\end{lma}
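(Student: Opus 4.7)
The plan is to show the two inequalities separately. Write $s^* := \inf {\rm supp}\,\rho_{\mu}$.

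For the inequality $\sup\{s : \Pi_s \preceq \mu\} \ge s^*$, I would observe that since $\rho_{\mu}$ is supported on $[s^*,1]$, de Finetti's representation gives $\mu = \int_{[s^*,1]} \Pi_t\, d\rho_{\mu}(t)$. Because $\Pi_{s^*} \preceq \Pi_t$ for every $t \ge s^*$, and stochastic domination is preserved under mixtures (one couples each $\Pi_t$ above $\Pi_{s^*}$ and then integrates the coupling against $\rho_{\mu}$), this gives $\Pi_{s^*} \preceq \mu$, from which $\Pi_s \preceq \mu$ follows for every $s \le s^*$.

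For the reverse inequality, I would show that $\Pi_s \not\preceq \mu$ for every $s > s^*$. The tool is the decreasing cylinder event $A_n := \{X_1 = \cdots = X_n = 0\}$. Under $\Pi_s$ we have $\Pi_s(A_n) = (1-s)^n$, while under $\mu$,
\begin{equation*}
\mu(A_n) = \int_0^1 (1-t)^n\, d\rho_{\mu}(t).
\end{equation*}
By definition of $s^*$, for any $s' \in (s^*, s)$ we have $\rho_{\mu}([s^*, s')) > 0$, and hence
\begin{equation*}
\mu(A_n) \ge (1-s')^n \, \rho_{\mu}([s^*, s')).
\end{equation*}
Since $(1-s')^n / (1-s)^n \to \infty$ as $n \to \infty$, for $n$ large enough we get $\mu(A_n) > \Pi_s(A_n)$. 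But if $\Pi_s \preceq \mu$, then for every decreasing event $A$ we would have $\Pi_s(A) \ge \mu(A)$, a contradiction.

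Combining the two directions yields $\sup\{s : \Pi_s \preceq \mu\} = s^*$. No step is really hard; the one small point to keep clean is that ``$\sup$'' is used rather than ``$\max$'' on the left and ``$\inf$'' rather than ``$\min$'' on the right, so one must verify that the sup is attained (it is, by the mixture argument) and that strict inequality $s > s^*$ is what is needed for the contradiction via $A_n$.
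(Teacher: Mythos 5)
Your proof is correct, and it is the standard argument: the monotone coupling of product measures integrated against $\rho_{\mu}$ for the lower bound, and the decreasing cylinder events $\{X_1=\cdots=X_n=0\}$ for the upper bound. The paper itself omits the proof of this lemma, calling it "straightforward and certainly known," so there is nothing to compare against; your write-up supplies exactly the expected argument, including the correct observations that the supremum is attained at $\inf{\rm supp}\,\rho_{\mu}$ and that $\rho_{\mu}([s^*,s'))>0$ for $s'>s^*$ because the support is closed and hence contains $s^*$.
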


\bigskip\noindent
Recall (see Theorem~\ref{t.kingman}) that for any $\nu\in \rer_{\N}^{\exch}$, there is 
a unique measure $\rho_{\nu}$ on $\rer_{\N}^{\exch,\pure}$ such that 
\begin{equation}\label{e.rerepr}
\nu=\int_{\nu_{\bf p}\in \rer_{\N}^{\exch,\pure}} \nu_{\bf p} \, d\rho_{\nu}(\nu_{\bf p}).
\end{equation}
\noindent
As an application of Lemma~\ref{l.dominationlemma} to exchangeable color processes, we 
have the following.

\begin{prop}\label{p.dominationexch}
If $\nu_{\bf p}\in \rer_{\N}^{\exch,\pure}$ with ${\bf p}=(p_1,p_2,\ldots)$, then for all $p\in (0,1)$
\begin{equation}\label{e.dominationexch1}
d(\nu_{\bf p},p)=p\left(1-\sum_{i\ge 1} p_i\right).
\end{equation}
Hence
$$
d(\nu_{\bf p})=1-\sum_{i\ge 1} p_i.
$$
More generally, if $\nu\in \rer_{\N}^{\exch}$ then for all $p\in (0,1)$
\begin{equation}\label{e.dominationexch2}
d(\nu,p)=\inf\left\{p\left(1-\sum_{i\ge 1} p_i\right)\,:\,\nu_{\bf p}\in {\rm supp}\,\rho_{\nu}\right\}.
\end{equation}
Hence
$$
d(\nu)=1-\sup\left\{\sum_{i\ge 1} p_i\,:\,\nu_{\bf p}\in {\rm supp}\,\rho_{\nu}\right\}.
$$

\end{prop}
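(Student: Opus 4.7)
The plan is to apply Lemma~\ref{l.dominationlemma} to $\mu = \Phi_p(\nu)$, which reduces computing $d(\nu,p)$ to identifying $\inf{\rm supp}\,\rho_{\Phi_p(\nu)}$, where $\rho_{\Phi_p(\nu)}$ is the de~Finetti mixing measure of the exchangeable color process $\Phi_p(\nu)$.

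For the pure case $\nu = \nu_{\bf p}$, Lemma~\ref{l.xipaint} identifies $\rho_{\Phi_p(\nu_{\bf p})}$ with the law of $\xi_{{\bf p},p} = p(1 - \sum_{i\ge 1} p_i) + \tfrac{1}{2}\sum_{i\ge 1} p_i(1 + Z_i)$, where the $Z_i$ are i.i.d.\ $\pm 1$-valued with $P(Z_i = 1) = p$. Since each summand $p_i(1+Z_i)/2$ is nonnegative, I immediately get $\inf{\rm supp}\,\xi_{{\bf p},p} \ge p(1 - \sum_{i\ge 1} p_i)$. For the matching upper bound, I will note that $P(Z_1 = \cdots = Z_N = -1) = (1-p)^N > 0$ while the residual $\sum_{i > N} p_i(1+Z_i)/2$ is bounded by $\sum_{i > N} p_i$, which vanishes as $N\to\infty$; hence $p(1 - \sum p_i)$ is a limit point of the support, establishing~\eqref{e.dominationexch1}.

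For the general case, Kingman's theorem (Theorem~\ref{t.kingman}) writes $\nu = \int \nu_{\bf p}\,d\rho_\nu(\nu_{\bf p})$, and since $\Phi_p$ is affine, $\rho_{\Phi_p(\nu)}$ is the corresponding mixture $\int \rho_{\Phi_p(\nu_{\bf p})}\,d\rho_\nu(\nu_{\bf p})$. The infimum of the support of such a mixture equals the infimum, over $\nu_{\bf p} \in {\rm supp}\,\rho_\nu$, of $\inf {\rm supp}\,\rho_{\Phi_p(\nu_{\bf p})}$; combining this with the pure case yields~\eqref{e.dominationexch2}. Finally, letting $p \to 1$ and pulling the factor of $p$ (which does not depend on ${\bf p}$) outside the infimum delivers the two formulas for $d(\nu)$.

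The step I expect to be the main obstacle is the identification, in the general case, of $\inf{\rm supp}$ of a mixture with the infimum of $\inf{\rm supp}$ of the components over ${\rm supp}\,\rho_\nu$. This requires fixing a sensible topology on $\rer_{\N}^{\exch,\pure}$ (e.g., the one inherited from the weak topology on ${\mathcal P}({\rm Part}_{\N})$) and a short continuity argument showing that whenever $\nu_{\bf p} \in {\rm supp}\,\rho_\nu$, every open neighborhood in $[0,1]$ of $\inf{\rm supp}\,\rho_{\Phi_p(\nu_{\bf p})} = p(1-\sum p_i)$ receives positive $\rho_{\Phi_p(\nu)}$-mass, so that $p(1-\sum p_i)$ actually lies in ${\rm supp}\,\rho_{\Phi_p(\nu)}$.
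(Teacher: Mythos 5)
Your proposal is correct and follows essentially the same route as the paper: the pure case is Lemma~\ref{l.dominationlemma} applied to the explicit representation~\eqref{e.xiequiv} of $\xi_{{\bf p},p}$, and the general case combines the Kingman/affine decomposition of $\Phi_p(\nu)$ with a semicontinuity argument. Your phrasing of the key step via weak continuity of $\nu_{\bf p}\mapsto\rho_{\Phi_p(\nu_{\bf p})}$ (so that $p(1-\sum_i p_i)$ lies in ${\rm supp}\,\rho_{\Phi_p(\nu)}$ for every $\nu_{\bf p}\in{\rm supp}\,\rho_\nu$) is just a slightly more explicit version of the paper's appeal to upper semicontinuity of $\nu_{\bf p}\mapsto p(1-\sum_i p_i)$, and the reverse inequality is the easy direction since $\rho_\nu$ is carried by its support.
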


\pf Statement~\eqref{e.dominationexch1} follows from Lemma~\ref{l.dominationlemma} by inspection 
of~\eqref{e.xiequiv}. The general statement~\eqref{e.dominationexch2} follows 
from~\eqref{e.dominationexch1} and the upper semicontinuity of the map \\
$\nu_{\bf p}\mapsto p(1-\sum_{i=1}^{\infty} p_i)$ (which in fact is not continuous) by 
observing that
$$
\Phi_p(\nu)\stackrel{~\eqref{e.rerepr}}{=}\int_{\nu_{\bf p}\in \rer_{\N}^{\exch,\pure}} \Phi_p(\nu_{\bf p}) \, d\rho_{\nu}(\nu_{\bf p}).
$$ 
\qed

\medskip

Next we present a result for the infinite exchangeable case projected to a finite set
which follows from a result in~\cite{LS06}.
For $\nu\in \rer_{\N}^{\exch}$ we let $\nu_{[n]}\in\rer_{[n]}^{\exch}$ stand for the RER on $[n]$ induced by $\nu$. Similarly for $\mu\in {\rm EP}_{\N}$ let $\mu_{[n]}$ be the measure induced by $\mu$ on $\{0,1\}^{[n]}$. 
Corollary $1.1$ in~\cite{LS06} says that for all $\mu\in {\rm EP}_{\N}$ and all $n\ge 1$
$$
\sup\{s\,:\,\Pi_s\preceq \mu_{[n]} \}=1-\left(\int_{s=0}^{1}(1-s)^n d\rho_{\mu}(s)\right)^{1/n}.
$$
This immediately implies the following proposition which we therefore give without proof. Recall the 
definition of $\xi_{{\bf p},p}$ from~\eqref{e.xiequiv}.

\begin{prop}
Let $n\ge 1$ and suppose that $\nu\in \rer_{\N}^{\exch}$. Then
$$
\sup\{s\,:\,\Pi_s\preceq \Phi_p(\nu_{[n]})\}=1-\left(\int_{\nu_{\bf p}\in \rer_{\N}^{\exch,\pure} }\int_{s=0}^1 (1-s)^n\,d F_{\xi_{{\bf p},p}}(s)d\rho_{\nu}(\nu_{\bf p}) \right)^{1/n}.
$$
\end{prop}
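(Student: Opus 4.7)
The plan is to reduce the statement directly to Corollary 1.1 of \cite{LS06} quoted immediately above, by identifying the correct de Finetti representing measure for $\Phi_p(\nu)$.

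First I would observe that the construction of the color process commutes with projection to $[n]$: if $\nu\in\rer_{\N}^{\exch}$ then $(\Phi_p(\nu))_{[n]}=\Phi_p(\nu_{[n]})$, since restricting a partition of $\N$ to $[n]$ and then independently coloring the induced classes yields the same joint distribution on $\{0,1\}^{[n]}$ as sampling the full colored configuration on $\N$ and restricting. Hence the left-hand side of the desired equality is $\sup\{s:\Pi_s\preceq (\Phi_p(\nu))_{[n]}\}$, so it suffices to apply the quoted formula to the exchangeable measure $\mu:=\Phi_p(\nu)\in {\rm EP}_{\N}$ and identify its de Finetti mixing measure $\rho_\mu$.

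Next I would compute $\rho_\mu$. By Kingman's Theorem~\ref{t.kingman}, $\nu=\int \nu_{\bf p}\,d\rho_\nu(\nu_{\bf p})$; since $\Phi_p$ is affine,
\begin{equation*}
\Phi_p(\nu)=\int_{\nu_{\bf p}\in\rer_{\N}^{\exch,\pure}} \Phi_p(\nu_{\bf p})\,d\rho_\nu(\nu_{\bf p}).
\end{equation*}
By Lemma~\ref{l.xipaint}, for each pure paintbox ${\bf p}$ one has $\Phi_p(\nu_{\bf p})=\int_0^1 \Pi_s\,dF_{\xi_{{\bf p},p}}(s)$. Substituting and applying Fubini yields
\begin{equation*}
\Phi_p(\nu)=\int_0^1 \Pi_s\,d\Big(\int_{\nu_{\bf p}} F_{\xi_{{\bf p},p}}\,d\rho_\nu(\nu_{\bf p})\Big)(s).
\end{equation*}
The uniqueness part of de Finetti's Theorem~\ref{t.definetti} then forces $\rho_\mu$ to be exactly the mixture measure in parentheses.

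Finally I would plug this expression for $\rho_\mu$ into the quoted Corollary 1.1 of \cite{LS06}:
\begin{equation*}
\sup\{s:\Pi_s\preceq \mu_{[n]}\}=1-\Big(\int_0^1 (1-s)^n\,d\rho_\mu(s)\Big)^{1/n},
\end{equation*}
and interchange the two integrals once more. This produces precisely the double integral in the statement, completing the proof. There is no real obstacle here; the only thing to be a little careful about is measurability of $\nu_{\bf p}\mapsto F_{\xi_{{\bf p},p}}$ when applying Fubini, and that the identification of $\rho_\mu$ uses uniqueness in de Finetti rather than any additional argument.
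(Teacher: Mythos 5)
Your argument is correct and is exactly the route the paper intends: the paper states this proposition without proof as an immediate consequence of Corollary 1.1 of \cite{LS06}, and the identification of the de Finetti mixing measure of $\Phi_p(\nu)$ as $\int F_{\xi_{{\bf p},p}}\,d\rho_\nu(\nu_{\bf p})$ via Kingman's theorem, affineness of $\Phi_p$ and Lemma~\ref{l.xipaint} is precisely the step being left to the reader. Nothing is missing.
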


\subsection{Stochastic domination for our various models}

In this subsection, we examine what the earlier results in this section tell us about 
stochastic domination for some of our standard models.

\subsubsection{Random walk in random scenery}

\begin{prop}\label{p.recurrent.mean0}
(i). Consider a recurrent random walk on $\zd$ and let $\nu$ be the associated RER on $\Z$. 
Then $d(\nu)=0$.\\
(ii). Consider a random walk on $\zd$ whose steps have mean 0 and let $\nu$ be 
the associated RER on $\Z$. Then $d(\nu)=0$.
\end{prop}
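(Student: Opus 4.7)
The plan is to derive both statements from Proposition~\ref{p.expdomlemma1}(iii) applied with index set $V = \Z$, so that the dimension parameter in that proposition equals $1$ and one only needs $k_n = o(n)$ together with a suitable upper bound on $\liminf_{n\to\infty} -\log\nu(C_n \le k_n)/(2n+1)$. Throughout, $C_n$ denotes the number of equivalence classes of $\nu$ meeting $[-n,n]$; this equals the cardinality of the two-sided range $\{S_i : |i|\le n\} \subset \zd$, which is dominated by the sum of the forward range $R_n^+ := |\{S_0,\dots,S_n\}|$ and the backward range $R_n^- := |\{S_0,S_{-1},\dots,S_{-n}\}|$.

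For part (i), the main input is the Kesten--Spitzer--Whitman theorem: $R_n^+/n \to {\mathbf P}(S_k \neq 0\text{ for all } k \ge 1)$ in $L^1$, with the limit being $0$ for a recurrent walk. Applying this to both the forward and the backward walks (the latter being a random walk with i.i.d.\ steps distributed as $-X_1$, recurrent whenever the original walk is) yields $E[C_n]/n \to 0$. Taking $k_n := \lceil 2 E[C_n]\rceil = o(n)$, Markov's inequality gives $\nu(C_n \le k_n) \ge 1/2$, whence $-\log\nu(C_n \le k_n) \le \log 2$ and the relevant liminf equals $0$. Proposition~\ref{p.expdomlemma1}(iii) with $\epsilon = 0$ then yields $d(\nu) = 0$.

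For part (ii), I would establish a confinement estimate: for every fixed $M \ge 1$ there exist $c_M > 0$ and $\lambda_M > 0$, with $\lambda_M \to 0$ as $M \to \infty$, such that
\begin{equation*}
\nu\bigl(\, S_i \in [-M,M]^d \text{ for all } 0 \le i \le n\,\bigr) \ge c_M\, e^{-\lambda_M n},
\end{equation*}
where $d$ now denotes the spatial dimension of the walk. For a mean-zero walk with finite variance one can take $\lambda_M$ of order $M^{-2}$, via the principal Dirichlet eigenvalue of the walk killed on the boundary of the box, or equivalently via Donsker's invariance principle and the known spectrum of Brownian motion on $[-M,M]^d$. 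Independence of the forward and backward halves of the walk then yields the analogous bound on $[-n,n]$ with rate $2\lambda_M$. On this confinement event $C_n \le (2M+1)^d$, a constant in $n$ and hence $o(n)$, so
\begin{equation*}
\liminf_{n\to\infty} \frac{-\log\nu\bigl(C_n \le (2M+1)^d\bigr)}{2n+1} \le \lambda_M,
\end{equation*}
and Proposition~\ref{p.expdomlemma1}(iii) yields $d(\nu) \le 1 - e^{-\lambda_M}$. Sending $M \to \infty$ gives $d(\nu) = 0$.

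The main obstacle I anticipate is the confinement estimate in (ii) if ``mean zero'' is taken literally with no moment assumption. In dimensions $d = 1,2$, where mean-zero finite-variance walks are recurrent by Chung--Fuchs, part (ii) reduces to part (i); the confinement argument is genuinely needed only in the transient regime, where finite variance suffices to produce the $O(M^{-2})$ rate. For heavy-tailed mean-zero walks a variant of the confinement bound with a rate $\lambda_M$ reflecting the walk's own scaling would have to be established, though the overall structure of the argument carries through unchanged provided one still has $\lambda_M \to 0$.
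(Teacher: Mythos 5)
Your part (i) is essentially the paper's proof: both rest on $E[R_n]=o(n)$ for recurrent walks, Markov's inequality with $k_n$ of order $E[C_n]$, and Proposition~\ref{p.expdomlemma1}(iii) with $\epsilon=0$. (The paper works with the one-sided range on $[0,n-1]$ via a trivial modification of that proposition while you use the two-sided range; this is immaterial, and your remark that the reversed walk is recurrent whenever the original is, is correct.)

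Part (ii) is where you diverge, and there is a genuine gap. You replace the needed lower bound on $\nu(C_n\le k_n)$ by a confinement estimate $\nu(S_i\in[-M,M]^d,\ 0\le i\le n)\ge c_M e^{-\lambda_M n}$ with $\lambda_M\to 0$. Confinement is strictly stronger than what is required: $C_n$ is just the number of distinct sites visited by the walk during times in $[-n,n]$, and it does not matter where in $\zd$ those sites lie, so the event that actually needs a lower bound is $\{R_n\le\eps n\}$, not confinement to a fixed box. Your justification of the confinement estimate (principal Dirichlet eigenvalue of the killed walk, or Donsker's invariance principle, giving $\lambda_M=O(M^{-2})$) uses finite variance, whereas the proposition assumes only that the steps have mean zero, i.e.\ a finite first moment. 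The uncovered cases are exactly the transient mean-zero walks with infinite variance, which exist for every $d\ge 2$ (in $d=1$ mean zero forces recurrence by Chung--Fuchs, so part (i) applies). For such walks your $\lambda_M$ would have to be re-derived from scratch: the truncation needed to rule out single jumps that exit the box introduces a drift in the conditioned walk and can even create support obstructions (for steps supported on $\{+1\}\cup\{-2^k:k\ge 1\}$, say, the walk restricted to steps of size at most $2M$ can be forced out of $[-M,M]$ by its drift unless one runs a delicate renewal argument), so that establishing $\lambda_M\to 0$ in full generality essentially amounts to redoing the Donsker--Varadhan-type analysis. The paper instead quotes Lemma 2.2 of \cite{JS} (reproduced as Lemma~\ref{lem:kesten}), which gives $P(R_n/n\le\eps)\ge (1/2)^{\eps n}$ for large $n$ for any mean-zero walk, with Lemma 5.1 of \cite{DONVAR} as the key ingredient; taking $k_n=n\eps_n$ for a suitable $\eps_n\to 0$ then makes the liminf in \eqref{e.suff.cond.dzero} equal to $0$ directly. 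To complete your part (ii) you should either add a finite-variance hypothesis or replace the confinement estimate by a small-range estimate of this type.
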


While (ii) is much stronger in some sense than (i), it does not actually imply it since there are recurrent 
random walks with infinite mean.

{\bf Proof.}
(i). It is well known and easy to show that for any recurrent random walk, $E(R_n)=o(n)$ where
$R_n$ is the range of the random walk up to time $n$, i.e., the cardinality of the set
$\{S_0,S_1,\ldots,S_{n-1}\}$. It is clear that $R_n$ is exactly the number of clusters 
intersecting $[0,n-1]$ in the associated RER. Using a trivial modification of 
Proposition~\ref{p.expdomlemma1}(iii) (where $[-n,n]$ is simply replaced by $[0,n-1]$),
we let $k_n:=2E(R_n)$. Then $k_n=o(n)$ and $\nu(R_n\ge k_n)\le \frac{1}{2}$ by 
Markov's inequality and hence $\nu(R_n\le k_n)\ge \frac{1}{2}$. It follows 
that~\eqref{e.suff.cond.dzero} holds in this case with $\epsilon=0$ and hence $d(\nu)=0$ by
Proposition~\ref{p.expdomlemma1}(iii).

(ii). We will use Lemma 2.2 in \cite{JS} which is the following.

\begin{lma} \label{lem:kesten} 
Consider a random walk on $\zd$ whose steps have mean 0. Then for every $\eps >0$, it is 
the case that
$$
P\left({R_n \over n} \le \eps\right) \ge \left({1\over 2}\right)^{\eps n}
$$
holds for large $n$.
\end{lma}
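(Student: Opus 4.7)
Plan: The natural strategy is to exhibit an event $A_n$ satisfying $A_n \subseteq \{R_n/n \le \eps\}$ and $P(A_n) \ge (1/2)^{\eps n}$. The cleanest choice for $A_n$ is the event that the walk is confined to a bounded box throughout its first $n$ steps.

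The first step is to prove a small deviation estimate: there exists a constant $C=C(X_1)>0$, depending only on the step distribution, such that for every integer $r \ge r_0$ and every $n \ge 1$,
$$
P\!\left(\max_{0 \le k \le n} |S_k| \le r\right) \ge \exp(-Cn/r^2).
$$
For walks with finite variance, this is classical and can be established by a block/Markov argument: partition $[0,n]$ into $\lceil n/r^2 \rceil$ blocks of length $r^2$, and observe that, by the CLT combined with Kolmogorov's maximal inequality, within each block the walker has probability bounded away from zero (uniformly in $r$) of never straying beyond distance $r/2$ from its starting point; iteration via the Markov property then yields the stated bound. For walks without finite variance, a preliminary truncation/coupling argument reduces matters to the finite-variance case at a negligible probabilistic cost on $[0,n]$.

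Next I would choose $r = r(\eps) := \lceil \sqrt{C/(\eps \log 2)}\, \rceil$, a constant depending only on $\eps$ and the law of $X_1$. Then
$$
P\!\left(\max_{0 \le k \le n} |S_k| \le r\right) \ge \exp(-\eps n \log 2) = (1/2)^{\eps n}.
$$
On the event $\{\max_{k \le n} |S_k| \le r\}$ the walk is trapped in a box containing at most $(2r+1)^d$ lattice points of $\zd$, so $R_n \le (2r+1)^d$; since this upper bound is a constant in $n$, we have $R_n \le \eps n$ for all $n$ large enough (depending only on $\eps$ and the step distribution), which gives the desired inclusion $\{\max_k |S_k| \le r\} \subseteq \{R_n/n \le \eps\}$ and hence the lemma.

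The main obstacle is proving the small deviation estimate with precisely the exponential rate $n/r^2$; for finite-variance walks this is standard and the scaling is dictated by the CLT, but in the bare mean-$0$ setting required by the lemma it needs the preliminary truncation step. The rest is parameter tuning: the precise constant $\log 2$ appearing in the exponent is obtained by choosing $r$ so that $Cn/r^2 = \eps n \log 2$.
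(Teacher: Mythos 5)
First, a point of reference: the paper does not prove this lemma at all --- it is quoted verbatim from Lemma~2.2 of \cite{JS}, with the remark that the key ingredient there is Lemma~5.1 of \cite{DONVAR}, which is precisely a confinement estimate of the kind you are constructing. So your overall strategy --- confine the walk to a box of constant radius $r=r(\eps)$ at exponential cost $e^{-\delta n}$ with $\delta=\delta(r)\to 0$ as $r\to\infty$, and note that on this event $R_n\le (2r+1)^d\le \eps n$ for large $n$ --- is the same route as the cited source, and the final deduction from the confinement estimate is correct.

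The genuine gap is in the confinement estimate itself, which carries all the weight, and your treatment of the bare mean-zero hypothesis is not adequate. Two concrete problems. (a) The claimed bound $P(\max_{k\le n}|S_k|\le r)\ge\exp(-Cn/r^2)$ is a finite-variance statement and is actually \emph{false} as a lower bound for general mean-zero walks: if the steps satisfy $P(|X_1|>x)\asymp x^{-\alpha}$ with $\alpha<2$ (still mean zero for $\alpha>1$), then already $P(\max_{k\le n}|S_k|\le r)\le P(\mbox{no single step exceeds }2r)\approx e^{-c\,n/r^{\alpha}}$, which is smaller than $e^{-Cn/r^2}$ once $r$ is large. The strategy survives because any bound of the form $e^{-c(r)n}$ with $c(r)\to 0$ suffices, but the estimate must be stated and proved in that weaker form. (b) The phrase ``a preliminary truncation/coupling argument reduces matters to the finite-variance case at a negligible probabilistic cost'' hides the real difficulty: truncating at level $M$ destroys the mean-zero property, the truncated step acquires a drift $\mu_M\neq 0$, and confining a drifting walk to a box of radius $r$ costs an exponential factor per step of order $\mu_M^2/\sigma_M^2$ that does \emph{not} vanish as $r\to\infty$ for fixed $M$. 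One must send $M\to\infty$ together with $r$, budget the cost $nP(|X_1|>M)$ of avoiding large steps, and run a Cram\'er tilting argument to cancel the drift; all three costs are of the same exponential order as the target $2^{-\eps n}$, so none is negligible. (A smaller, fixable issue: iterating the one-block estimate via the Markov property does not confine the walk to radius $r$, since the endpoint displacement accumulates across blocks; each block must be forced to end near its starting point, at an extra polynomial-in-$r$ cost per block.) This is exactly why the paper notes that the Donsker--Varadhan lemma covers compactly supported (or otherwise controlled) steps, while the bare mean-zero case is the content of Lemma~2.2 in \cite{JS}.
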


The key ingredient in the proof of the above lemma is Lemma 5.1 in \cite{DONVAR} which 
gives a much stronger result when the distribution of the steps is compact or even 
satisfies much weaker assumptions.

It is easy to see that Lemma~\ref{lem:kesten} implies that we can choose $(\eps_n)$ 
going to 0 such that for all $n\ge 1$
$$
P\left({R_n \over n} \le \eps_n\right) \ge \left({1\over 2}\right)^{\eps_n n}.
$$
Now let $k_n:=n\eps_n$ which is clearly $o(n)$. The above inequality yields 
that~\eqref{e.suff.cond.dzero} holds in this case with $\epsilon=0$ as well and hence $d(\nu)=0$ by 
Proposition~\ref{p.expdomlemma1}(iii).
\qed

\medskip\noindent
Understanding what happens with $d(\nu)$ for 1 dimensional random walk with drift
seems to be an interesting question; see Question~\ref{q.bhs}.

\subsubsection{Stationary distributions for the voter model in $d\ge 3$}

Recall that in this case, the RER $\nu_d$ is described by taking independent coalescing random walkers starting 
at each point of  $\zd$ and running to time $\infty$ and letting two points be in the same class if the random 
walkers started at those two points ever coalesce. 

\begin{prop}\label{p.voting.sd}
For all $d\ge 1$, $d(\nu_d)=0$.
\end{prop}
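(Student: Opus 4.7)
The plan is to apply Proposition~\ref{p.expdomlemma1}(iii) in both the recurrent ($d\le 2$) and transient ($d\ge 3$) regimes, by exhibiting a sequence $k_n = o(n^d)$ along which $\nu_d(C_n \le k_n) \to 1$, so the $\liminf$ in the hypothesis equals $0$ and the conclusion reads $d(\nu_d) \le 1 - e^0 = 0$.

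For $d\le 2$, the discussion in Subsection~\ref{ss.Examples} identifies $\nu_d$ as the point mass on the trivial partition $\{\zd\}$, so $C_n = 1$ almost surely. Taking $k_n = 1$ (which is $o(n^d)$ for $d\ge 1$) we have $\nu_d(C_n \le 1) = 1$, and Proposition~\ref{p.expdomlemma1}(iii) with $\epsilon=0$ immediately gives $d(\nu_d) = 0$.

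For $d\ge 3$, my plan is to bound $\E[C_n]$ using the double-counting identity
\[
C_n \;=\; \sum_{x \in [-n,n]^d} \frac{1}{|\pi(x) \cap [-n,n]^d|},
\]
obtained by contributing $1$ to the sum for each cluster that meets the box (note $|\pi(x)\cap[-n,n]^d|\ge 1$ since $x$ itself lies in it). Translation invariance of $\nu_d$ then yields
\[
\E[C_n] \;=\; (2n+1)^d \, f_n, \qquad f_n \;:=\; \E\!\left[\frac{1}{|\pi(0)\cap[-n,n]^d|}\right].
\]
Subsection~\ref{ss.Examples} records (via \cite{Griff}) that for $d \ge 3$ the cluster of the origin is a.s.\ infinite, so $|\pi(0)\cap[-n,n]^d| \nearrow \infty$ a.s., and dominated convergence (the integrand is bounded by $1$) gives $f_n \to 0$.

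Finally, setting $k_n := \lceil \sqrt{f_n}\,(2n+1)^d \rceil$, which is $o(n^d)$, Markov's inequality gives
\[
\nu_d(C_n > k_n) \;\le\; \frac{\E[C_n]}{k_n} \;\le\; \sqrt{f_n} \;\to\; 0,
\]
so $\nu_d(C_n \le k_n) \to 1$, and Proposition~\ref{p.expdomlemma1}(iii) delivers $d(\nu_d) = 0$. The only ingredient I am importing beyond the abstract machinery already in place is the a.s.\ infiniteness of the origin's cluster in the transient regime; the rest is a stationarity identity combined with a Markov/DCT argument, so I do not anticipate a substantive obstacle. The mild ``cleverness'' lies in the choice to convert the expected number of clusters into an expected reciprocal cluster size, since the latter is easily controlled by the one geometric input (infiniteness of the origin's cluster) that the voter-model literature supplies.
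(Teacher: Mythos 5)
Your treatment of $d\le 2$ and your overall strategy for $d\ge 3$ (bound $\E[C_n]$, apply Markov's inequality, feed the result into Proposition~\ref{p.expdomlemma1}(iii)) match the paper's proof in outline; the difference is that the paper simply cites the bound $\E[C_n]=O(n^{d-2})$ from \cite{LEBSCHON}, whereas you try to derive $\E[C_n]=o(n^d)$ internally from the a.s.\ infiniteness of the origin's cluster via the double-counting identity $C_n=\sum_{x}1/|\pi(x)\cap[-n,n]^d|$. That identity is correct and the idea is attractive (it would prove the more general statement that $d(\nu)=0$ for every stationary RER whose clusters are a.s.\ infinite), but the step where you invoke translation invariance is wrong as stated. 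Translation invariance gives $\E\bigl[1/|\pi(x)\cap[-n,n]^d|\bigr]=\E\bigl[1/|\pi(0)\cap([-n,n]^d-x)|\bigr]$ --- the box must be translated along with the point --- and this is \emph{not} equal to $\E\bigl[1/|\pi(0)\cap[-n,n]^d|\bigr]$ in general. (Concretely, for the stationary partition of $\Z$ into pairs $\{2k+U,2k+1+U\}$ with $U$ uniform on $\{0,1\}$, one has $|\pi(0)\cap[-n,n]|=2$ a.s.\ for $n\ge 1$, while $|\pi(n)\cap[-n,n]|$ equals $1$ with probability $1/2$.) So the identity $\E[C_n]=(2n+1)^d f_n$ is false, and with it the one-line deduction $f_n\to 0\Rightarrow \E[C_n]=o(n^d)$.

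The gap is repairable by a routine boundary-layer argument: writing $\E[C_n]=\sum_{x\in[-n,n]^d}\E\bigl[1/|\pi(0)\cap([-n,n]^d-x)|\bigr]$, note that for $\|x\|_\infty\le(1-\eps)n$ the translated box contains $[-\eps n,\eps n]^d$, so the corresponding term is at most $\E\bigl[1/|\pi(0)\cap[-\eps n,\eps n]^d|\bigr]$, which tends to $0$ by your dominated-convergence argument (that part is fine), while the remaining at most $Cd\eps(2n+1)^d$ terms are each bounded by $1$; sending $n\to\infty$ and then $\eps\to 0$ gives $\E[C_n]=o(n^d)$, after which your choice of $k_n$ and Markov's inequality go through unchanged. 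With this fix your argument is complete and is in fact more self-contained and more general than the paper's, which leans on the quantitative voter-model estimate $\E[C_n]=O(n^{d-2})$; the price is that you must handle the boundary terms that your version of the translation-invariance step silently discards.
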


{\bf Proof.}
For $d=1,2$, $\nu_d$ has a.s.\ 1 cluster and therefore the result is trivial.
For $d\ge 3$, it is stated (in different terminology) on p.\ 60 in \cite{LEBSCHON} that 
$E(C_n)\le O(n^{d-2})$. Letting $k_n:=n^{d-1}(=o(n^d))$ and using Markov's inequality, 
we obtain $\nu(C_n\le k_n)\to 1$ as $n\to\infty$. It follows that~\eqref{e.suff.cond.dzero} holds in this 
case with $\epsilon=0$ and hence $d(\nu)=0$ by Proposition~\ref{p.expdomlemma1}(iii). \qed

\subsubsection{1-dimensional Random Cluster Model}

Consider the RER, denoted by $\nu_s$, in $\rer_{{\mathbb Z}}^{\stat,\conn}$ where one performs i.i.d.\
percolation with parameter $s$ on ${\mathbb Z}$ and considers the connected components.
(This is exactly the RER that arises in Definition~\ref{d.rergen} where the $Y$ process is i.i.d.\ with marginal
probability $s$.) 

\begin{prop}\label{p.1Drcm}
$d(\nu_s,p)= p-ps$ and hence, by letting $p\to 1$, $d(\nu_s)= 1-s$. 
\end{prop}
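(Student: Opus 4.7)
The strategy is a matching upper and lower bound $d(\nu_s,p) \le p(1-s) \le d(\nu_s,p)$ coming from (a) an explicit coupling and (b) a Markov-chain computation on a decreasing cylinder event. Recall the generative picture for $\nu_s$: declare each edge of $\Z$ present independently with probability $s$, the clusters being the maximal runs of present edges; then assign each cluster the colour $1$ with probability $p$ and $0$ with probability $1-p$, independently.

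For the lower bound $d(\nu_s,p) \ge p(1-s)$, I would construct an i.i.d.\ Bernoulli$(p(1-s))$ field $W=(W_n)_{n\in\Z}$ pointwise below the color process $X$. Set
\[
W_n := I\{e_{n-1,n}\text{ is absent}\}\cdot I\{\text{cluster of }n\text{ is coloured }1\}.
\]
Then $W_n \le X_n$ is immediate. The events $A_n := \{e_{n-1,n}\text{ absent}\}$ are i.i.d.\ across $n$, and whenever $A_{n_1},\ldots,A_{n_k}$ all occur (with $n_1<\cdots<n_k$) the clusters containing $n_1,\ldots,n_k$ are pairwise disjoint, since between $n_i$ and $n_{i+1}$ there is already a missing edge at $e_{n_{i+1}-1,n_{i+1}}$. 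Independent colouring of distinct clusters then gives, for every finite $F \subset \Z$,
\[
\Pr(W_n = 1 \text{ for all } n\in F) = (1-s)^{|F|}p^{|F|} = (p(1-s))^{|F|},
\]
so $W$ is i.i.d.\ Bernoulli$(p(1-s))$, proving $\Pi_{p(1-s)} \preceq \Phi_p(\nu_s)$.

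For the upper bound $d(\nu_s,p) \le p(1-s)$, I would appeal to Proposition~\ref{t.markovcolor}: the color process is a stationary two-state Markov chain, whose transition probabilities are obtained by conditioning on the intervening edge. Specifically,
\[
\Phi_p(\nu_s)\bigl(X_{n+1}=0 \mid X_n=0\bigr) = s + (1-s)(1-p) = 1 - p(1-s),
\]
so $\Phi_p(\nu_s)(X_0=X_1=\cdots=X_N=0) = (1-p)\bigl(1-p(1-s)\bigr)^N$. The event $\{X_n=0\text{ for all }n\in[0,N]\}$ is decreasing, so $\Pi_\alpha\preceq \Phi_p(\nu_s)$ forces
\[
(1-\alpha)^{N+1} \ge (1-p)\bigl(1-p(1-s)\bigr)^N.
\]
Taking logarithms, dividing by $N+1$ and letting $N\to\infty$ yields $1-\alpha \ge 1-p(1-s)$, i.e., $\alpha\le p(1-s)$. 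Finally, the identity $d(\nu_s) = 1-s$ follows by letting $p\to 1$ in $d(\nu_s,p) = p(1-s)$.

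The only even slightly delicate step is the independence verification in the coupling: it rests entirely on the two observations that the $A_n$'s are independent functions of disjoint edges and that simultaneous occurrence of any finite collection of $A_n$'s forces the corresponding clusters to lie in disjoint blocks. Nothing deeper than the i.i.d.\ structure of the bond configuration and the independent cluster-colouring is needed.
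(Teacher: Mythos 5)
Your proof is correct, but it takes a genuinely different route from the paper. The paper's proof is very short: it uses the correspondence from the proof of Proposition~\ref{t.markovcolor} to identify $\Phi_p(\nu_s)$ as the two-state Markov chain with $s=p_{1,1}-p_{0,1}$ and $p=p_{0,1}/(p_{0,1}+p_{1,0})$, and then quotes Proposition 5.1 of \cite{LS06}, which says that the maximal density of a product measure dominated by such a chain is exactly $p_{0,1}$; inverting the correspondence gives $p_{0,1}=p-ps$. You instead prove both bounds from scratch: your upper bound extracts $\alpha\le p(1-s)$ from the exponential rate of the all-zeros cylinder $(1-p)\bigl(1-p(1-s)\bigr)^N$ (which is the same mechanism that makes $p_{0,1}$ the answer in \cite{LS06}), and your lower bound replaces the external citation with an explicit monotone coupling, setting $W_n$ to be the indicator that $e_{n-1,n}$ is absent \emph{and} the cluster of $n$ is coloured $1$. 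That coupling is valid: $W_n\le X_n$ pointwise, the all-ones probabilities $\Pr(W_n=1\ \forall n\in F)=\bigl(p(1-s)\bigr)^{|F|}$ determine the law of a $\{0,1\}$-valued field by inclusion--exclusion, and the disjointness of the relevant clusters is forced exactly as you say. What the paper's route buys is generality (the $p_{0,1}$ formula applies to every two-state Markov chain with nonnegative correlations, hence to every process in this family at once) at the cost of an external reference; what your route buys is a self-contained argument whose lower-bound coupling is specific to the i.i.d.\ edge structure of $\nu_s$ and would not transfer to an arbitrary Markov chain.
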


{\bf Proof.} By the proof of Proposition~\ref{t.markovcolor}, 
as we vary $s$ and $p$, the collection of color processes that we obtain
are exactly the set of 2 state Markov chains with nonnegative correlations and the correspondence
is given by $s=p_{1,1}-p_{0,1}$ and $p=p_{0,1}/(p_{0,1}+p_{1,0})$.

Now, by Proposition 5.1 in \cite{LS06}, the maximal density product measure that our Markov
chain dominates has density $p_{0,1}$. Next, we want to express this in terms of $s$ and $p$.
Inverting the above set of equations yields $p_{0,1}=p-ps$ and $p_{1,1}=s+p-ps$.
It follows that $d(\nu_s,p)= p-ps$, as desired. \qed

\medskip

We point out that, in the terminology of Proposition~\ref{p.expdomlemma1}, we clearly have that
$\nu_s(C_n=1)= s^{2n}$ and hence we can conclude from Proposition~\ref{p.expdomlemma1}(iv)
that $d(\nu_s)\le 1-s$. Hence Proposition~\ref{p.expdomlemma1}(iv) is sharp in this case.

Finally, we recall that the above set of color processes (as $s$ and $p$ vary)
corresponds to the set of 1-dimensional nearest neighbor Ising models as we vary 
$J\ge 0$ and $h\in {\mathbb R}$. 
Using the exact correspondence given in \cite{Georgii}, p. 50-51 between Ising models on ${\mathbb Z}$
and the above processes, one can can determine the largest product measure which the Ising model
with parameters $J\ge 0$ and $h\in {\mathbb R}$ dominates.

\subsubsection{Random Cluster models in $\zd$}

We refer to \cite{GrimRC} for all background concerning the random cluster model. Given $d\ge 2$, $\alpha\ge 0$ 
and $q\ge 1$, we let $\nu^{\rm{RCM}}_{d,\alpha,q}$ be the random cluster model on $\zd$ with
parameters $\alpha$ and $q$ which is a probability measure on $\{0,1\}^E$, where $E$ are the edges in $\zd$,
obtained by taking a limit of the random cluster models on finite boxes as defined in 
Subsection~\ref{ss.Examples}. We then 
think of $\nu^{\rm{RCM}}_{d,\alpha,q}$ as an RER on $\zd$ by considering the induced connected components.
(For the experts, using one of the possible definitions of a random cluster model,
there might be more than one such measure on $\zd$; nonetheless, our definition of
$\nu^{\rm{RCM}}_{d,\alpha,q}$ above is well-defined as this limit exists.) Recall that $q=1$ 
corresponds to the classical divide and color model.

\begin{prop}\label{p.RCM}
(i). For all $d\ge 2$, $q\ge 1$ and $\alpha> 0$, $d(\nu^{\rm{RCM}}_{d,\alpha,q})< 1$. \\
(ii). (\cite{BBT1}) For all $d\ge 2$, $\alpha> 0$ and $p>0$, $d(\nu^{\rm{RCM}}_{d,\alpha,1},p)>0$.
\end{prop}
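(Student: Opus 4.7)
The plan for part (i) is to apply Proposition~\ref{p.expdomlemma1}(iv), which reduces the task to producing $\gamma > 0$ with $\nu^{\rm{RCM}}_{d,\alpha,q}(C_n = 1) \ge \gamma^{(2n+1)^d}$ for infinitely many $n$. I would invoke the standard FK comparison inequality (see \cite{GrimRC}): for $q \ge 1$, the random cluster measure $\nu^{\rm{RCM}}_{d,\alpha,q}$ stochastically dominates independent Bernoulli edge percolation on $\zd$ with parameter $\alpha' := \alpha/(\alpha + q(1-\alpha))$, which is strictly positive since $\alpha>0$ and $q$ is finite.

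The key observation is that $\{C_n = 1\}$ is implied by the event that every edge with both endpoints in $[-n,n]^d$ is open: under this event every pair of vertices of the box is connected by a path inside the box, so exactly one cluster of the whole configuration intersects $[-n,n]^d$. Letting $E_n$ denote the number of such edges, the comparison inequality yields
\begin{equation*}
\nu^{\rm{RCM}}_{d,\alpha,q}(C_n = 1) \;\ge\; (\alpha')^{E_n} \;\ge\; (\alpha')^{d(2n+1)^d} \;=\; \gamma^{(2n+1)^d}
\end{equation*}
with $\gamma := (\alpha')^d > 0$. Proposition~\ref{p.expdomlemma1}(iv) then gives $d(\nu^{\rm{RCM}}_{d,\alpha,q}) \le 1-\gamma < 1$, proving (i). The only nontrivial input is the FK comparison inequality, which is classical; there is no real obstacle here beyond citing it correctly and observing that an all-open box collapses the cluster count to $1$.

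For part (ii), since it is attributed to \cite{BBT1}, no new proof is owed. I would simply quote that reference. Heuristically, when $q = 1$ the underlying RER is just Bernoulli bond percolation with parameter $\alpha>0$, and at any $p>0$ one can exhibit a positive-density product measure dominated by $\Phi_p(\nu^{\rm{RCM}}_{d,\alpha,1})$ by a local argument, e.g.\ by independently planting a $1$ with a small probability depending only on the status of finitely many nearby edges in the percolation configuration, and then verifying the FKG lattice-type condition needed to invoke a domination criterion such as the one used in the proof of Proposition~\ref{p.Olle}. The quantitative value of $d(\nu^{\rm{RCM}}_{d,\alpha,1},p)$ is not needed for the qualitative statement in (ii).
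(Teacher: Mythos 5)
Your part (i) is correct but takes a different route from the paper. The paper obtains the bound by first noting the elementary exponential lower bound on the cluster-size tail, $\nu^{\rm{RCM}}_{d,\alpha,q}(|\pi(0)|\ge n)\ge C\gamma^n$, and then invoking Proposition~\ref{p.statconncorr}(iv), which converts such a tail bound (for connected stationary RERs, via the $7^{dn}$ count of lattice animals) into $d(\nu)\le 1-\gamma/7^d<1$. You instead lower-bound the probability of the single-cluster event $\{C_n=1\}$ by forcing every edge inside $[-n,n]^d$ to be open, using the FK comparison inequality $\nu^{\rm{RCM}}_{d,\alpha,q}\succeq \Pi_{\alpha'}$ with $\alpha'=\alpha/(\alpha+q(1-\alpha))$, and then apply Proposition~\ref{p.expdomlemma1}(iv); the edge count $E_n\le d(2n+1)^d$ and the implication ``all edges in the box open $\Rightarrow C_n=1$'' are both correct, and the comparison inequality does pass to the infinite-volume limit as you need. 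Both arguments draw on the lemmas of Section~\ref{s.dom} and on essentially the same finite-energy/domination input for the random cluster measure; yours has the mild advantage of avoiding the connectivity hypothesis and the $7^d$ loss in Proposition~\ref{p.statconncorr}(iv) (giving the cleaner bound $d(\nu)\le 1-(\alpha')^d$), while the paper's is marginally more local in that it only needs to open a single path of $n$ edges rather than an entire box. For part (ii) you do exactly what the paper does, namely cite \cite{BBT1}; the additional heuristic you sketch is not needed and is not part of the paper's proof.
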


{\bf Proof.} (i). It is easy to show that for all $d\ge 2$, $q\ge 1$ and $\alpha> 0$, there exists $C,\gamma >0$ so that for all $n$,
$\nu^{\rm{RCM}}_{d,\alpha,q}(|\pi(0)|\ge n)\ge C\gamma^n$. It follows from Proposition~\ref{p.statconncorr}(iv)
that $d(\nu^{\rm{RCM}}_{d,\alpha,q})< 1$. \\
(ii). This is stated in Theorem 3.1 in \cite{BBT1}.
\qed

\section{Ergodic results in the translation invariant case}\label{s.transfer}

In this section, the main theme is to investigate the ergodic theoretic properties of 
our color processes in the translation invariant case.
These will turn out to depend both on the ergodic behavior of the 
RERs generating the color process as well as on the structure of the clusters which arise.
We therefore assume in this section that $V={\mathbb Z}^d$ and we only consider RERs in
$\rm{RER}^{\rm{stat}}_V$.

We will refer to \cite{EW} and \cite{W} for the standard definitions in ergodic theory 
and will not, in view of space, recall these definitions here. The ergodic concepts which
we will consider are (1) ergodicity, (2) weak-mixing,
(3) mixing, (4) $k$-mixing, (5) $K$-automorphism and (6) Bernoullicity.
Importantly, in \cite{EW},
these definitions are also stated for ${\mathbb Z}^d$. 
In addition, we will assume familiarity with the notion of the entropy of a dynamical system or 
a stationary process. We recall that one stationary process is a {\it factor}
of another stationary process if the former can be expressed as a translation invariant 
function of the latter. All the standard ergodic properties (in particular all those 
considered in this paper) are easily shown (or known to be) preserved by factor maps.
In addition, it is known that i.i.d.\ processes satisfy all of the ergodic
properties that we study and that, in addition, if we have a stationary process $\mu$
satisfying one of our ergodic properties, then the joint stationary process
where (1) the first marginal is $\mu$, (2) the second marginal is an i.i.d.\ process
and (3) the two processes are independent also satisfies this given ergodic property.

In what follows, $(\pi,X^{\nu,p})$ is our {\it joint} RER and color process where 
$\pi$ is the random partition with distribution $\nu$ and $X^{\nu,p}$ is the corresponding
color process with parameter $p$; the latter of course has distribution $\Phi_p(\nu)$. 
The distribution of the joint law will be denoted by $\bfp=\jointlaw$.
With $d$ specified, we let $B_n:=[-n,n]^d\cap{\mathbb Z}^d$
(so that $|B_n|=(2n+1)^d$). For a subset $A\subseteq{\mathbb Z}^d$ and $x\in {\mathbb Z}^d$, 
define the translation of $A$ by $x$ by $T^x A:=\{y\,:\,y-x\in A\}$ and
for subsets $B\subseteq\{0,1\}^{{\mathbb Z}^d}$ and $x\in {\mathbb Z}^d$, 
$T^x B$ will also have the obvious meaning.

\subsection{Positive density clusters imply nonergodicity of the color process}

Essentially following Burton-Keane~\cite{BK89}, we first
make the following definition.

\begin{df}\label{d.clustdens}
We say that a subset $S$ of ${\mathbb Z}^d$ has density $\alpha$ if
$$
\lim_{i\to\infty} \frac{|S\cap B_i|}{|B_i|}=\alpha.
$$ 
We say that $S$ has \emph{upper density} $\alpha$ if 
$$
\varlimsup_{i\to \infty} \frac{|S\cap B_i|}{|B_i|}=\alpha.
$$
\end{df}

The proof of Theorem 1 in \cite{BK89} easily yields the following result.

\begin{thm}\label{t.bkrer}
Suppose that $\nu\in \rer_{\zd}^{\stat}$. Then 
\begin{equation}
\nu(\mbox{every }\phi\in \pi\mbox{ has  a density})=1.
\end{equation}
\end{thm}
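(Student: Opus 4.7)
The plan is to adapt the ergodic-theoretic argument of Burton and Keane~\cite{BK89}. The first step is to reduce to the ergodic case by decomposing $\nu$ into its ergodic components under the $\zd$-translation action; since ``every cluster has a density'' is a translation-invariant event, it holds $\nu$-a.s.\ iff it holds a.s.\ under almost every ergodic component. For finite clusters $\phi$ the bound $|\phi \cap B_n|/|B_n| \le |\phi|/|B_n| \to 0$ is trivial, so only infinite clusters require work. Moreover, by a countable union over $x_0 \in \zd$ combined with stationarity, it suffices to prove that the limit $\lim_n |\pi(0) \cap B_n|/|B_n|$ exists $\nu$-a.s.

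For the substantive step, I would apply Birkhoff's pointwise ergodic theorem for the $\zd$-action to the bounded functions $f_u(\pi) := \mathbf{1}[0 \sim_\pi u]$, one for each $u \in \zd$. Since $f_u(T^y \pi) = \mathbf{1}[-y \sim_\pi u - y]$, after reindexing $z = -y$ and using the symmetry of $B_n$, Birkhoff gives
\[
\frac{1}{|B_n|}\bigl|\{z \in B_n : z \sim_\pi z+u\}\bigr| \longrightarrow \nu(0 \sim_\pi u) \quad \text{$\nu$-a.s.}
\]
in the ergodic case. Averaging these convergences over $u \in B_L$ rewrites the left-hand side as
\[
\frac{1}{|B_n|\,|B_L|}\sum_{z \in B_n}\bigl|\pi(z) \cap (z+B_L)\bigr|,
\]
and Birkhoff applied instead to the single bounded function $V_L(\pi) := |\pi(0) \cap B_L|/|B_L|$ (noting that $V_L \circ T^{-z}$ records exactly $|\pi(z)\cap(z+B_L)|/|B_L|$) identifies this same limit as the constant $\mathbf{E}_{\nu}[V_L]$. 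Taking $L \to \infty$ and comparing with $|\pi(0) \cap B_n|/|B_n|$ through a $\limsup$/$\liminf$ pinching on the translation-equivariant upper and lower densities of $\pi(0)$---essentially the Burton--Keane move---would conclude the argument.

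The main obstacle is that $\mathbf{1}[y \sim_\pi 0]$ is \emph{not} of the form $F(T^y \pi)$ for any fixed $F \in L^1(\nu)$: its distribution genuinely depends on $y$. Consequently one cannot apply Birkhoff directly to $|\pi(0)\cap B_n|/|B_n|$, and must go through the interchange of the two limits $n\to\infty$ and $L\to\infty$ while controlling boundary terms of order $O(L\,n^{d-1})$ arising from the $B_L$-tiling. A further subtlety, absent in the classical Burton--Keane setting, is that the equivalence classes of a general RER carry no graph structure, so techniques based on connectivity (paths, trifurcations) are unavailable; the whole argument must be phrased purely in terms of the equivalence relation and the $\zd$-ergodic theorem.
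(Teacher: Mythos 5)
Your steps (1)--(4) are fine: the ergodic decomposition, the reduction to showing that $\pi(0)$ has a density, and the application of Birkhoff to $f_u(\pi)=\mathbf{1}[0\sim_\pi u]$ and to $V_L(\pi)=|\pi(0)\cap B_L|/|B_L|$ are all correct. The gap is that the information these steps produce cannot be "pinched" into the conclusion, for two reasons. First, the inequalities you can actually extract go the wrong way: writing $\overline{D}$ and $\underline{D}$ for the upper and lower densities of $\pi(0)$, Fatou and reverse Fatou applied to $V_L\to(\underline{D},\overline{D})$ give only $E_\nu[\underline{D}]\le\liminf_L E_\nu[V_L]\le\limsup_L E_\nu[V_L]\le E_\nu[\overline{D}]$, which is automatic and useless; to conclude $\overline{D}=\underline{D}$ a.s.\ you would need the reverse inequality, i.e.\ exactly the convergence you are trying to prove. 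Second, and more decisively, the conclusions of your steps (3)--(4) are genuinely insufficient as pointwise information. Take $d=1$ and a partition consisting of one distinguished class $C^*$ plus singletons, where $C^*$ is pseudorandom of density $1/2$ with all pair correlations equal to $1/4$ on regions $[N_{2k},N_{2k+1})$, and consists of runs of length $M_k$ separated by gaps of length $3M_k$ (with $M_k\to\infty$) on regions $[N_{2k+1},N_{2k+2})$, with $N_{k+1}\gg N_k$. Then for every $u\neq 0$ the averages $|B_n|^{-1}\,|\{z\in B_n: z\sim z+u\}|=|B_n|^{-1}\,|C^*\cap(C^*-u)\cap B_n|$ converge (to $1/4$), and hence so do all your $B_L$-averaged quantities, yet $|C^*\cap B_n|/|B_n|$ oscillates between $1/4$ and $1/2$. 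Since Birkhoff only hands you almost-sure statements about those particular spatial averages, a deduction that works configuration-by-configuration from them is impossible; extra input from stationarity is needed, and your outline does not say what it is. Relatedly, the average $|B_n|^{-1}\sum_{z\in B_n}V_L(T^{-z}\pi)$ aggregates the local densities of \emph{all} clusters and does not isolate $\pi(0)$.

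This is also not "essentially the Burton--Keane move". The proof of Theorem 1 in \cite{BK89} (which is all the paper invokes here) does not use two-point functions at all. After the ergodic reduction, it applies the spatial ergodic theorem to indicators of the form $\mathbf{1}[0\in U_W]$, where $U_W$ is the union of a translation-covariantly selected family $W$ of clusters -- selected according to the values of their upper and lower densities, e.g.\ all clusters $C$ with $\underline{D}(C)\le r<s\le\overline{D}(C)$ -- so that each such union has an \emph{exact} density equal to $\nu(0\in U_W)$. The contradiction is then extracted by refining the selection and using the superadditivity of the lower density over countable disjoint unions of clusters ($\underline{D}(U_W)\ge\sum_{C\in W}\underline{D}(C)$), which in particular bounds the number of clusters with lower density above any fixed threshold. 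Your observation that the equivalence classes carry no graph structure is correct but harmless: this selection argument never uses connectivity, only equivariance, which is why the paper can assert that the Burton--Keane proof transfers verbatim to general stationary RERs. To repair your write-up you should replace step (5) by this covariant-selection argument rather than trying to pass from the two-point limits.
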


The main result of this subsection is the following result.

\begin{thm}\label{t.ergod1}
Fix $d\ge 1$, $p\in (0,1)$ and suppose that $\nu\in \rer_{\zd}^{\stat}$. If 
$$
\nu(\exists \phi\in \pi\,:\,\phi\mbox{ has positive density})>0,
$$ 
then $\munup$ is not ergodic. In particular, if under $\nu$ there are a positive finite number 
of infinite clusters with positive probability, then $\munup$ is not ergodic. 
\end{thm}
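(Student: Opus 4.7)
The plan is to derive a contradiction from ergodicity of $\munup$ by analyzing the empirical density
\begin{equation*}
f_n \;:=\; \frac{1}{|B_n|}\sum_{v \in B_n} \xnup(v).
\end{equation*}
Were $\munup$ ergodic, the $\zd$-mean ergodic theorem applied to the bounded marginal $\xnup(\org)$ would force $f_n \to p$ in $L^2$ and hence $\Vbf(f_n) \to 0$; the goal is to instead prove $\liminf_n \Vbf(f_n) > 0$.

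The central computation is the conditional variance of $f_n$ given $\pi$. Writing $\{Y_\phi\}_{\phi \in \pi}$ for the independent Bernoulli$(p)$ cluster colors and setting $g_n(\phi) := |\phi \cap B_n|/|B_n|$, the representation $f_n = \sum_{\phi\in\pi} Y_\phi\, g_n(\phi)$ (a convergent sum, since $\sum_\phi g_n(\phi) = 1$) yields
\begin{equation*}
\Ebf_{\jointlaw}[f_n \mid \pi] \;=\; p, \qquad \Vbf_{\jointlaw}(f_n \mid \pi) \;=\; p(1-p)\sum_{\phi \in \pi} g_n(\phi)^2.
\end{equation*}
Since the conditional mean is the deterministic constant $p$, we obtain $\Vbf(f_n) = \Ebf_\nu[\Vbf(f_n \mid \pi)]$, so it suffices to bound the inner quantity below in expectation.

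For that lower bound the hypothesis must first be upgraded to $\nu(\pi(\org)\text{ has positive density})>0$. Let $E_v := \{\pi(v)\text{ has positive density}\}$; these events are pairwise translates, so by translation invariance of $\nu$ they share a common probability, and the countable union $\{\exists \phi\in\pi:\phi\text{ has positive density}\} = \bigcup_{v\in\zd} E_v$ together with countable subadditivity forces $\nu(E_{\org}) > 0$. On $E_{\org}$, Theorem~\ref{t.bkrer} produces a random density $\alpha(\pi) > 0$ of the class $\pi(\org)$, so $g_n(\pi(\org))^2 \to \alpha(\pi)^2$; dropping the remaining non-negative terms in $\sum_\phi g_n(\phi)^2 \ge g_n(\pi(\org))^2$ and applying Fatou's lemma then yields
\begin{equation*}
\liminf_n \Vbf(f_n) \;\ge\; p(1-p)\,\Ebf_\nu\bigl[\alpha^2\, \ind_{E_{\org}}\bigr] \;>\; 0,
\end{equation*}
producing the required contradiction.

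For the ``in particular'' clause I plan to deduce the positive-density hypothesis from the assumption of a finite positive number of infinite clusters with positive $\nu$-probability. Let $F_k := \{\text{exactly } k \text{ infinite clusters}\}$ and $F := \bigcup_{1\le k<\infty} F_k$, so $\nu(F) > 0$, and let $I := \{v:\pi(v)\text{ is infinite}\}$. The translation-invariance/countable-subadditivity trick applied to the invariant event $F$ and the events $F\cap\{v\in I\}$ forces $\nu(F \cap \{\org \in I\}) > 0$, so Birkhoff's ergodic theorem applied to $\ind_{\{\org \in I\}}$ shows that the density of $I$ is positive on a subset of $F$ with positive $\nu$-measure. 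On that subset, $I$ is a finite union of the infinite clusters, each with a well-defined density by Theorem~\ref{t.bkrer}, and these densities sum to the (positive) density of $I$, so at least one cluster density is positive and the main statement applies. The step needing the most care is precisely this reconciliation between the $\sigma$-additive invariant density of $I$ and the Burton--Keane finite-cluster decomposition on $F$; everything else is a direct computation.
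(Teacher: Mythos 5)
Your proof is correct, and while it rests on the same two pillars as the paper's argument (Theorem~\ref{t.bkrer} and the fact that ergodicity forces decay of averaged correlations), it is organized differently. The paper first proves an auxiliary lemma (Lemma~\ref{l.ergthm1}): by a contrapositive argument it extracts a deterministic set $S\subseteq\zd$ of positive upper density and a $\delta>0$ with $\nu(\pi(\org)=\pi(x))\ge\delta$ on $S$, and then contradicts the Ces\`aro-average characterization of ergodicity, $\frac{1}{|B_n|}\sum_{x\in B_n}\munup(X(\org)=X(x)=1)\to p^2$. You instead work directly with the second moment of the block average: conditioning on $\pi$, computing $\Vbf(f_n\mid\pi)=p(1-p)\sum_\phi g_n(\phi)^2$, discarding all clusters but $\pi(\org)$, and applying Fatou together with $\nu(E_{\org})>0$ (your translation-invariance/subadditivity upgrade of the hypothesis is valid and mirrors what the paper does implicitly). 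Since $\Vbf(f_n)$ is, up to normalization, exactly the double sum of the covariances $p(1-p)\nu(\pi(u)=\pi(v))$, the two arguments are quantitatively cousins, but yours bypasses the intermediate set $S$ entirely and is arguably more direct; the paper's lemma, on the other hand, isolates a statement about the RER alone that could be reused elsewhere. Your treatment of the ``in particular'' clause --- Birkhoff for the indicator of $\{\org\in I\}$ on the invariant event $F$, plus the finite additivity of cluster densities --- is a complete version of what the paper dismisses as ``follows easily from the ergodic theorem,'' and it is sound.
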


To prove this, we begin with the following lemma.

\begin{lma}\label{l.ergthm1}
Suppose $\nu\in \rer_{\zd}^{\stat}$ and that 
$$
\nu(\exists \phi\in \pi\,:\,\phi\mbox{ has positive density})>0.
$$ 
Then there exists a set $S\subseteq {\mathbb Z}^d$ of positive upper density and a number 
$\delta=\delta_ S>0$ such that $$\nu(\pi(0)=\pi(x))\ge \delta\mbox{ for all }x\in S.$$
\end{lma}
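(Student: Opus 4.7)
The plan is to reduce everything to a Fatou-type averaging argument on the two-point function $p(x) := \nu(\pi(0) = \pi(x))$. First I would invoke Theorem~\ref{t.bkrer} so that every cluster has a well-defined density, and write $D(x)$ for the density of $\pi(x)$. The initial step is to show that the hypothesis forces $\nu(D(0) > 0) > 0$: the random set $U := \{x \in \zd : D(x) > 0\}$ is exactly the union of all positive-density clusters, so the hypothesis reads $\nu(U \neq \emptyset) > 0$; by stationarity $\nu(x \in U) = \nu(D(0) > 0)$ for every $x$, and if this probability were zero then Fubini would force $U = \emptyset$ $\nu$-a.s., a contradiction.

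Having this, I would pick $\varepsilon > 0$ with $\nu(B) > 0$, where $B := \{D(0) \ge \varepsilon\}$, and set $c := \varepsilon\,\nu(B) > 0$. The identity $\sum_{x \in B_n} p(x) = \E_\nu |\pi(0) \cap B_n|$, together with the fact that $|\pi(0) \cap B_n|/|B_n|$ stays in $[0,1]$ and converges to $D(0) \ge \varepsilon$ on $B$, lets Fatou's lemma supply
\begin{equation*}
\liminf_{n \to \infty} \frac{1}{|B_n|}\sum_{x \in B_n} p(x) \ge c.
\end{equation*}

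To produce $S$, I would argue by contradiction. If $\{x : p(x) \ge \delta\}$ had zero upper density for every $\delta > 0$, then splitting the sum at level $\delta$ and using $p(x) \le 1$ gives
\begin{equation*}
\frac{1}{|B_n|}\sum_{x \in B_n} p(x) \le \delta + \frac{|\{x \in B_n : p(x) \ge \delta\}|}{|B_n|},
\end{equation*}
whose $\limsup$ is at most $\delta$; letting $\delta \downarrow 0$ contradicts the Fatou bound. Consequently there is some $\delta > 0$ for which $S := \{x : p(x) \ge \delta\}$ has positive upper density, as required.

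The main obstacle is really only the very first reduction — one must rule out the scenario in which a positive-density cluster exists but typically sits far from the origin — and this is handled cleanly by the stationarity/Fubini argument sketched above. The remainder is essentially bookkeeping, using that $p(\cdot)$ is bounded and that a nontrivial average of a bounded nonnegative sequence must have superlevel sets of positive upper density.
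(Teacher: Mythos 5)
Your proof is correct and follows essentially the same route as the paper's: both identify $\sum_{x\in B_n}\nu(\pi(0)=\pi(x))$ with $\E_\nu|\pi(0)\cap B_n|$, use Theorem~\ref{t.bkrer} to get a positive limiting expected density, and then conclude that some superlevel set of $x\mapsto\nu(\pi(0)=\pi(x))$ must have positive upper density (you argue forward where the paper argues by contradiction, but the content is identical). A minor plus: your stationarity/Fubini step explicitly justifies that the hypothesis forces $\nu(D(0)>0)>0$, a point the paper's proof asserts without comment.
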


\pf We proceed by contradiction. Assume that there does not exist a set $S$ with positive upper density and a 
$\delta>0$ such that  $\nu(\pi(0)=\pi(x))\ge \delta\mbox{ for all }x\in S.$ 
Let $\epsilon>0$ be arbitrary and let $$S_{\epsilon}=\{x\,:\,\nu(\pi(0)=\pi(x))\ge\epsilon\}.$$ Our 
assumptions imply that $S_\epsilon$ has upper density $0$. We now get that 
\begin{eqnarray*}
\lefteqn{\varlimsup_{n\to \infty}\frac{E_{\nu}[|\pi(0)\cap B_n|]}{(2n+1)^d}  = \varlimsup_{n\to\infty}\sum_{x\in B_n} \frac{\nu(\pi(0)=\pi(x))}{(2n+1)^d} }\\  & & =\varlimsup_{n\to\infty} \sum_{x\in B_n\cap S_{\epsilon}} \frac{\nu(\pi(0)=\pi(x))}{(2n+1)^d} + \varlimsup_{n\to\infty} \sum_{x\in B_n\cap S_{\epsilon}^c} \frac{\nu(\pi(0)=\pi(x))}{(2n+1)^d}\\ & & \le \varlimsup_{n\to\infty} \sum_{x\in B_n\cap S_{\epsilon}} \frac{1}{(2n+1)^d} + \varlimsup_{n\to\infty} \sum_{x\in B_n\cap S_{\epsilon}^c} \frac{\epsilon}{(2n+1)^d}\\ & & \le 0 +\epsilon=\epsilon,
\end{eqnarray*}
using  that $S_{\epsilon}$ has upper density $0$ in the last inequality. Since $\epsilon>0$ was arbitrary, it follows that

\begin{equation}\label{e.bk1}
\lim_{n\to \infty}\frac{E_{\nu}[|\pi(0)\cap B_n|]}{(2n+1)^d} =0.
\end{equation}

On the other hand, by Theorem~\ref{t.bkrer},

\begin{equation}\label{e.bk2}
\lim_{n\to\infty} \frac{|\pi(0)\cap B_n|}{(2n+1)^d}=L\mbox{ a.s.},
\end{equation}
for some random variable $L$. The assumption 
$\nu(\exists \phi\in \pi\,:\,\phi\mbox{ has positive density})>0$ implies that $P_\nu(L>0)>0$, 
so that $E_{\nu}[L]>0$.  Hence, using~\eqref{e.bk2} and the bounded convergence theorem,

\begin{equation}\label{e.bk3}
\lim_{n\to\infty}\frac{E_{\nu}[|\pi(0)\cap B_n|]}{(2n+1)^d}=E_{\nu}[L]>0.
\end{equation}

However~\eqref{e.bk3} contradicts~\eqref{e.bk1}, finishing the proof.\qed

{\bf Proof of Theorem~\ref{t.ergod1}.} If $\munup$ is ergodic, then
\begin{equation}\label{e.bk4}
\lim_{n\to\infty} \sum_{x\in B_n}\frac{\munup(X(0)=X(x)=1)}{(2n+1)^d}=p^2.
\end{equation}
From Lemma~\ref{l.ergthm1}, it follows that there is a deterministic set $S\subseteq \zd$ of 
positive upper density and a $\delta=\delta_S>0$ such that 
\begin{equation}\label{e.bk5}
\nu(\pi(0)=\pi(x))\ge \delta \mbox{ for }x\in S.
\end{equation}
Hence,
\begin{equation}\label{e.bk6}
\munup(X(0)=X(x)=1)\ge \delta p + (1-\delta) p^2\mbox{ for }x\in S.
\end{equation}
However, we also know that 
\begin{equation}\label{e.bk7}
\munup(X(0)=X(x)=1)\ge p^2\mbox{ for all }x.
\end{equation}

Equations~\eqref{e.bk6}, ~\eqref{e.bk7} and the fact that $S$ has positive upper density imply that
\begin{equation}\label{e.bk8}
\varlimsup_{n} \sum_{x\in B_n} \frac{\munup(X(0)=X(x)=1)}{(2n+1)^d}>p^2,
\end{equation} 
which implies that $\munup$ is not ergodic due to~\eqref{e.bk4}.
The final statement follows easily from the ergodic theorem.\qed

\begin{remark}
We will see later (Theorem~\ref{t.ergod3}), that the converse to 
Theorem~\ref{t.ergod1} holds when $\nu$ is ergodic. 
\end{remark}

\subsection{When does the color process inherit ergodic properties from the RER?}

The first  theorem in this subsection tells us that, when all clusters are finite, then
any ergodic property of $\nu$ is automatically passed on to $\jointlaw$ and hence to 
$\munup$. This is really just an extension (with the same proof) of Theorem 3.1 
in \cite{JS} where this 
was proved for the particular property of being Bernoulli. Nonetheless, since the proof is short, 
we include it for completeness. We mention that Bernoulliness for the $\ttinv$-process 
(and consequently for random walk in random scenery) in the transient case (which is a special 
case of having finite clusters) was proved earlier by a different method in \cite{dHS97}.

\begin{thm}\label{t.finiteclust}
Fix $d\ge 1$, $p\in (0,1)$ and $\nu\in \rer_{\zd}^{\stat}$. Assume that $\nu$ satisfies 
$$
\nu(\forall \phi\in \pi\,:\,\phi\mbox{ is finite})=1.
$$
Then, letting $\mu_p$ denote product measure with density $p$ on ${\zd}$, we have that
$\jointlaw$ is a factor of $\nu\times \mu_p$. In particular, if
${\mathfrak p}$ denotes any one of the ergodic properties being studied here, then 
$\nu$ has property ${\mathfrak p}$ if and only if $\jointlaw$ has property 
${\mathfrak p}$. 
In particular, if $\nu$ has property ${\mathfrak p}$,
then $\munup$ has property ${\mathfrak p}$.
\end{thm}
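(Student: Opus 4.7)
The plan is to construct an explicit translation-equivariant factor map from $\nu\times\mu_p$ onto $\jointlaw$, after which all the ergodic-inheritance statements follow from standard facts about factors and products with Bernoulli systems. The main idea is to exploit finiteness of clusters to deterministically choose a canonical representative vertex inside each cluster, and then let an i.i.d.\ coloring decide the color of that representative (and hence of the whole cluster).

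Concretely, I would fix a total order $\prec$ on $\zd$ that is translation equivariant in the following weak sense: for each finite $\phi\subset\zd$, let $m(\phi)$ denote the lexicographically smallest element of $\phi$; then $m(\phi+y)=m(\phi)+y$ for all $y\in\zd$. Define
$$F:\,\rm{Part}_{\zd}\times\{0,1\}^{\zd}\to \rm{Part}_{\zd}\times\{0,1\}^{\zd}, \qquad F(\pi,\omega)=(\pi,\,\Psi(\pi,\omega)),$$
where $\Psi(\pi,\omega)(x):=\omega(m(\pi(x)))$. Since $\nu$ concentrates on partitions all of whose clusters are finite, $m(\pi(x))$ is well-defined $\nu$-a.s. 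A routine check using $m(\pi(x-y))+y=m((T^y\pi)(x))$ shows that $F$ intertwines the diagonal shift on the domain with the shift on the range, i.e.\ $F$ is translation-equivariant. Next I would verify that $F_*(\nu\times\mu_p)=\jointlaw$: conditioning on $\pi$, the vertices $\{m(\phi):\phi\in\pi\}$ are distinct, so the random variables $\omega(m(\phi))$ (with $\omega\sim\mu_p$) are i.i.d.\ Bernoulli$(p)$, one per cluster; thus $\Psi(\pi,\omega)$ has the correct conditional divide-and-color law given $\pi$. This establishes that $\jointlaw$ is a factor of $\nu\times\mu_p$.

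Given the factor map, the ergodic consequences are immediate from the general principles stated in the excerpt: (a) if $\nu$ has property $\mathfrak{p}$, then so does $\nu\times\mu_p$ (since $\mu_p$ is i.i.d.\ and hence enjoys every ergodic property considered, and the independent product with an i.i.d.\ system preserves $\mathfrak{p}$); (b) factors preserve $\mathfrak{p}$, so $\jointlaw$ has property $\mathfrak{p}$; (c) $\munup$ is in turn a factor of $\jointlaw$ (projection onto the second coordinate), so $\munup$ inherits $\mathfrak{p}$. For the converse direction in the ``if and only if'', the projection onto the first coordinate exhibits $\nu$ as a factor of $\jointlaw$, so any ergodic property of $\jointlaw$ passes to $\nu$.

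The only genuinely delicate point is checking that $F$ really is translation-equivariant with respect to the shift action on $\rm{Part}_{\zd}\times\{0,1\}^{\zd}$ and that the almost-sure domain of definition of $m$ is shift-invariant; both facts rest on the hypothesis that every cluster is finite a.s., so that $m(\pi(x))$ is always defined. Everything else, including the conditional i.i.d.\ structure used to identify the pushforward with $\jointlaw$, is essentially bookkeeping once the canonical representative $m$ has been chosen. I do not expect a serious obstacle; the proof is a direct coding argument in the spirit of the one referenced from \cite{JS}.
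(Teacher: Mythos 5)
Your proof is correct and follows essentially the same route as the paper: the paper also defines the factor map by assigning to each $x$ the i.i.d.\ color at the canonical (translation-equivariantly chosen, lexicographically minimal) representative of its cluster, which is well defined precisely because all clusters are finite a.s., and then derives the ergodic statements from the standard facts about factors and independent products with i.i.d.\ systems. No gaps.
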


\pf Concerning the middle statement, first, since $\pi$ is a factor of $(\pi,\xpip)$ and all of 
these properties are preserved under factors, the ``if'' direction follows. Secondly, for the 
``only if'' direction, we observe that if $\nu$ has property ${\mathfrak p}$, then so does 
$\nu\times \mu_p$ and hence $\jointlaw$ in turn has this property being, as claimed, a factor of the 
latter. Since $\munup$ is a factor of $\jointlaw$, the final statement is immediate.

For the first and main statement, let $Y=(Y(z))_{z\in {\mathbb Z}^d}$ be an i.i.d.\ 
field with $P(Y(z)=1)=p=1-P(Y(z)=0)$ 
for $z\in {\mathbb Z}^d$, and let $Z=(\pi,Y)$. We will now obtain  $(\pi,\xpip)$ as a factor of $Z$. 
For the first marginal, we just copy the first marginal of $Z$. For the second marginal, we proceed as 
follows. Choose an arbitrary lexicographic ordering of ${\mathbb Z}^d$. For $x\in {\mathbb Z}^d$, let 
$y_x$ be that element $y$ of $\pi(x)$ which minimizes $y-x$ with respect to the above ordering. Finally, 
we let $\xpip(x)=Z(y_x)$. It is easy to see that this yields the desired factor map. \qed

\medskip\noindent
Theorems~\ref{t.ergod1} and~\ref{t.finiteclust} suggest to us
that the interesting case is when $\pi$ contains no equivalence class of 
positive density but contains some infinite equivalence class, necessarily of 
0 density. Theorems~\ref{t.ergod3},~\ref{t.ergod4},~\ref{t.numixmumix} 
and~\ref{t.numixmumix-kmixing}
below cover this case for some ergodic properties.

\begin{thm}\label{t.ergod3}
Fix $d\ge 1$ and $p\in (0,1)$ and assume that $\nu\in \rer_{\zd}^{\stat}$ satisfies 
$$
\nu(\exists \phi\in \pi\,:\,\phi\mbox{ has positive density})=0. 
$$
Then $\nu$ is ergodic if and only if $\jointlaw$ is ergodic. 
In particular, if $\nu$ is ergodic, then $\munup$ is ergodic.

\end{thm}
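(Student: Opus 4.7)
Since $\pi$ is a measurable function of $(\pi,\xnup)$, the marginal $\nu$ is a factor of $\jointlaw$, so ergodicity of $\jointlaw$ passes to $\nu$ for free. The content is the converse: assume $\nu$ is ergodic, and deduce ergodicity of $\jointlaw$.

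The plan is to verify ergodicity of $\jointlaw$ via the Ces\`aro-averaged pairwise asymptotic independence criterion on a generating family of cylinder events. Fix a finite window $F\subset\zd$ and two cylinder events $A,B\in\sigma(\pi_F,\xnup|_F)$; set $f(\pi):=\bfp(A\mid\pi)$ and $g(\pi):=\bfp(B\mid\pi)$, which are bounded and $\sigma(\pi_F)$-measurable. Let $E_x$ denote the event that some cluster of $\pi$ meets both $F$ and $T^x F$. The crucial decoupling step is that, given $\pi$, the coloring assigns i.i.d.\ Bernoulli$(p)$ labels to distinct clusters, so on $E_x^c$ the events $A$ and $T^x B$ are conditionally independent given $\pi$. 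Consequently,
\begin{equation*}
\bigl|\bfp(A\cap T^x B)-\Ebf_\nu\bigl[f(\pi)\,g(T^{-x}\pi)\bigr]\bigr|\le 2\bfp(E_x).
\end{equation*}

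Averaging the principal term, the mean ergodic theorem applied to the ergodic $\zd$-action on $(\part_{\zd},\nu)$ yields $|B_n|^{-1}\sum_{x\in B_n}\Ebf_\nu[f(\pi)g(T^{-x}\pi)]\to \Ebf_\nu[f]\,\Ebf_\nu[g]=\bfp(A)\bfp(B)$. To close the argument I control the error term: a union bound followed by translation invariance gives
\begin{equation*}
\sum_{x\in B_n}\bfp(E_x)\le\sum_{y,w\in F}\Ebf_\nu\bigl[\,|\pi(0)\cap(B_n+w-y)|\,\bigr].
\end{equation*}
The zero-density hypothesis combined with Theorem~\ref{t.bkrer} implies $|\pi(0)\cap B_n|/|B_n|\to 0$ almost surely, and since this ratio is bounded by $1$, bounded convergence delivers $\Ebf_\nu[|\pi(0)\cap(B_n+c)|]/|B_n|\to 0$ for every fixed $c$. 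Dividing the displayed inequality by $|B_n|$ and summing over the finitely many pairs $(y,w)\in F\times F$ finishes the Ces\`aro estimate, hence yields ergodicity of $\jointlaw$. The final statement that $\munup$ is ergodic is immediate, since $\munup$ is a further factor of $\jointlaw$.

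The only subtlety is the conditional-independence assertion: once one unpacks the construction of $\Phi_p$, this is exactly the statement that independent per-cluster colorings produce independent events when the supporting clusters are disjoint. The zero-density hypothesis enters precisely once, in making the Ces\`aro average of $\bfp(E_x)$ negligible.
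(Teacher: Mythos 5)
Your proposal is correct and follows essentially the same route as the paper's proof: both decompose according to the event that some cluster links the two windows (your $E_x$ is the paper's $C_x$), exploit conditional independence of the per-cluster coloring on its complement, kill the Ces\`aro average of $\bfp(E_x)$ via the zero-density hypothesis and Theorem~\ref{t.bkrer}, and feed the main term to ergodicity of $\nu$. Your packaging via $f(\pi)=\bfp(A\mid\pi)$ and the mean ergodic theorem is a slightly cleaner bookkeeping of what the paper does by conditioning on explicit partition configurations $F_i=\{\pi_{K_i}=\phi_i\}$, but it is not a different argument.
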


\pf First assume that $(\pi,\xpip)$ is ergodic. Since $\pi$ is a factor of $(\pi,\xpip)$ and ergodicity 
is preserved under factors, the if part of the theorem follows. Similarly, we obtain the last statement 
of the theorem from the first statement since $\xpip$ is a factor of $(\pi,\xpip)$.

We move on to the only if part of the theorem. Assume that $\nu$ is ergodic. Suppose that $K_1$ and $K_2$ are 
finite subsets of ${\zd}$. For $i=1,2$, suppose that $E_i$ is an event depending only on the 
color process $\xpip$ restricted to $K_i$, that is $E_i\in \sigma(\xpip(z)\,:\,z\in K_i)$. 
For $i=1,2$, fix $\phi_i\in\Delta_{K_i}$ and let $F_i= \{\pi_{K_i}=\phi_i\}$. 
By standard approximation by cylinder sets, it suffices to show that

\begin{equation}\label{e.nts}
\lim_{n\to \infty} \frac{1}{(2n+1)^d}\sum_{x\in B_n} \Pr(E_1, F_1, T^x E_2, T^x F_2) =\Pr(E_1,F_1)\Pr(E_2,F_2).
\end{equation}

For $x\in {\mathbb Z}^d$, let $C_x=C_x(K_1,K_2)$ be the event that there is some $z_1\in K_1$ and some 
$z_2\in K_2$ such that $\pi(z_1)=\pi(T^x(z_2))$. 

We will be done if we show that 

\begin{equation}\label{e.nts2}
\lim_{n\to \infty} \frac{1}{(2n+1)^d}\sum_{x\in B_n} \Pr(C_x^c,E_1, F_1, T^x E_2, T^x F_2) =\Pr(E_1,F_1)\Pr(E_2,F_2)
\end{equation}

and

\begin{equation}\label{e.nts3}
\lim_{n\to \infty} \frac{1}{(2n+1)^d}\sum_{x\in B_n} \Pr(C_x,E_1, F_1, T^x E_2, T^x F_2) =0.
\end{equation}

We start with~\eqref{e.nts3}. Clearly, it suffices to show

\begin{eqnarray}\label{e.densclust}
 \lim_{n\to \infty} \frac{1}{(2n+1)^d}\sum_{x\in B_n} \Pr( C_x)=0.
\end{eqnarray}

We get that

\begin{eqnarray}\label{e.innersum}
\lefteqn{\sum_{x\in B_n} \frac{\Pr( C_x)}{(2n+1)^d}}\nonumber\\ & & \le \sum_{x\in B_n} 
\sum_{z_1\in K_1,z_2\in K_2} \frac{\Pr(\pi(z_1)=\pi(T^x z_2))}{(2n+1)^d}\nonumber\\ & & = \sum_{z_1\in K_1,z_2\in K_2}  \sum_{x\in B_n} \frac{\Pr(\pi(z_1)=\pi(T^x z_2))}{(2n+1)^d}.
\end{eqnarray}
For fixed $z_1$ and $z_2$, the inner sum in~\eqref{e.innersum} converges to $0$  as $n\to\infty$ since every cluster of $\pi$ has density $0$ a.s.\, Hence,~\eqref{e.densclust} follows, and so~\eqref{e.nts3} is established.

We now move on to prove~\eqref{e.nts2}. We write

\begin{eqnarray}\label{e.condind}
\lefteqn{\Pr(C_x^c,E_1, F_1, T^x E_2, T^x F_2) } \\ & & =\Pr(C_x^c,F_1,T^x F_2) \Pr(E_1,T^x E_2|C_x^c,F_1,T^x F_2)\nonumber\\ & & =\Pr(C_x^c,F_1,T^x F_2)\Pr(E_1|F_1)\Pr(T^x E_2| T^x F_2),\nonumber\\ & & =\Pr(C_x^c,F_1,T^x F_2)\Pr(E_1|F_1)\Pr( E_2| F_2)\nonumber
\end{eqnarray}

where in the second equality we used the fact that $E_1$ and $T^x E_2$ are conditionally independent given 
the event $\{C_x^c,F_1,T^x F_2\}$, and translation invariance was used in the last equality.

Next, we argue that for $\phi_1,\phi_2$ fixed,

\begin{equation}\label{e.nts4}
\lim_{n\to \infty} \frac{1}{(2n+1)^d} \sum_{x\in B_n } \Pr(C_x^c,F_1,T^x F_2)= \Pr(F_1)\Pr(F_2).
\end{equation}
To see this, we observe that 

\begin{equation}\label{e.ineq1}
\Pr(C_x^c,F_1,T^x F_2)\le \Pr(F_1,T^x F_2) 
\end{equation}

and 

\begin{equation}\label{e.ineq2}
\Pr(C_x^c,F_1,T^x F_2)\ge \Pr(F_1,T^x F_2)-\Pr(C_x). 
\end{equation}

By ergodicity of $\nu$, 

\begin{equation}\label{e.nts5}
\lim_{n\to \infty} \frac{1}{(2n+1)^d} \sum_{x\in B_n }\Pr(F_1,T^x F_2)= \Pr(F_1)\Pr(F_2).
\end{equation}

Next, we already proved in~\eqref{e.densclust} that

\begin{equation}\label{e.nts6}
\lim_{n\to \infty} \frac{1}{(2n+1)^d} \sum_{x\in B_n }\Pr(C_x)=0.
\end{equation}

Hence, Equation~\eqref{e.nts4} follows from~\eqref{e.ineq1},~\eqref{e.ineq2},~\eqref{e.nts5} and~\eqref{e.nts6}.
We are now ready to obtain~\eqref{e.nts2} from~\eqref{e.condind} and~\eqref{e.nts4}. We get

\begin{eqnarray*}
\lefteqn{\lim_{n\to \infty} \frac{1}{(2n+1)^d}\sum_{x\in B_n} \Pr(C_x^c,E_1, F_1, T^x E_2, T^x F_2) }\\ & & \stackrel{~\eqref{e.condind}}{=}\lim_{n\to \infty} \frac{1}{(2n+1)^d}\sum_{x\in B_n} \Pr(C_x^c,F_1,T^x F_2)\Pr(E_1|F_1)\Pr( E_2| F_2) \\ & & \stackrel{~\eqref{e.nts4}}{=} \Pr(F_1)\Pr(F_2)\Pr(E_1|F_1)\Pr( E_2| F_2) =\Pr(E_1,F_1)\Pr(E_2,F_2).
\end{eqnarray*}

Hence,~\eqref{e.nts2} is established. Since $K_1$ and $K_2$ are arbitrary finite sets, ergodicity of 
$\munup$ follows. \qed

\begin{thm}\label{t.ergod4}
Fix $d\ge 1$ and $p\in (0,1)$ and assume that $\nu\in \rer_{\zd}^{\stat}$ satisfies 
$$
\nu(\exists \phi\in \pi\,:\,\phi\mbox{ has positive density})=0. 
$$
Then $\nu$ is weakly mixing if and only if $\jointlaw$ is weakly mixing. In particular, if $\nu$ is 
weakly mixing, then $\munup$ is weakly mixing.
\end{thm}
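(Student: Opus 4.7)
The plan is to parallel the proof of Theorem~\ref{t.ergod3}, replacing Ces\`aro averages of signed differences by Ces\`aro averages of their absolute values, which is the standard formulation of weak mixing for ${\mathbb Z}^d$-actions. The ``if'' direction is immediate: $\pi$ is a factor of $(\pi,\xpip)$ and weak mixing is preserved under factors; the same observation applied to $\xpip$ as a factor of $(\pi,\xpip)$ gives the final statement about $\munup$ once the main equivalence is proved.

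For the ``only if'' direction, assume $\nu$ is weakly mixing. By standard approximation by cylinder sets, it suffices to fix finite $K_1,K_2\subseteq\zd$, partitions $\phi_i\in{\rm Part}_{K_i}$, events $E_i\in\sigma(\xpip(z):z\in K_i)$, and $F_i:=\{\pi_{K_i}=\phi_i\}$, and show
\begin{equation}\label{e.wm.nts}
\lim_{n\to\infty}\frac{1}{|B_n|}\sum_{x\in B_n}\bigl|\Pr(E_1,F_1,T^xE_2,T^xF_2)-\Pr(E_1,F_1)\Pr(E_2,F_2)\bigr|=0.
\end{equation}
As in the proof of Theorem~\ref{t.ergod3}, let $C_x=C_x(K_1,K_2)$ be the event that some element of $K_1$ is $\pi$-equivalent to some element of $T^x K_2$. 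The key inputs are: (a) the conditional independence of $E_1$ and $T^xE_2$ given $\{C_x^c,F_1,T^xF_2\}$, which factors the joint probability on $C_x^c$ as $\Pr(C_x^c,F_1,T^xF_2)\Pr(E_1\mid F_1)\Pr(E_2\mid F_2)$; and (b) the fact that $\nu\{0\text{-density clusters}\}=1$ forces $\frac{1}{|B_n|}\sum_{x\in B_n}\Pr(C_x)\to 0$, exactly as established in equation~\eqref{e.densclust} of the earlier proof.

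Splitting the summand in~\eqref{e.wm.nts} according to $C_x$ vs. $C_x^c$ and applying the triangle inequality, the $C_x$ contribution is bounded termwise by $\Pr(C_x)$, whose Ces\`aro average vanishes by (b). On $C_x^c$, using (a) and writing $\Pr(E_1,F_1)\Pr(E_2,F_2)=\Pr(F_1)\Pr(F_2)\Pr(E_1\mid F_1)\Pr(E_2\mid F_2)$, the discrepancy factors as
\begin{equation*}
\Pr(E_1\mid F_1)\Pr(E_2\mid F_2)\bigl[\Pr(C_x^c,F_1,T^xF_2)-\Pr(F_1)\Pr(F_2)\bigr],
\end{equation*}
so in absolute value it is at most $|\Pr(F_1,T^xF_2)-\Pr(F_1)\Pr(F_2)|+\Pr(C_x)$. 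The Ces\`aro average of the first summand tends to $0$ by weak mixing of $\nu$ applied to the cylinder events $F_1,F_2$, and the second again vanishes by (b). Combining yields~\eqref{e.wm.nts}.

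The main obstacle is simply the bookkeeping: because weak mixing is phrased via absolute values rather than signed Ces\`aro limits, the ``two-sided sandwich'' trick (\eqref{e.ineq1}--\eqref{e.ineq2}) used in Theorem~\ref{t.ergod3} must be replaced by a direct triangle inequality, and one must verify that the conditional-independence rewriting still produces a single scalar factor times a $\nu$-correlation to which weak mixing of $\nu$ applies. Everything else is mechanical and identical in spirit to the ergodicity proof; no new probabilistic input beyond weak mixing of $\nu$ and the $0$-density-of-clusters hypothesis is needed.
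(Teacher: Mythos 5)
Your proposal is correct and follows essentially the same route as the paper's proof: the same decomposition via the events $C_x$, the same conditional-independence factorization on $C_x^c$, the same appeal to the Ces\`aro vanishing of $\Pr(C_x)$ established in the ergodicity proof, and the same replacement of the two-sided sandwich by a direct triangle inequality. No gaps.
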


\pf First assume that $(\pi,\xpip)$ is weakly mixing. Since $\pi$ is a factor of $(\pi,\xpip)$ and weak 
mixing is preserved under factors, the if part of the theorem follows. Similarly, we obtain the last 
statement of the theorem from the first statement since $\xpip$ is a factor of $(\pi,\xpip)$.

We move on to the only if part of the theorem. Assume that $\nu$ is weak mixing. Suppose that $K_1$ and $K_2$ 
are finite subsets of $\zd$. For $i=1,2$, suppose that $E_i$ is an event depending only on the color process 
$\xpip$ restricted to $K_i$, that is $E_i\in \sigma(\xpip(z)\,:\,z\in K_i)$. For $i=1,2$, 
fix $\phi_i\in\Delta_{K_i}$ and let $F_i= \{\pi_{K_i}=\phi_i\}$. By approximation by cylinder sets, it 
suffices to show that

\begin{equation}
\lim_{n\to\infty} \frac{\sum_{x\in B_n} | \Pr(E_1, F_1, T^x E_2, T^x F_2) -\Pr(E_1,F_1)\Pr(E_2,F_2) |}{(2n+1)^d}=0.
\end{equation}

Define $C_x$ in the same way as in the proof of Theorem~\ref{t.ergod3}. By the triangle inequality, 
we have for each
$n$

\begin{eqnarray}\label{e.twoterms}
\lefteqn{\frac{\sum_{x\in B_n} | \Pr(E_1, F_1, T^x E_2, T^x F_2) -\Pr(E_1,F_1)\Pr(E_2,F_2) |}{(2n+1)^d} } \\ & &\le \frac{\sum_{x\in B_n}\Pr(C_x)}{(2n+1)^d}+ \frac{\sum_{x\in B_n} | \Pr(C_x^c,E_1, F_1, T^x E_2, T^x F_2) -\Pr(E_1,F_1)\Pr(E_2,F_2)|}{(2n+1)^d}.\nonumber
\end{eqnarray}

The first term in the last line of~\eqref{e.twoterms} converges to $0$ as $n\to \infty$ due to 
Equation~\eqref{e.densclust} above, so we can focus on the second term. We get that

\begin{eqnarray}\label{e.prodrewrite}
\lefteqn{\frac{\sum_{x\in B_n} | \Pr(C_x^c,E_1, F_1, T^x E_2, T^x F_2) -\Pr(E_1,F_1)\Pr(E_2,F_2)|}{(2n+1)^d}}\\ & & =\frac{\sum_{x\in B_n} | \Pr(C_x^c,F_1, T^x F_2)\Pr(E_1|F_1 )\Pr(E_2|F_2 ) -\Pr(E_1|F_1)\Pr(E_2|F_2)\Pr(F_1)\Pr(F_2)|}{(2n+1)^d}  \nonumber\\ & & \le \frac{\sum_{x\in B_n} | \Pr(C_x^c,F_1, T^x F_2) -\Pr(F_1)\Pr(F_2)|}{(2n+1)^d}\nonumber\\ & &\le\frac{\sum_{x\in B_n}\Pr(C_x)}{(2n+1)^d}+  \frac{\sum_{x\in B_n} | \Pr(F_1, T^x F_2) -\Pr(F_1)\Pr(F_2)|}{(2n+1)^d}\to 0,\nonumber
\end{eqnarray}
as $n\to \infty$ due to the weak mixing of $\nu$ and the comment above Equation~\eqref{e.prodrewrite}.
Since $K_1$ and $K_2$ are arbitrary finite sets, this finishes the proof.\qed

\begin{thm}\label{t.numixmumix}
Fix $d\ge 1$ and $p\in (0,1)$ and assume that $\nu\in \rer_{\zd}^{\stat}$ satisfies 
$$
\lim_{x\to\infty}\nu(\pi(x)=\pi({\bf 0}))= 0.
$$
Then $\nu$ is mixing if and only if $\jointlaw$ is  mixing. In particular, if $\nu$ is mixing,
then $\munup$ is mixing.
\end{thm}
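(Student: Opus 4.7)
The plan is to mirror the approach used in Theorem~\ref{t.ergod4} for weak mixing, replacing Ces\`aro averages with pointwise limits as $x \to \infty$ and using the stronger hypothesis to control the ``clusters that cross'' correction term. The ``if'' direction is immediate: mixing is preserved under factors, and $\pi$ (respectively $\xnup$) is a factor of $(\pi,\xnup)$.

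For the ``only if'' direction, by approximation by cylinder events it suffices to consider, for finite $K_1, K_2 \subset \zd$, events $E_i \in \sigma(\xnup(z): z \in K_i)$ together with fixings $F_i = \{\pi_{K_i} = \phi_i\}$ for some partitions $\phi_i$ of $K_i$, and to show that
\begin{equation*}
\lim_{x\to\infty} \Pr(E_1, F_1, T^x E_2, T^x F_2) = \Pr(E_1,F_1)\,\Pr(E_2,F_2).
\end{equation*}
Let $C_x$ be the event that some $z_1 \in K_1$ and some $z_2 \in K_2$ satisfy $\pi(z_1) = \pi(T^x z_2)$. The key estimate is
\begin{equation*}
\Pr(C_x) \le \sum_{z_1\in K_1,\, z_2\in K_2} \nu(\pi({\bf 0}) = \pi(T^x z_2 - z_1)),
\end{equation*}
which tends to $0$ as $x \to \infty$ since $K_1, K_2$ are finite and $\nu(\pi({\bf 0}) = \pi(y)) \to 0$ as $y \to \infty$ by hypothesis.

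On the event $C_x^c$, no cluster meets both $K_1$ and $T^x K_2$, so conditional on $(F_1, T^x F_2, C_x^c)$, the coin flips coloring the clusters intersecting $K_1$ are independent of those coloring the clusters intersecting $T^x K_2$. This gives the conditional independence
\begin{equation*}
\Pr(C_x^c, E_1, F_1, T^x E_2, T^x F_2) = \Pr(C_x^c, F_1, T^x F_2)\,\Pr(E_1 \mid F_1)\,\Pr(E_2 \mid F_2),
\end{equation*}
where translation invariance is used to replace $\Pr(T^x E_2 \mid T^x F_2)$ by $\Pr(E_2 \mid F_2)$. Combining $\Pr(C_x^c, F_1, T^x F_2) = \Pr(F_1, T^x F_2) - \Pr(C_x, F_1, T^x F_2)$ with the mixing of $\nu$ yields $\Pr(C_x^c, F_1, T^x F_2) \to \Pr(F_1)\Pr(F_2)$, and writing $\Pr(E_1,F_1,T^x E_2, T^x F_2) = \Pr(C_x^c, \ldots) + \Pr(C_x, \ldots)$ with the latter bounded by $\Pr(C_x) \to 0$, the required limit follows.

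The only genuinely new ingredient beyond the proof of Theorem~\ref{t.ergod4} is the pointwise decay of $\Pr(C_x)$; this is precisely where the strengthened hypothesis on $\nu(\pi(x)=\pi({\bf 0}))$ enters, and it is the only place in the weak-mixing argument that had to rely on a Ces\`aro average. I expect no real obstacle: once the pointwise bound on $\Pr(C_x)$ is in hand, the conditional independence step and the algebraic manipulation transfer verbatim.
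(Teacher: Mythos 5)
Your proposal is correct and follows essentially the same route as the paper's proof: the factor argument for the ``if'' direction, the decomposition over $C_x$ and $C_x^c$, the union bound showing $\Pr(C_x)\to 0$ from the hypothesis, and the conditional independence of the colorings given $\{C_x^c, F_1, T^xF_2\}$ combined with the mixing of $\nu$. No gaps.
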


\begin{remark}
It is elementary to check that the condition that $\nu(\pi(x)=\pi({\bf 0}))\to 0$ as $x\to\infty$ 
is necessary for mixing, since if this fails, pairwise correlations in the color process do 
not converge to $0$ and hence mixing does not hold.
\end{remark}

\begin{remark}
Clearly the condition $\nu(\pi(x)=\pi(y))\to 0$ as $|x-y|\to\infty$ implies 
the condition $\nu(\exists \phi\in \pi\,:\,\phi\mbox{ has positive density})=0$. To see that 
the converse does not hold, consider the following deterministic example in ${\mathbb Z}^2$. 
Let $\pi$ be the partition into horizontal lines. Then clearly each cluster has density $0$, 
but $\nu(\pi(x)=\pi(y))=1$ if $x_2=y_2$. Obviously, a similar example exists for any $d\ge 2$.
\end{remark}

\pf First assume that $(\pi,\xpip)$ is mixing. Since $\pi$ is a factor of $(\pi,\xpip)$ and mixing is 
preserved under factors, the if part of the theorem follows. Similarly, we obtain the last statement of 
the theorem from the first statement since $\xpip$ is a factor of $(\pi,\xpip)$.

We move on to the only if part of the theorem. Assume that $\nu$ is mixing. Suppose that $K_1$ and 
$K_2$ are finite subsets of $V$. For $i=1,2$, suppose that $E_i$ is an event depending only on the 
color process $\xpip$ restricted to $K_i$, that is $E_i\in \sigma(\xpip(z)\,:\,z\in K_i)$. For 
$i=1,2$, fix $\phi_i\in\Delta_{K_i}$ and let $F_i= \{\pi_{K_i}=\phi_i\}$. By approximation by cylinder 
sets, it suffices to show that

\begin{equation}
\lim_{|x|\to\infty} \Pr(E_1, F_1, T^x E_2, T^x F_2) =\Pr(E_1,F_1)\Pr(E_2,F_2).
\end{equation}

For $x\in {\mathbb Z}^d$, let $C_x$ be the event as defined in the proof of Theorem~\ref{t.ergod3}. 

Then
\begin{eqnarray}\label{e.firstsum}
\lefteqn{\Pr(E_1, F_1, T^x E_2, T^x F_2)} \\ & & = \Pr(C_x,E_1, F_1, T^x E_2, T^x F_2)+\Pr(C_x^c,E_1, F_1, T^x E_2, T^x F_2).\nonumber
\end{eqnarray}

Since $K_1$ and $K_2$ are finite, the property that $\Pr(\pi(x)=\pi(y))\to 0$ as $|x-y|\to\infty$ 
implies that $\Pr(C_x)\to 0$ as $|x|\to \infty$. Hence, the first term in the right hand side 
of~\eqref{e.firstsum} converges to $0$ as $|x|\to\infty$, and we can focus on the second term.

Observe that as in the proof of Theorem~\ref{t.ergod3},
\begin{eqnarray}\label{e.spliteq1}
\Pr(C_x^c,E_1, F_1, T^x E_2, T^x F_2)  =\Pr(C_x^c,F_1,T^x F_2)\Pr(E_1|F_1)\Pr( E_2| F_2).
\end{eqnarray}

Using the mixing property of $\nu$ and the fact that $\Pr(C_x^c)\to 1$ as $|x|\to \infty$, we get
\begin{eqnarray}\label{e.spliteq3}
\lefteqn{\lim_{|x|\to \infty} \Pr(C_x^c,F_1,T^x F_2)\Pr(E_1|F_1)\Pr( E_2| F_2)}\\ & & =\Pr(F_1)\Pr( F_2)\Pr(E_1|F_1)\Pr( E_2| F_2)=\Pr(E_1,F_1)\Pr(E_2,F_2).\nonumber
\end{eqnarray}

Since $K_1$ and $K_2$ are arbitrary finite sets, this establishes the mixing property of $\munup$ and 
the proof is finished. \qed

The following theorem also holds. Its proof is a straightforward modification of the proof of 
Theorem~\ref{t.numixmumix} and hence is left to the reader. In addition, also here the condition 
$\nu(\pi(x)=\pi(y))\to 0$ as $|x-y|\to \infty$ clearly cannot be weakened.

\begin{thm}\label{t.numixmumix-kmixing}
Fix $d\ge 1$ and $p\in (0,1)$ and assume that $\nu\in \rer_{\zd}^{\stat}$ satisfies 
$$
\lim_{x\to\infty}\nu(\pi(x)=\pi({\bf 0}))= 0.
$$
Then $\nu$ is $k$-mixing if and only if $\jointlaw$ is $k$-mixing. 
In particular, if $\nu$ is $k$-mixing, then $\munup$ is $k$-mixing.
\end{thm}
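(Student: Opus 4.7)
The plan is to adapt the two-point argument of Theorem~\ref{t.numixmumix} to the $(k{+}1)$-point setting that defines $k$-mixing. As in the earlier proofs, the ``if'' direction is trivial: since $\pi$ is a factor of $(\pi,\xpip)$ and $k$-mixing is preserved by factor maps, $k$-mixing of $\jointlaw$ immediately yields $k$-mixing of $\nu$, and likewise the last sentence of the statement follows from $\xpip$ being a factor. So the real content is the ``only if'' direction.

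Assume $\nu$ is $k$-mixing. By approximation by cylinder sets, it suffices to show that for finite $K_0,\ldots,K_k\subseteq\zd$, any cylinder events $E_i\in\sigma(\xpip(z):z\in K_i)$, and any fixed partitions $\phi_i$ of $K_i$ with $F_i:=\{\pi_{K_i}=\phi_i\}$, we have
$$
\Pr\!\left(E_0, F_0,\, T^{x_1}E_1, T^{x_1}F_1,\ldots, T^{x_k}E_k, T^{x_k}F_k\right)\;\longrightarrow\;\prod_{i=0}^k \Pr(E_i,F_i)
$$
as $\min_{i\neq j}|x_i-x_j|\to\infty$ (with the convention $x_0={\bf 0}$). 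Following the earlier template, I would introduce the ``collision'' event
$$
C_{x_1,\ldots,x_k}:=\bigcup_{0\le i<j\le k}\bigcup_{z_i\in K_i,\, z_j\in K_j}\{\pi(T^{x_i}z_i)=\pi(T^{x_j}z_j)\},
$$
and split the probability according to $C$ and $C^c$.

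The first step is to show $\Pr(C_{x_1,\ldots,x_k})\to 0$. This is immediate from the hypothesis $\nu(\pi(y)=\pi({\bf 0}))\to 0$ as $|y|\to\infty$, finiteness of the $K_i$'s, a union bound over the finitely many pairs $(i,j)$ and $(z_i,z_j)$, and the fact that $\min_{i\neq j}|x_i-x_j|\to\infty$ forces $|T^{x_i}z_i - T^{x_j}z_j|\to\infty$ for each such pair. On the complementary event $C^c$, the clusters of $\pi$ intersecting the translated windows $T^{x_i}K_i$ are pairwise disjoint, so the colors on those windows are conditionally independent given the partition restricted to $\bigcup_i T^{x_i}K_i$. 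This lets me factor
$$
\Pr(C^c, E_0,F_0,\ldots,T^{x_k}E_k,T^{x_k}F_k) = \Pr(C^c, F_0,T^{x_1}F_1,\ldots,T^{x_k}F_k)\prod_{i=0}^k \Pr(E_i\mid F_i),
$$
using translation invariance on the conditional factors exactly as in~\eqref{e.condind}.

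It then remains to control $\Pr(C^c, F_0, T^{x_1}F_1,\ldots,T^{x_k}F_k)$. Sandwiching it between $\Pr(F_0,T^{x_1}F_1,\ldots,T^{x_k}F_k)-\Pr(C)$ and $\Pr(F_0,T^{x_1}F_1,\ldots,T^{x_k}F_k)$, invoking $k$-mixing of $\nu$ on the $F_i$'s, and using $\Pr(C)\to 0$, I obtain the limit $\prod_{i=0}^k\Pr(F_i)$. Multiplying by $\prod_i \Pr(E_i\mid F_i)=\prod_i \Pr(E_i,F_i)/\Pr(F_i)$ gives the desired product $\prod_i\Pr(E_i,F_i)$, and adding back the negligible $\Pr(C)$-term completes the argument. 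No new obstacle appears compared to Theorem~\ref{t.numixmumix}; the only bookkeeping subtlety is handling the finitely many pairs in $C$ and writing the conditional-independence factorization for $k{+}1$ disjoint windows rather than two, which is the reason the authors declare the modification ``straightforward.''
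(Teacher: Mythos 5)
Your proof is correct and is exactly the ``straightforward modification'' of the proof of Theorem~\ref{t.numixmumix} that the paper has in mind (the paper itself leaves this to the reader): the factor-map argument for the ``if'' direction, the multi-window collision event controlled by the hypothesis $\nu(\pi(x)=\pi({\bf 0}))\to 0$ via a union bound, the conditional-independence factorization on the complement, and the sandwich argument invoking $k$-mixing of $\nu$ all match the intended template. No issues.
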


Theorem~\ref{t.finiteclust} tells us that when all the clusters are finite,
all ergodic properties of $\nu$ are passed to $\jointlaw$ and
Theorems~\ref{t.ergod3},~\ref{t.ergod4},~\ref{t.numixmumix} and~\ref{t.numixmumix-kmixing} 
tell us that four specific ergodic properties are passed from $\nu$ to $\jointlaw$ under the 
weaker assumption (and even under weaker assumptions for two of these) that
$$
\lim_{x\to\infty}\nu(\pi(x)=\pi({\bf 0}))= 0.
$$
However, it turns out interestingly that the important property of being 
Bernoulli is not necessarily passed from $\nu$ to $\jointlaw$ under this latter 
assumption. We call the following a theorem although it is actually just an observation 
based on Kalikow's famous work (see \cite{Kal}) on the $\ttinv$-process.

\begin{thm}\label{t.kalikow.example}
There exists $\nu\in \rer_{{\mathbb Z}}^{\stat}$ which is Bernoulli satisfying
$$
\lim_{x\to\infty}\nu(\pi(x)=\pi({\bf 0}))= 0
$$
but for which ${\mathbf P}_{\nu,1/2}$ is not Bernoulli and even for which
$\Phi_{1/2}(\nu)$ is not Bernoulli.
\end{thm}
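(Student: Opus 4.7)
The plan is to take $\nu\in\rer^{\stat}_{\Z}$ to be the RER on $\Z$ induced by one-dimensional simple random walk, exactly as in the random walk in random scenery example. More precisely, with $(X_i)_{i\in\Z}$ i.i.d.\ uniform on $\{-1,+1\}$, form $(S_n)_{n\in\Z}$ using the convention of that construction (so that the resulting partition is stationary) and declare $i\sim j$ iff $S_i=S_j$.

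First I would dispatch the two easy properties. Since $\pi$ is a shift-equivariant measurable function of the i.i.d.\ input $(X_i)_{i\in\Z}$, $\nu$ is a factor of a Bernoulli shift and hence itself Bernoulli by Ornstein's theorem. For the cluster-tail condition,
\[
\nu(\pi(x)=\pi(0))=\Pr(S_x=0),
\]
which vanishes for odd $x$ and equals $\binom{|x|}{|x|/2}2^{-|x|}=\Theta(|x|^{-1/2})$ for even $x$ by Stirling (or the local central limit theorem), so it tends to $0$ as $|x|\to\infty$.

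The core of the argument is the identification, already built into the construction, of $\Phi_{1/2}(\nu)$ with the one-dimensional random walk in random scenery $(Y^{1/2}_k)_{k\in\Z}$ driven by simple random walk and independent Bernoulli$(1/2)$ scenery: coloring an equivalence class of $\pi$ independently with probability $1/2$ is precisely placing an i.i.d.\ fair scenery on $\Z$ and reading it along the walk. This is the scenery factor of Kalikow's $\ttinv$-system, and Kalikow's theorem \cite{Kal} asserts that this very process fails to be loosely Bernoulli and in particular is not isomorphic to any Bernoulli shift. Thus $\Phi_{1/2}(\nu)$ is not Bernoulli; since $\Phi_{1/2}(\nu)$ is a factor of $\mathbf{P}_{\nu,1/2}$ and factors of Bernoulli shifts are Bernoulli, $\mathbf{P}_{\nu,1/2}$ cannot be Bernoulli either.

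The only substantive input is Kalikow's deep result, which the authors invoke rather than reprove; this is precisely why they describe the theorem as ``just an observation.'' The one potential subtlety is to ensure one applies Kalikow's conclusion in the form that pertains to the scenery process itself (i.e., to the factor generated by the ``color seen at time $0$'' partition under the $\ttinv$-dynamics), not merely to the full joint walk-and-scenery system; this, however, is exactly what the partition analyzed by Kalikow generates under his dynamics.
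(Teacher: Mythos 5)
Your construction is the same as the paper's (the RER induced by simple random walk, i.e.\ $i\sim j$ iff $S_i=S_j$), and your treatment of the two easy properties is fine: $\nu$ is Bernoulli as a factor of the i.i.d.\ steps, and $\nu(\pi(x)=\pi(0))=\Pr(S_x=0)\to 0$ by the local CLT.

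The gap is in the attribution and logical direction of the hard step. Kalikow's theorem is about the full $\ttinv$-system, whose natural generating partition records \emph{both} the walk increment and the scenery value at the current location; in the present language it is the joint process $\mathbf{P}_{\nu,1/2}$ (up to the global sign ambiguity of the walk) that Kalikow proves is not loosely Bernoulli. It does \emph{not} follow from this that the scenery factor $\Phi_{1/2}(\nu)$ fails to be Bernoulli: a factor of a non-Bernoulli system can perfectly well be Bernoulli, so non-Bernoullicity does not pass to factors. Your closing remark that the partition analyzed by Kalikow ``generates'' only the color-seen-at-time-$0$ factor is exactly where this goes wrong --- his partition also generates the walk. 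The correct logic is the reverse of yours: Kalikow gives non-Bernoullicity of $\mathbf{P}_{\nu,1/2}$ directly, while the strictly stronger assertion that the scenery factor $\Phi_{1/2}(\nu)$ alone is not Bernoulli is a separate, later theorem of Hoffman (which builds on Kalikow's work). This is precisely how the paper cites the two results, and the ``even for which $\Phi_{1/2}(\nu)$ is not Bernoulli'' clause of the statement cannot be obtained from Kalikow's theorem alone.
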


\pf Let $(X(i))_{i\in {\mathbb Z}}$ be an i.i.d.\ sequence such that 
$P(X(i)=1)=P(X(i)=-1)=1/2$.
Let $\nu \in \rer_{{\mathbb Z}}^{\stat}$ be the distribution of the RER given by $j < k$ are 
put in the same cluster if $\sum_{i=j+1}^k X(i)=0$. 
(This is of course just our RER for random walk in random scenery from Subsection~\ref{ss.Examples}.)
Being a factor of an i.i.d.\ process, 
$\nu$ is Bernoulli and one easily has $\lim_{x\to\infty}\nu(\pi(x)=\pi({\bf 0}))= 0$. The 
fact however that ${\mathbf P}_{\nu,1/2}$ is not Bernoulli is Kalikow's famous theorem 
(\cite{Kal}). The stronger fact that even $\Phi_{1/2}(\nu)$ is not Bernoulli was proved 
by Hoffman (\cite{Hoff}). One should however stress that the latter proof relies on 
Kalikow's theorem.\qed

\subsection{Can the color process enjoy more ergodic properties than the RER?}

While $\jointlaw$ cannot of course exhibit stronger ergodic behavior than 
$\nu$ itself (since the latter is a factor of the former), $\munup$ could 
possibly exhibit stronger ergodic behavior than $\nu$.

Our first example shows that $\Phi_{1/2}$ is not injective on 
$\rer_{{\mathbb Z}}^{\stat}$ and as a consequence gives us a nonergodic
RER whose color process is ergodic.

\begin{prop}\label{p.differinf}
There exist $\nu_3,\nu_4\in \rer_{{\mathbb Z}}^{\stat}$ with
$\nu_3\neq \nu_4$, $\Phi_{1/2}(\nu_3)=\Phi_{1/2}(\nu_4)$ and such that the latter process is ergodic.
It follows that there is a nonergodic RER whose color process is ergodic.
\end{prop}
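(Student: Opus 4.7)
The plan is to lift the finite non-injectivity example from Theorem~\ref{t.bigfinitetheorem}(A) to $\mathbb{Z}$ via a block-iid construction with a randomized phase. Recall the measures $\nu_1 = (2/3,0,0,0,1/3)$ and $\nu_2 = (0,1/3,1/3,1/3,0)$ on ${\rm Part}_{[3]}$ satisfying $\Phi_{1/2}(\nu_1)=\Phi_{1/2}(\nu_2)$. I will let $U$ be uniform on $\{0,1,2\}$, independent of an i.i.d.\ sequence of partitions $(\pi_k)_{k\in\mathbb{Z}}$ with law $\nu_i$, and set $B_k := \{3k-U,\,3k+1-U,\,3k+2-U\}$, a partition of $\mathbb{Z}$ into consecutive triples. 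Define $\tilde\nu_i \in \rer_{\mathbb{Z}}^{\stat}$ as the distribution of the RER on $\mathbb{Z}$ obtained by applying (the natural translate of) $\pi_k$ to $B_k$ for every $k$. The random offset $U$ gives stationarity, and $\tilde\nu_1 \neq \tilde\nu_2$ is immediate since the former produces $3$-clusters with positive probability while the latter never does. Conditional on $U$, the color processes $\Phi_{1/2}(\tilde\nu_i)$ are independent across blocks with common per-block law $\Phi_{1/2}(\nu_1)=\Phi_{1/2}(\nu_2)$, so they are equal; take these as $\nu_3,\nu_4$.

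The main obstacle is proving ergodicity of this common color process $X := \Phi_{1/2}(\tilde\nu_1)$. The plan here is a skew-product argument. Realize $X$ as $X(n) = V(n+U)$, where $V \in \{0,1\}^{\mathbb{Z}}$ is the concatenation $V(3k+i) = W_k(i)$ of an i.i.d.\ sequence $(W_k)_{k \in \mathbb{Z}}$ of $\{0,1\}^{[3]}$-valued random vectors with law $\Phi_{1/2}(\nu_1)$. The unit shift on $X$ then lifts to a measure-preserving transformation $T$ on the $(V,U)$-space: $T(V,U) = (V,\,U+1)$ when $U \in \{0,1\}$, and $T(V,2) = (\sigma^3 V,\,0)$, where $\sigma$ denotes the unit shift on $V$. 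A direct computation shows $T^3(V,U) = (\sigma^3 V, U)$, which on each fiber $\{U=u\}$ acts as the Bernoulli shift on the i.i.d.\ block sequence $(W_k)$ and is therefore ergodic. Since $T$ itself cyclically permutes the three $U$-fibers, any $T$-invariant set must (by $T^3$-ergodicity on each fiber, then by cyclic invariance in $U$) be null or full. Thus $T$ is ergodic, and the factor $X$ inherits ergodicity.

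The final ``it follows'' statement is then immediate from the convex combination $\nu := \tfrac{1}{2}(\tilde\nu_1 + \tilde\nu_2)$. The translation-invariant event ``the partition contains at least one $3$-cluster'' has $\tilde\nu_1$-measure $1$ (by the ergodic theorem applied within the block structure) and $\tilde\nu_2$-measure $0$ (since $\tilde\nu_2$-clusters have size at most $2$), so $\tilde\nu_1$ and $\tilde\nu_2$ are mutually singular and $\nu$ is stationary but not ergodic. By affinity of $\Phi_{1/2}$, its color process equals $\Phi_{1/2}(\tilde\nu_1)$, which is ergodic by the previous paragraph.
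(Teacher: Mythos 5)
Your proposal is correct and follows essentially the same route as the paper: tile $\mathbb{Z}$ by length-$3$ blocks with a uniform random phase, place independent copies of the measures $\nu_1,\nu_2$ from Theorem~\ref{t.bigfinitetheorem}(A) on the blocks, and take a convex combination for the final claim. The skew-product argument for ergodicity and the explicit invariant event separating $\tilde\nu_1$ from $\tilde\nu_2$ are correct fillings-in of steps the paper leaves to the reader.
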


\pf Construct $\nu_3$ as follows: on each subset of the type 
$\{i,i+1,i+2\}$ for $i$ divisible by $3$, independently use
the RER $\nu_1$ from the proof of Theorem~\ref{t.bigfinitetheorem}(A)
on these 3 points. Next,
shift the configuration, uniformly at random by $0$, $1$ or $2$ steps to the 
right to construct a stationary RER. Next construct $\nu_4$ in the same way, 
using $\nu_2$ from the proof of Theorem~\ref{t.bigfinitetheorem}(A).
Since $\nu_1$ and $\nu_2$ yield the same color processes in the setting with 
three elements, it follows easily that $\Phi_{1/2}(\nu_3)=\Phi_{1/2}(\nu_4)$.
Ergodicity (but not mixing) of the latter is easily established. 
The final claim is established by considering any nontrivial
convex combination of $\nu_3$ and $\nu_4$.\qed

\begin{remark}\label{r.nonerg}
(1) Using Theorem~\ref{t.bigfinitetheorem}(E), one can even, in the same way, find
$\nu_3,\nu_4\in \rer_{{\mathbb Z}}^{\stat}$ with
$\nu_3\neq \nu_4$ such that $\Phi_{p}(\nu_3)=\Phi_{p}(\nu_4)$ for all $p$. \\
(2). The above also shows that ergodicity of the color process may depend on $p$.
If we take $\nu_3$ and $\nu_4$ from the above proof and take any $p\neq 0,1/2,1$, then 
$\Phi_{p}(\nu_3)\neq \Phi_{p}(\nu_4)$ (since now $\nu_1$ and $\nu_2$
yield different color processes for such $p$ by Theorem~\ref{t.bigfinitetheorem}(C))
and hence the image of any
nontrivial convex combination of $\nu_3$ and $\nu_4$ is nonergodic, being
a nontrivial convex combination of the respective color processes.
\end{remark}

One can strengthen Proposition~\ref{p.differinf}, obtaining examples where the
color process is Bernoulli.

\begin{prop}\label{p.nonergtomix}
There exist a non-ergodic $\nu\in \rer_{{\mathbb Z}}^{\stat}$ such that 
$\Phi_{1/2}(\nu)$ is Bernoulli.
\end{prop}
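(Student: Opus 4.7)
The plan is to refine the construction in Proposition~\ref{p.differinf} so that the two representing stationary RERs are each \emph{Bernoulli} as $\Z$-measure-preserving systems. Then Theorem~\ref{t.finiteclust} will automatically upgrade the common color process to Bernoulli, and a nontrivial convex combination will yield the desired non-ergodic RER. The construction of Proposition~\ref{p.differinf} cannot be used as is: the rigid pattern ``length-$3$ blocks shifted by a uniform phase in $\{0,1,2\}$'' equips the shift action with a period-$3$ eigenvalue, since the phase is a.s.\ recoverable from the partition (it is encoded by the positions of the non-singleton clusters); by ergodicity of the color process, this phase is also recoverable from the coloring, so neither the RER nor the color process is even weakly mixing, let alone Bernoulli. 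The remedy is to replace the periodic block pattern by a Bernoulli renewal structure.

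Concretely, let $\nu_1 \neq \nu_2 \in \rer_{[3]}$ be the pair from the proof of Theorem~\ref{t.bigfinitetheorem}(A), so that $\Phi_{1/2}(\nu_1) = \Phi_{1/2}(\nu_2)$. Fix $q \in (0,1)$ and let $(R_k)_{k \in \Z}$ be i.i.d.\ Bernoulli$(q)$ and $(C_k)_{k \in \Z}$ be i.i.d.\ Uniform$[0,1]$, independent of $R$. Declare a ``cut'' between $k{-}1$ and $k$ iff $R_k = 1$, and let the blocks of the random partition be the maximal cut-free intervals of $\Z$ (a.s.\ all finite). For $i = 1, 2$ define a random partition $\pi_i$ by: on each block $B$ with $|B| = 3$, use $C_{\min B}$ to sample a $\nu_i$-distributed partition of $B$; on each block with $|B| \neq 3$, keep all sites as singletons. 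Let $\tilde\nu_i$ denote the law of $\pi_i$. Then $\tilde\nu_1, \tilde\nu_2 \in \rer_\Z^{\stat}$ by construction, all clusters have size at most $3$, and $\Phi_{1/2}(\tilde\nu_1) = \Phi_{1/2}(\tilde\nu_2)$ since the block structure is the same function of $R$ and the within-block color laws coincide by choice of $\nu_1, \nu_2$. They are distinct, since conditional on the positive-probability event $\{R_0 = R_3 = 1,\ R_1 = R_2 = 0\}$ the partition of $\{0,1,2\}$ has law $\nu_i$.

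Each $\tilde\nu_i$ is a translation-equivariant measurable function of the i.i.d.\ field $((R_k, C_k))_{k \in \Z}$, hence a factor of a Bernoulli shift. By Ornstein's theorem that every factor of a Bernoulli shift is Bernoulli, $\tilde\nu_i$ is Bernoulli (and in particular ergodic). Applying Theorem~\ref{t.finiteclust}, the joint process $(\pi, X^{\tilde\nu_i, 1/2})$ is a factor of $\tilde\nu_i \times \mu_{1/2}$. As a product of two Bernoulli $\Z$-systems is Bernoulli, and a factor of a Bernoulli shift is Bernoulli, we conclude that $\Phi_{1/2}(\tilde\nu_i)$ is Bernoulli.

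Set $\nu := \tfrac{1}{2}(\tilde\nu_1 + \tilde\nu_2)$. Since $\tilde\nu_1$ and $\tilde\nu_2$ are distinct ergodic invariant probability measures, $\nu \in \rer_\Z^{\stat}$ is non-ergodic; nevertheless $\Phi_{1/2}(\nu) = \tfrac{1}{2}(\Phi_{1/2}(\tilde\nu_1) + \Phi_{1/2}(\tilde\nu_2)) = \Phi_{1/2}(\tilde\nu_1)$, which is Bernoulli, as required. The one delicate point in this plan is securing the Bernoulli property for each $\tilde\nu_i$; this is precisely why the renewal-based (rather than periodic) block structure is needed, so that each $\tilde\nu_i$ is manifestly a translation-equivariant factor of an i.i.d.\ field and no unwanted finite-period eigenvalue is introduced.
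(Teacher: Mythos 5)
Your proof is correct and follows essentially the same route as the paper: both replace the periodic phase randomization of Proposition~\ref{p.differinf} by an i.i.d.-driven block structure so that each of the two representing RERs is a translation-equivariant factor of an i.i.d.\ field (hence Bernoulli by Ornstein), then pass Bernoullicity to the common color process via Theorem~\ref{t.finiteclust} and finish with a nontrivial convex combination. The only differences are cosmetic: the paper builds its blocks by blowing each $1$ of an i.i.d.\ sequence up into three sites, whereas you use renewal cuts directly on $\Z$, which makes the factor-of-i.i.d.\ property slightly more transparent; and your distinctness argument is cleanest if phrased as the observation that $\tilde\nu_1$ gives positive probability to $\{0,1,2\}$ being a cluster while $\tilde\nu_2$ gives this event probability zero.
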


\pf We will only sketch the proof.
Define $\nu_3\in\rer_{{\mathbb Z}}^{\stat}$ as follows. Let $(Z(i))_{i\in {\mathbb Z}}$ be an i.i.d.\ 
sequence such that $P(Z(i)=1)=P(Z(i)=0)=1/2$. Call all vertices $i$ with $Z(i)=0$ white, and all 
vertices $i$ with $Z(i)=1$ blue. Replace each blue vertex with three green vertices. Let each white 
vertex be its own equivalence class. The green vertices come in blocks of length divisible by $3$. 
Partition the green blocks independently using $\nu_1$ as in 
Proposition~\ref{p.differinf} yielding what we also call here $\nu_3$.
Define $\nu_4$ in the same way as $\nu_3$ but using 
$\nu_2$ from Proposition~\ref{p.differinf} instead of $\nu_1$. Again 
$\Phi_{1/2}(\nu_3)= \Phi_{1/2}(\nu_4)$
but now it is easily seen that the latter process is Bernoulli. 
Now take a nontrivial convex combination of $\nu_3$ and $\nu_4$ as above.\qed

\begin{remark}
Again, $\Phi_{p}(\nu_3)\neq \Phi_{p}(\nu_4)$ for any $p\neq 0,1/2,1$ and so we
see that a color process can change from being Bernoulli to being nonergodic as $p$ varies.
\end{remark}

We should confess at this point, although we felt it important to point out the above results to the reader,
we do feel at the same time that using nonergodic RERs in this context
is a little bit of a cheat.

We give another result which gives some restriction on the ergodic behavior of the color process in terms
of a restriction on the RER.

\begin{prop}\label{p.0entropy,notK}
If $\nu\in  \rer_{\zd}^{\stat}$ has 0 entropy and is not the RER which 
assigns probability 1 to the ``all singletons'' partition, then for any $p\in (0,1)$, 
$\Phi_{p}(\nu)$ is not a $K$-automorphism.
\end{prop}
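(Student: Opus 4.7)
The plan is to argue by contradiction: assume $\Phi_p(\nu)$ is a $K$-automorphism, so in particular it is ergodic, and invoke Furstenberg's disjointness theorem (in the amenable-group extension valid for $\zd$-actions), which says that an ergodic $K$-automorphism is disjoint from every zero-entropy measure-preserving system. The natural candidate joining of $\Phi_p(\nu)$ with the zero-entropy system $\nu$ is $\jointlaw$ itself, the joint law of $(\pi,\xnup)$. A case split is forced, however, because when $\nu$ is a point mass $\jointlaw$ is trivially the product joining and the disjointness argument breaks down; in that case a direct entropy computation takes over.

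In the case $\nu$ is not a point mass, I would first observe that there exists $z\neq\org$ with $\alpha:=\nu(\pi(\org)=\pi(z))\in(0,1)$: indeed, if $\nu(\pi(\org)=\pi(z))\in\{0,1\}$ for every $z$, then the cluster of $\org$ would be deterministic, and stationarity would force $\pi$ to be a.s.\ a fixed partition, contradicting non-determinism. With such $z$ in hand, the formula~\eqref{e.nonneg.cor} gives
\begin{equation*}
\jointlaw\bigl(\xnup(\org)=\xnup(z)=1,\ \pi(\org)=\pi(z)\bigr)=p\alpha,
\end{equation*}
while the product joining $\Phi_p(\nu)\otimes\nu$ assigns the same event mass $\bigl(p^2+p(1-p)\alpha\bigr)\alpha$; these differ by $p(1-p)\alpha(1-\alpha)>0$, so $\jointlaw$ is non-product, contradicting disjointness.

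In the case $\nu$ is a point mass at some shift-invariant partition $\pi_0$, a short reflexivity-symmetry-transitivity argument (using stationarity to translate) shows that $G:=\{z\in\zd:\pi_0(\org)=\pi_0(z)\}$ is a subgroup of $\zd$ and that $\pi_0$ is its coset partition; the hypothesis that $\nu$ is not all singletons forces $G\ne\{\org\}$. Since $\xnup$ is constant on each coset, $\xnup|_{B_n}$ is determined by one Bernoulli$(p)$ flip per coset meeting $B_n$; this number of cosets is of order $|B_n|/|G\cap B_n|$, which is $o(|B_n|)$ because $G$ contains a nonzero element and hence $|G\cap B_n|\to\infty$. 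Therefore $h(\Phi_p(\nu))=0$; but $\Phi_p(\nu)$ is nontrivial (positive mass on more than one configuration), and a $K$-automorphism with zero entropy must be trivial, a contradiction.

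The main obstacle will be citing the right form of Furstenberg's disjointness theorem for $\zd$-actions (the classical version is stated for $\Z$; the extension to countable amenable groups is standard, and possible non-ergodicity of $\nu$ itself is handled by ergodic decomposition, since disjointness from each ergodic component of a zero-entropy system integrates to disjointness from the whole) and recognizing that the joining-based argument collapses precisely in the deterministic case, which must therefore be rescued by the separate coset-and-entropy computation above.
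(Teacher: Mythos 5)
Your proposal is correct and follows essentially the same route as the paper: the same split into deterministic and non-deterministic cases, with Furstenberg's disjointness theorem applied to the joining $\jointlaw$ in the non-deterministic case (your explicit computation showing $\jointlaw$ is not the product joining just makes precise what the paper dismisses as "easy to see"). The only divergence is in the deterministic case, where the paper observes that arbitrarily distant points lie in a common cluster with probability one and hence mixing (a fortiori the $K$-property) fails, while you instead compute that the coset structure forces zero entropy and invoke that a nontrivial $K$-automorphism has positive entropy; both arguments are valid and of comparable length.
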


\pf Case 1. $\nu$ is deterministic; i.e., there exists $\pi\in \rm{Part}_{{\zd}}$ 
such that $\nu({\pi})=1$. In this case, since $\pi$, by assumption, is not the
``all singletons'' partition, there must exist $x\neq y$ so that $x$ and $y$ are in the 
same cluster with positive probability and hence with probability 1. By translation 
invariance, there are points arbitrarily far away which are in the same cluster with probability 1.
This clearly rules out even mixing.

Case 2. $\nu$ is nondeterministic. Considering the joint process 
$(\pi,X^{\nu,p})$, it is 
easy to see that if $\nu$ is nondeterministic, then the two processes $\pi$ and 
$X^{\nu,p}$ cannot be independent. However, it has been proved by H. Furstenberg
(see Theorem 18.16 in \cite{glasner}) that if one has a 0 entropy system and a 
$K$-automorphism, then the only stationary joint process (so-called {\sl joining}) for them 
is when they are independently coupled. (When two processes have this latter property, they are 
called {\sl disjoint}.) Therefore, since $\pi$ is assumed to have 0 
entropy, $X^{\nu,p}$ cannot be a $K$-automorphism. \qed

\subsection{Constructing color processes with various ergodic behavior}

The first observation in this subsection that we want to make is that we can find 
$\nu\in \rer_{{\mathbb Z}}^{\stat,\conn}$ which falls anywhere in the ergodic hierarchy
(e.g., weak-mixing but not mixing). This is an immediate consequence of the following
lemma and of course the fact that we can find stationary 0,1-valued processes anywhere
in the ergodic hierarchy.

\begin{lma}\label{l.zconstruct}
Given a stationary 0-1 valued process $\{X_n\}$ on ${\mathbb Z}$, there is 
$\nu\in \rer_{{\mathbb Z}}^{\stat,\conn}$ which is isomorphic to 
$\{X_n\}$; i.e., there is a translation invariant invertible measure preserving transformation
between them.
\end{lma}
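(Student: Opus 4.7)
The plan is to manufacture $\pi$ directly from $X$ via the edge-process construction of Definition~\ref{d.rergen}. Concretely, define a $\{-1,1\}$-valued edge process $Y=(Y(e_{n,n+1}))_{n\in{\mathbb Z}}$ by setting $Y(e_{n,n+1}):=2X_n-1$, and let $\pi:=\pi_Y$ be the RER induced by $Y$ as in Definition~\ref{d.rergen}. Every cluster of $\pi_Y$ is by construction an interval (possibly infinite), so $\pi_Y$ generates an element $\nu\in \rer_{{\mathbb Z}}^{\conn}$. Stationarity of $X$ transfers immediately to $Y$ under the natural identification of ${\mathbb Z}$ with the edge set via $n\leftrightarrow e_{n,n+1}$, and hence $\nu\in \rer_{{\mathbb Z}}^{\stat,\conn}$.

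Next I would establish the isomorphism. The comment following Definition~\ref{d.rergen} notes that $Y$ and $\pi_Y$ determine each other deterministically: one recovers $Y$ from $\pi$ by setting $Y(e_{n,n+1})=1$ iff $n$ and $n+1$ lie in the same class of $\pi$. Since $X$ and $Y$ are in trivial deterministic bijection via $X_n=(Y(e_{n,n+1})+1)/2$, composing these two bijections yields a measurable bijection between realizations of $\{X_n\}$ and realizations of $\pi$, defined on a set of full measure (in fact, on all of $\{0,1\}^{\mathbb Z}$). By construction this bijection transports the law of $\{X_n\}$ to $\nu$.

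Finally, I would verify shift-equivariance. Under the identification $n\leftrightarrow e_{n,n+1}$, the shift $\sigma\colon(X_n)\mapsto (X_{n+1})$ corresponds to the shift on edges $Y(e_{n,n+1})\mapsto Y(e_{n+1,n+2})$, which in turn corresponds to the shift $T^1$ on the vertex partition $\pi_Y$ (since $m,n$ are equivalent under $\pi_Y$ iff $m+1,n+1$ are equivalent under $T^1\pi_Y$, both being determined by the same string of $Y$-values). Thus the bijection above intertwines $\sigma$ with the shift action on $\rm{Part}_{{\mathbb Z}}$, which is precisely what it means for $\{X_n\}$ and $\nu$ to be isomorphic as measure-preserving systems.

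There is essentially no serious obstacle here; the only mildly delicate point is to be explicit that the correspondence $X\leftrightarrow \pi$ is measurable and bijective on the nose (not merely mod null sets), which follows because the deterministic rule ``$n$ and $n+1$ are in the same class iff $X_n=1$'' both produces and inverts the construction pointwise.
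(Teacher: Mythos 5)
Your construction is exactly the paper's: the paper also places $n$ and $n+1$ in the same class precisely when $X_n=1$ (equivalently, reads $X$ as the edge process of Definition~\ref{d.rergen}) and observes the map is invertible and shift-equivariant. Your write-up is correct and simply makes the bijection and equivariance more explicit than the paper's one-line argument.
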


\pf This is nothing other than what we considered in Section~\ref{s.conn}.
If $X_n=1$, then we place $n$ and $n+1$ in the same class and then we saturate this so that it is
an equivalence relation. (So, essentially, the clusters will correspond to intervals of 1's in 
$\{X_n\}$.) This map is clearly invertible, proving the lemma.\qed

\medskip

We first mention that constructing a color process which is ergodic but not weak-mixing is a triviality.
Let $\{X_n\}$ be the stationary 0-1 valued process on ${\mathbb Z}$ which goes back and forth between 0 and
1 and consider the associated $\nu\in \rer_{{\mathbb Z}}^{\stat,\conn}$ given in the proof of
Lemma~\ref{l.zconstruct}. It is immediate that for all $p\in (0,1)$, the associated color process is
ergodic but not weak-mixing. We next have the following proposition.

\begin{prop}\label{p.Chacon}
There exists $\nu\in \rer_{{\mathbb Z}}^{\stat,\conn}$ so that for all $p\in (0,1)$, 
the associated color process $\munup$ is weak-mixing but not mixing.

\end{prop}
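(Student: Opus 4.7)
The plan is to convert a weak-mixing but not mixing ergodic two-state system on $\Z$ into a connected RER via Definition~\ref{d.rergen}, verify that weak-mixing descends to the color process using Theorem~\ref{t.ergod4}, and then produce non-mixing by hand through a single cylinder event. Let $(\Omega,\mathcal{F},\mu,T)$ be Chac\'on's rank-one transformation (or any ergodic m.p.\ system that is weak-mixing, not mixing, and partially rigid), and let $B\in\mathcal{F}$ be any event with $\mu(B)\in(0,1)$ small enough that partial rigidity produces a sequence $n_k\to\infty$ along which (after a further subsequence) $\mu(B\cap T^{-n_k}B)\to L\ne\mu(B)^2$. Define the edge process
\begin{equation*}
Y_{e_{n,n+1}}(\omega):=-\ind_B(T^n\omega)+\ind_{B^c}(T^n\omega)\in\{-1,+1\},
\end{equation*}
and let $\nu\in \rer_\Z^{\stat,\conn}$ be the RER induced by $Y$ through Definition~\ref{d.rergen}.

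For weak-mixing, $\{Y_e\}$ is a factor of $(\Omega,T)$ and hence is itself weak-mixing; Lemma~\ref{l.zconstruct} identifies $\nu$ with $\{Y_e\}$, so $\nu$ is weak-mixing too. Ergodicity together with $\mu(B)>0$ yields infinitely many $-1$ edges on either side of the origin a.s., so every cluster of $\pi_Y$ is finite and in particular has density zero. Theorem~\ref{t.ergod4} then gives that $\jointlaw$, and hence its factor $\munup$, is weak-mixing for every $p\in(0,1)$.

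For non-mixing, I single out the event $E:=\{X(0)\ne X(1)\}$. Conditioning on the cluster colouring gives $\jointlaw(E)=2p(1-p)\mu(B)$. Only edge-configurations with $Y_{e_{0,1}}=Y_{e_{k,k+1}}=-1$ contribute to $E\cap T^{-k}E$; among these, connectedness allows essentially two compatible partitions of $\{0,1,k,k+1\}$: the ``all four distinct'' partition (for which, by independence of the four cluster colors, the conditional probability of $E\cap T^{-k}E$ equals $(2p(1-p))^2$) and the exceptional partition $\pi(1)=\pi(k)$, whose probability is bounded by $\nu(|\pi(1)|\ge k)\to 0$ by finiteness of clusters. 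Putting the pieces together,
\begin{equation*}
\jointlaw(E\cap T^{-k}E)=4p^2(1-p)^2\,\mu(B\cap T^{-k}B)+o(1).
\end{equation*}
Taking $k=n_k$, the right side tends to $4p^2(1-p)^2L\ne 4p^2(1-p)^2\mu(B)^2=\jointlaw(E)^2$, so $\munup$ is not mixing, for every $p\in(0,1)$.

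The main obstacle is the overlap bookkeeping in the last display: one must check that partitions of $\{0,1,k,k+1\}$ in which two of these points are merged are compatible with both connectedness and $Y_{e_{0,1}}=Y_{e_{k,k+1}}=-1$ only through the exceptional case $\pi(1)=\pi(k)$, and that this case contributes $o(1)$ by finiteness of clusters. Connectedness is what trivialises the combinatorics here, since any other merging of two of our four points into a single connected cluster forces one of the edges $e_{0,1}$ or $e_{k,k+1}$ to be $+1$. Everything else is a direct application of tools already present in the paper (Lemma~\ref{l.zconstruct} and Theorem~\ref{t.ergod4}) combined with the existence of Chac\'on-type weak-mixing non-mixing systems.
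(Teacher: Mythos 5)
Your proof is correct and follows essentially the same route as the paper: build the connected RER from a Chac\'on-type weak-mixing, non-mixing two-state process via the edge construction, get weak-mixing of the color process from Theorem~\ref{t.ergod4}, and detect non-mixing through the correlation at lag $k$ of the event that sites $0$ and $1$ lie in different clusters (your $E$ is just the complement of the paper's event $A$, so the covariance computation is identical). The cluster bookkeeping you spell out, including the $o(1)$ contribution from $\pi(1)=\pi(k)$, is exactly the ``elementary computation left to the reader'' in the paper.
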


\pf We start with a stationary 0-1 valued process $\{X_n\}$ on ${\mathbb Z}$ which is weak-mixing
but for which $\limsup_n \Pr(X_0=X_n=1)> \Pr(X_0=1)^2$ (and hence is not 
mixing). An example of such a process is the so-called Chacon example; see for example page 216 
in \cite{Petersen}. Next consider the associated $\nu\in \rer_{{\mathbb Z}}^{\stat,\conn}$ given in 
the proof of Lemma~\ref{l.zconstruct}. Clearly, $\lim_{x\to\infty}\nu(\pi(x)=\pi({\bf 0}))= 0$ 
and hence Theorem~\ref{t.ergod4} implies that $\munup$ (and in fact $\jointlaw$) is weak-mixing.
To show that $\munup$ is not mixing, consider the two events $A:=\{\xpip_0=\xpip_1\}$ 
and $B_n:=\{\xpip_n=\xpip_{n+1}\}$. An elementary computation left to the reader gives that 
$$
\limsup_n\Pr(A\cap B_n) >\Pr(A)^2
$$
which implies that $\munup$ is not mixing.\qed

\section{Questions and further directions}\label{s.ques}

In this final section, we list a number of questions and 
a number of directions which might be interesting to pursue.
The questions certainly might be of somewhat varying difficulty but all seem natural to us.

\begin{question}\label{q.gaussian}
Let $X=(X_1,\ldots,X_n)$ be a $n$-dimensional Gaussian random variable, where each $X_i$ is $N(0,1)$ 
and the pairwise correlations are given by $\sigma_{i,j}$. Assume $\sigma_{i,j}\ge 0$ for 
all $i,j$. Let $h\in (-\infty,\infty)$ and $Y^h=(Y_1^h,\ldots,Y_n^h)$ be, as earlier, given by $Y_i^h=1$ if $X_i\ge h$
and $Y_i^h=0$ if $X_i<h$.  When is $Y^h$ a color process? 
\end{question}
\begin{remark}
Note that if $n=3$ and $h=0$, then $Y^h$ is a color process by Lemma~\ref{l.symm}.
The next three questions are special cases of the above question.
\end{remark}

\begin{question}
Concerning the exchangeable Gaussian process described in Subsection~\ref{s.gaussian},
which nonzero thresholds yield color processes?
\end{question}

\begin{question}
Given $\rho \in [0,1]$, consider the Markov chain on $\Rf$ where if in state $x$, then the 
next state has distribution $\rho^{1/2}  x + (1-\rho)^{1/2}  Z$ where $Z$ is standard normal.
Clearly the stationary distribution is a standard normal and we consider the corresponding
stationary Markov Chain $(Z_i)_{i\in {\mathbb Z}}$.
Fix $h$ and define the process $Y=(Y_i)_{i\in {\mathbb Z}}$ where $Y_i=1$ if $Z_i\ge h$ and
$Y_i=0$ if $Z_i<h$. For which $\rho$ and $h$ is $Y$ a color process? 
\end{question}

\begin{question}
Consider a centered Gaussian free field $(Z(x))_{x\in {\mathbb Z}^d}$ with $d\ge 3$. 
Fix $h$ and consider the process $Y^h=(Y^h(x))_{x\in {\mathbb Z}^d}$ where 
$Y^h(x)=1$ if $Z(x)\ge h$ and $Y^h(x)=0$ if $Z(x)<h$. When is $Y$ a color process?
\end{question}

\begin{question}\label{q.ising}
On which graphs and for which values of the parameters $J\ge 0$ and $h>0$ is 
the Ising model a color process?
\end{question}
\begin{remark}
(i). Unlike in the case $h=0$, the marginal distributions of the Ising model with
$J\ge 0$ and $h>0$ need not be the same in which case it of course cannot be a color process;
this happens for example for a path of length 2. One might therefore restrict to
transitive graphs for this question. \\
(ii). In \cite{Alexander}, an asymmetric random cluster model is studied and it is shown
how one can obtain the Ising model with $J\ge 0$ and $h>0$ using this model. However,
this procedure does not correspond to a color process in our sense as it does in the case $h=0$.\\
(iii). Theorem~\ref{t.bigfinitetheorem}(B) and (D) in Section~\ref{s.finitecase} yield that
there is more than one RER generating the Ising model on $K_3$ (the complete graph on 3 vertices) 
when $J>0$ and $h=0$ while there is at most one RER generating the Ising model on $K_3$ 
when $J>0$ and $h>0$. Mathematica gives a (necessarily unique) solution for the latter RER
for positive $h$ which interestingly does not coverge, as $h\to 0$, to the RER corresponding to 
the random cluster model but rather converges to a different RER. One might conclude from this that
the random cluster RER is not the natural RER which yields the Ising model on $K_3$ with $J>0$ and $h=0$
since it cannot be perturbed to obtain the $h>0$ case.
\end{remark}

\begin{question}
For $p\neq 1/2$, determine those $\nu\in \rer_{\N}^{\exch}$ which are 
$(\rer_{\N}^{\exch},p)$-unique. Is it all of $\rer_{\N}^{\exch}$ (which is equivalent to
$\Phi_p$ being injective)?
\end{question}

\begin{question}
What are all the possible limiting distributions (after normalization) of
$$
\sum_{i\in B_n} \xpip(i)
$$
which one can obtain by varying $\nu$ and $p$?
\end{question}
\begin{remark}
It was shown in \cite{KS79} that one can obtain a large number of limiting distributions for the special 
case of random walk in random scenery. Also, it is known (see \cite{NW}) that if $\{X_n\}_{n\ge 0}$ is a 
stationary and positively associated process with $\sum_n {\rm Cov}(X_0,X_n)<\infty$, then one obtains a 
central limit theorem. This, together with~\eqref{e.nonneg.cor}, could be used to show that certain 
classes of color processes obey a central limit theorem. In addition, a central limit theorem and various 
other results concerning the original divide and color model are obtained in \cite{Garet}.
\end{remark}

\begin{question}
If an RER $\nu_1$ is finer than another RER $\nu_2$, in the sense that $\nu_1$ and $\nu_2$ can be 
coupled so that the clusters of $\nu_2$ are unions of clusters of $\nu_1$, does it follow that
$d(\nu_1,p)\ge d(\nu_2,p)$ for each $p$?
\end{question}
\begin{remark}
We note that for $d\ge 1$ and $J_1 <J_2$, the RER for the random cluster model with parameters $q=2$ and
$J_1$ is finer than the RER for $q=2$ and $J_2$ and in this case, Proposition 1.6 in \cite{LS06} states 
the asked for inequality above for the special case $p=1/2$, in which case the color process is just the Ising 
model. There is a minor additional point here. In the color process, even the infinite clusters are colored 
using $p=1/2$ while in Proposition 1.6 in \cite{LS06}, one was looking at the plus states for the Ising model
which is obtained by coloring the unique (if there is any) infinite cluster 1. However,
by Proposition 1.2 in \cite{LS06}, the set of product measures that one dominates is the same 
whether the infinite cluster is colored 1 (corresponding to the plus state) or colored $-1$ 
(corresponding to the minus state) and therefore also for the above color process which lies inbetween.
\end{remark}

\begin{question}
If an RER $\nu$ is such that $d(\nu)>0$, does it follow that $d(\nu,p)>0$ for all $p >0$?
\end{question}

\begin{question}
Let $\nu^{\rm{RCM}}_{d,\alpha,2}$ be the random cluster model on $\zd$ with $q=2$ and parameter $\alpha$.
One would perhaps expect that (1) $d(\nu^{\rm{RCM}}_{d,\alpha,2},p)$ is jointly continuous in $\alpha$ 
and $p$ and decreasing in $\alpha$ for fixed $p$, (2) $d(\nu^{\rm{RCM}}_{d,\alpha,2})$ is continuous in $\alpha$, 
(3) $\lim_{\alpha\to 0}d(\nu^{\rm{RCM}}_{d,\alpha,2})=1$ and 
(4) $\lim_{\alpha\to \infty}d(\nu^{\rm{RCM}}_{d,\alpha,2})=0$. 
Verify as much of this picture as possible. Does anything interesting happen near the critical value 
$\alpha_c(d)$?.
\end{question}

\begin{question}\label{q.bhs}
Consider a 1-dimensional random walk which moves to the right with probability $\frac{1}{2}+\sigma$
and to the left with probability $\frac{1}{2}-\sigma$ where $\sigma>0$. Let $\nu_\sigma$ be the 
associated RER on $\Z$ (whose color process is then random walk in random scenery). 
What results can one obtain concerning $d(\nu_\sigma,p)$ and $d(\nu_\sigma)$?
Is there some phase transition in the parameter $\sigma$?
\end{question}
\begin{remark}
In \cite{BHS}, a phase transition in $\sigma$ is shown for random walk in random scenery,
concerning Gibbsianness of the process. Is it possible that this could be related to a phase
transition concerning the stochastic domination behavior?
\end{remark}

\begin{question}
Provide natural examples of RERs for which all clusters are infinite and $d(\nu)>0$.
\end{question}

\begin{question}\label{q.voter.bern}
Are the stationary distributions for the voter model (which we have seen are color processes)
in $d\ge 3$ dimensions Bernoulli shifts?
\end{question}
\begin{remark}
If we look at the RER corresponding to coalescing random walks in $d\ge 4$ dimensions and 
we restrict the clusters down to a $d-3$ dimensional sublattice, then all the clusters become finite.
It follows from Theorem~\ref{t.finiteclust} that the restriction of the stationary distributions
for the voter model to this $d-3$ dimensional sublattice is a Bernoulli shift and the fact that the RER 
itself in any dimension is a Bernoulli shift. The latter is most easily seen by noting that the entire 
evolution of the process of coalescing random walks (which  yields the RER) can be generated by uniform 
$[0,1]$ random variables at each of the points of $\zd$ and hence must be a Bernoulli shift
being a factor of an i.i.d.\ process.
\end{remark}

\begin{question}\label{q.musd}
If one cannot provide an affirmative answer to Question~\ref{q.voter.bern},
can one give an example of an RER which has infinite clusters but the corresponding color
process is Bernoulli? 
\end{question}

\medskip\noindent {\bf Acknowledgements. } 
We thank Olle H\"{a}ggstr\"{o}m for providing us with Proposition~\ref{p.Olle} and
Russell Lyons for the key part of the proof of Proposition~\ref{p.Russ}.

\bibliography{tykesson}
\bibliographystyle{plain}

\end{document}